\numberwithin{equation}{section}
\newcounter{keepeqno}
\newcommand{\BA}{{\mathbb {A}}}
\newcommand{\BB}{{\mathbb {B}}}
\newcommand{\BC}{{\mathbb {C}}}
\newcommand{\BD}{{\mathbb {D}}}
\newcommand{\BF}{{\mathbb {F}}}
\newcommand{\BG}{{\mathbb {G}}}
\newcommand{\BH}{{\mathbb {H}}}
\newcommand{\BJ}{{\mathbb {J}}}
\newcommand{\BL}{{\mathbb {L}}}
\newcommand{\BP}{{\mathbb {P}}}
\newcommand{\BT}{{\mathbb {T}}}
\newcommand{\BV}{{\mathbb {V}}}
\newcommand{\BW}{{\mathbb {W}}}
\newcommand{\BX}{{\mathbb {X}}}
\newcommand{\BY}{{\mathbb {Y}}}
\newcommand{\BZ}{{\mathbb {Z}}}
\newcommand{\Bw}{{\mathbf {w}}}
\newcommand{\CC}{{\mathcal {C}}}
\newcommand{\CF}{{\mathcal {F}}}
\newcommand{\CG}{{\mathcal {G}}}
\newcommand{\CH}{{\mathcal {H}}}
\newcommand{\CI}{{\mathcal {I}}}
\newcommand{\CL}{{\mathcal {L}}}
\newcommand{\CO}{{\mathcal {O}}}
\newcommand{\CP}{{\mathcal {P}}}
\newcommand{\CS}{{\mathcal {S}}}
\newcommand{\CT}{{\mathcal {T}}}
\newcommand{\CX}{{\mathcal {X}}}
\newcommand{\FC}{{\mathfrak {C}}}
\newcommand{\FL}{{\mathfrak {L}}}
\newcommand{\FS}{{\mathfrak {S}}}
\newcommand{\FV}{{\mathfrak {V}}}
\newcommand{\FX}{{\mathfrak {X}}}
\newcommand{\Fc}{{\mathfrak {c}}}
\newcommand{\Fg}{{\mathfrak {g}}}
\newcommand{\Fl}{{\mathfrak {l}}}
\newcommand{\Fo}{{\mathfrak {o}}}
\newcommand{\Fp}{{\mathfrak {p}}}
\newcommand{\Fs}{{\mathfrak {s}}}
\newcommand{\RA}{{\mathrm {A}}}
\newcommand{\RB}{{\mathrm {B}}}
\newcommand{\RC}{{\mathrm {C}}}
\newcommand{\RD}{{\mathrm {D}}}
\newcommand{\RF}{{\mathrm {F}}}
\newcommand{\RG}{{\mathrm {G}}}
\newcommand{\RH}{{\mathrm {H}}}
\newcommand{\RI}{{\mathrm {I}}}
\newcommand{\RK}{{\mathrm {K}}}
\newcommand{\RL}{{\mathrm {L}}}
\newcommand{\RN}{{\mathrm {N}}}
\newcommand{\RO}{{\mathrm {O}}}
\newcommand{\RP}{{\mathrm {P}}}
\newcommand{\RS}{{\mathrm {S}}}
\newcommand{\RT}{{\mathrm {T}}}
\newcommand{\RU}{{\mathrm {U}}}
\newcommand{\RV}{{\mathrm {V}}}
\newcommand{\RW}{{\mathrm {W}}}
\newcommand{\RX}{{\mathrm {X}}}
\newcommand{\RZ}{{\mathrm {Z}}}
\newcommand{\Sat}{{\mathrm {Sat}}}
\newcommand{\Ad}{{\mathrm{Ad}}}
\newcommand{\disc}{{\mathrm{disc}}}
\newcommand{\End}{{\mathrm{End}}}
\newcommand{\GL}{{\mathrm{GL}}}
\newcommand{\Orb}{{\mathrm{Orb}}}
\newcommand{\Hom}{{\mathrm{Hom}}}
\newcommand{\Id}{{\mathrm{Id}}}
\newcommand{\Mat}{{\mathrm{Mat}}}
\newcommand{\Mp}{{\mathrm{Mp}}}
\newcommand{\PGL}{{\mathrm{PGL}}}
\renewcommand{\Re}{{\mathrm{Re}}}
\newcommand{\Res}{{\mathrm{Res}}}
\newcommand{\res}{{\mathrm{res}}}
\newcommand{\SL}{{\mathrm{SL}}}
\newcommand{\Spec}{{\mathrm{Spec}}}
\newcommand{\SO}{{\mathrm{SO}}}
\newcommand{\Sym}{{\mathrm{Sym}}}
\newcommand{\Sp}{{\mathrm{Sp}}}
\newcommand{\st}{{\mathrm{st}}}
\newcommand{\tr}{{\mathrm{tr}}}
\newcommand{\ud}{\,\mathrm{d}}
\newcommand{\vol}{{\mathrm{vol}}}
\def\ac{\mathrm{ac}}
\def\diag{{\rm diag}}
\def\std{\rm Std}
\def\val{\mathrm{val}}
\newcommand{\sslash}{\mathbin{/\mkern-6mu/}}
\newtheorem{thm}{Theorem}[section]
\newtheorem{dfn}[thm]{Definition}
\newtheorem{rmk}[thm]{Remark}
\newtheorem{exm}[thm]{Example}
\newtheorem{prp}[thm]{Proposition}
\newtheorem{lem}[thm]{Lemma}
\newtheorem{cor}[thm]{Corollary}
\newtheorem{cnj}[thm]{Conjecture}
\newcommand{\Rmnum}[1]{\expandafter\@slowromancap\romannumeral #1@}
\begin{document}

    \title[]{Fundamental Lemma for Rank One Spherical Varieties of Classical Types}

    \date{\today}
    
    \author{Zhaolin Li}
    
    \address{School of Mathematics, University of Minnesota, 206 Church St. S.E., Minneapolis, MN 55455, USA.
}
    \email{li001870@umn.edu}
    \subjclass[2010]{Primary 11F66, 22E50, 43A32; Secondary 11F70, 22E53, 44A20}
    \date{\today}
\keywords{Automorphic $L$-functions, Relative Langlands Program, Langlands Functoriality, Transfer Operators, Fundamental Lemma}

\begin{abstract}
   According to the relative Langlands functoriality conjecture, an admissible morphism between the $L$-groups of spherical varieties should induce a functorial transfer of the corresponding local and global automorphic spectra. Via the relative trace formula approach, two basic problems are the local transfer and the fundamental lemma on the geometric side of the relative trace formulae. In this paper, we consider the rank one spherical variety case, where the admissible morphism between the $L$-groups is the identity morphism, in which case, Y. Sakellaridis has already established the local transfer (\cite{Sak21}). We formulate the statement of the fundamental lemma for the general rank one spherical variety case and prove the fundamental lemma for the rank one spherical varieties of classical types. 
\end{abstract}

    \maketitle
    \tableofcontents

\section{Introduction}

The origins of the relative trace formula can be traced back to the early 1930s. In his seminal work, H. Petersson expressed sums of products of Fourier coefficients of holomorphic forms in terms of Bessel functions and Kloosterman sums in \cite{Pet32}. Decades later, in 1980, N.V. Kuznetsov established a similar formula for Maass forms in \cite{Kuz80}. This Petersson-Kuznetsov trace formula is now recognized as an early example of the relative trace formula.

To study periods of automorphic forms, H. Jacquet, S. Rallis, and their collaborators developed the framework of the relative trace formula. The central idea is to compare geometric sides of two relative trace formulae in order to establish relationships between different automorphic periods, rather than analyzing a single relative trace formula in isolation. A notable success of this method is Jacquet's reproof of Waldspurger's celebrated formula by comparing the relative trace formula associated with a non-split torus and a split torus in $\PGL_2$, see \cite{Jac86, Sak13, Sak19}.

In the theory of automorphic forms, another central problem is the functoriality conjecture, which aims to establish a correspondence between automorphic representations of two reductive groups $\RG_1$ and $\RG_2$, given and $L$-homomorphism between their respective $L$-groups $^L\RG_1\rightarrow{^L\RG_2}$.

Through the influential work of H. Jacquet, S. Rallis, A. Venkatesh, Y. Sakellaridis, and others, it has become increasingly evident that the functoriality conjecture admits a natural generalization to a broader class of $\RG$-spaces known as spherical varieties, which include symmetric spaces as a prominent subclass. Notably, the classical group case can be interpreted as a special instance of a symmetric space, wherein the group $\RG\times\RG$ acts on $\RG$ via left and right multiplication. Spherical varieties are expected to possess associated $L$-groups, and according to \cite{SV17}, any morphism of $L$-groups $^L\RG_{\RX_1}\rightarrow{^L\RG_{\RX_2}}$ between the $L$-groups of two spherical varieties $\RX_1$ and $\RX_2$ should induce relations between their automorphic spectra.

In this context, the relative trace formula is envisioned as a distribution on the quotient stack $\CX:=[\RX\times\RX/\RG]$, where $\RX$ is a $\RG$-space, equipped with both geometric and spectral expansions, see \cite{Sak16}. The overarching goal is to deduce relations between the spectral sides of two such relative trace formulas by comparing their corresponding geometric sides.

In \cite{Sak21}, Y. Sakellaridis proposed a new form of comparison for spherical varieties of rank one, that is, in cases where the associated $L$-groups satisfy $^L\RG_{\RX}=\SL_2$ or $\PGL_2$. The central idea is to compare the relative trace formula associated with an arbitrary rank one spherical variety with the corresponding Kuznetsov trace formula, which is conjectured to represent a fundamental case in this broader framework of comparisons. As far as current understanding goes, there are principally two reasons for favoring the Kuznetsov trace formula over the classical Arthur-Selberg trace formula in this context. First, given a spherical $\RG$-variety $\RX$, the spectral side of the Kuznetsov trace formula is weighted by 

\[
\frac{1}{L(\varphi,\Ad,1)}
\]
while the relative trace formula will have an additional factor
\[
\frac{L_{\RX}(\varphi)}{L(\varphi,\Ad,1)},
\]
where $\varphi$ is a global Arthur parameter. In the Arthur-Selberg trace formula case, we have $L_{\RX}(\varphi)=L(\varphi,\Ad,1)$. The second reason arises from geometric considerations, which lie somewhat beyond the scope of this paper; we refer the interested reader to \cite{Sak23} for further details.

We now outline the strategy for approaching the functoriality conjecture for rank-one spherical varieties via trace formulae. For simplicity, we assume throughout that all groups and varieties are split over the global field under consideration. Let $\RX$ be a $\RG$-spherical variety, $^{L}\RG_{\RX}=\SL_2$ or $\PGL_2$, and write $\RG_{\RX}=\PGL_2$ or $\SL_2$ respectively.

Locally, let $\CS(\RX\times\RX/\RG)$ be the push-forward of Schwartz measures $\CS(\RX\times\RX)$ to $\RX\times\RX\sslash\RG\cong \BA^1$ along the canonical map fixed in \cite{Sak21}, $\BL_{\RX}^{\circ}$ be the push-forward image of the probability measure, which is equal to the characteristic function on $\RX(\Fo_{\RF})\times\RX(\Fo_{\RF})$ times an invariant measure. On the Kuznetsov trace formula side, let $\CS^-_{L_{\RX}}(\RN_{\psi}\backslash\RG_{\RX}/\RN,\psi)$ be an enlarged space of test measures, and $f_{L_{\RX}}$ be the non-standard basic vector determined by $L_{\RX}$, that we will explain in Section \ref{ssec_localktf}. Due to \cite{Sak21}, there is a linear isomorphism 
\begin{align*}
	\CT:\CS^-_{L_{\RX}}(\RN_{\psi}\backslash\RG_{\RX}/\RN,\psi)\rightarrow \CS(\RX\times\RX/\RG),
\end{align*}
with an explicit formula given by a composition of classical Fourier transforms with certain simple correction factors. In this paper, we prove the fundamental lemma when $\RG$ is of classical type, that is, the inverse of $\CT$ takes $\BL_{\RX}^{\circ}$ to an explicit multiple of $f_{L_{\RX}}$, and is compatible with the action of Hecke algebras. 

Our approach is purely local, extending the methodology introduced by S. Rallis \cite{Ral82}, and later in \cite{GW21}. After establishing the comparison between the transfer operators in \cite{Sak21} and those arising from the Weil representation (see Section \ref{sec_trWeil}), the fundamental lemma for the unit element allows readily for types $A_n$, $C_n$, and $D_n$. The case of $B_n$ is more nuanced, as the dual pair involves the metaplectic cover of $\SL_2$, precluding the direct identification possible in the other cases. Nevertheless, a manual matching of the orbital integrals remains feasible, as we demonstrate in Section \ref{ssec_bunit}. Regarding the general Hecke elements, S. Rallis \cite{Ral82} demonstrated that within the framework of Weil representation, the Hecke algebras of the even orthogonal and symplectic groups act compatibly. We verify that this compatibility is consistent with the Hecke algebra homomorphism arising from the map between the universal Cartan of $\RG$ and $\RX$, as we detail in Section \ref{intro_tfandfl}. This correspondence ensures that the matching of the unit elements extends to the full Hecke algebra, thereby establishing the fundamental lemma for $\RX=\SO_{2n-1}\backslash\SO_{2n}$. We further generalize this technique to all rank one spherical varieties of classical types. A significant technical challenge arises when $\RX=\SO_{2n}\backslash\SO_{2n+1}$, again, as the dual pair setup involves the metaplectic cover $\Mp_2$ of $\SL_2$. Here, we utilize a comparison between $\RA\backslash\PGL_2/(\RN,\psi)$ and $(\RN,\psi)\backslash\Mp_2/(\RN,\psi)$ in the foundational work of H. Jacquet \cite{Jac87}. Since the functions on $\Mp_2$ arising from the Weil representation are non-standard, we enlarge the space of test measures in Jacquet's comparison. Specifically, to ensure the $L$-values match, we insert the factor $L\left(\st,n-\frac{1}{2}\right)$, a procedure analogous to the one employed by Y. Sakellaridis in \cite{Sak13}. And the comparison between $\RA\backslash\PGL_2/(\RN,\psi)$ and $(\RN,\psi)\backslash\PGL_2/(\RN,\psi)$ with non-standard test measures follows from the unfolding procedure, as detailed in \cite{SV17}.

Our methods and results are strictly local in nature, but they are motivated by the global problem of establishing relative functoriality for rank one spherical varieties via relative trace formulae. This functoriality is predicated on the canonical map between dual groups $\RG_{\RX}^{\vee}\rightarrow\RG^{\vee}$, which, according to the relative Langlands functoriality conjecture, should induce a functorial transfer of the corresponding automorphic spectra. Globally, the relative trace formula should be viewed as a distribution
\begin{align*}
	\mathrm{RTF}_{\FX}:\CS(\RX\times\RX/\RG)\rightarrow\BC.
\end{align*}
The geometric expansion is given as the evaluation map in \cite{Sak16}. We still need the spectral expansion. On the Kuznetsov trace formula side, we have to extend the Kuznetsov trace formula to these enlarged test measures
\begin{align*}
	\mathrm{KTF}:\CS^-_{L_{\RX}}(\RN_{\psi}\backslash\RG_{\RX}/\RN,\psi)\rightarrow\BC
\end{align*}
as in \cite{Li25}. Subsequently, local transfer operators are employed to facilitate the formulation of a Poisson summation formula, which is instrumental in comparing the geometric sides of the relative trace formulas.
\begin{align*}
	\xymatrix{
\CS^-_{L_{\RX}}(\RN_{\psi}\backslash\RG_{\RX}/\RN,\psi) \ar@{->}[rd]^{\mathrm{KTF}} \ar[rr]^{\CT}&& \CS(\RX\times\RX/\RG),\ar@{->}[ld]^{\mathrm{RTF}_{\CX}}\\
 &\BC & 
}
\end{align*}
see \cite[Section 4]{Sak19}.

For each $\RX$-distinguished cuspidal representation $\pi$ of $\RG$, let $\mathrm{RTF}(\pi)$ be the $\pi$-component in the spectral expansion. For every cuspidal representation $\sigma$ of $\RG_{\RX}$, let $\mathrm{KTF}(\sigma)$ be the $\sigma$-component in the spectral expansion of $\mathrm{KTF}$. Let $\pi_0$ be an $\RX$-distinguished cuspidal representation of $\RG$; in principle, we will be led to the following identity
\[
\sum_{\pi}\mathrm{RTF}(\pi)=\sum_{\sigma}\mathrm{KTF}(\sigma),
\]
where the summation on the left-hand-side is over $\RX$-distinguished cuspidal representations that are nearly isomorphic to $\pi_0$, that is, $\pi_{\nu}\cong\pi_{0,\nu}$ for all but finitely many places, and the summation on the right-hand-side is over all the cuspidal representations $\sigma$ such that the Satake parameter of $\sigma_{\nu}$ is mapped to $\pi_{\nu}$ at each unramified place under the canonical map
\begin{align*}
	\RG_{\RX}^{\vee}\rightarrow\RG^{\vee}.
\end{align*}

\subsection{Relative trace formulae and fundamental lemmas}\label{intro_tfandfl}
We first give an overview of the relative trace formula. Let $k$ be a global field, $\BA$ its ring of adeles, $\RG$ a split reductive group over $k$, and $\RX$ a smooth affine spherical variety over $k$. For the purpose of this paper, we may take
\begin{align*}
	\CF(\RX(\BA)):=\otimes^{\prime}_{\nu}\CF(\RX(k_{\nu})),
\end{align*}
where the tensor product is over all the places of $k$ with respect to the characteristic function $1_{\RX(\Fo_{\RF})}$ of $\RX(\Fo_{\nu})$. The $\RX$-theta series is the following functional on $\CF(\RX(\BA))$:
\begin{align*}
	\Phi\mapsto\theta(\Phi,g):=\sum_{\gamma\in\RX(k)}\Phi(\gamma g)\in\CC^{\infty}([\RG]),
\end{align*}
where the sum is absolutely convergent since $\RX(k)$ is discrete in $\RX(\BA)$ as $\RX$ is affine.

Roughly speaking, the relative trace formula is the Plancherel formula for $\CL^2([\RG])$, applied to the inner product of two theta series:
\begin{align*}
	\mathrm{RTF}_{\FX}:\CF(\RX(\BA))\otimes\CF(\RX(\BA))\ni \Phi_1\otimes\Phi_2\mapsto \int_{[\RG]}^*\theta(\Phi_1,g)\theta(\Phi_2,g)\ud g.
\end{align*}
This inner product does not converge in general and needs to be regularized. In the case that $\RX$ is smooth affine and $\RG$ is split, there is a way to regularize it as described in \cite{Sak16}.

As for the Kuznetesov trace formula, one can define $\mathrm{KTF}$ similarly, but with non-standard spaces of test measures, which we will describe in Section \ref{ssec_localktf}. 

For the purpose of this paper, let us consider the case related to rank one spherical varieties. The list of spherical varieties of rank $1$ consists of a finite number of families, classified by Ahiezer \cite{Ahi83}, also see \cite{KV06} and \cite{Was96}. Up to an action of the center, the affine homogeneous spherical varieties over an algebraically closed field in characteristic $0$ are listed in the following table :
\begin{align*}
	\begin{tabular}{ | m{1cm} | m{3.4cm}| m{1cm} | m{5.5cm}| } 
 \hline
 & $\RX$ & $\RG_{\RX}^{\vee}$ & $L_{\RX}$ \\
 \hline
   $A_n$&   $\GL_n\backslash \PGL_{n+1}$ & $\SL_2$ & $L\left(\std,\frac{n}{2}\right)^2$  \\
\hline
    $B_n$&   $\SO_{2n}\backslash\SO_{2n+1}$ &$\SL_2$  & $L\left(\std,n-\frac{1}{2}\right)L\left(\std,\frac{1}{2}\right)$ \\
\hline
   $C_n$&   $\Sp_{2n-2}\times\Sp_2\backslash\Sp_{2n}$  & $\SL_2$  &$L\left(\std,n-\frac{1}{2}\right)L\left(\std,n-\frac{3}{2}\right)$  \\
\hline
   $D_n$&   $\SO_{2n-1}\backslash\SO_{2n}$  & $\PGL_2$ & $L(\Ad,n-1)$ \\
\hline
   $B_3^{''}$ &   $\RG_2\backslash\mathrm{Spin}_7$ & $\PGL_2$  & $L(\Ad,3)$ \\
\hline
 $D_4^{''}$&   $\mathrm{Spin}_7\backslash\mathrm{Spin}_8$  & $\PGL_2$ & $L(\Ad,n-1)$ \\
\hline
 $F_4$&   $\mathrm{Spin_9}\backslash\RF_4$  & $\SL_2$ & $L\left(\std,\frac{11}{2}\right)L\left(\std,\frac{5}{2}\right)$ \\
\hline
 $G_2$&   $\SL_3\backslash\RG_2$  & $\SL_2$ & $L\left(\std,\frac{5}{2}\right)L\left(\std,\frac{1}{2}\right)$ \\
\hline
\end{tabular}.
\end{align*}
In this paper, we will only consider the split cases. For each spherical variety, $L_{\RX}$ stands for a $\displaystyle{\frac{1}{2}\BZ}$-graded representation $\displaystyle{\rho=\bigoplus_{n\in\frac{1}{2}\BZ} \rho_n }$ of its dual group $\RG_{\RX}^{\vee}$, which is called the $L$-value asssociated to $\RX$, thinking of the $L$-value
\[
\prod_{n}L(\rho_n\circ\varphi,n)
\]
attached to any Langlands parameter $\varphi$ into $\RG_{\RX}^{\vee}$. And the $L$-value here is determined in terms of the geometry of $\RX$. For each $L_{\RX}$, we define a space of non-standard test measures in Section \ref{ssec_localktf}, together with basic vectors 
\begin{align*}
	f_{L_{\RX}}:=	\left\{ \begin{array}{rcl}
f_{L\left(\std,s_1\right)L\left(\std,s_2\right)}  & \mbox{for}&
\RG_{\RX}^{\vee}=\SL_2;\\ f_{L\left(\Ad,s_0\right)}  & \mbox{for} &\RG_{\RX}^{\vee}=\PGL_2.
\end{array}\right.
\end{align*}
Here $s_1,s_2$ and $s_0$ are also listed as in the above table, depending on $\RX$. We take \[\CF_{L_{\RX}}(\RN(k_{\nu}),\psi_{\nu}\backslash\RG_{\RX}(k_{\nu}))\] to be the Schwartz sections of certain bundle over $\overline{\RN\backslash\RG_{\RX}}$, together with basic vector
\begin{align*}
	\Phi_{L_{\RX}}:=\left\{ \begin{array}{rcl}
\Phi_{L\left(\std,s_1\right)L\left(\std,s_2\right)}  & \mbox{for}&
\RG_{\RX}^{\vee}=\SL_2;\\ \Phi_{L\left(\Ad,s_0\right)}  & \mbox{for} &\RG_{\RX}^{\vee}=\PGL_2.
\end{array}\right.
\end{align*}
Here again $\Phi_{L_{\RX}}$ is defined in Section \ref{ssec_localktf}. Then we have the Poincar\'{e} series
\begin{align*}
	\CP(\Phi,g):=\sum_{\gamma\in\RN(k)\backslash\RG_{\RX}(k)}\Phi(\gamma g)
\end{align*}
for $\Phi\in\CF_{\RX}(\RN(\BA),\psi\backslash\RG_{\RX}(\BA)):=\otimes^{\prime}_{\nu}\CF_{\RX}(\RN(k_{\nu}),\psi_{\nu}\backslash\RG_{\RX}(k_{\nu}))$. Let $\CF(\RN(\BA),\psi\backslash\RG(\BA))$ be the space of standard test functions, then the Kuznetsov trace formula is the decomposition of
\begin{align*}
	\mathrm{KTF}:\CF_{\RX}(\RN(\BA),\psi\backslash\RG_{\RX}(\BA))\otimes\CF(\RN(\BA),\psi\backslash\RG_{\RX}(\BA))\ni\Phi_1\otimes\Phi_2\mapsto \int^*_{[\RG_{\RX}]}\CP(\Phi_1,g)\CP(\Phi_2,g)\ud g.
\end{align*}
Of course, the integral here also needs to be regularized.

The ultimate objective of this framework is to establish a rigorous global comparison between $\mathrm{RTF}_{\FX}$ and $\mathrm{KTF}$. Locally, the smooth transfer has already been established in \cite{Sak21}. Let $\pi:\RX\times\RX\rightarrow\RX\times\RX\sslash\RG$ be the canonical quotient map to the affine, invariant-theoretic quotient $\Spec\;\RF[\RX\times\RX]^{\RG}$, and the image $\pi_*\CS(\RX\times\RX)$ of the push-forward measure will be denoted by $\CS(\RX\times\RX/\RG)$. Y. Sakellaridis proved the following theorem in \cite{Sak21}:
\begin{thm}[Theorem 1.3.1 in \cite{Sak21}]\label{thm_TrSak}
	Let $\FC_{\RX}=(\RX\times\RX)\sslash\RG$. There is an isomorphism $\FC_{\RX}\cong\BA^1$, and the map $\RX\times\RX\rightarrow\BA^1$ is smooth away from the preimage of two points of $\BA^1$, which we will call singular. We fix the isomorphism as follows:
	\begin{itemize}
		\item When $\RG_{\RX}^{\vee}=\SL_2$, we take the set of singular points to be $\{0,1\}$, with $\RX^{\diag}\subset\RX\times\RX$ mapping to $1\in\FC_{\RX}\cong\BA^1$.
		\item When $\RG_{\RX}^{\vee}=\PGL_2$, we take the set of singular points to be $\{-2,2\}$, with $\RX^{\diag}\subset\RX\times\RX$ mapping to $2\in\FC_{\RX}\cong\BA^1$.
	\end{itemize}
	Then there is a continuous linear isomorphism:
	\[
	\CT:\CS_{L_{\RX}}^-(\RN,\psi\backslash\RG_{\RX}/\RN,\psi)\rightarrow\CS(\RX\times\RX/\RG),
	\]
	given by the following formula:
	\begin{itemize}
		\item When $\RG_{\RX}^{\vee}=\SL_2$ with $L_{\RX}=L(\std,s_1)L(\std,s_2)$, $s_1\geq s_2$,
		\[
		\CT f(\xi)=|\xi|^{s_1-\frac{1}{2}}\left(  |\cdot|^{\frac{1}{2}-s_1}\psi(\cdot)\ud \cdot   \right)\star \left(  |\cdot|^{\frac{1}{2}-s_2}\psi(\cdot)\ud \cdot   \right)\star f(\xi).
		\]
		\item When $\RG_{\RX}^{\vee}=\PGL_2$ with $L_{\RX}=L(\Ad,s_0)$,
		\[
		\CT f(\zeta)=|\zeta|^{s_0-1}\left(  |\cdot|^{1-s_0}\psi(\cdot)\ud \cdot   \right)\star f(\zeta).
		\]
		\end{itemize}
\end{thm}

\begin{rmk}\label{lem_Tr_Sak}
	Write $f(\xi)=\phi(\xi)\ud \xi$ or $\phi(\zeta)\ud\zeta$, in terms of additive usual Fourier transforms,
	\begin{itemize}
	\item When $\RG_{\RX}^{\vee}=\SL_2$ with $L_{\RX}=L(\std,s_1)L(\std,s_2)$, $s_1\geq s_2$,
		\[
		\CT  f(\xi)= \left(\BF_{\psi^{-1}}\circ\iota\circ|\cdot|^{1+s_1-s_2}\circ\BF_{\psi^{-1}} \circ\iota\circ|\cdot|^{s_2+\frac{1}{2}} \right)(\phi)(\xi)\ud\xi  .
		\]
		\item When $\RG_{\RX}^{\vee}=\PGL_2$ with $L_{\RX}=L(\Ad,s_0)$,
		\[
		\CT f(\zeta)= (\BF_{\psi^{-1}}\circ\iota\circ|\cdot|^{s_0})(\phi)(\zeta)\ud\zeta .
		\]
	\end{itemize}
    For the notations, see Section \ref{ssec_notation}.
\end{rmk}

To achieve the relative Langlands functoriality via comparing the relative trace formula for $\RX$ and the Kuznetsov trace formula, we also need the appropriate fundamental lemma, and the first main result of this paper is the following fundamental lemma, which is a combination of Theorem \ref{thm_d}, Theorem \ref{thm_a}, Theorem \ref{thm_c}, and Theorem \ref{thm_b}:
\begin{thm}\label{thm_main}
		Let $\RX$ be a rank one spherical variety and $\FC_{\RX}=(\RX\times\RX)\sslash\RG$. Fix the isomorphism $\FC_{\RX}\cong\BA^1$ as in Theorem \ref{thm_TrSak}, and under the transfer operator
	\[
	\CT:\CS_{L_{\RX}}^-(\RN,\psi\backslash\RG_{\RX}/\RN,\psi)\rightarrow\CS(\RX\times\RX/\RG)
	\]
there,
\begin{align*}
	\CT^{-1}(\BL_{\RX}^{\circ})=\frac{q^{\dim\RX}}{\#\RX(\kappa_{\RF})\zeta_{\RF}(2)\zeta_{\RF}(s_{\RX})}f_{L_{\RX}}.
\end{align*}
where \begin{align*}
	s_{\RX}=\left\{ \begin{array}{rcl}
s_1+s_2=\frac{\dim \RX}{2}    & \mbox{for} & \RG_{\RX}^{\vee}=\SL_2; \\
2s_0=\dim{\RX}-1 & \mbox{for} & \RG_{\RX}^{\vee}=\PGL_2.
\end{array}\right.
\end{align*}
Here $\BL_{\RX}^{\circ}$ is the push-forward image of the probability measure, which is equal to the characteristic function on $\RX(\Fo_{\RF})\times\RX(\Fo_{\RF})$ times an invariant measure.\end{thm}

\begin{cnj}\label{cnj_un}
	Theorem \ref{thm_main} holds for all rank one spherical varieties.
\end{cnj}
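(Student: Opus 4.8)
The conjecture asserts the fundamental lemma $\CT^{-1}(\BL_{\RX}^{\circ})=\frac{q^{\dim\RX}}{\#\RX(\kappa_{\RF})\zeta_{\RF}(2)\zeta_{\RF}(s_{\RX})}f_{L_{\RX}}$ for \emph{every} rank-one spherical variety; for those of classical type it is Theorem~\ref{thm_main}, so the remaining task is the finitely many non-classical cases (those where $\RG$ or its generic stabilizer is of exceptional type), and the plan is to repeat, case by case, the argument proving Theorem~\ref{thm_main}. The first step is to reduce the global statement to a single local unramified identity of measures on $\FC_{\RX}\cong\BA^1$, using on the Kuznetsov side that $\RN_{\psi}\backslash\RG_{\RX}\sslash\RN$ is again $\BA^1$ and that, by \cite{Sak21}, the transfer operator $\CT$ is an \emph{explicit} composition of additive Fourier transforms with multiplication by unramified characters and scalar gamma factors; hence $\CT^{-1}$ has the same shape, and the statement becomes a computation rather than a structural result.

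The first genuine computation is the density of $\BL_{\RX}^{\circ}$, which is the pushforward to $\BA^1$ of $1_{\RX(\Fo_{\RF})\times\RX(\Fo_{\RF})}$ times the invariant measure: its value at a point $c\in\Fo_{\RF}$ is a normalized count of $\Fo_{\RF}$-points in the fibre over $c$, locally constant away from the two singular values, and acquiring at each singular value a pole whose order is dictated by the boundary degeneration of $\RX$ and its colours. That exponent is exactly what pins down the shift $s_{\RX}$, and matching it against $\dim\RX$ recovers $s_1+s_2=\dim\RX/2$ in the $\SL_2$ case and $2s_0=\dim\RX-1$ in the $\PGL_2$ case. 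In the classical cases this point count reduces to orbital integrals already available or to explicit matrix manipulations; for the non-classical cases I would instead compute it directly, for instance by resolving the singularities of $\RX\times\RX\to\BA^1$ and counting points on the resolution.

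With this density in hand, one applies the explicit $\CT^{-1}$: a Fourier transform turns its piecewise density into another explicit measure, the character twists and gamma factors contribute $\zeta_{\RF}$-factors, and after reassembling one should recognise the generating series of $f_{L_{\RX}}$ from Section~\ref{ssec_nstdspaces}, namely $L(\std,s_1)L(\std,s_2)$ in the $\SL_2$ case and $L(\Ad,s_0)$ in the $\PGL_2$ case, with the shifts forced as above. The scalar then appears as the product of the volume normalization $\vol(\RX(\Fo_{\RF}))^{-1}=q^{\dim\RX}/\#\RX(\kappa_{\RF})$, the self-dual constant $\zeta_{\RF}(2)$ coming from the Fourier normalization, and the factor $\zeta_{\RF}(s_{\RX})$ coming from the gamma factor built into $\CT$; keeping these three normalizations mutually consistent with \cite{Sak21} is fiddly but routine.

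The hard part is the geometric input of the second step for the non-classical $\RX$: neither the relevant orbital integrals nor a convenient model for the singular fibres of the invariant-theoretic quotient is available off the shelf, so the point count must be done by hand, and that is where essentially all of the work lies; once the density is known, feeding it through the already-explicit transfer operator and collecting constants is mechanical. A secondary point to watch is that for some of these $\RX$ the location of the two singular values, or the precise coordinate normalizing $\FC_{\RX}\cong\BA^1$, may differ from the four classical families, so the statement may first have to be re-normalized before the comparison with $f_{L_{\RX}}$ closes cleanly.
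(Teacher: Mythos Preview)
This statement is a \emph{conjecture} in the paper, not a theorem: the paper provides no proof of it. Theorem~\ref{thm_main} is established only for the classical families $A_n,B_n,C_n,D_n$, and the conjecture records the author's expectation that the same identity holds for the remaining exceptional cases $B_3'',D_4'',F_4,G_2$ in the table of Section~\ref{ssec_trop}. So there is nothing in the paper to compare your argument against.

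Your proposal is therefore not a proof but a strategy, and you say as much yourself (``that is where essentially all of the work lies''). Two remarks on the strategy. First, it is genuinely different from the paper's method for the classical cases: the paper does \emph{not} compute $\BL_{\RX}^{\circ}$ directly and then apply $\CT^{-1}$. Instead it realizes $\RX$ as a quadric in a Hermitian $\BD$-module $\BV$ (for $\BD=\RF,\RF\times\RF,\Mat_2(\RF)$), introduces an auxiliary operator $\widetilde{\BT}$ built from the Weil representation of the dual pair $\RU(\BV)\times\RU(\BW)$, proves an identity of the form $\widetilde{\BT}\circ\CT=\mathrm{(simple)}$ (Propositions~\ref{prp_tr_weil_d}, \ref{prp_tr_weil_b}, \ref{prp_tr_weil_a}, \ref{prp_tr_weil_c}), and computes $\widetilde{\BT}(\BL_{\RX}^{\circ})$ via the regularized Weil-representation orbital integral (Lemma~\ref{lem_RegOrb}). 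This is precisely what fails to extend: the exceptional $\RG$ are not unitary groups of Hermitian modules over an $\RF$-algebra with involution, so there is no dual pair and no $\widetilde{\BT}$ of this shape. That obstruction is presumably why the statement is left as a conjecture.

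Second, your description of the density of $\BL_{\RX}^{\circ}$ as having ``a pole whose order is dictated by the boundary degeneration'' undersells the difficulty. In the classical computations the image of $\BL_{\RX}^{\circ}$ under $\CT^{-1}$ is matched not just at its asymptotic exponent but as an explicit function including the Kloosterman-germ region (see e.g.\ Propositions~\ref{prp_basic} and~\ref{prp_basic_a}); a direct approach would have to produce $\BL_{\RX}^{\circ}$ exactly, not only its leading singularities, and for the exceptional varieties no such closed form is available in the paper or in the cited literature.
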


\begin{rmk}
The author is grateful to Yi Shan for noting that Conjecture \ref{cnj_un} for $B_3^{''}$, $D_4^{''}$, and $G_2$ follows from isomorphisms	
\begin{align*}
		\RG_2 \backslash \mathrm{Spin}_7\cong \mathrm{Spin}_7\backslash\mathrm{Spin}_8\cong  \SO_7\backslash\SO_8,\;\mathrm{and}\;\SL_3\backslash\RG_2\cong\SO_6\backslash\SO_7,
	\end{align*}
effectively reducing them to the classical cases established herein. The remaining case of $F_4$ is treated in the recent preprint \cite{LW25}.
\end{rmk}

\begin{rmk}
	If we write $\tau(\RX)$ for the local Tamagawa measure of $\RX$, then we have
	\begin{align*}
		\tau(\SL_2)=\tau(\PGL_2)=1-q^{-2}=\frac{1}{\zeta_{\RF}(2)},
	\end{align*}
	hence we can rewrite the above formula as
	\begin{align*}
	\CT^{-1}(\BL_{\RX}^{\circ})=\frac{\tau(\RG_{\RX})}{\tau(\RX)\zeta_{\RF}(s_{\RX})}f_{L_{\RX}}.
	\end{align*}
\end{rmk}

However, to globally implement the relative trace formula for establishing relative functoriality and the correlation between $\RX$-periods of automorphic forms and $L$-values, the fundamental lemma for the full Hecke algebra is required to effectively isolate the spectral contributions.

 Fix a Borel subgroup $\RB\subset\RG$, the maximal torus $\RA\subset\RB$. Fix a hyperspecial vertex in the Bruhat-Tits building of $\RG$ that lies in the apartment determined by $\RA$. It determines a hyperspecial maximal open compact subgroup $\RK$ of $\RG$. Let $\RX^{\circ}\subset\RX$ be its open orbit under the $\RB$-action. Let $\RP(\RX):=\{g\in\RG\mid \RX^{\circ}g=\RX^{\circ}   \}$. Let $\RN$ be the unipotent radical of $\RB$, then the quotient $\RX^{\circ}\sslash\RN$ is a homogeneous space under the action of $\RA$, whose action factors through the faithful action of a quotient $\RA\twoheadrightarrow\RA_{\RX}$, the universal Cartan of $\RX$. It is known that $\RA_{\RX}$ is a quotient of $\RP(\RX)$ through the Levi $\RL(\RX)$:
\[
\RP(\RX)\rightarrow\RL(\RX)\rightarrow\RA_{\RX}.
\]
This gives rise to a map with finite kernel between the complex dual tori
\begin{align}\label{map_tori}
\RA_{\RX}^{\vee}\rightarrow\RA^{\vee}.
\end{align}
While this map facilitates the study of the unramified spectrum of $\RX$, it necessitates a modification as formulated in \cite{Sak18}. Let $\delta_{(\RX)}^{\frac{1}{2}}$ be the square root of the modular character, which is defined as the quotient of the right by the left Haar measure of the group $\RB\cap\RL(\RX)$, considered as an unramified character of $\RB$ and hence as an element of $\RT^{\vee}$. It is stable under the little Weyl group $\BW_{\RX}$ of $\RX$, and we consider the normalized morphism
\begin{align}\label{map_tori_crt}
\RA_{\RX}^{\vee}\rightarrow\RA^{\vee}:\widetilde{\chi}\mapsto \chi\delta_{(\RX)}^{\frac{1}{2}},
\end{align}
where $\chi$ is the image of $\widetilde{\chi}$ under (\ref{map_tori}). Consider the spherical Hecke algebra $\CH(\RG,\RK)$ of $\RG$. By restriction via the above normalized morphism between tori, we get a morphism of algebras
\[
\lambda_{\RX}:\CH(\RG,\RK)\cong \BC[\RA^{\vee}]^{\BW_{\RG}}\rightarrow\BC[\RA_{\RX}^{\vee}]^{\BW_{\RX}}\cong \CH(\RG_{\RX},\RG_{\RX}(\Fo_{\RF})).
\]

\begin{thm}\label{thm_flhk}
	Let $\RX$ be a rank one spherical variety of classical type and $\FC_{\RX}=(\RX\times\RX)\sslash\RG$. Fix the isomorphism $\FC_{\RX}\cong\BA^1$ as in Theorem \ref{thm_TrSak}, and under the transfer operator
	\[
	\CT:\CS_{L_{\RX}}^-(\RN,\psi\backslash\RG_{\RX}/\RN,\psi)\rightarrow\CS(\RX\times\RX/\RG)
	\]
there,\begin{align*}
	\CT^{-1}(f\star \BL_{\RX}^{\circ} )=\lambda_{\RX}(f)\star\frac{q^{\dim\RX}}{\#\RX(\kappa_{\RF})\zeta_{\RF}(2)\zeta_{\RF}(s_{\RX})}f_{L_{\RX}}.
\end{align*}

\end{thm}

\begin{cnj}\label{cnj_flhk}
	Theorem \ref{thm_flhk} for all rank one spherical varieties.
\end{cnj}

\begin{rmk}\label{rmk_notations}
	Before moving on, let us clarify the notations. First, note that the Hecke algebra does not act on the space of push-forward of measures. However, the Bernstein center does, where the action is on the first copy, and the Bernstein center for the component of the spectrum corresponding to the unramified principal series is isomorphic to the Hecke algebra under the natural map. Of course, there is a more directed description of this action. Let us follow the conversion used in \cite{Sak13}. Let $\Phi_1$ be a Schwartz function on $\RX$ that is fixed by $\RK$ and $h \in\CH(\RG,\RK)$, consider the right convolution, which is the push-forward under the action map
	\[
	h \star \Phi(x):=\int_{\RG}\Phi(xg^{-1})h(g)\ud g.
	\]
	Then for $\Phi_1\otimes\Phi_2$, we interpret the action of $h$ on its push-forward measure as the image of $(h\star\Phi_1)\otimes\Phi_2$. If one wants to consider the action on the second copy, the action of the Bernstein center is also the image of the push-forward of $\Phi_1\otimes ( h^{\vee}\star \Phi_2)$.
	
	When $\RX=\RH\backslash\RG$, under the various canonical identifications
	\[
	\left(\RH\backslash\RG\times\RH\backslash\RG\right)\sslash \RG= \RG\rotatebox{60}{$\sslash$}\left(\RG/\RH\times\RG/\RH\right)=\left(\RH\backslash\RG\right)\sslash\RH=\RH\rotatebox{60}{$\sslash$}(\RG/\RH),
	\]
	One can check that the Hecke algebra action is realized either as the $\star$ action on $\RH\backslash\RG$ or the following right action of $f\in\CH(\RG,\RK)$ on $\RK$-invariant functions on $\RG/\RH$ defined by
	\[
	h \star \Phi(x):=\int_{\RG}h(g)\Phi(gx)\ud x,\;\Phi\in\CF(\RG/\RH)^{\RK}.
	\]

\end{rmk}

\begin{rmk}
In view of the subsequent analysis involving Weil representations, we adopt the left $\RG$-action on $\RG/\RH$ for all cases. Consequently, to ensure consistency with (\ref{map_tori_crt}), the modular character $\delta_{(\RX)}$ is replaced by its inverse, $\delta_{(\RX)}^{-1}$, for the homomorphism between Hecke algebras.

\end{rmk}

\subsection{The organization of the paper}

We begin by reviewing the necessary background in Section \ref{sec_pre}. Section \ref{ssec_localktf} focuses on the non-standard test measures for the Kuznetsov trace formula on $\SL_2$ and $\PGL_2$, and Section \ref{ssec_weil} provides the requisite background on the Weil representations employed in our proofs. 

In Section \ref{sec_trWeil}, we described the construction of certain transfer operators via Weil representations, following \cite{GS12}. These operators are closely related to those studied in \cite{Sak21}. Once the relationship between the transfer operators arising from Weil representations and those defined in \cite{Sak21} is established, the fundamental lemma for the unit elements follows from an explicit computation. Furthermore, the extension to the full Hecke algebra is obtained by demonstrating the compatibility of the Hecke actions, as predicated on the methods of S. Rallis \cite{Ral82}.

The ensuing sections, Section \ref{sec_d}, Section \ref{sec_a}, Section \ref{sec_c}, and Section \ref{sec_b}, are devoted to the case-by-case analysis required to establish the fundamental lemma across all classical types.

\subsection{Notations}\label{ssec_notation}
\begin{itemize}
    \item For a finite set $\RS$, we will use $\#\RS$ to denote the cardinality of $\RS$, and $1_{\RS}$ the characteristic function of $\RS$.
    \item Let $\RF$ be a non-Archimedean local field of characteristic zero, $\Fo_{\RF}$ its ring of integers, and $\Fp_{\RF}$ its maximal ideal. Let $q$ be the cardinality of the residue field $\kappa_{\RF}:=\Fo_{\RF}/\Fp_{\RF}$. Assume $q$ is odd. Fix a uniformizer $\varpi_{\RF}$ and for any $x\in\RF$, denote 
    \[
    \ac(x):=\frac{x}{\varpi_{\RF}^{\val(x)}}\in\Fo_{\RF}^{\times},
    \]
    where $\val:\RF^{\times}\rightarrow\BZ$ is the normalized valuation map such that $\val(\varpi_{\RF})=1$. Fix a non-trivial unramified character $\psi$ of $\RF$.
    \item
    Let
    \[
    \zeta_{\RF}(s):=\frac{1}{1-q^{-s}}
    \]
    be the local zeta function, where $s\in\BC$.
       
       \item  Let $\BD$ be an algebra over $\RF$ of one of the following types:
    \subitem [(1)] the field $\RF$ itself;
    \subitem [(2)] the split etale quadratic extension of $\RF$, i.e., $\BD=\RF\times\RF$;
    \subitem [(3)] the matrix algebra $\Mat_2(\RF)$ of $2\times 2$ matrices over $\RF$. 
    In all the cases, we identify $\RF$ with the subfield of $\BD$ consisting of scalar multiples of the identity. Equip $\BD$ with the $\RF$-involution $\overline{\cdot}$, together with the (reduced) trace and (reduced) norm $\tau,\nu:\BD\rightarrow\RF$ as follows:
    \subitem [(1)] If $\BD=\RF$, let $\overline{x}:=x$, $\tau(x):=x$, and $\nu(x):=x$. 
    \subitem [(2)] If $\BD=\RF\times\RF$, and $x=(a,b)$, let $\overline{x}:=(b,a)$, $\tau(x):=a+b$ and $\nu(x):=ab$.
    \subitem [(3)] If $\BD=\Mat_2(\RF)$, and $x=\begin{pmatrix}
    	a & b \\ c & d
    \end{pmatrix}$, let $\overline{x}:=\begin{pmatrix}
    	d & -b \\ -c & a
    \end{pmatrix}$, $\tau(x):=a+b$ and $\nu(x):=ad-bc$. 
     \item When the meaning is clear from the context, for a variety $\RX$ over $\RF$, we denote the set of $\RF$-rational points $\RX(\RF)$ simply by $\RX$.      
    \item For a smooth variety $\RX$ over $\RF$, we denote by $\CF(\RX)$ the space of Schwartz functions on the rational points of $\RX$. It means smooth functions of compact support. Similarly, we will use $\CS(\RX)$ to denote the space of Schwartz measures. Often $\RX(\RF)$ is a homogeneous $\RG(\RF)$-space for some reductive group $\RG$, and admits an invariant positive measure $\ud x$, then we have $\CS(\RX)=\CF(\RX) \cdot \ud x$.
    \item If $\RX$ is a homogeneous $\RG$-variety that admits an invariant rationally defined non-vanishing top degree differential form, we use $\ud x$ to denote the induced local unramified measure, which satisfies 
    \[
 \tau(\RX):=  \int_{\RX(\Fo_{\RF})}1\ud x=\frac{\#\RX(\kappa_{\RF})}{q^{\dim\RX}},
    \]
    where $\#$ means the cardinality. See \cite[Theorem 2.2.5]{Wei82}. Note that in the case that $\RX=\BG_a$, $\ud x$ is also the self-dual Haar measure on $\RF$. We also equip $\BD$ with the product Haar measure. It might be a little bit confusing between $\RX=\BG_m$ and $\RX=\BG_a$. When we use $\RF$ or $\RF^{\times}$ (instead of $\BG_a$ and $\BG_m$) to denote the integral domain, $\ud x$ is always reserved to denote the additive Haar measure, i.e., for a function $f$ on $\BG_m$, we will write
    \begin{align*}
    	\int_{\BG_m}f(x)\ud x=\int_{\RF^{\times}}f(x)\frac{\ud x}{|x|}
    \end{align*}
    interchangeably. To emphasize the choice of the multiplicative Haar measure on the torus, we adopt the notation $\ud^{\times}x$ where appropriate
        \item For the notation of the Fourier transforms, let $f$ be a measure on $\BG_a$, for $s\in\BC$, we use $(|\cdot|^s\psi(\cdot))\star$ to denote the operator of multiplicative convolution by the measure $ |x|^s\psi(x)\ud x $:
 \[
(|\cdot|^s\psi(\cdot))\star f(y):=\int_{\RF}|x|^s\psi(x)f(x^{-1}y)\ud x.
	\]
If the integral does not converge, then the convolution should be understood as the Fourier transform. If $\phi$ is a function on $\RF$, we also use the usual Fourier transform on the Affine line
\[
\BF_{\psi}(\phi)(x):=\int_{\RF} \phi(y)\psi^{-1}(xy)\ud y.
\]
Then if $f$ is represented by some function $\phi(x)\ud x$, we have
\[
(|\cdot|^s\psi(\cdot))\star f(y)=(|\cdot|^s\circ\BF_{\psi^{-1}}\circ\iota\circ|\cdot|^{1-s})(\phi)(y)\ud y,
\]
where $\iota(\phi)(x):=\phi(x^{-1})$.
\end{itemize}

\subsection{Acknowledgement}

I am deeply grateful to my advisor, Professor Dihua Jiang, for his steadfast guidance and for encouraging a rigorous approach to this research. I would also like to thank Professor Yiannis Sakellaridis for introducing me to this area of study. Special thanks are due to Professor Zhilin Luo and Yi Shan for many helpful discussions and valuable feedback. I am also grateful to Professor Fan Gao for clarifying several technical points regarding covering groups and for providing relevant references. 

A portion of this work was completed during my visit to the Institute for Advanced Study in Mathematics at Zhejiang University; I am indebted to Professor Binyong Sun for the invitation, as well as for the warm hospitality and the stimulating academic environment provided during my stay.

\section{Preliminaries}\label{sec_pre}

\subsection{Test measures for the Kuznetsov formula}\label{ssec_localktf}

For the purpose of this paper, let $\RG$ be a split reductive group over $\RF$ with fixed Borel pairs $(\RB,\RA)$. Let $\RG^\vee$ be the dual group with the corresponding Borel pair $(\RB^\vee,\RA^\vee)$. Let $X^*(\RA)$ be the $\RF$-rational characters of $\RA$ and $X_*(\RA)$ be the $\RF$-rational co-characters of $\RA$. Then $X^*(\RA)=X_*(\RA^\vee)$ and $X^*(\RA^\vee)=X_*(\RA)$. We denote by $X^{*}(\RA)^+$ and $X_{*}(\RA)^+$, respectively, the dominant cones in $X^*(\RA)$ and $X_*(\RA)$, respectively, with the positivity being associated to the chosen Borel subgroup. We denote by $\Delta$ the set of simple roots and $\Delta^{\vee}$ the set of simple coroots, respectively. In this way, we obtain the 
based root datum $(X^*(\RA),\Delta,X_*(\RA),\Delta^\vee)$ of $\RG$ with respect to $(\RB,\RA)$ and the based root datum $(X_{*}(\RA),\Delta^{\vee},X^{*}(\RA),\Delta)$ of $\RG^\vee$ with respect to $(\RB^{\vee},\RA^{\vee})$. 

Fix a hyperspecial vertex in the Bruhat-Tits building of $\RG$ that lies in the apartment determined by $\RA$. It determines a hyperspecial maximal open compact subgroup $\RK$ of $\RG$. Note that $\RK_{\RA}:=\RA\cap \RK$ is a hyperspecial subgroup of $\RA$. Let $\RA^+$ be the image of $X_{*}(\RA)^+$ under the map $\mu\mapsto \mu(\varpi_{\RF})$, where $\varpi_{\RF}\in \RF$ is a fixed uniformizer. We have the Cartan decomposition 
\[
\RG=\RK \RA^+\RK.
\]
Only here we take the Haar measure on $\RG$ such that the volume of $\RK$ is $1$. The spherical Hecke algebra of $\CH(\RG,\RK)$ is defined as the 
convolution algebra of compactly supported $\RK$-bi-invariant $\BC$-valued functions on $\RG$. The same definition applies to $\RA$ 
with respect to $\RK_{\RA}$ and the Weyl group $\BW_{\RG}$ acts on $\CH(\RA,\RK_{\RA})$.

The Satake isomorphism, as is defined in \cite{Sat63} yields an isomorphism of algebras:
\begin{align}\label{Sat}
   \CH(\RG,\RK)\rightarrow \CH(\RA,\RK_{\RA})^{\BW_{\RG}},\quad h \mapsto \left( t \mapsto \delta_{\RB}^{\frac{1}{2}}(t)\int_{\RN} h(tn)\ud n \right),
\end{align}
where $\delta_{\RB}$ is the modular character on $\RB$ and $\ud n$ is the Haar measure on $\RN$ such that the volume of $\RN\cap \RK$ 
is $1$. 

Note that $X_{*}(\RA)\cong \RA/\RK_{\RA}\colon\mu\mapsto \mu(\varpi_{\RF})$, which implies that 
\[
\CH(\RA,\RK_{\RA})^{\BW_{\RG}}\cong \BC[X_{*}(\RA)]^{\BW_{\RG}}=\BC[X^{*}(\RA^{\vee})]^{\BW_{\RG}}.
\]
Write \[\Sat:\CH(\RG,\RK)\rightarrow\CH(\RA,\RK_{\RA})^{\BW_{\RG}}\rightarrow\BC[X^*(\RA^{\vee})]^{\BW_{\RG}}.\]

It is known that for any finite-dimensional algebraic representation $\rho$ of $\RG^{\vee}(\BC)$, 
we must have $\tr\; \rho\in \BC[X^{*}(\RA^{\vee})]^{\BW_{\RG}}$, and 
there exists a canonical one-to-one correspondence between the set $\Pi_{\RF}(\RG,\RK)$ of equivalent classes of irreducible admissible spherical representations of $\RG$ with respect to $\RK$ and the set of conjugacy classes $\Fc$ of $\RA^{\vee}(\BC)/\BW_{\RG}$.

More precisely, let $\Fs\in X^*(\RA)\otimes_{\BZ}\BC/\BW_{\RG}.\left( \frac{2\pi i }{\log q} X^*(\RA)   \right)$, where following \cite{Sat63}, we use $\BW_{\RG}.\left( \frac{2\pi i }{\log q} X^*(\RA)   \right)$ to denote the group of affine transformations of $X^*(\RA)\otimes_{\BZ}\BC$ generated by the Weyl group $\BW_{\RG}$ and by the group of translations defined by $\frac{2\pi i}{\log q}X^*(\RA)$. On the one hand, denote $\chi(\Fs)$ to be the corresponding unramified character of $\RA$, and let $\RI(\chi(\Fs))$ be the normalized induced representation, whose underlying space consists of locally constant functions $\Phi$ on $\RG$ with compact support module $\RB$, satisfying
\[
\Phi(gb)=\delta_{\RB}^{-\frac{1}{2}}(b)\chi(\Fs)(b)\Phi(g),\;\forall g\in\RG,b\in\RB,
\]
with the action of $\RG$ given by $g.\Phi(x):=\Phi(g^{-1}x)$. $\RI(\chi(\Fs))$ admits a unique unramified subquotient, which we denote $\pi(\Fs)$.

On the other hand, view $\Fs\in X_*(\RA^{\vee})\otimes_{\BC}\BC/\BW_{\RG}.\left( \frac{2\pi i }{\log q} X_*(\RA^{\vee})   \right)$ and let $\Fc(\Fs)=q^{\Fs}\in\RA^{\vee}(\BC)/\BW$ be the corresponding conjugacy class. Then we have
\[
\tr_{\;\pi(\Fs)}(h)=\Sat(h)(\Fc(\Fs)) 
\]
for any $h\in\CH(\RG,\RK)$.

\begin{exm}
	Let $\RG=\SL_2$, $\RB:=\left\{ \begin{pmatrix}
		* & * \\ & * 
	\end{pmatrix}  \right\}$  and $\RA:=\left\{ \begin{pmatrix}
		* & \\ & *
	\end{pmatrix}\right\}$. Let $s\in X^*(\RA)\otimes_{\BZ}\BC= X_*(\RA^{\vee})\otimes_{\BZ}\BC\cong \BC$, which gives the unramified character $\chi(s)$ of $\RA$
	\[
   \RA\rightarrow\BC^{\times}:\begin{pmatrix}
		t & \\ & t^{-1}
	\end{pmatrix}\mapsto |t|^s.
	\]
	and the normalized induced representation $\RI(\chi(s))$. Let $\pi(s)$ be the unique irreducible unramified subquotient of $\RI(\chi(s))$. On the other hand, $\Fc(s)$ is represented by $\left( \begin{pmatrix}
	q^s & \\ & 1
\end{pmatrix}    \right)$, in the maximal torus of $\PGL_2$. The Satake transform states that we have an isomorphism
\[
\Sat:\CH(\SL_2,\SL_2(\Fo_{\RF}))\rightarrow\BC[\RA^{\vee}]^{\BZ/2\BZ }=\BC[x,x^{-1}]^{\BZ/2\BZ}
\]
such that for any $h\in\CH(\SL_2,\SL_2(\Fo_{\RF}))$, we have
\[
\tr_{\pi(s)}(h)=\Sat(h)(q^s).
\]
Similarly for $\RG=\PGL_2$.
\end{exm}

Let $\rho$ be an algebraic representation of $\RG^{\vee}(\BC)$. Consider the $L$-value
\[
L(\rho,s):=\frac{1}{\det(1-q^{-s}\rho)}\in\BC(\RG^{\vee})^{\RG^{\vee}},
\]
where $\BC(\RG^{\vee})^{\RG^{\vee}}$ is the set of invariant rational functions on $\RG^{\vee}$ under the adjoint action.
When $\displaystyle{\rho=\bigoplus_{i=1}^m\rho_i}$ is reducible, we can allow $s=(s_i)_{i=1}^m$ to be a collection of complex numbers, and define 
\[
L(\rho,s):=\prod_{i=1}^mL(\rho_i,s_i).
\]
We proceed with a single $s$, and the arguments for general $s$ are similar. The $L$-factor can be written as a formal Taylor series in $q^{-s}$:
\[
L(\rho,s)=\sum_{n=0}^{\infty}q^{-ns}\tr\;(\Sym^n\rho).
\]
The generating series of the local $L$-function is the Whittaker function
\[
\Phi_{L(\rho,s)}:=\sum_{n=0}^{\infty}q^{-ns}\Sat^{-1}(\Sym^n\rho)\star 1_{\RN\backslash\RG(\Fo_{\RF})}.
\]

When $\RG=\PGL_2$ or $\SL_2$, let $\RB$ be the Borel subgroup consisting of upper triangular matrices, $\RA$ the maximal torus consisting of diagonal matrices. Let $\RN$ be the unipotent radical of $\RB$ consisting of strictly upper triangular matrices. Choose $\Bw:=\begin{pmatrix}
	0 & -1 \\ 1 & 0
\end{pmatrix}$ to be the non-trivial Weyl element. Let $\FC$ be the quotient $\RN\backslash\RG\sslash\RN$, and $\FC^{\circ}$ its open subset corresponding to the open Bruhat cell. We embed $\RA$ into it by $t\mapsto [\Bw t]$, the class of the element $\Bw t$. Let
\[
\CS(\RN,{\psi}\backslash\RG/\RN,\psi)
\]
be the space of measures on $\RA$ of the form
\begin{align}\label{eq_twpufor}
f(t)= \frac{ \delta_{\RB}(t)}{1-q^{-2}} \int_{\RN\times\RN}\Phi(n_1\Bw tn_2)\psi^{-1}(n_1n_2)\ud n_1\ud n_2\cdot\ud t,
\end{align}
where recall $\ud n$ is such that $\vol(\RN(\Fo_{\RF}),\ud n)=1$, $\ud t$ is such that $\vol(\RA(\Fo_{\RF}),\ud t)=1-q^{-1}$, and $\Phi$ is a Schwartz function on $\RG$. We also fix the coordinates of $\RA$ by
\[
\xi(t)=e^{\alpha}(t),\;\mathrm{if}\;\RG=\PGL_2,
\]
and
\[
\zeta(t)=e^{\frac{\alpha}{2}}(t),\;\mathrm{if}\;\RG=\SL_2,
\]
where $\alpha$ is the positive root, and we use the exponential notation to denote the corresponding character. Then $\FC^{\circ}$ can be identified with $\BG_m$ or $\BG_m^2$ with sections
\[
\xi\mapsto \begin{pmatrix}
	 0 & -1 \\ \xi & 0
\end{pmatrix}\;\mathrm{if}\;\RG=\PGL_2,
\]
and
\[
\zeta\mapsto \begin{pmatrix}
	 0 & -\zeta^{-1} \\ \zeta & 0
\end{pmatrix}\;\mathrm{if}\;\RG=\SL_2.
\]
The elements of $\CS(\RN,\psi\backslash\RG/\RN,\psi)$, viewed as measures on $\RF$, are bounded, supported away from zero, while in a neighborhood of $0\in\BA^1$, they have a singularity which is called the Kloosterman germ, because they are smooth multiples of the measures
\[
\xi\mapsto \int_{|u|^2=|\xi|} \psi^{-1}\left(  \frac{  u}{\xi}+u^{-1}    \right)\ud u\cdot\ud^{\times}\xi,
\]
respectively,
\[
\zeta\mapsto \int_{|u|\in \pm 1+\Fp_{\RF} } \psi^{-1}\left(\frac{ u+u^{-1}}{\zeta}   \right)\ud u\cdot  |\zeta|  \ud^{\times}\zeta.
\]
For more details, we refer the reader to \cite{Sak13, Sak21, Sak22}.

Now let us consider the twisted-push forward of the non-standard measure $\Phi_{L(\rho,s)}$:
\[
f_{L(\rho,s)}(t):=\frac{\delta_{\RB}(t)}{1-q^{-2}}\int_{\RN}^*\Phi_{L(s,\rho)}(\Bw tn)\psi^{-1}(n)\ud n\cdot \ud t.
\]
 We call $f_{L(\rho,s)}$ the basic vector in the Whittaker space of the $L$-value $L(\rho,s)$.

Specialize to $\rho$ being a sum of copies of the standard representations of $\RG^{\vee}=\SL_2$ when $\RG=\PGL_2$, and the sum of copies of adjoint representations of $\RG^{\vee}=\PGL_2$ when $\RG=\SL_2$, and write 
\[
\rho=\bigoplus_{i=1}^m\rho_i,
\]
correspondingly write $s=(s_i)_{i=1}^m$. We let $\CS^-_{L(\rho,s)}(\RN,\psi\backslash\RG/\RN,\psi)$ be the space of measures on $\FC$ which on any compact set coincide with elements of $\CS(\RN,\psi\backslash\RG/\RN,\psi)$, while in a neighborhood of infinity, in the above coordinates, when all of the $s_i$'s are distinct, are of the form
\[
\sum_j C_j(\zeta^{-1})|\zeta|^{1-s_i}\ud^{\times}\zeta,
\]
in the case $\RG=\SL_2$, and 
\[
\sum_j C_j(\xi^{-1})|\xi|^{\frac{1}{2}-s_i}\ud^{\times}\xi,
\]
in the case $\RG=\PGL_2$, where $C_j$'s are locally constant functions in a neighborhood of zero. When two or more of the $s_i$'s coincide with some complex number $s_0$, the corresponding summands at infinity will be replaced by
\[
\left(C_1(\zeta^{-1})+C_2(\zeta^{-1})\log|\zeta|+C_3(\zeta^{-1})\log^2|\zeta|+\cdots\right)\cdot|\zeta|^{1-s_0}\ud^{\times}\zeta,
\]
and similarly when $\RG=\PGL_2$. According to \cite{Sak22}, $f_{L(\rho,s)}\in\CS^-_{L(\rho,s)}(\RN,\psi\backslash\RG/\RN,\psi)$. Moreover, one can compute explicitly that

\begin{prp}\label{prp_basic}
	Let $\RG=\PGL_2$ and $\rho=\std$ be the standard representation of $\SL_2(\BC)$. For the $L$-value $L\left(\std,s_1\right)L\left(\std,s_2\right)$, we have the following explicit expressions of the basic vector:
		\begin{align*}
		\Phi_{L(\std,s_1)(\std,s_2)}=\left\{ \begin{array}{rcl}
 	 \sum_{m=0}^{\infty}\frac{q^{-m(s+\frac{1}{2})}}{1-q^{-2s}}(m+1) 1_{x_m\RK} & \mbox{if} &  s_1=s_2=s;\\ \sum_{m=0}^{\infty}\left(\frac{q^{-m(s_1+\frac{1}{2})}-q^{-m(s_2+\frac{1}{2})}q^{s_1-s_2}}{(1-q^{s_1-s_2})(1-q^{-(s_1+s_2)})} \right)1_{x_m\RK} & \mbox{if} & s_1\neq s_2.
\end{array}\right.,  
	\end{align*}
where 
	\[
	1_{x_m\RK}\left(n\begin{pmatrix}
		\varpi_{\RF}^{\ell} & 0 \\ 0 & 1
	\end{pmatrix} k \right)=\left\{ \begin{array}{rcl} 0 & \mbox{if} & \ell\neq m; \\  \psi(n) & \mbox{if} & \ell=m. \end{array}\right. 
	\]
	Here $n\in\RN$ and $k\in\PGL_2(\Fo_{\RF})$. And
	\begin{itemize}
		\item when $s_1=s_2=s$, 
			\begin{align*}
		&\frac{f_{L\left(\std,s_1\right)L\left(\std,s_2\right)}\left( \begin{pmatrix}
			\xi & 0 \\ 0 & 1
		\end{pmatrix}  \right)}{|\xi|\ud^{\times}\xi}\\
		&=(1-q^{-2})^{-1}(1-q^{-2s})^{-1}\left\{ \begin{array}{rcl}
|\xi|^{-s-\frac{1}{2}}\left( (1-q^{-2s-1)})\log_q|\xi|+(1-3q^{-2s-1})    \right) & \mbox{for} & |\xi|\geq 1; \\
 -2q^{-s-\frac{1}{2}} & \mbox{for} & |\xi|=q^{-1}; \\
|\xi|^{-1}\int_{|x|^2=|\xi|}  \psi^{-1}\left( \frac{x}{\xi }+ x^{-1} \right)\ud x & \mbox{for} & |\xi|<q^{-1} .
\end{array}\right.
	\end{align*}		where $\ud^{\times}\xi$ is the Haar measure such that $\vol(\RT(\Fo_{\RF}),\ud^{\times}\xi)=1-q^{-1}$.
    \item When $s_1\neq s_2$, 
   \begin{align*}
		&\frac{f_{L\left(\std,s_1\right)L\left(\std,s_2\right)}\left( \begin{pmatrix}
			\xi & 0 \\ 0 & 1
		\end{pmatrix}  \right)}{|\xi|\ud^{\times}\xi}=(1-q^{-2})^{-1}(1-q^{s_1-s_2})^{-1}\left(1-q^{-(s_1+s_2)}\right)^{-1}\\
		&\cdot \left\{ \begin{array}{rcl}
|\xi|^{-s_1-\frac{1}{2}} (1-q^{-2s_1-1)})-|\xi|^{-s_2-\frac{1}{2}}q^{s_1-s_2}(1-q^{-2s_2-1})   & \mbox{for} & |\xi|\geq 1; \\
-q^{-(s_1+\frac{1}{2})}+q^{-(s_2+\frac{1}{2})}q^{s_1-s_2} & \mbox{for} & |\xi|=q^{-1}; \\
(1-q^{s_1-s_2})   |\xi|^{-1}\int_{|x|^2=|\xi|}  \psi^{-1}\left( \frac{x}{\xi }+ x^{-1} \right)\ud x & \mbox{for} & |\xi|<q^{-1} .
\end{array}\right.
	\end{align*}	
	\end{itemize}
	\end{prp}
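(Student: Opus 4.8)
The plan is to reduce the Proposition to two explicit local computations: (i) determining the basic Whittaker function $\Phi_{L(\std,s_1)L(\std,s_2)}$ on $\RN\backslash\RG/\RK$, and (ii) carrying out the one-dimensional twisted push-forward integral that defines $f_{L(\std,s_1)L(\std,s_2)}$ on $\FC^{\circ}\cong\BG_m$. Throughout I work with $\RG=\PGL_2$ and use that $\Phi_{L(\rho,\mathbf s)}$ is $(\RN,\psi)$-equivariant on the left and $\RK$-invariant on the right, so by the Iwasawa decomposition $\RG=\RN\RT\RK$ it is determined by the values $\Phi_L\!\left(\begin{pmatrix}\varpi_{\RF}^k&0\\0&1\end{pmatrix}\right)$, $k\in\BZ$.

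\textbf{Step 1: the basic vector on the torus.} First I would expand $L(\std,s_1)L(\std,s_2)$, viewed as a $\BW$-invariant function on $\RG^{\vee}=\SL_2(\BC)$, into characters of irreducibles. Writing $\chi_n$ for the character of $\Sym^n\std$ and using $\sum_{n\geq0}q^{-ns}\chi_n=\bigl((1-q^{-s}\alpha)(1-q^{-s}\alpha^{-1})\bigr)^{-1}$, a short partial-fraction computation gives $L(\std,s_1)L(\std,s_2)=\sum_{k\geq0}a_k\chi_k$ with $a_k=\dfrac{q^{-(k+1)s_1}-q^{-(k+1)s_2}}{(1-q^{-s_1-s_2})(q^{-s_1}-q^{-s_2})}$, which degenerates to $a_k=\dfrac{(k+1)q^{-ks}}{1-q^{-2s}}$ when $s_1=s_2=s$ (this confluence, from the double pole, is the eventual source of the $\log_q$). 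By the definition of $\Phi_{L(\rho,\mathbf s)}$ together with the Casselman--Shalika/Shintani formula for unramified Whittaker functions (essentially \cite{Sak22}), this expansion translates into $\Phi_{L(\std,s_1)L(\std,s_2)}\!\left(\begin{pmatrix}\varpi_{\RF}^k&0\\0&1\end{pmatrix}\right)=q^{-k/2}a_k$ for $k\geq0$ and $0$ for $k<0$; equivalently $\Phi_L\!\left(\begin{pmatrix}\eta&0\\0&1\end{pmatrix}\right)=|\eta|^{1/2}a_{\val(\eta)}\mathbf{1}_{\val(\eta)\geq0}$.

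\textbf{Step 2: the twisted push-forward.} From the definition and $\delta(\mathrm{diag}(\xi,1))=|\xi|$, one has $\dfrac{f_{L}(\mathrm{diag}(\xi,1))}{|\xi|\ud^{\times}\xi}=(1-q^{-2})^{-1}\displaystyle\int_{\RF}\Phi_L\!\left(\begin{pmatrix}0&-1\\\xi&\xi u\end{pmatrix}\right)\psi^{-1}(u)\ud u$. The key is the Iwasawa decomposition of $\begin{pmatrix}0&-1\\\xi&\xi u\end{pmatrix}$, which I would compute separately on $u\in\Fo_{\RF}$ and $u\notin\Fo_{\RF}$: for $u\in\Fo_{\RF}$ the torus part is $\mathrm{diag}(\xi^{-1},1)$ with trivial unipotent part, while for $u\notin\Fo_{\RF}$ the torus part is $\mathrm{diag}((\xi u^2)^{-1},1)$ with unipotent part $\begin{pmatrix}1&-(\xi u)^{-1}\\0&1\end{pmatrix}$. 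Hence the $u\in\Fo_{\RF}$ range contributes $\Phi_L(\mathrm{diag}(\xi^{-1},1))$, nonzero exactly for $|\xi|\geq1$, which produces the main term $|\xi|^{-s-\frac12}(\cdots)$ (the $\log_q|\xi|$ entering through the linear-in-$\val(\xi)$ factor in $a_{-\val(\xi)}$). For the range $u\notin\Fo_{\RF}$, the substitution $v=u^{-1}$ turns the contribution into $\int_{v\in\Fp_{\RF}}\psi(-v/\xi)\psi(-v^{-1})\,\Phi_L(\mathrm{diag}(v^2\xi^{-1},1))\,|v|^{-2}\ud v$, which I would evaluate shell by shell in $\val(v)=j$: when $|\xi|\geq q^{-1}$ the phase $\psi(-v/\xi)$ is trivial and $\int_{\val(v)=j}\psi(-v^{-1})\ud v$ equals $-q^{-2}$ for $j=1$ and $0$ for $j\geq2$, so only $j=1$ survives; when $|\xi|<q^{-1}$ the balanced shell $\val(v)=\tfrac12\val(\xi)$ survives and, after the change of variables $x=u^{-1}$, reproduces exactly the Kloosterman germ $|\xi|^{-1}\int_{|x|^2=|\xi|}\psi^{-1}(x/\xi+x^{-1})\ud x$. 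Adding the two ranges and simplifying (using $\tfrac{1}{q^{-s_1}-q^{-s_2}}=\tfrac{q^{s_1}}{1-q^{s_1-s_2}}$ and $\log_q|\xi|=-\val(\xi)$) yields the three-case formulas in both the $s_1=s_2$ and $s_1\neq s_2$ regimes, with the $s_1=s_2$ case obtained by passing to the confluent limit throughout.

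\textbf{Main obstacle.} The delicate point is the bookkeeping in Step 2 for small $|\xi|$: one must check that all shells with $\val(v)>\tfrac12\val(\xi)$ cancel by oscillation (a standard but careful stationary-phase/Kloosterman argument) and that the one surviving boundary shell is precisely the Kloosterman germ with the correct constant; the intermediate case $|\xi|=q^{-1}$, where the germ is supported on the empty set but the integral is nonzero, has to be handled by hand. A secondary point requiring care is pinning down the exact normalization in Step 1, namely $\Phi_L(\mathrm{diag}(\varpi_{\RF}^k,1))=q^{-k/2}a_k$ with support in $k\geq0$, which I would extract from the definition of the basic vector and Casselman--Shalika rather than recompute from first principles.
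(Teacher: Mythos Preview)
Your proposal is correct and follows essentially the same route as the paper. The only cosmetic differences are that (i) in Step~1 you obtain the coefficients $a_k$ via a partial-fraction expansion, while the paper reaches the same formula by invoking the Clebsch--Gordan decomposition $\Sym^{m_1}\std\otimes\Sym^{m_2}\std=\bigoplus_\ell\Sym^{m_1+m_2-2\ell}\std$, and (ii) in Step~2 you carry out the Iwasawa decomposition and shell-by-shell integration by hand, whereas the paper simply imports the result of that same computation from \cite[Section~6.3]{Sak13}.
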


\begin{proof}
	Let us start with the series
	\begin{align*}
		L\left(\std,s_1\right)L\left(\std,s_2\right)&=\frac{1}{\det(1-q^{- s_1}\std)}\cdot \frac{1}{\det(1-q^{- s_2}\std)}\\
		&=\left(\sum_{m=0}^{\infty} q^{-m s_1 }\tr\;(\Sym^m\std)  \right)\left(\sum_{m=0}^{\infty} q^{-m s_1 }\tr\;(\Sym^m\std)  \right)\\
		&=\sum_{m_1,m_2=0}^{\infty}q^{-m_1s_1-m_2s_2}\tr\;(\Sym^{m_1}\std\otimes\Sym^{m_2}\std).
	\end{align*}
	According to \cite[Section 11.2]{FH91},
	\[
	\Sym^{m_1}\std\otimes\Sym^{m_2}\std=\sum_{\ell=0}^{\min\{m_1,m_2\}}\Sym^{m_1+m_2-\ell}\std.
	\]
	Therefore, we have
	\begin{align*}
		L\left(\std,s_1\right)L\left(\std,s_2\right)&=\frac{1}{\det(1-q^{- s_1}\std)}\cdot \frac{1}{\det(1-q^{- s_2}\std)}\\
		&=\sum_{m_1,m_2=0}^{\infty}q^{-m_1s_1-m_2s_2}\tr\;\left(\sum_{\ell=0}^{\min\{m_1,m_2\}}\Sym^{m_1+m_2-\ell}\std\right)\\
		&=\sum_{m=0}^{\infty}  \left( \sum_{\ell=0}^{\infty}q^{-(m+2\ell)s_1}\sum_{m_2=\ell}^{m+\ell}q^{m_2(s_1-s_2)}   \right)       \tr\;(    \Sym^{m}\std)\\
		&=\left\{ \begin{array}{rcl}
 	 \sum_{m=0}^{\infty}\frac{q^{-ms}}{1-q^{-2s}}(m+1)\tr\;(\Sym^m\std) & \mbox{for} &  s_1=s_2=s;\\ \sum_{m=0}^{\infty}\left(\frac{q^{-ms_1}-q^{-ms_2}q^{s_1-s_2}}{(1-q^{s_1-s_2})(1-q^{-(s_1+s_2)})} \right)\tr\;(\Sym^m\std) & \mbox{for} & s_1\neq s_2.
\end{array}\right.  
	\end{align*}
	The Casselman-Shalika formula tells us
	\[
	\Sat^{-1}(\Sym^m\std)\star 1_{\RN\backslash\RG(\Fo_{\RF})}=q^{-\frac{m}{2}}1_{x_m\RK}.
	\]
 Therefore,
	\begin{align*}
		\Phi_{L(\std,s_1)(\std,s_2)}=\left\{ \begin{array}{rcl}
 	 \sum_{m=0}^{\infty}\frac{q^{-m(s+\frac{1}{2})}}{1-q^{-2s}}(m+1) 1_{x_m\RK} & \mbox{if} &  s_1=s_2=s;\\ \sum_{m=0}^{\infty}\left(\frac{q^{-m(s_1+\frac{1}{2})}-q^{-m(s_2+\frac{1}{2})}q^{s_1-s_2}}{(1-q^{s_1-s_2})(1-q^{-(s_1+s_2)})} \right)1_{x_m\RK} & \mbox{if} & s_1\neq s_2.
\end{array}\right.  
	\end{align*}
According to \cite[Section 6.3]{Sak13}, we have that
\begin{align*}
	\int_{\RN}1_{x_m\RK}\left(\Bw \begin{pmatrix}
		\xi & 0 \\ 0 & 1
	\end{pmatrix}  n  \right)&\psi^{-1}(n)\ud n
	\\ &= \left\{ \begin{array}{rcl}
 	1 & \mbox{if} & |\xi|=q^m;   \\ -1 & \mbox{if} & |\xi|=q^{m-2},m\geq 1; \\
 	|\xi|^{-1}\int_{|x|^2=|\xi|}\psi^{-1}\left(   \frac{x}{\xi}+x^{-1}\right)\ud x & \mbox{if} & |\xi|<1,m=0.
\end{array}\right.  
\end{align*}
	Combine all these, in the case that $s_1=s_2=s$, we have
		\begin{align*}
		&\frac{f_{L\left(\std,s_1\right)L\left(\std,s_2\right)}\left( \begin{pmatrix}
			\xi & 0 \\ 0 & 1
		\end{pmatrix}  \right)}{|\xi|\ud^{\times}\xi}\\
		&=(1-q^{-2})^{-1}(1-q^{-2s})^{-1}\left\{ \begin{array}{rcl}
|\xi|^{-s-\frac{1}{2}}\left( (1-q^{-2s-1)})\log_q|\xi|+(1-3q^{-2s-1})    \right) & \mbox{for} & |\xi|\geq 1; \\
 -2q^{-s-\frac{1}{2}} & \mbox{for} & |\xi|=q^{-1}; \\
|\xi|^{-1}\int_{|x|^2=|\xi|}  \psi^{-1}\left( \frac{x}{\xi }+ x^{-1} \right)\ud x & \mbox{for} & |\xi|<q^{-1} .
\end{array}\right.
	\end{align*}	
	And in the case that $s_1\neq s_2$, we have
		\begin{align*}
		&\frac{f_{L\left(\std,s_1\right)L\left(\std,s_2\right)}\left( \begin{pmatrix}
			\xi & 0 \\ 0 & 1
		\end{pmatrix}  \right)}{|\xi|\ud^{\times}\xi}=(1-q^{-2})^{-1}(1-q^{s_1-s_2})^{-1}\left(1-q^{-(s_1+s_2)}\right)^{-1}\\
		&\cdot \left\{ \begin{array}{rcl}
|\xi|^{-s_1-\frac{1}{2}} (1-q^{-2s_1-1)})-|\xi|^{-s_2-\frac{1}{2}}q^{s_1-s_2}(1-q^{-2s_2-1})   & \mbox{for} & |\xi|\geq 1; \\
-q^{-(s_1+\frac{1}{2})}+q^{-(s_2+\frac{1}{2})}q^{s_1-s_2} & \mbox{for} & |\xi|=q^{-1}; \\
(1-q^{s_1-s_2})   |\xi|^{-1}\int_{|x|^2=|\xi|}  \psi^{-1}\left( \frac{x}{\xi }+ x^{-1} \right)\ud x & \mbox{for} & |\xi|<q^{-1} .
\end{array}\right.
	\end{align*}	
	\end{proof}

\begin{prp}
	Let $\RG=\SL_2$ and $\rho=\Ad$ be the adjoint representation of $\PGL_2(\BC)$. For the $L$-value $L(\Ad,s)$, we have the following explicit expressions of the basic vector:
	\begin{align*}
		\Phi_{L(\Ad,s)}=\sum_{m=0}^{\infty}\frac{q^{-m(s+1)}}{1-q^{-2s}}1_{x_m\RK},
	\end{align*}
where \[
	1_{x_m\RK}\left(n\begin{pmatrix}
		\varpi_{\RF}^{\ell} & 0 \\ 0 & \varpi_{\RF}^{-\ell}
	\end{pmatrix} k \right)=\left\{ \begin{array}{rcl} 0 & \mbox{if} & \ell\neq m; \\  \psi(n) & \mbox{if} & \ell=m. \end{array}\right. 
	\]
	Here $n\in\RN$ and $k\in\RK$. And
\begin{align*}
	&\frac{f_{L(\Ad,s)}\left(\begin{pmatrix}
		\zeta & 0 \\ 0 & \zeta^{-1}
	\end{pmatrix} \right)}{|\zeta|^2\ud^{\times}\zeta}\\
	&=(1-q^{-2})^{-1}(1-q^{-2s})^{-1}\left\{ \begin{array}{rcl}
|\zeta|^{-s-1}(1-q^{-s-1}) & \mbox{for} & |\zeta|\geq 1; \\
	|\zeta|^{-1}\int_{x\in \pm 1+\Fp_{\RF}}\psi^{-1}\left(   \frac{x+x^{-1}}{\xi}\right)\ud x & \mbox{for} & |\zeta|\leq q^{-1} .
\end{array}\right.
	\end{align*}
\end{prp}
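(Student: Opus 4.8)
The plan is to follow the proof of Proposition~\ref{prp_basic}, with the plethysm of $\Sym^m$ of the adjoint representation in place of the Clebsch--Gordan rule. First I would write $L(\Ad,s)=\sum_{m\ge0}q^{-ms}\tr(\Sym^m\Ad)$. Since $\Ad$ pulls back along $\SL_2(\BC)\to\PGL_2(\BC)$ to $\Sym^2$ of the standard representation, the classical $\SL_2$-plethysm $\Sym^m(\Sym^2)=\bigoplus_{j=0}^{\lfloor m/2\rfloor}\Sym^{2(m-2j)}$ (see \cite{FH91}) gives $\Sym^m(\Ad)=\bigoplus_{j=0}^{\lfloor m/2\rfloor}W_{m-2j}$, where $W_\ell$ denotes the $(2\ell+1)$-dimensional irreducible representation of $\PGL_2(\BC)$ (so $W_0=\mathbf 1$ and $W_1=\Ad$). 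Re-summing over the multiplicity index $j$ collapses the double series to
\[
L(\Ad,s)=\frac{1}{1-q^{-2s}}\sum_{\ell\ge0}q^{-\ell s}\,\tr W_\ell ,
\]
which is where the factor $(1-q^{-2s})^{-1}$ comes from; one can also check this directly by the partial-fraction identity $\bigl((1-q^{-s}z)(1-q^{-s})(1-q^{-s}z^{-1})\bigr)^{-1}=(1-q^{-2s})^{-1}\sum_{\ell\ge0}q^{-\ell s}\sum_{j=-\ell}^{\ell}z^{j}$.

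Next I would compute the generating Whittaker function. The Casselman--Shalika formula, in the form used for $\PGL_2$ in the proof of Proposition~\ref{prp_basic}, gives for $\RG=\SL_2$ that $\Sat^{-1}(\tr W_\ell)\star 1_{\RN\backslash\RG(\Fo_\RF)}=q^{-\ell}\,1_{z_\ell\RK}$, where $z_\ell:=\diag(\varpi_\RF^{\ell},\varpi_\RF^{-\ell})$, the scalar $q^{-\ell}=\delta^{1/2}(z_\ell)$, and $1_{z_\ell\RK}$ is the $(\RN,\psi)$-equivariant function supported on $\RN z_\ell\RK$ with $1_{z_\ell\RK}(z_\ell)=1$. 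Combined with the previous display this yields
\[
\Phi_{L(\Ad,s)}=\frac{1}{1-q^{-2s}}\sum_{\ell\ge0}q^{-\ell(s+1)}\,1_{z_\ell\RK},
\]
and it then remains to apply the twisted push-forward $f_{L(\Ad,s)}(t)=(1-q^{-2})^{-1}\int_\RN\Phi_{L(\Ad,s)}(\Bw t n)\psi^{-1}(n)\,\ud n\cdot\delta(t)\,\ud^\times t$ term by term and to evaluate at $t=\diag(\zeta,\zeta^{-1})$, where $\delta(t)=|\zeta|^2$.

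The last ingredient is the $\SL_2$-analogue of the local Whittaker--Kloosterman integral quoted from \cite[Section~6.3]{Sak13} in the proof of Proposition~\ref{prp_basic}: one shows that $\int_\RN 1_{z_\ell\RK}(\Bw\,\diag(\zeta,\zeta^{-1})\,n)\psi^{-1}(n)\,\ud n$ equals $1$ when $|\zeta|=q^{\ell}$, equals $-1$ when $|\zeta|=q^{\ell-1}$ and $\ell\ge1$, equals the Kloosterman germ $|\zeta|^{-1}\int_{u\in\pm1+\Fp_\RF}\psi^{-1}\bigl((u+u^{-1})/\zeta\bigr)\,\ud u$ when $\ell=0$ and $|\zeta|<1$, and vanishes otherwise; this comes from putting $\Bw\,\diag(\zeta,\zeta^{-1})\,n$ in Iwasawa form and, in the germ case, analysing the resulting oscillatory $\Fp_\RF$-adic integral by stationary phase, localising at the critical points $u=\pm1$ (distinct since $q$ is odd). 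Granting it, for $|\zeta|=q^{R}$ with $R\ge0$ only the terms $\ell=R$ and $\ell=R+1$ survive and sum to $q^{-R(s+1)}-q^{-(R+1)(s+1)}=|\zeta|^{-s-1}(1-q^{-s-1})$, whereas for $|\zeta|\le q^{-1}$ only $\ell=0$ survives and contributes the germ term; multiplying through by $(1-q^{-2})^{-1}(1-q^{-2s})^{-1}$ gives the asserted two-case formula. I expect this Whittaker--Kloosterman integral, and in particular fixing the precise shape and normalisation of the germ, to be the only genuinely delicate step — it is exactly where the local harmonic analysis of \cite{Sak13,Sak21} enters; the rest is formal manipulation.
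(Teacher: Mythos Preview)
Your proposal is correct and follows essentially the same route as the paper: the paper also decomposes $\Sym^m\Ad=\bigoplus_{k\le m,\,k\equiv m\,(2)}\rho_k$ (your $W_k$), re-sums to obtain $\Phi_{L(\Ad,s)}=(1-q^{-2s})^{-1}\sum_{m\ge0}q^{-m(s+1)}1_{x_m\RK}$, and then invokes exactly the three-case evaluation of $\int_\RN 1_{x_m\RK}(\Bw\,\diag(\zeta,\zeta^{-1})\,n)\psi^{-1}(n)\,\ud n$ that you describe. The only cosmetic difference is that the paper cites \cite{Rud90} for the plethysm whereas you derive it via the pullback to $\Sym^2$ of the standard representation of $\SL_2(\BC)$.
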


\begin{proof}
	Let $\rho_k$ be the descent of $\Sym^{2k}\std$ of $\SL_2(\BC)$ to $\PGL_2(\BC)$. Then according to \cite{Rud90}, we have
	\[
	\Sym^m\Ad=\sum_{k\leq m, k\equiv m \mod 2}\rho_k.
	\]
	Therefore,
	\begin{align*}
	L(\Ad,s)&=\frac{1}{\det(1-q^{-s}\Ad)} =\sum_{m=0}^{\infty}q^{-ms}\;\Sym^m\Ad= \sum_{m=0}^{\infty}   \left(\sum_{k=m,k\equiv m \mod 2}^{\infty} q^{-ks}\right)\rho_m       \\
	&=\sum_{m=0}^{\infty}\frac{q^{-ms}}{1-q^{-2s}}\rho_m.
	\end{align*}
	Using the Casselman-Shalika formula again, we have
	\[
	\Sat^{-1}(\rho_m)\star1_{\RN\backslash\RG(\Fo_{\RF})}=q^{-m}1_{x_{m}\RK}.
	\]

Therefore,
	\begin{align*}
		\Phi_{L(\Ad,s)}=\sum_{m=0}^{\infty}\frac{q^{-m(s+1)}}{1-q^{-2s}}1_{x_m\RK}.
	\end{align*}
	Similar to the computation in \cite{Sak13}, it is easy to see that
	\begin{align*}
	\int_{\RN}1_{x_m\RK}\left(\Bw \begin{pmatrix}
		\zeta & 0 \\ 0 & \zeta^{-1}
	\end{pmatrix}  n  \right)&\psi^{-1}(n)\ud n \\&= \left\{ \begin{array}{rcl}
 	1 & \mbox{if} & |\zeta|=q^m;   \\ -1 & \mbox{if} & |\zeta|=q^{m-1},m\geq 1; \\
 	|\zeta|^{-1}\int_{x\in \pm 1+\Fp_{\RF}}\psi^{-1}\left(   \frac{x+x^{-1}}{\xi}\right)\ud x & \mbox{if} & |\xi|<1,m=0.
\end{array}\right.  
\end{align*}
Combine them, and we have
\begin{align*}
	&\frac{f_{L(\Ad,s)}\left(\begin{pmatrix}
		\zeta & 0 \\ 0 & \zeta^{-1}
	\end{pmatrix} \right)}{|\zeta|^2\ud^{\times}\zeta}
	\\&=(1-q^{-2})^{-1}(1-q^{-2s})^{-1}\left\{ \begin{array}{rcl}
|\zeta|^{-s-1}(1-q^{-s-1}) & \mbox{for} & |\zeta|\geq 1; \\
	|\zeta|^{-1}\int_{x\in \pm 1+\Fp_{\RF}}\psi^{-1}\left(   \frac{x+x^{-1}}{\xi}\right)\ud x & \mbox{for} & |\zeta|\leq q^{-1} .
\end{array}\right.
	\end{align*}
\end{proof}

\subsection{$\epsilon$-Hermitian modules, unitary groups and Weil representations}\label{ssec_weil}

Let $\BV$ and $\BW$ be two right $\BD$-modules, which are finite dimensional over $\RF$. We will denote the set of right $\BD$-module morphisms by 
\[
\Hom_{\BD}(\BV,\BW):=\{T:\BV\rightarrow\BW\mid T(v_1a+v_2b)=T(v_1)a+T(v_2)b,\;\forall v_1,v_2\in\BV,a,b\in\BD\}.
\]
If $\BV=\BW$, we will denote this set by $\End_{\BD}(\BV)$. Set $\GL_{\BD}(\BV)$ to be the set of elements $T$ in $\End_{\BD}(\BV)$ such that $T$ is invertible.

\begin{dfn}
	Let $\epsilon=\pm 1$. We say that $(\BV,\BB)$ is a right $\epsilon$-Hermitian $\BD$-module, if $\BV$ is a right $\BD$-module, and $\BB:\BV\times\BV\rightarrow\BD$ is a map such that
	\begin{itemize}
		\item [(1)] $\BB$ is sesquilinear, i.e., for all $v_1,v_2,v_3\in\BV$, $a,v\in\BD$,
		\[
		\BB(v_1,v_2a+v_3b)=\BB(v_1,v_2)a+\BB(v_1,v_3)b
		\]
		and
		\[
		\BB(v_1a+v_2b,v_3)=\overline{a}\BB(v_1,v_3)+\overline{b}\BB(v_2,v_3).
		\]
		\item [(2)] $\BB$ is $\epsilon$-$Hermitian$, i.e., for all $v_1,v_2\in\BV$,
		\[
		\BB(v_1,v_2)=\epsilon\overline{\BB(v_2,v_1)}.
		\]
		\item [(3)] $\BB$ is non-degenerate.
	\end{itemize}
	We call $\BB$ an $\epsilon$-Hermitian form. Usually, $1$-Hermitian $\BD$-modules (resp. forms) are also called Hermitian modules (resp. forms), and $-1$-Hermitian modules (resp. forms) are called skew-Hermitian modules (resp. forms).
\end{dfn}

Given a right $\epsilon$-Hermitian $\BD$-module $(\BV,\BB)$, we write 
\[
\RU(\BV,\BB):=\{  g\in\GL_{\BD}(\BV) \mid \BB(gv_1,gv_2)=\BB(v_1,v_2),\;\forall v_1,v_2\in\BV  \}
\]
for the unitary group of $(\BV,\BB)$. Then $\RU(\BV,\BB)$ acts on $\BV$ on the left. When $\BB$ is clear from the context, we will also write $\RU(\BV)$ for simplicity.

Given a right $\epsilon$-Hermitian $\BD$-module $(\BV,\BB)$, we can construct a left $\BD$-module $\overline{\BV}$ together with $\overline{\BB}:\overline{\BV}\times \overline{\BV}\rightarrow\BD$ in the following way: as a set, $\overline{\BV}$ is the set of symbols $\{ \overline{v}\mid v\in\BV\}$. Then we give $\overline{\BV}$ a left $\BD$-module structure by setting, for all $v_1,v_2\in\BV$, $a\in\BD$,
\[
\overline{v_1}+\overline{v_2}:=\overline{v_1+v_2},\; a\overline{v}:=\overline{v\overline{a}},
\]
and
\[
\overline{\BB}:\overline{\BV}\times\overline{\BV}\rightarrow\BD:(\overline{v_1},\overline{v_2})\mapsto \overline{\BB(v_2,v_1)}.
\]

Let us briefly recall the classification of (skew-)Hermitian modules and the corresponding unitary groups.

\begin{exm}
	If $\BD=\RF$, the Hermitian $\BD$-modules are classfied by the non-degenerate quadratic forms over $\RF$, which are determined by the invariants: the discriminant $\disc(\BV)\in \RF^{\times}/(\RF^{\times})^2$, the Hasse-Witt invariant $\epsilon(\BV)=\pm 1$, and the dimension $\dim_{\RF}(\BV)$. For more details, see \cite{Ser73}. In this case, we have $\RU(\BV,\BB)=\RO(\BV,\BB)$. When $\BV=\RF^n$, and $\BB$ is given by
	\[
	\RF^{n}\times\RF^n:\left(\begin{pmatrix}
		x_1 \\ \vdots \\ x_n
	\end{pmatrix}  ,    \begin{pmatrix}
		y_1\\ \vdots \\ y_n
	\end{pmatrix}  \right)\mapsto\sum_{i=1}^nx_iy_{n+1-i},
	\]
	we call it the standard Hermitian model of rank $n$ over $\RF$. When $n=2m+1$ is odd, we will add one more $x_{m+1}y_{m+1}$ for the purpose of the paper.
	
	The skew-Hermitian forms are classified by non-degenrate alternative formrs over $\RF$, which is unique up to isomorphism, see \cite[Lemma 1.1.5]{GW09}. For the purpose of this paper, when $\BW=\RF^2$, and $\BB$ is given by
	\[
	\RF^2\times\RF^2\rightarrow\RF:\left( \begin{pmatrix}
		x_1 \\ x_2
	\end{pmatrix} ,   \begin{pmatrix}
		y_1 \\ y_2
	\end{pmatrix}  \right)\mapsto x_1y_2-x_2y_1,
	\]
	we call it the standard skew-Hermitian model of rank $2$ over $\RF$. We also write $e:=\begin{pmatrix}
		1 \\ 0
	\end{pmatrix}$ and $f:=\begin{pmatrix}
		0 \\ 1
	\end{pmatrix}$, and call $\BW= e\BD+f \BD$ the standard polarization.
	\end{exm} 
\begin{exm}
	If $\BD=\RF\times\RF$. Since $\BV$ is an $\RF\times\RF$-module, and hence has the form $\RV\times\RV^{\vee}$, for some $\RF$-vector space $\RV$ and its $\RF$-dual space $\RV^{\vee}$. Up to isomorphism, any Hermitian $\BD$-module is isomorphic to the one defined by 
	\[
	\BB:\left((v_1,v_1^{\vee}),(v_2,v_2^{\vee})\right) \mapsto(  \langle v_1,v_2^{\vee}\rangle,\langle v_2,v_1^{\vee}\rangle     )\in\BD=\RF\times\RF, 
	\]
	where $\langle \cdot,\cdot\rangle :\RV\times\RV^{\vee}\rightarrow\RF$ is the canonical pair between $\RV$ and $\RV^{\vee}$. 
	
	In this case, it is easy to see 
	\[
	\RU(\BV,\BB)\cong\GL(\RV),
	\]
	where the embedding is given by $\GL(\RV)\ni g\mapsto\begin{pmatrix}
		g & 0 \\ 0 & {^t g^{-1}}
	\end{pmatrix}$ in terms of matrices. We also fix a basis of $\RV$ and its dual basis, then $\RV=\RF^n$ and $\BB$ is given by
	\[
	\left(  \left(  \begin{pmatrix}
		x_1\\ \vdots \\ x_n
	\end{pmatrix},\begin{pmatrix}
		y_1 \\ \vdots \\ y_n
	\end{pmatrix}   \right),  \left(  \begin{pmatrix}
		\xi_1\\ \vdots \\ \xi_n
	\end{pmatrix},\begin{pmatrix}
		\zeta_1 \\ \vdots \\ \zeta_n
	\end{pmatrix}   \right)      \right)\mapsto \left(  \sum_{i=1}^nx_i\zeta_{i},\sum_{i=1}^n y_i\xi_{i}  \right).
	\]
	We call it the standard Hermitian model of rank $n$ over $\RF\times\RF$.
	
	Similarly, any skew Hermitian space is of the form $\BW=\RW\times\RW^{\vee}$ with
	\[
	\BB:\left((w_1,w_1^{\vee}),(w_2,w_2^{\vee})\right) \mapsto(  \langle w_1,w_2^{\vee}\rangle, -\langle w_2,w_1^{\vee}\rangle     )\in\BD=\RF\times\RF, 
	\]
	up to isomorphism. Then $\RU(\BW,\BB)\cong\GL(\RW)$ as well. In the special case that $\BW=\RW\times\RW^{\vee}$ for a two dimensional vector space $\RW$ over $\RF$, we have $\RU(\BW,\BB)\cong\GL_2$. Choose a basis $\{e_1,e_2\}$ of $\RW$ and its dual basis $\{e_1^{\vee},e_2^{\vee}\}\in\RW^{\vee}$, and let 
	\[
	e=(e_1,e_2^{\vee}),\;f=(-e_2,e_1^{\vee}),
	\]   	
	then we have $\BB(e,f)=1$ and $\BB(e,e)=\BB(f,f)=0$. We call $\BW$ the standard skew-Hermitian model of rank $2$ over $\RF\times\RF$ and $\BW=e\BD+f\BD$ the standard polarization of $\BW$.
	\end{exm}

    \begin{exm}
	If $\BD=\Mat_2(\RF)$. According to the structure theory of simple central algebras, $\BV$ has the form 
	\[
\RF^2\oplus\RF^2\oplus\cdots\oplus\RF^2\cong\RF^n\otimes_{\RF}\RF^2,
	\]
	where we view $\RF^2$ as the space of row vectors, and $\Mat_2(\RF)$ acts on the right.
	
	Let $\BB$ be an $\epsilon$-Hermitian form on $\BV$, then $\BB$ defines a map $\sigma_{\BB}:\End_{\BD}(\BV)\rightarrow\End_{\BD}(\BV)$, which is called the adjoint involution, characterized by
	\[
	\BB(v_1,f(v_2))=\BB(\sigma_{\BB}(f)(v_1),v_2),\;\forall v_1,v_2\in\BV,\;f\in\End_{\BD}(\BV).
	\]
	This map $\BB\mapsto\sigma_{\BB}$ is a one-to-one correspondence between $\epsilon$-Hermitian forms on $\BV$ up to a scalar in $\RF^{\times}$, and involutions $\sigma$ on $\End_{\BD}(\BV)$ such that $\sigma(x)=x$ for all $x\in\RF$. See \cite[Chapter 1, Section 4
	]{KMRT98}. Accoding to the Schur-Weyl duality, we know $\End_{\BD}(\BV)=\End_{\RF}(\RF^n)=\Mat_n(\RF)$, acting on $\BV=\RF^n\otimes_{\RF}\RF^2$ on the left. Then the $\epsilon$-Hermitian forms are classified by convolutions on $\Mat_n$ preserving $\RF$.
	
	Let us consider the special case that $\BV=\BD^n$, where $\BD$ acts on the right. Consider the Hermitian form
	\[
	\BB:\BD^n\times\BD^n\rightarrow\BD: ((x_1,\cdots,x_n),(y_1,\cdots,y_n))\mapsto \sum_{i=1}^n \overline{x_i}y_i.
	\]
	If we identify $\BD\cong\RF^2\otimes_{\RF}\RF^2$, and $\BD^n\cong\RF^{2n}\otimes_{\RF}\RF^2$ as above, then the above form is given by
	\begin{align*}
	\left( \begin{pmatrix}
		v_1 \\ \vdots \\ v_{2n}
	\end{pmatrix} \otimes (x_1,x_2) \right. &, \left.\begin{pmatrix}
		w_1 \\ \vdots \\ w_{2n}
	\end{pmatrix} \otimes (y_1,y_2) \right)    \\
	&\mapsto (v_1,\cdots,v_{2n})\cdot  \begin{pmatrix}
		J &  & \\ & \ddots  & \\ & & J
	\end{pmatrix}   \cdot\begin{pmatrix}
		w_1 \\ \vdots \\w_{2n} 
	\end{pmatrix} \cdot J^{-1} \begin{pmatrix}
		x_1 \\ x_2
	\end{pmatrix}  (y_1,y_2),
	\end{align*}
	where $J=\begin{pmatrix}
		0 & 1 \\ -1 & 0
	\end{pmatrix}$.
	At this time, we have $\RU(\BV,\BB)=\Sp_{2n}$, where $\Sp_{2n}$ is defined by the symplectic form given by $\diag(J,\cdots,J)$, and $\Sp_{2n}$ acts on $\BD^n=\RF^{2n}\otimes_{\RF}\RF^2$ on the left. And we call it the standard Hermitian model of rank $n$ over $\BD$.
	
	Another special case is that $\BW=\BD^2$, and consider the skew-Hermitian form
	\[
	\BB:\BD^2\times\BD^2\rightarrow\BD: ((x_1,x_2),(y_1,y_2))\mapsto \overline{x_1}y_2-\overline{x_2}y_1.
	\]
	Then identify $\BD\cong\RF^2\otimes_{\RF}\RF^2$, and $\BD^n\cong\RF^{2n}\otimes_{\RF}\RF^2$ as above, this form is given by
	\begin{align*}
	\left( \begin{pmatrix}
		v_1 \\ \vdots \\ v_{4}
	\end{pmatrix} \otimes (x_1,x_2) \right. &, \left.\begin{pmatrix}
		w_1 \\ \vdots \\ w_{4}
	\end{pmatrix} \otimes (y_1,y_2) \right)    \\
	&\mapsto (v_1,\cdots,v_{4}) \begin{pmatrix}
		  0  & J \\ -J & 0 
	\end{pmatrix}  \cdot\begin{pmatrix}
		w_1 \\ \vdots \\w_{4} 
	\end{pmatrix} \cdot J \begin{pmatrix}
		x_1 \\ x_2
	\end{pmatrix}  (y_1,y_2),
	\end{align*}
	and hence $\RU(\BW,\BB)=\RO_4$. Let $e=(\RI_2,0)\in\BD^2$ and $f=(0,\RI_2)\in\BD^2$, then we have $\BB(e,f)=1$, and $\BB(e,e)=\BB(f,f)=0$. In this case, we call it the standard skew-Hermitian model of rank $2$ over $\BD$. 
	\end{exm}

Let $(\BV,\BB_{\BV})$ be a Hermitian $\BD$-module, and $(\BW,\BB_{\BW})$ be a skew Hermitian $\BD$-module. On the $\RF$-vector space $\BV\otimes_{\BD}\overline{\BW}$, one can define an alternating form $\BB$ by setting
\[
\BB(v_1\otimes w_1,v_2\otimes w_2):=\tau ( \BB_{\BV}(v_1,v_2)\cdot \overline{\BB_{\BW}} (\overline{w_2},\overline{w_1})   )\in\RF 
\]
for $v_1,v_2\in\BV$ and $w_1,w_2\in\overline{\BW}$. Let 
\[
\Sp(\BV\otimes_{\BD}\overline{\BW},\BB) 
\]
be the symplectic group for the above alternating form. Moreover, there is a natural map
\[
\RU(\BV,\BB_{\BV})\times\RU(\BW,\BB_{\BW})\rightarrow\Sp(\RV\otimes_{\BD}\overline{\BW},\BB ): (g_1,g_2)\mapsto (  v\otimes \overline{w} \mapsto g_1v\otimes \overline{g_2w}  ).
\]
Let $\BH=\BH(\BV\otimes_{\BD}\overline{\BW})=\BV\otimes_{\BD}\overline{\BW}\ltimes\RF$ be the Hsisenberg group associated with $\BV\otimes_{\BD} \overline{\BW}$. 

Assume now $\BV$ is the standard Hermitian model of rank $n$ over $\BD$, $\BW$ is the standard skew-Hermitian model of rank $2$ over $\BD$, and $\BV\otimes_{\BD}\overline{\BW}=\BX\oplus\BY$ be the complete polarization that corresponds to the standard polarization $\BW=e\BD\oplus f\BD$. Let $(\omega_{\psi,\BX,\BY},\CF(\BY))$ be the Schrodinger model of the Weil representation of $\BH$. According to Stone-von-Neumann theorem, for each $g\in\Sp(\BV\otimes_{\BD}\overline{\BW},\BB)$, there is an isomorphism $M=M(g)\in\GL(\CF(\BY))$ of $\CF(\BY)$ such that 
\[
M\circ\omega_{\psi,\BX,\BY}(h)=\omega_{\psi,\BX,\BY}(gh)\circ M.
\]
Define
\[
\Mp(\BV\otimes_{\BD}\overline{\BW})_{\psi,\CF(\BY)}^{\BC^{\times}}:=\{ (g,M)\in\Sp(\BV\otimes_{\BD} \overline{\BW},\BB )\times \GL(\CF(\BY)) \mid M\circ\omega_{\psi,\BX,\BY}(h)=\omega_{\psi,\BX,\BY}(gh)\circ M     \}.
\]
Let $c_{\BX}^{\psi}:\Sp(\BV\otimes_{\BD}\overline{\BW}\BB)\times\Sp(\BV\otimes_{\BD}\overline{\BW},\BB)\rightarrow\mu_8$ be the Leray cocycle, and define
\[
\Mp(\BV\otimes_{\BD}\overline{\BW})_{\psi,\BX}^{\BC^1}:=\Sp(\BV\otimes_{\BD}\overline{\BW})\times \BC^1
\]
as a set but with group structure given by the Leray cocycle $c_{\BX}^{\psi}$, namely
\[
(g,z)\cdot (g^{\prime},z^{\prime}):=(gg^{\prime},c_{\BX}^{\psi}(g,g^{\prime})zz^{\prime}).
\]
Let $\mathbf{r}=\mathbf{r}_{\BX,\BY}$ be the standard Weil operator; then we have the Weil representation
\[
\omega_{\psi,\BX,\BY}:\Mp(\BV\otimes_{\BD}\overline{\BW})_{\psi,\BX}^{\BC^1}\rightarrow \GL(\CF(\BY)):(g,z)\mapsto z\mathbf{r}(g),
\]
and the action of $\Mp(\BV\otimes_{\BD}\overline{\BW})_{\psi,\BX}^{\BC^1}$ is explicitly described as follows: for any $\Phi\in\CF(\BY)$, and $y\in\BY$, we have for 
\begin{itemize}
	\item \[\omega_{\psi,\BX,\BY} \left(
		g ,1\right) \Phi(y)= |\det g|_{\BV}^{\frac{1}{2}} \Phi(g^{-1}y) ,\;\forall g\in\RU(\BV,\BB_{\BV}), \]
		where $|\det g|_{\BV}=1$ except when $\BD=\RF$ and $\RU(\BV,\BW_{\BV})=\RO(\BV)$, in which case $|\det g|_{\BV}=\pm 1$ depending on whether $g$ is in $\SO(\BV)$ or not.
	\item \[
          \omega_{\psi,\BX,\BY}\left( 1,z   \right)\Phi(y)=z\Phi(y),\;\forall z\in\BC^1.
          \]
\end{itemize}
For the action of $\RU(\BW,\BB_{\BW})$, it is given as follows:
\begin{itemize}
	\item For $\BD=\RF$, we have $\RU(\BW,\BB_{\BW})=\SL_2$, and
	\subitem \[ \omega_{\psi,\BX,\BY}\left( \begin{pmatrix}
		\zeta & 0 \\ 0 &  \zeta^{-1}
	\end{pmatrix} ,1       \right) \Phi(y)=|\zeta|^{\frac{\dim_{\RF}\BV}{2}} \Phi(\zeta y),\;\forall \zeta\in\RF^{\times},   \]
	\subitem \[ \omega_{\psi,\BX,\BY}\left( \begin{pmatrix}
		1 & u \\ 0 & 1
	\end{pmatrix}  ,1   \right)\Phi(y) =  \psi\left( \frac{ u \cdot \BB(y,y)}{2}  \right)\Phi(y) ,\;\forall u \in\RF ,   \]
	\subitem \[  \omega_{\psi,\BX,\BY}\left( \begin{pmatrix}
		0 & -1 \\ 1 & 0
	\end{pmatrix}, 1    \right)  \Phi(y)= \int_{\BV}\Phi(x)\psi^{-1}(\BB(x,y) )\ud x .    \]
		\item For $\BD=\RF\times\RF$, we have $\RU(\BW,\BB_{\BW})=\GL_2$, and 
	\subitem \[\omega_{\psi,\BX,\BY} \left( \begin{pmatrix}
		t_1 & 0 \\ 0 & t_2
	\end{pmatrix},1     \right)\Phi(y)=\left|  \frac{t_1}{t_2} \right|^{\frac{ \dim_{\RF}\BV}{4}}\Phi\left(   y (t_1,t_2^{-1})  \right),\;\forall t_1,t_2\in\RF^{\times},
	\]
	\subitem \[  \omega_{\psi,\BX,\BY}\left( \begin{pmatrix}
		1 & u \\ 0 & 1
	\end{pmatrix}    ,1  \right) \Phi(y)=\psi\left(  \frac{u\cdot \tau(\BB(y,y))}{2}   \right) \Phi(y),\;\forall u\in\RF,     \]
	\subitem \[    \omega_{\psi,\BX,\BY}\left( \begin{pmatrix}
		0 & -1 \\ 1 & 0
	\end{pmatrix}, 1    \right)  \Phi(y)= \int_{\BV}\Phi(x)\psi^{-1}(\tau(\BB(x,y)) )\ud x .        \] 
	
	\item For $\BD=\Mat_2(\RF)$, we have $\RU(\BW,\BB_{\BW})=\RO_4$, and
	\subitem \[  
	\omega_{\psi,\BX,\BY}\left( \begin{pmatrix}
		g & 0\\ 0 & g
	\end{pmatrix} ,1 \right)\Phi( v\otimes \overline{f})=\Phi(vg\otimes\overline{f}),\;\forall g\in\SL_2(\RF)\subset\BD,
	\]
	\[
	\omega_{\psi,\BX,\BY}\left(   \begin{pmatrix}
		t_1 & 0 & 0 & 0 \\ 0 & t_2 & 0 & 0 \\  0 & 0 & t_2^{-1} & 0 \\ 0 & 0 & 0 & t_1^{-1}
	\end{pmatrix} ,1  \right) \Phi(v\otimes\overline{f})=|t_1t_2|^{\frac{\dim_{\RF}\BV}{4}}\Phi\left( v\begin{pmatrix}
		t_1 & 0 \\ 0 & t_2
	\end{pmatrix} \otimes\overline{f}   \right),\;\forall t_1,t_2\in\RF^{\times} ,    \]
	\subitem \[\omega_{\psi,\BX,\BY} \left( \begin{pmatrix}  1 & 0 & u & 0 \\ 0 & 1 & 0 & u \\ 0 & 0 & 1 & 0 \\ 0 & 0 & 0 & 1 \end{pmatrix},1  \right) \Phi(y)=\psi\left( \frac{n\cdot \tau (\BB_{\BV}(y,y))}{2}   \right) \Phi(y) ,\;\forall u \in\RF,     \]
	\subitem \[  \omega_{\psi,\BX,\BY} \left(  \begin{pmatrix}
		0 & 0 & -1 & 0 \\  0 & 0 & 0 & -1 \\ 1 & 0 & 0 & 0 \\ 0 & 1 & 0 & 0
	\end{pmatrix},1  \right)\Phi(y)=\int_{\BV}\Phi(x)\psi^{-1}(\tau(\BB(x,y))   )\ud x.    \]
\end{itemize}
For more details, see \cite{GKT25}.

In the following, for simplicity, write $\omega_{\psi}:=\omega_{\psi,\BX,\BY}$. In order to analyze the Hecke algebra action, the following remarks on the splitting are in order. In the case that $\BD=\RF$ and $\dim_{\RF}\BV$ is even, according to \cite[Theorem 11.6]{GKT25}, the map
	\[
	\SO(\BV)\times\SL_2\rightarrow\Mp(\BV\otimes_{\BD}\BW)_{\psi,\BX}^{\BC^1}:(g,h)\mapsto (g\otimes h,1)
	\]
	is a splitting of $\SO(\BV)\times\SL_2$ to $\Mp(\BV\otimes_{\BD}\BW)_{\psi,\BX}^{\BC^1}$, hence the Weil representation is an honest representation of $\SO(\BV)\times\SL_2$ by restriction. The same holds for the case when $\BD=\Mat_2(\RF)$.
	
	In the case that $\BD=\RF\times\RF$, this map $(g,h)\mapsto (g\otimes h,1)$ is also the Leray splitting according to \cite[Proposition 12.21]{GKT25}. 
	
	The only non-trivial case is when $\BD=\RF$ and $\dim_{\RF}\BV$ is odd. In this case, let
	\[
	\widetilde{\SL_2}=\Mp(\BW)_{\BX}^{\mu_2}:=\Sp(\BW)\times\mu_2=\SL_2\times\mu_2
	\]
	as a set, with group multiplication given by
	\[
	(g_1,\epsilon_1)(g_2,\epsilon_2):=(g_1g_2,\epsilon_1\epsilon_2 c_{\BX}(g_1,g_2)),
	\]
	where $c_{\BX}$ is the Rao cocycle. In the case of $\SL_2$, recall that we have the Rao function 
	\[
	\mathbf{x}\left( \begin{pmatrix}
		a & b \\ c & d
	\end{pmatrix} \right):=\left\{ \begin{array}{rcl}
c & \mbox{if}
& c\neq 0 ;\\ d & \mbox{if} & c=0,
\end{array}\right.
	\]
	and the Rao cocycle is given by 
	\[
	c_{\BX}(g_1,g_2):=(\mathbf{x}(g_1),\mathbf{x}(g_2))_{\RF}(-\mathbf{x}(g_1)\mathbf{x}(g_2),\mathbf{x}(g_1g_2))_{\RF},
	\]
	where $(\cdot,\cdot)_{\RF}$ is the Hilbert symbol of $\RF$. Note that this $c_{\BX}$ corresponds to the $\alpha$ used in \cite{Jac87}. Then we have a group homomorphism \cite[Theorem 11.6]{GKT25}
	\[
	\widetilde{\SL_2}\rightarrow\Mp(\BV\otimes_{\BD}\BW)_{\psi,\BX}^{\BC^1}:(g,\epsilon)\mapsto (1\otimes g, \epsilon\gamma(\mathbf{x}(g),\psi_{\frac{1}{2}})^{-\dim_{\RF}(\BV)} (\mathbf{x}(g),\det\BV)_{\RF}        ),
	\]
	where $\det(\BV)=(-1)^{\frac{\dim_{\RF}\BV-1}{2}}$ and $\gamma(\cdot,\psi_{\frac{1}{2} })$ is the normalized Weil index as in \cite[Section 3.3]{GKT25}. Following \cite{Jac87}, we employ the normalized cocycle $\beta$ for the metaplectic group as defined in \cite{Jac87}. To avoid confusion, we introduced the following notation to distinguish it from the conventions used in the previous sections. Write\[
\Mp_2:=\SL_2\times\{\pm1 \}
\]
as a set, with multiplication given by
\[
(g_1,\epsilon_1)(g_2,\epsilon_2):=(g_1g_2,\epsilon_1\epsilon_2\beta(g_1,g_2)),
\]
where $\beta(g_1,g_2):=\alpha(g_1,g_2)s(g_1)s(g_2)s(g_1 g_2)^{-1}$, and for $g=\begin{pmatrix}
	a & b \\ c & d
\end{pmatrix}$, we define
\begin{align*}
	s(g):=\left\{ \begin{array}{rcl}
 (c,d)_{\RF}& \mathrm{if}\; cd\neq 0\;\mathrm{and}\;\val(c)\equiv 1\mod 2 ;\\ 1 & \mathrm{otherwise}.
\end{array}\right.
\end{align*}
In this case $k\mapsto (k,1)$ is an isomorphism of $\SL_2(\Fo_{\RF})$ onto a subgroup $\RK^{\prime}$ of $\Mp_2$. And $\Mp_2\rightarrow\widetilde{\SL_2}:(g,\epsilon)\mapsto (g,\epsilon s(g))$ is a group isomorphism. We view $\SL_2$ as a subset of $\Mp_2$ via $g\mapsto (g,1)$. Let $\CH(\Mp_2,\RK^{\prime})$ be the spherical Hecke algebra of $\Mp_2$, which consists of $\RK^{\prime}$ bi-invariant functions with compact support. Then there is an isomorphism \[\CH(\PGL_2,\PGL_2(\Fo_{\RF}))\cong\CH(\Mp_2,\RK^{\prime}):h\mapsto h^{\prime}. \]
as fixed in \cite[Section 2]{Jac87}, which we review in Section \ref{ssec_jac} for the reader's convenience. We will identify $\RU(\BW,\BB_{\BW})=\SL_2$ as a subset of $\Mp_2$ using the set-theoretical section $g\mapsto (g,1)$.

\begin{rmk}
	For the sake of clarity, we provide a brief comparison between our notation and that employed in \cite{Jac87}. The additive character $\psi$ used in \cite{Jac87} should be $\psi_{-2}(x):=\psi(-2x)$ in our notaion. We take $\epsilon =1$ and $\eta=1$ in \cite{Jac87}. Note that the normalization of the Weil constant in \cite{Jac87} differs by a factor of $-2$ from the convention adopted in \cite{GKT25}. So the Weil constant in \cite{Jac87} is $\gamma(\cdot,\psi_{(-2)(-\frac{1}{2})})=\gamma(\cdot,\psi)$ in our notation, and the $\mu$ function in \cite{Jac87} is still $\gamma(\cdot,\psi)(\cdot,-1)_{\RF}= \gamma(\cdot,\psi)^{-1}$.
\end{rmk}

For $\RU(\BW,\BB_{\BW})$, write
\[
\RT:=\left\{ \begin{array}{rcl}
 \left\{ \begin{pmatrix}
 	\zeta & 0 \\ 0 & \zeta^{-1}
 \end{pmatrix} \mid \zeta\in\RF^{\times}    \right\} & \mbox{if} & \BD=\RF; \\ 
      \left\{ \begin{pmatrix}
 	t_1 & 0 \\ 0 & t_2 
 \end{pmatrix} \mid t_1,t_2\in\RF^{\times}    \right\} & \mbox{if} & \BD=\RF\times\RF ;    \\
 \left\{  \begin{pmatrix}  \xi  & 0 & 0 & 0 \\ 0 & 1 & 0 & 0 \\ 0 & 0 & 1 & 0 \\ 0 & 0 & 0 & \xi^{-1} \end{pmatrix} \mid \xi \in\RF^{\times}    \right\}   & \mbox{if} & \BD= \Mat_2(\RF).  \\
 \end{array}\right.
\]
\[
\RN:=\left\{ \begin{array}{rcl}
 \left\{ \begin{pmatrix}
 	1 & u \\ 0 & 1
 \end{pmatrix} \mid u \in\RF    \right\} & \mbox{if} & \BD=\RF,\;\RF\times\RF; \\ \left\{  \begin{pmatrix}  1 & 0 & u & 0 \\ 0 & 1 & 0 & u \\ 0 & 0 & 1 & 0 \\ 0 & 0 & 0 & 1 \end{pmatrix} \mid u\in\RF     \right\}   & \mbox{if} & \BD= \Mat_2(\RF) . \\
 \end{array}\right.
\]
We identify $\RN$ with $\BG_a$ in the obvious way. And finally write
\[
\Bw:=\left\{ \begin{array}{rcl}
 \begin{pmatrix}
 	0 & -1 \\ 1 & 0
 \end{pmatrix}  & \mbox{if} & \BD=\RF,\;\RF\times\RF; \\  \begin{pmatrix}  0 & 0 & -1 & 0 \\ 0 & 0 & 0 & -1 \\ 1 & 0 & 0 & 0 \\ 0 & 1 & 0 & 0 \end{pmatrix}    & \mbox{if} & \BD= \Mat_2(\RF).  \\
 \end{array}\right.
\]

\section{Transfer operators from Weil representations}\label{sec_trWeil}
In this section, let $(\BV,\BB_{\BV})$ be the standard Hermitian model of rank $n$ and $(\BW,\BB_{\BW})$ be the standard sekw-Hermitian model of rank $2$.
Fix the following $v_0\in\BV$ such that $\displaystyle{\frac{\tau(\BB_{\BV}(v_0,v_0))}{2}=1\in\RF}$:
\[
v_0:=\left    \{ \begin{array}{rcl}
   \begin{pmatrix}
   	1 \\ 0 \\ \vdots \\ 0 \\ 1
   \end{pmatrix} \in \RF^n & \mbox{if}& \;\BD=\RF ; \\ 
     \left( \begin{pmatrix}
   	0 \\ 0 \\ \vdots  \\ 1
   \end{pmatrix} , \begin{pmatrix}
   	0 \\ 0 \\ \vdots \\1 
   \end{pmatrix} \right) \in \RV \times\RV^{\vee}=\RF^n\times\RF^n & \mbox{if} & \;  \BD=\RF\times\RF; \\
    \begin{pmatrix}
   	0 \\ \vdots \\ 0 \\ \RI_2
   \end{pmatrix}\in\BD^n    & \mbox{if} & \;  \BD= \Mat_2(\RF).
 \end{array}\right.
\]
Set
\[
\RX:=\left\{v\in\BV\mid  \frac{\tau( \BB_{\BV}(v,v))}{2}=1 \right\}\subset \BV.
\]
Then $\RX$ is a closed subvariety of $\BV$.

\begin{lem}\label{lem_transitive}
	$\RU(\BV)$ acts transitively on $\RX$. Moreover, let
	\[
	v_0^{\perp}:=\{ v\in\BV\mid \BB_{\BV}(v,v_0)=0\}.
	\]
	The stabilizer of $v_0$ in $\RU(\BV)$ is $\RU(v_0^{\perp})$, where the Hermitian form on $v_0^{\perp}$ is the restriction of $\BB_{\BV}$ to $v_0^{\perp}\times v_0^{\perp}$. Therefore, we have
	\[
	\RX\cong \left    \{ \begin{array}{rcl}
   \SO_n/\SO_{n-1} & \mbox{if}& \;\BD=\RF ; \\ 
   \GL_n /\GL_{n-1} & \mbox{if}&  \;  \BD=\RF\times\RF; \\
   \Sp_{2n}/\Sp_{2n-2}   & \mbox{if}  &\;  \BD= \Mat_2(\RF).
 \end{array}\right.
	\]
\end{lem}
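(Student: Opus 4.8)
The plan is to handle $\BD=\RF$, $\BD=\RF\times\RF$, and $\BD=\Mat_2(\RF)$ separately, following a common template: in each case I would first write $\RX$ explicitly as a subvariety of $\BV$, then prove transitivity of $\RU(\BV)$ by the appropriate form of Witt's extension theorem (available since $\mathrm{char}\,\RF=0$, so in particular $2\in\RF^\times$), and finally identify the stabilizer of $v_0$. The stabilizer step is uniform and I would carry it out once, abstractly: if $g\in\RU(\BV)$ fixes $v_0$, then for any $v\in v_0^\perp$ we get $\BB_{\BV}(gv,v_0)=\BB_{\BV}(gv,gv_0)=\BB_{\BV}(v,v_0)=0$, so $g$ preserves $v_0^\perp$; since $\BB_{\BV}(v_0,v_0)$ is a unit in $\BD$, the line $v_0\BD$ is a non-degenerate rank-one submodule and $\BV=v_0\BD\oplus v_0^\perp$ as an orthogonal direct sum, whence $g|_{v_0^\perp}\in\RU(v_0^\perp)$; conversely every element of $\RU(v_0^\perp)$ extends by the identity on $v_0\BD$. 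This gives $\Stab_{\RU(\BV)}(v_0)=\RU(v_0^\perp)$, and the homogeneous-space descriptions follow formally once transitivity is known. In the orthogonal case $\BD=\RF$ one adds that, for $n\ge 2$, $\RO(v_0^\perp)$ contains reflections and so meets the non-identity component of $\RO(\BV)$; hence $\SO(\BV)$ already acts transitively with stabilizer $\SO(v_0^\perp)$, giving $\RX\cong\SO_n/\SO_{n-1}$.

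For $\BD=\RF$ one has $\tau=\mathrm{id}$, so $\RX=\{v\in\BV\colon\BB_{\BV}(v,v)=2\}$ is an affine quadric in $\BV\cong\RF^n$ and $\BB_{\BV}(v_0,v_0)=2$. Given $v_1,v_2\in\RX$, the lines $\RF v_1$ and $\RF v_2$ are non-degenerate (as $\BB_{\BV}(v_i,v_i)=2\ne 0$), so the isometry $v_1\mapsto v_2$ extends to an element of $\RO(\BV)$ by Witt's extension theorem for quadratic spaces, proving transitivity. For $\BD=\RF\times\RF$, writing $v=(\vec x,\vec y)\in\RV\times\RV^\vee=\RF^n\times\RF^n$ and using the explicit form $\BB_{\BV}$ recorded in the corresponding example, one computes $\tfrac{\tau(\BB_{\BV}(v,v))}{2}=\langle\vec x,\vec y\rangle$, so $\RX=\{(\vec x,\vec y)\colon\langle\vec x,\vec y\rangle=1\}$, with $\GL(\RV)$ acting by $(\vec x,\vec y)\mapsto(g\vec x,{}^{t}g^{-1}\vec y)$. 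Transitivity is then elementary: $\GL(\RV)$ is transitive on $\RV\setminus\{0\}$, so one may assume $\vec x=e_n$, and the stabilizer of $e_n$ (matrices whose last column is $e_n$) acts transitively on $\{\vec y\colon\langle e_n,\vec y\rangle=1\}$; the stabilizer of $v_0=(e_n,e_n)$ is the block-diagonal copy of $\GL_{n-1}=\GL(v_0^\perp)$, so $\RX\cong\GL_n/\GL_{n-1}$.

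The case $\BD=\Mat_2(\RF)$ is where the real work---and, I expect, the only genuine obstacle---lies: one must transport the intrinsic condition $\tfrac{\tau(\BB_{\BV}(v,v))}{2}=1$ and the $\BD$-linear $\RU(\BV)$-action through the identification $\BV=\BD^n\cong\RF^{2n}\otimes_\RF\RF^2$ of the example into a ``doubled'' picture where classical linear algebra applies. Since $\overline{d}\,d=\nu(d)\,\RI_2$ for $d\in\Mat_2(\RF)$, one has $\BB_{\BV}(v,v)=\big(\sum_i\nu(v_i)\big)\RI_2$, hence $\tfrac{\tau(\BB_{\BV}(v,v))}{2}=\sum_i\det(v_i)$ and $\RX=\{(d_1,\dots,d_n)\colon\sum_i\det d_i=1\}$, a split quadric of rank $4n$. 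Writing a vector along the standard polarization as $v=u_1\otimes f_1+u_2\otimes f_2$ with $\{f_1,f_2\}$ a symplectic basis of $\RF^2$, a short computation identifies the defining quadratic form with $(u_1,u_2)\mapsto\langle u_1,u_2\rangle$ (up to sign), the symplectic pairing on $\RF^{2n}$, under which $\Sp_{2n}=\RU(\BV)$ acts diagonally, $g\cdot v=gu_1\otimes f_1+gu_2\otimes f_2$. Thus transitivity on $\RX$ becomes transitivity of $\Sp_{2n}$ on pairs $(u_1,u_2)$ with $\langle u_1,u_2\rangle=1$; such a pair spans a hyperbolic plane, so Witt's extension theorem for symplectic spaces (equivalently, elementary symplectic linear algebra) applies. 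Under this dictionary $v_0=(0,\dots,0,\RI_2)$ corresponds to a hyperbolic pair $(u_1,u_2)$, and its stabilizer in $\Sp_{2n}$ is the pointwise stabilizer of the plane $\langle u_1,u_2\rangle$, i.e.\ $\Sp$ of its $2(n-1)$-dimensional orthogonal complement; since $\BB_{\BV}(v,v_0)=\overline{v_n}$ gives $v_0^\perp=\{v\colon v_n=0\}\cong\BD^{n-1}$, the standard Hermitian model of rank $n-1$, this pointwise stabilizer is precisely $\RU(v_0^\perp)=\Sp_{2n-2}$, and $\RX\cong\Sp_{2n}/\Sp_{2n-2}$. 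Once the dictionary is set up carefully, all three cases reduce to textbook facts about quadratic and symplectic spaces.
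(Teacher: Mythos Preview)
Your proposal is correct and follows essentially the same case-by-case strategy as the paper: Witt's theorem for $\BD=\RF$, explicit $\GL_n$-linear algebra for $\BD=\RF\times\RF$, and reduction to transitivity of $\Sp_{2n}$ on hyperbolic pairs (via extension to a symplectic basis) for $\BD=\Mat_2(\RF)$. The one noteworthy difference is that you handle the stabilizer uniformly via the orthogonal decomposition $\BV=v_0\BD\oplus v_0^\perp$, whereas the paper computes $\Stab(v_0)$ by explicit matrix descriptions in each case; your approach is cleaner here, though it buys nothing new mathematically.
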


\begin{proof}
	Let us prove it case by case.

\begin{itemize}
	\item [(1)] If $\BD=\RF$, it is Witt's theorem. See \cite[Chapter 4]{Ser73}. In this case, we have $\RU(\BV)=\RO(\BV,\BB_{\BV})$ and $\RU(v_0^{\perp})=\RO(v_0^{\perp},\BB_{\BV})$, where we still write the restriction of $\BB_{\BV}$ to $v_0^{\perp}\times v_0^{\perp}$ by $\BB_{\BV}$. Since we can always find some element $g\in\RO_n\setminus\SO_n$ such that $gv_0=v_0$, $\SO_n$ acts on $\RX$ transitively as well.
	\item [(2)] If $\BD=\RF\times\RF$, and $\BV=\RF^n\times\RF^n$, then we have
	\begin{align*}
	\RX&=\{ (v,v^{\vee})\in\RF^n\times\RF^n\mid \BB( (v,v^{\vee}),(v,v^{\vee}))=(\langle v,v^{\vee}\rangle,\langle v,v^{\vee}\rangle) =(1,1)  \}\\
	&=\{(v,v^{\vee})\in\RF^n\times\RF^n  \mid \langle v,v^{\vee}\rangle =1  \}.
	\end{align*}
	For simplicity, write $e_i\in\RV=\RF^n$ for the standard vector with $1$ in the $i$-th component and $0$ otherwise for $1\leq i\leq n$. Write $\{e_i^{\vee}\}_{i=1}^n\subset \RV^{\vee}=\RF^n$ for the dual basis, which can be identified with $\{e_i\}_{i=1}^n$ since we identify $\RV^{\vee}=\RF^n$ via the standard pair between $\RF^n$ and $\RF^n$.
	For $g\in\GL_n$, if $ge_n=e_n$, then $g$ is of the form
	\[
	\begin{pmatrix}
		* & \cdots & *  & 0\\
		\vdots & \ddots & \vdots & \vdots \\
		* & \cdots & * & 0 \\
		*  & \cdots & * & 1
	\end{pmatrix}.
	\]
	Similarly, ${^t}g^{-1}e_n^{\vee}=e_n^{\vee}$ implies that $g$ is of the form
	\[
	\begin{pmatrix}
		* & \cdots & *  &  * \\
		\vdots & \ddots & \vdots & \vdots \\
		* & \cdots & * & *  \\
		0  & \cdots & 0 & 1
	\end{pmatrix}.
	\]
	Then $g(e_n,e_n^{\vee})=(ge_n,{^tg^{-1}}e_n^{\vee})=(e_n,e_n^{\vee})$ implies that $g\in\GL_{n-1}=\GL((e_n^{\vee})^{\perp} )\cong \RU ( (e_n,e_n^{\vee})^{\perp} )$. As for the transitivity, suppose $(v,v^{\vee})\in\BV$ such that $\langle v,v^{\vee}\rangle =1$. Let $\{e_1^{\prime},\cdots,e_{n-1}^{\prime}\}$ be a basis of $\{ w\in\RV\mid \langle w,v^{\vee} \rangle =0 \}$. Let $g\in\GL_n$ be such that $ge_n=v$ and $ge_i=e_i^{\prime}$ for $1\leq i\leq n-1$. Then we have for any $1\leq i\leq n-1$
	\[
	\langle e_i^{\prime},{^tg^{-1}}e_n^{\vee}\rangle =\langle g^{-1}e_i^{\prime},e_n^{\vee}\rangle =\langle e_i,e_n^{\vee}\rangle =0,
	\]
	and 
	\[
	\langle v,{^tg^{-1}}e_n^{\vee}\rangle =\langle g^{-1}v,e_n^{\vee}\rangle =\langle e_n,e_n^{\vee}\rangle =1.
	\]
	Therefore, we have $^tg^{-1}e_n^{\vee}=v^{\vee}$.
	\item [(3)] If $\BD=\Mat_2(\RF)$, write $v_0=\begin{pmatrix}
		0 \\ \vdots \\ 1 \\ 0
	\end{pmatrix}\otimes (1,0)+\begin{pmatrix}
		0 \\ \vdots \\ 0 \\ 1
	\end{pmatrix}\otimes (0,1)$ under the isomorphism $\BD^n\cong\RF^{2n}\otimes_{\RF}\RF^2$. Once we identitfy $\RU(\BV)\cong\Sp_{2n}$, then the stabilizer of $v_0$ is 
	\[
	\left\{  g\in\Sp_{2n}\mid g \begin{pmatrix}
		0 \\ \vdots \\ 1 \\ 0
	\end{pmatrix}=\begin{pmatrix}
		0 \\ \vdots \\ 1 \\ 0
	\end{pmatrix}, g\begin{pmatrix}
		0 \\ \vdots \\ 0 \\ 1
	\end{pmatrix}=\begin{pmatrix}
		0 \\ \vdots \\ 0 \\ 1
	\end{pmatrix} \right \}=\Sp_{2n-2},
	\]
	which is exactly $\RU(v_0^{\perp})$. To see the action is transitive, note that any $v\in\BV$ can be written as the form $v_1\otimes(1,0)+v_2\otimes(0,1)$, and 
	\[
	\BB(v,v)= {^tv_1}  \begin{pmatrix}
		J &  & \\ & \ddots  & \\ & & J
	\end{pmatrix} v_2,
	\]
	it suffices to show that for any $v_1,v_2\in\RF^{2n}$ such that $\displaystyle{{^tv_1}  \begin{pmatrix}
		J &  & \\ & \ddots  & \\ & & J
	\end{pmatrix} v_2}$=1, there is some $g\in\Sp_{2n}$ such that 
	\[
	g\begin{pmatrix}
		0 \\ \vdots \\ 1 \\ 0
	\end{pmatrix}=v_1, \; g\begin{pmatrix}
		0 \\ \vdots \\ 0 \\ 1
	\end{pmatrix}=v_2.
	\]
	According to \cite[Section 6.1]{Jac74}, we can extend $\{v_1,v_2\}$ to a basis $\{v_1,v_2,\cdots,v_{2n} \}$ of $\RF^{2n}$ such that the symplectic matrix with respect to this basis is also given by
	\[
	\begin{pmatrix}
		J &  & \\ & \ddots  & \\ & & J
	\end{pmatrix}.
	\] 
	Write $e_i$ for the $i$-th standard vector of $\RF^{2n}$ with $1$ in the $i$-th entry and $0$ otherwise. Consider $g\in\GL_{2n}$ such that $ge_{2n-1}=v_1,ge_{2n}=v_2$, and $ge_i=v_{i-2}$ for $3\leq i\leq 2n$, then we have $g\in\Sp_{2n}$.
	\end{itemize}

\end{proof}

There is a natural restriction map
\[
\res:\CF(\BV)\rightarrow\CF(\RX),
\]
which is surjective since $\RX\hookrightarrow\BV$ is a closed embedding. On the other hand, for any $g\in\RU(\BW,\BB_{\BW})$, $g$ acts on $\CF(\BV)$ via the Weil representation. Write 
\[
\Omega:\CF(\BV)\rightarrow\CC^{\infty}(\RU(\BW,\BB_{\BW})):\Phi\mapsto (g\mapsto \omega_{\psi}(g)\Phi(v_0)).
\]

The first easy lemma is

\begin{lem}\label{lem_RegOrb}
	Given a function $\Phi\in\CF(\BV)$, the following regulized integral
		\begin{align}
		\BT(\Phi)(t):=\int_{\RN}^*  \Omega(\Phi)(\Bw tu)   \psi^{-1}(u)\ud u:=\lim_{k\rightarrow+\infty}\int_{\Fp_{\RF}^{-k}}\Omega(\Phi)(\Bw t u) \psi^{-1}(u)\ud u
	\end{align}
	stabilize for sufficiently large $k$ and $t\in\RT$, where we identify $\RN$ with $\BG_a$ in the usual way. We denote this space of such functions by 
	\[
	\Orb_{\BV}(\RT).
	\]	
\end{lem}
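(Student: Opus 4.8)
The plan is to unwind $\BT(\Phi)(t)$ using the explicit formulas of Section~\ref{sec_Weil} and recognize it, in the limit, as an honest (convergent) orbital integral on the hypersurface $\RX=\{Q=1\}\subset\BV$, where $Q(y):=\tfrac{\tau(\BB_\BV(y,y))}{2}$; the regularization then reduces to an elementary truncation statement. Concretely, recall that for $u\in\RN\cong\BG_a$ the operator $\omega_\psi(u)$ acts on $\CF(\BV)$ by multiplication by the character $y\mapsto\psi\big(u\,Q(y)\big)$; for $t\in\RT$ the operator $\omega_\psi(t)$ acts by a scalar $\kappa(t)$ times precomposition with an $\RF$-linear automorphism $\sigma_t$ of $\BV$ satisfying $Q\circ\sigma_t=\lambda(t)\,Q$ for some $\lambda(t)\in\RF^\times$; and $\omega_\psi(\Bw)$ is a normalization of the Fourier transform on $\CF(\BV)$ attached to the pairing $(x,y)\mapsto\tau(\BB_\BV(x,y))$. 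Composing these operators, evaluating at $v_0$, and substituting $x\mapsto\sigma_t(x)$ to absorb the rescaling, one produces a constant $c(t)=\kappa(t)|\det\sigma_t|^{-1}$ and a linear form $\ell_t$ on $\BV$ (coming from $x\mapsto\tau(\BB_\BV(\sigma_t^{-1}x,v_0))$) such that, writing $\Phi_t:=\Phi\cdot\psi^{-1}(\ell_t)\in\CF(\BV)$,
\[
\int_{\Fp_\RF^{-k}}\big[\omega_\psi(\Bw)\omega_\psi(t)\omega_\psi(u)\Phi\big](v_0)\,\psi^{-1}(u)\,\ud u
= c(t)\int_{\BV}\Phi_t(x)\Big(\int_{\Fp_\RF^{-k}}\psi\big(u(Q(x)-1)\big)\,\ud u\Big)\ud x ,
\]
the interchange being legitimate for each fixed $k$ since $\Fp_\RF^{-k}$ is compact and $\Phi_t$ is Schwartz. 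The inner integral equals $q^{k}\mathbf 1_{\{Q(x)\in 1+\Fp_\RF^{k}\}}$, so the left-hand side is $c(t)\,q^{k}\int_{\BV}\Phi_t(x)\,\mathbf 1_{\{Q(x)\in 1+\Fp_\RF^{k}\}}\,\ud x$.

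It then remains to see that this last expression is independent of $k$ for $k$ large. I would first record that $Q$ is a \emph{nondegenerate} quadratic form on the $\RF$-space $\BV$ in all three cases: $Q=\tfrac12\BB_\BV$ when $\BD=\RF$; $Q(v,v^\vee)=\langle v,v^\vee\rangle$, the split form, when $\BD=\RF\times\RF$; and $Q(v)=\sum_i\det(v_i)$, a sum of determinant forms, when $\BD=\Mat_2(\RF)$. Hence $dQ$ is nonvanishing on $\{Q\neq0\}$, so $\RX=\{Q=1\}$ is a smooth affine hypersurface (consistent with Lemma~\ref{lem_transitive}). Now cover the compact set $\mathrm{supp}(\Phi_t)$ by finitely many balls $B$ on which $\Phi_t$ is constant; on each $B$ either $Q-1$ is bounded away from $0$ (so $\{Q\in1+\Fp_\RF^k\}\cap B=\varnothing$ for $k$ large), or, after shrinking $B$, a measure-preserving analytic change of coordinates carries $Q$ to one coordinate, whence $q^{k}\int_B\mathbf1_{\{Q\in1+\Fp_\RF^k\}}\,\ud x$ is manifestly constant in $k$ once $k$ exceeds the radius of $B$. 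Taking $k$ past the maximum of these finitely many radii stabilizes the partial integrals, and passing to the limit identifies $\BT(\Phi)(t)$ with $c(t)$ times the convergent integral of $\Phi_t$ against the canonical (Gelfand--Leray) measure on $\RX$; this in particular makes $\BT(\Phi)$ a well-defined function on $\RT$, so $\Orb_\BV(\RT):=\{\BT(\Phi):\Phi\in\CF(\BV)\}$ is defined.

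The main obstacle is bookkeeping rather than analysis: one must carry out the substitution absorbing $\sigma_t$ cleanly and uniformly across the three families $\BD=\RF,\ \RF\times\RF,\ \Mat_2(\RF)$ — tracking $\kappa(t)$, $\det\sigma_t$, and the normalization of $\omega_\psi(\Bw)$ built into the explicit Weil operators of Section~\ref{sec_Weil} — and verify the nondegeneracy of $Q$ in each case; this nondegeneracy is the only non-formal input, as it is precisely what guarantees that $\{Q=1\}$ is smooth and hence that the final truncation argument applies. No delicate estimate is involved: the apparent divergence of the original integral over $\RN$ is fully accounted for by the identity $\int_{\Fp_\RF^{-k}}\psi(uc)\,\ud u=q^{k}\mathbf 1_{\{c\in\Fp_\RF^{k}\}}$.
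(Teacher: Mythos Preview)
Your proposal is correct and follows essentially the same route as the paper: unwind the Weil operators, interchange the $u$- and $\BV$-integrals, recognize the inner integral as $q^k\mathbf 1_{\{Q\in 1+\Fp_\RF^k\}}$, and then invoke that $Q$ is a submersion near $Q=1$ to obtain stabilization. The paper packages the last step via Igusa's fiber-integration formalism (giving measures $|\omega_x|$ on the level sets $\BV_x$), whereas you argue directly by locally straightening $Q$; and the paper leaves the scaling factor $x(t)$ in the exponent rather than absorbing it by your substitution $x\mapsto\sigma_t(x)$, but these are presentational choices rather than different ideas.
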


\begin{proof}
First note that $\Omega(\Phi)(\Bw tu)$ differs $\omega_{\psi}(\Bw)\omega_{\psi}(t)\omega_{\psi}(u)\Phi(v_0)$ only by a factor of $t$, so let us consider 
\[
\lim_{k\rightarrow+\infty}\int_{\Fp_{\RF}^{-k}}\omega_{\psi}(\Bw)\omega_{\psi}(t)\omega_{\psi}(u)\Phi(v_0)\psi^{-1}(u)\ud u 
\]
in the following.

Note that for any $k\in\BZ$, we have
\begin{align*}
	&\int_{\Fp_{\RF}^{-k}} \omega_{\psi} (\Bw) \omega_{\psi} (t)\omega_{\psi}(u)  \Phi (v_0 )\psi^{-1}(u)\ud u \\
	&=\int_{\Fp_{\RF}^{-k}}\int_{\BV} \omega_{\psi} ( t)\omega_{\psi}( u)  \Phi (v)\psi^{-1}(\tau(\BB_{\BV}(v_0,v)))     \ud v \cdot \psi^{-1}(u)\ud u \\
	&=\int_{\Fp_{\RF}^{-k}} \int_{\BV} \omega_{\psi} ( t) \psi\left( \frac{u\cdot \tau(\BB_{\BV}(v,v))}{2} \right)   \Phi (v)\psi^{-1}(\tau(\BB_{\BV}(v_0,v)))     \ud v \cdot \psi^{-1}(u)\ud u
\end{align*}
Since $\displaystyle{v\mapsto \omega_{\psi} ( t) \psi\left( \frac{u\cdot \tau(\BB_{\BV}(v,v))}{2} \right)   \Phi (v)}$ is a Schwartz function on $\BV$, we are allowed to change the order in the integration. Moreover, in all cases, it can be checked easily that there are $x(t),y(t)\in\RF^{\times}$, $n(\BV)\in\BZ$ and a smooth map $z:\RT\times\BV\rightarrow\BV$ such that 
\[
 \omega_{\psi} ( t) \psi\left( \frac{u \cdot \tau(\BB_{\BV}(v,v))}{2} \right)   \Phi (v)=|y(t)|^{n(\BV)}\psi\left(  \frac{u \cdot x(t)\tau(\BB_{\BV}(v,v)  )}{2}  \right)\Phi(z(t,v)).
\]

Then the above integral becomes
\begin{align*}
	\int_{\BV} |y(t)|^{n(\BV)}\Phi(z(t,v))\psi^{-1}(\tau (\BB_{\BV}(v_0,v)))  \int_{\Fp_{\RF}^{-k}}\psi\left( u \left( \frac{ x(t)   \tau(\BB(v,v))}{2}  -1\right)\right) \ud u \ud v .
\end{align*}
Note that the inner integral
\[
\int_{\Fp_{\RF}^{-k}}\psi\left( u \left( \frac{ x(t)   \tau(\BB(v,v))}{2}  -1\right)\right) \ud u
\]
is non-zero only if 
\[
\frac{\tau (\BB(v,v))}{2}\in x(t)^{-1}(1+\Fp_{\RF}^{k}),
\]
in which case, we have
\[
\int_{\Fp_{\RF}^{-k}}\psi\left( u \left( \frac{ x(t)   \tau(\BB(v,v))}{2}  -1\right)\right) \ud u =q^{k}.
\]

Consider the map $\displaystyle{\BV\rightarrow\RF:w\mapsto  \frac{\tau(\BB(w,w))}{2}}$, which is a submersion outside the zero vector. According to \cite[Chapter 7]{Igu00}, the Haar measure $\ud v$ on $\BV$ and $\ud x$ on $\RF$ induce measures $|\omega_{x}|$ on each fiber 
\[
\BV_x:=\left\{ v\in\BV \mid \frac{\tau(\BB(v,v))}{2}=x \right\},
\]
such that for any $\Phi\in\BV$,
\[
\int_{\BV}\Phi(v)\ud v =\int_{\RF} \int_{\BV_x}\Phi(v)|\omega_x(v)| x.
\]
Then the desired integral is 
\begin{align*}
&q^k \int_{v \in \BV,\frac{\tau (\BB(v,v))}{2}\in x(t)^{-1}(1+\Fp_{\RF}^{k})
} |y(t)|^{n(\BV)}\Phi(z(t,v))\psi^{-1}(\tau(\BB_{\BV}(v_0,v))) \ud v\\
&=q^k\int_{x(t)^{-1}(1+\Fp_{\RF}^{k} )} \int_{\BV_x}|y(t)|^{n(\BV)}\Phi(z(t,v))\psi^{-1}(\tau(\BB_{\BV}(v_0,v)) )|\omega_x(v) | \ud x,
\end{align*}
and since $\Phi$ is locally constant of compact support, when $k$ is large enough, over $x(t)^{-1}(1+\Fp_{\RF})^k$, we have
\[
\int_{\BV_x}\Phi(z(t,v))\psi^{-1}(\tau(\BB_{\BV}(v_0,v)) )|\omega_x(v) |=\int_{\BV_{x(t)^{-1}}}\Phi(z(t,v))\psi^{-1}(\tau(\BB_{\BV}(v_0,v)) )|\omega_{x(t)^{-1}}(v) |,
\]
and hence
\begin{align*}
&q^k\int_{x(t)^{-1}(1+\Fp_{\RF}^{k-d} )} \int_{\BV_x}|y(t)|^{n(\BV)}\Phi(z(t,v))\psi^{-1}(\tau(\BB_{\BV}(v_0,v)) )|\omega_x(v) | \ud x\\
&=|x(t)|^{-1} |y(t)|^{n(\BV)} \int_{\BV_{x(t)^{-1}}}\Phi(z(t,v))\psi^{-1}(\tau(\BB_{\BV}(v_0,v)) )|\omega_{x(t)^{-1}}(v) |.
\end{align*}
\end{proof}

Now consider the $\RU(v_0^{\perp})$-invariant morphism
\[
\RX\rightarrow\BD:v \mapsto \BB(v_0,v).
\]
For any $x \in\BD$, the preimage of $x$ is 
\[
\RX_{x}=\{  v\in\RX\mid \BB(v_0,v)= x    \}=\left\{ x v_0 + u\mid u\in v_0^{\perp},\frac{\tau(\BB(u,u))}{2}=1-\nu(x)  \right\}.
\]
Outside $\BD^{\heartsuit}:=\{ x \in\BD \mid \nu (x )=1 \}$, the map $\RX\rightarrow\BD$ is a submersion and by Lemma \ref{lem_transitive} again, $\RU(v_0^{\perp})$ acts transitively on $\RX_{x}$. Write $\RX^{\heartsuit}$ to be the preimage of $\BD^{\heartsuit}$, then we have
\[
\RX^{\heartsuit}/\RU(v_0^{\perp})\cong\BD^{\heartsuit}.
\]

The following lemma is well-known for experts, but we also include the proof here for completeness.
\begin{lem}\label{lem_git}
	\begin{itemize}
		\item [(1)] When $\BD=\RF$, 
		\begin{align*}
			\pi:\RX\cong \SO_n/\SO_{n-1}\rightarrow\BA^1
		\end{align*}
		can be identified with the canonical quotient map \[\SO_n/\SO_{n-1}\rightarrow\SO_{n-1}\rotatebox{60}{$\sslash$}\SO_n/\SO_{n-1}.\]
		\item [(2)] When $\BD=\RF\times\RF$, $\RX\rightarrow\BD$ induces morphisms
\[
\begin{tikzcd}
\GL_n/\GL_{n-1} \arrow[r, ""] \arrow[d, ""]
&\BA^2 \arrow[d, ""] \\
\PGL_n/\GL_{n-1} \arrow[r, "\pi"]
&\BA^1
\end{tikzcd},
\]
and $\pi$ can be identified with the canonical quotient map \[
\PGL_n/\GL_{n-1}\rightarrow\GL_{n-1}\rotatebox{60}{$\sslash$}\PGL_n/\GL_{n-1}.\]
\item [(3)] When $\BD=\Mat_2(\RF)$, $\RX\rightarrow\BD$ induces morphisms
\[
\begin{tikzcd}
\Sp_{2n}/\Sp_{2n-2} \arrow[r, ""] \arrow[d, ""]
&\Mat_2 \arrow[d, "\det"] \\
\Sp_{2n}/\Sp_{2n-2}\times\Sp_2 \arrow[r, "\pi"]
&\BA^1
\end{tikzcd},
\]
and $\pi$ can be identified with the canonical quotient map \[
\Sp_{2n}/\Sp_{2n-2}\times\Sp_2 \rightarrow\Sp_{2n-2}\times\Sp_2 \rotatebox{60}{$\sslash$}\Sp_{2n}/\Sp_{2n-2}\times\Sp_2.\]
	\end{itemize}

\end{lem}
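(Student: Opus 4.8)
The plan is to prove all three cases by a common mechanism and then to quotient out a residual symmetry. Write $\RH:=\RU(v_0^{\perp})$ for the stabilizer of $v_0$ in $\RU(\BV)$ (Lemma~\ref{lem_transitive}). Since $\BB_{\BV}(v_0,v_0)$ is fixed by $\overline{\cdot}$, it is a unit of $\BD$ (namely $2$, $(1,1)$, $\RI_2$ in the three cases), so $\BB_{\BV}$ splits off the $\BD$-line $v_0\BD$ and one gets an orthogonal decomposition $\BV=v_0\BD\oplus v_0^{\perp}$ of $\BD$-modules: every $v\in\BV$ is uniquely $v=v_0a+u$ with $u\in v_0^{\perp}$ and $a=\BB_{\BV}(v_0,v_0)^{-1}\BB_{\BV}(v_0,v)$. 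In these coordinates $v\mapsto\BB_{\BV}(v_0,v)$ is, up to the fixed unit $\BB_{\BV}(v_0,v_0)$, the projection $v_0a+u\mapsto a$, and the equation $\tfrac12\tau(\BB_{\BV}(v,v))=1$ defining $\RX$ decouples (the cross terms vanish since $u\in v_0^{\perp}$) as $P(a)+Q(u)=1$, where $P(a):=\tfrac12\tau(\overline{a}\,\BB_{\BV}(v_0,v_0)\,a)$ is a polynomial in the $\BD$-coordinates of $a$ and $Q(u):=\tfrac12\tau(\BB_{\BV}(u,u))$ is a nondegenerate quadratic form on $v_0^{\perp}$.

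The core step is to show $k[\RX]^{\RH}=k[\BD]$, pulled back along $v\mapsto\BB_{\BV}(v_0,v)$. Since $\RH$ fixes $v_0$ and preserves $\BB_{\BV}$, it acts trivially on $v_0\BD$ and leaves $Q$ invariant, and on $(v_0^{\perp},\BB_{\BV})$ it acts in the standard way: as $\RO_{n-1}$ (or $\SO_{n-1}$) on one copy of the standard representation when $\BD=\RF$, as $\GL_{n-1}$ on the standard representation and its dual when $\BD=\RF\times\RF$, and as $\Sp_{2n-2}$ on two copies of the standard representation when $\BD=\Mat_2(\RF)$. The first fundamental theorem of invariant theory for the classical groups then gives $k[v_0^{\perp}]^{\RH}=k[Q]$ in each case; there is no determinant invariant because $\dim v_0^{\perp}\ge 2$ in all cases at hand, and for $\BD=\Mat_2(\RF)$ one uses that $\BB_{\BV}(u,u)$ is the \emph{scalar} matrix $\langle u^{(1)},u^{(2)}\rangle\,\RI_2$, so that $Q$ is the symplectic pairing of the two copies. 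Hence $k[\BV]^{\RH}=k[v_0\BD]\otimes k[Q]$; as $\RH$ is reductive the functor of invariants is exact, so restricting along the closed embedding $\RX\hookrightarrow\BV$ and eliminating $Q$ via $Q=1-P(a)$ yields $k[\RX]^{\RH}=k[v_0\BD]\cong k[\BD]$. Thus $v\mapsto\BB_{\BV}(v_0,v)$ is the categorical quotient $\RX\to\RX\sslash\RH$; this already settles case~(1) (where $\BD=\BA^1$ and $\RH\cap\SO_n=\SO_{n-1}$) and produces the top horizontal arrow of the diagrams in cases~(2) and~(3).

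It remains to descend from $\BD$ to $\BA^1$. For $\BD=\RF\times\RF$, the centre $Z\cong\BG_m$ of $\GL_n$ acts on $\RX$, hence on the quotient $\BA^2=\BD$, by $(a_1,a_2)\mapsto(\lambda^{-1}a_1,\lambda a_2)$; since $Z$ commutes with $\GL_{n-1}$, one gets $k[\PGL_n/\GL_{n-1}]^{\GL_{n-1}}=(k[\RX]^{\GL_{n-1}})^{Z}=k[a_1,a_2]^{Z}=k[a_1a_2]=k[\pi]$, with $\pi$ the descent of $\nu\circ\BB_{\BV}(v_0,\cdot)$. For $\BD=\Mat_2(\RF)$ the factor $\Sp_2=\RU(v_0\BD)\cong\SL_2$ plays a double role: it is the extra left-acting factor of $\Sp_{2n-2}\times\Sp_2$, acting on $\BD=\Mat_2$ by left multiplication, and at the same time $\Sp_{2n}/(\Sp_{2n-2}\times\Sp_2)$ is the quotient of $\RX$ by the commuting right action $v\mapsto vc$, which descends to right multiplication on $\Mat_2$. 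Since $k[\Mat_2]^{\SL_2}=k[\det]$ for either multiplication action, everything collapses to $k[\Sp_{2n}/(\Sp_{2n-2}\times\Sp_2)]^{\Sp_{2n-2}\times\Sp_2}=k[\det\circ\BB_{\BV}(v_0,\cdot)]=k[\pi]$, and $\det\circ\BB_{\BV}(v_0,\cdot)=\nu\circ\BB_{\BV}(v_0,\cdot)$. In each case $\pi$ is a non-constant function on an irreducible variety, so $k[\pi]\cong k[t]$ and $\Spec k[\pi]\cong\BA^1$, which is the asserted identification with the canonical quotient map.

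The routine parts are the bookkeeping of the forms, the explicit embeddings of $\RH$ and of the residual groups into $\RU(\BV)$, and the identifications of $\RX$ with homogeneous spaces already supplied by Lemma~\ref{lem_transitive}. The genuinely delicate point is case~(3): there one must check that the left $(\Sp_{2n-2}\times\Sp_2)$-invariants of $\RX$ modulo its right $\Sp_2$-action really collapse all the way down to $k[\det a]$, and not to a larger ring. This works because $\BB_{\BV}(u,u)$ is a scalar matrix for $u\in v_0^{\perp}$, so that the equation of $\RX$ genuinely decouples as $\det a+Q(u)=1$; I would verify this scalarity first.
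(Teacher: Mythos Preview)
Your argument is correct and takes a genuinely different route from the paper's proof. The paper invokes the structural isomorphism $\RH\rotatebox{60}{$\sslash$}\RG/\RH\cong \RA_{\RX}\sslash\RW_{\RX}$ from \cite{Sak21}, then in each case locates the open $\RB$-orbit explicitly, identifies the universal Cartan $\RA_{\RX}$, and checks that the restriction of $\pi$ to $\RA_{\RX}$ is (up to an affine change) $t+t^{-1}$, hence a generator of $\RF[\RA_{\RX}]^{\RW_{\RX}}$. By contrast, you compute the invariant ring $k[\RX]^{\RH}$ directly via the orthogonal splitting $\BV=v_0\BD\oplus v_0^{\perp}$, the First Fundamental Theorem for $\RO_{n-1}$, $\GL_{n-1}$, $\Sp_{2n-2}$ on (one or two copies of) the standard representation, and exactness of $(-)^{\RH}$ for reductive $\RH$; the descent to $\overline{\RX}$ by the residual $\BG_m$ or $\Sp_2$ is then a second application of the same circle of ideas. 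Your approach is more self-contained (it avoids the spherical-variety machinery) and makes transparent why the answer is $\BA^1$, while the paper's computation has the advantage of producing the explicit coordinate $t+t^{-1}$ on the universal Cartan, which matches the normalization in Theorem~\ref{thm_TrSak}. Your caution about case~(3) is well placed but easily discharged: since $\overline{x}x=\nu(x)\RI_2$ for every $x\in\Mat_2(\RF)$, the form $\BB_{\BV}(u,u)=\sum\overline{x_i}x_i$ is automatically a scalar matrix, so the decoupling $\det a+Q(u)=1$ holds and the left/right $\SL_2$-actions on the $\Mat_2$-coordinate collapse the invariants to $k[\det]$ as you claim.
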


\begin{proof}
 
 Write 
 \begin{align*}
 	(\RG,\RH):=\left\{ \begin{array}{rcl}
(\SO_n,\SO_{n-1}) & \mbox{if} & \BD=\RF; \\ 
     (\PGL_n,\GL_{n-1}) & \mbox{if} & \BD=\RF\times\RF ;    \\
(\Sp_{2n},\Sp_{2n-2}\times\Sp_2)  & \mbox{if} & \BD= \Mat_2(\RF).  \\
 \end{array}\right.
 \end{align*}
 And $\RX=\RG/\RH$ temporarily in this lemma. It has been proved in \cite{Sak21} that we have an isomorphism
 \begin{align*}
 	\RH\rotatebox{60}{$\sslash$}\RG/\RH=\RA_{\RX}\sslash\RW_{\RX},
 \end{align*}
 where $\RA_{\RX}$ is the universal Cartan of $\RX$ and $\RW_{\RX}$ is the little Weyl group, which acts on $\RA_{\RX}$ by inversion. Therefore, in all cases it suffices to show that the restriction of $\pi$ to the universal Cartan is a generator of $\RF[\RA_{\RX}]^{\RW_{\RX}}$. Write $\RB$ for the Borel subgrou pf $\RG$ consisting of upper triangular matrices in the case that $\BD=\RF$ and $\RF\times\RF$, and let $\RB$ be the Borel fixing the complete anisotropic flag of $\{ \{e_1\}\subset\{e_1+e_3\}\subset \cdots \subset \{e_1+\cdots+e_{2n-1}\}\}$ in the case $\BD=\Mat_2(\RF)$.
 
 For $(1)$, we first claim that $\RB v_0\subset\RX$ is an open $\RB$-orbit. In fact, it is easy to compute that the stabilizers of $v_0$ in $\RB$ are of the form
 \begin{align*}
 	\begin{pmatrix}
 		1 & 0 & 0 \\ 0 & g & 0 \\ 
 		0 & 0 & 1
 	\end{pmatrix},
 \end{align*}
 where $g$ is an element in the Borel subgroup of $\SO(n-2)$. Then, when $n=2k$ is even, we have
 \begin{align*}
 	\dim\RB v_0=k^2-(k-1)^2=2k-1=\dim\RX,
 \end{align*}
 and when $n=2k+1$ is odd,
 \begin{align*}
 	\dim\RB v_0=k(k+1)-(k-1)k=2k=\dim\RX.
 \end{align*}
 Then the universal Cartan can be identified with 
 \begin{align*}
 	\RA_{\RX}=\{ \diag\{t,1,\cdots,1,t^{-1}\}  \},
 \end{align*}
 and the restriction of $\pi$ to $ \diag\{t,1,\cdots,1,t^{-1}\}  v_0$ is exactly $t+t^{-1}$.
 
 For $(2)$, let $x_0:=\begin{pmatrix}
 	1 & 0 & -1 \\ 0 & \RI_{n-2}& 0 \\ 1 & 0 & 1
 \end{pmatrix}$, similarly one can easily compute that 
 \begin{align*}
 	\dim\RB v_0=2n-2=\dim\RX,
 \end{align*}
then we can identify the universal Cartan as 
\begin{align*}
	\RA_{\RX}=x_0^{-1}\{ \diag\{t,1,\cdots,1, 1\}\}x_0,  
\end{align*}
and the restriction of $\pi$ to $x_0^{-1}\diag\{t,1,\cdots,1, 1\}x_0$ is 
\begin{align*}
	\tau\left( \BB \left(   \left( \begin{pmatrix}
		-t \\ 0 \\ \vdots\\ 0 \\1
	\end{pmatrix}, \begin{pmatrix}
		-\frac{1}{2t} \\ 0 \\ \vdots\\ 0 \\ \frac{1}{2}
	\end{pmatrix}\right),  \left( \begin{pmatrix}
		-1 \\ 0 \\ \vdots\\ 0 \\1
	\end{pmatrix}, \begin{pmatrix}
		-\frac{1}{2} \\ 0 \\ \vdots\\ 0 \\ \frac{1}{2}
	\end{pmatrix}\right)   \right) \right)=\left(\frac{t}{2}+\frac{1}{2}\right)\left(\frac{1}{2t}+\frac{1}{2} \right)=\frac{t+t^{-1}+2}{4}.
\end{align*}

For $(3)$, note that $x_0=\frac{1}{4}\begin{pmatrix}
	 -1 & 0 & 0  &   1 & 0 \\ 0 & -1 & 0 & 0 & 1 \\0 & 0 & \RI_{2n-4} & 0 & 0 \\ 1 & 0 & 0 & 1 & 0 \\0 & 1 & 0 & 0 & 1
\end{pmatrix}$ is in the open $\RB$-orbit since $x_0$ sending the subspace generated by $\{e_n,f_n\}$ to the space generated by $\{e_1+e_n,f_1+f_n\}$, and the stabilizer in $\RB$ preserving the space generated by $\{e_1+e_n,f_1+f_n\}$ has dimension $n^2+n-(4n-4)$.

If we identify the universal Cartan as $x_0^{-1}\diag\{t,t^{-1},1,\cdots,1\}x_0$, then it is clear that the restriction of $\pi$ is given by
\begin{align*}
	\frac{1}{4}\det\left( \begin{pmatrix}
		t^{-1} & 0 \\ 0 & t
	\end{pmatrix}+\begin{pmatrix} 1 & 0 \\ 0 & 1\end{pmatrix}    \right)=\frac{t+t^{-1}+2}{4}.
\end{align*}

\end{proof}

According to \cite[Chapter 7]{Igu00} again, the measure $|\omega_1|$ on $\RX=\BV_1$ and the Haar measure on $\BD$ give rise to measures $|\mu_{x}|$ on fibers $\RX_{x}$.
\begin{dfn}
	Let $\Phi$ be a Schwartz function on $\RX$. We define its orbital integral with respect to the action of $\RU(v_0^{\perp})$, which is a function on $\BD^{\heartsuit}$, to be 
	\[
	\CI(\Phi):\BD^{\heartsuit}\ni x \mapsto \left\{ \begin{array}{rcl}
\int_{\RX_{ 4x-2  }}\Phi(v)|\mu_{4x-2}(v)| & \mbox{for} \;\BD=\RF \;\mbox{and} \;\dim_{\RF}\BV\equiv 1\mod 2
\\ \int_{\RX_{x}}\Phi(v)|\mu_{x}(v)| & \mbox{otherwise}.
\end{array}\right.
	\]
	We denote this space of orbital integrals by $\CF(\FX)$.
\end{dfn}

\begin{rmk}
	We choose $\displaystyle{x=\frac{\BB(v_0,v)}{4}+\frac{1}{2}}$, as our coordinates in the case $\BD=\RF$ and $\dim_{\RF}\BV$ is odd, to make it compatible with the coordinates chosen in Theorem \ref{thm_TrSak}.
\end{rmk}

\begin{prp}\label{prp_TranOp}
	The morphism $\CF(\BV)\rightarrow \Orb_{\BV}(\RT)$ factors through 
	\[
	\CF(\BV)\rightarrow\CF(\RX)\rightarrow\CF(\FX),
	\]
	i.e., we have a commutative diagram
\[
\begin{tikzcd}
\CF(\BV) \arrow[rr, "\BT"] \arrow[d, "\res"] &  & \Orb_{\BV}(\RT) \\
\CF(\RX)\arrow[d, "\CI"]                      &  &   \\
\CF(\FX) \arrow[rruu, "\widetilde{\BT}"]              &  &  
\end{tikzcd}
\]
Moreover, the morphism $\widetilde{\BT}:\CF(\FX)\rightarrow\Orb_{\BV}(\RT)$ is given by the following formulae:
\begin{itemize}
	\item [(1)] if $\BD=\RF$ and $\dim_{\RF}\BV$ is even, we have
	\[
	\widetilde{\BT}(\phi) \begin{pmatrix}
		\zeta & 0 \\  0 & \zeta^{-1}
	\end{pmatrix}  =|\zeta|^{-\frac{\dim_{\RF}\BV}{2}}\int_{\RF^{\heartsuit}}\phi(x)\psi^{-1}(\zeta^{-1} x)\ud x;
	\]
	\item [(2)]
	if $\BD=\RF$ and $\dim_{\RF}\BV$ is odd, we have
	\[
	\widetilde{\BT}(\phi) \begin{pmatrix}
		\zeta & 0 \\  0 & \zeta^{-1}
	\end{pmatrix} =\gamma(t,\psi)|\zeta|^{-\frac{\dim_{\RF}\BV}{2}}\psi^{-1}\left( \frac{-2}{\zeta} \right)\int_{\RF^{\heartsuit}} \phi(x)\psi^{-1}(4\zeta^{-1}x)\ud x;
	\]
	\item [(3)] if $\BD=\RF\times\RF$, wex have
	\[
	\widetilde{\BT}(\phi)\left( \begin{pmatrix}
		t_1 & 0 \\  0 & t_2
	\end{pmatrix} \right) =  \left|\frac{t_1}{ t_2}\right|^{-\frac{\dim_{\RF}\BV}{4}}  \int_{ (\RF\times\RF)^{\heartsuit}}\phi(x_1,x_2)  \psi^{-1}(t_2x_1+t_1^{-1}x_2)\ud x_1\ud x_2.;	\]
	\item [(4)] if $\BD=\Mat_2(\RF)$, we have
	\[
	\widetilde{\BT}(\phi)  \begin{pmatrix}  \xi  & 0 & 0 & 0 \\ 0 & 1 & 0 & 0 \\ 0 & 0 & 1 & 0 \\ 0 & 0 & 0 & \xi^{-1} \end{pmatrix}  =|\xi|^{-\frac{\dim_{\RF}\BV}{4}}\int_{\Mat_2(\RF)^{\heartsuit}}\phi(x)\psi^{-1}\left( \tau\left(  x\begin{pmatrix}
			\xi^{-1} & 0 \\ 0 & 1
		\end{pmatrix}\right) \right)\ud x.
	\]
\end{itemize}
\end{prp}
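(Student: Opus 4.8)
The plan is to start from the explicit evaluation of the regularized integral obtained inside the proof of Lemma~\ref{lem_RegOrb}: for every $t\in\RT$ there are $x(t),y(t)\in\RF^{\times}$, an integer $n(\BV)$, and an $\RF$-linear automorphism $z(t,\cdot)$ of $\BV$ — all read off directly from the Weil representation formulae of Section~\ref{sec_Weil} for the torus $\RT\subset\RU(\BW,\BB_{\BW})$ — such that
\[
\BT(\Phi)(t)=|x(t)|^{-1}\,|y(t)|^{n(\BV)}\int_{\BV_{x(t)^{-1}}}\Phi(z(t,v))\,\psi^{-1}\bigl(\tau(\BB_{\BV}(v_0,v))\bigr)\,|\omega_{x(t)^{-1}}(v)|.
\]
Here $z(t,\cdot)$ is the linear automorphism of $\BV\cong\BY$ through which $t$ acts in the Weil representation; it rescales the norm $v\mapsto\tfrac{1}{2}\tau(\BB_{\BV}(v,v))$ by the character $x(t)$ and transforms the coordinate $v\mapsto\BB_{\BV}(v_0,v)$ by an explicit affine-linear map of $\BD$. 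The first move is to substitute $w=z(t,v)$, which carries the fibre $\BV_{x(t)^{-1}}$ isomorphically onto $\RX=\BV_{1}$; using the definition of $|\omega_x|$ from $\ud v$ via \cite[Chapter~7]{Igu00} one records how $z(t,\cdot)$ rescales the Haar measure of $\BV$ (by $|\det z(t,\cdot)|_{\BV}$) and the base coordinate (by $x(t)$), and hence how it transforms the fibrewise measures $|\omega_x|$. After this substitution the integrand involves only the restriction $\res(\Phi)=\Phi|_{\RX}$, so $\BT$ factors through $\res\colon\CF(\BV)\to\CF(\RX)$, and a short case-by-case computation shows that the accumulated powers of $|\cdot|$ collapse to $|\zeta|^{-\dim_{\RF}\BV/2}$ when $\BD=\RF$, to $|t_1/t_2|^{-\dim_{\RF}\BV/4}$ when $\BD=\RF\times\RF$, and to $|\xi|^{-\dim_{\RF}\BV/4}$ when $\BD=\Mat_2(\RF)$.

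Next I would disintegrate the measure $|\omega_1|$ on $\RX=\BV_1$ along the $\RU(v_0^{\perp})$-invariant submersion $\RX^{\heartsuit}\to\BD^{\heartsuit}$, $v\mapsto\BB_{\BV}(v_0,v)$, using the same fibration formula of \cite[Chapter~7]{Igu00} that defines the measures $|\mu_x|$. Since the remaining phase is $\psi^{-1}$ of an affine-linear form evaluated at $\BB_{\BV}(v_0,v)$, it is constant along each fibre $\RX_x$, so the inner integral over $\RX_x$ is by construction the orbital integral $\CI(\Phi)(x)$ — up to the reparametrization $y=4x-2$ of $\BD$ in the case $\BD=\RF$ with $\dim_{\RF}\BV$ odd, which is precisely the normalization built into $\CI$. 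Hence $\BT(\Phi)(t)$ equals the displayed prefactor times $\int_{\BD^{\heartsuit}}\CI(\Phi)(x)\,\psi^{-1}(\ell_t(x))\,\ud x$ for the explicit form $\ell_t$ dictated by $z(t,\cdot)$, which at once establishes the factorization through $\CF(\RX)\to\CF(\FX)$ and produces the formula for $\widetilde{\BT}$. It then remains to match $\ell_t$ with the phase in each of (1)--(4); in the case $\BD=\RF$ with $\dim_{\RF}\BV$ odd the shift $y=4x-2$ accounts simultaneously for the extra phase $\psi^{-1}(-2/\zeta)$ and for the argument $4\zeta^{-1}x$, where one uses that $q$ is odd, so that $|4|=1$ and Haar measure is unaffected by $y\mapsto 4y$.

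Two points deserve care. The main technical obstacle is the measure bookkeeping of the first step: one must track $|x(t)|^{-1}$, $|y(t)|^{n(\BV)}$, $|\det z(t,\cdot)|_{\BV}$ and the rescaling of the base coordinate simultaneously, and verify that the exponents telescope to the stated power of $|\cdot|$ uniformly across all four cases. Secondly, in the case $\BD=\Mat_2(\RF)$ the torus $\RT$ sits inside $\RU(\BW)=\RO_4$ in the non-standard way $\xi\mapsto\diag(\xi,1,1,\xi^{-1})$, so no naive ``diagonal rescaling'' picture is available; instead one specializes the Weil operator of Section~\ref{sec_Weil} for $\diag(t_1,t_2,t_2^{-1},t_1^{-1})$ at $t_2=1$, which gives $x(t)=\xi$ and $z(t,\cdot)$ equal to right multiplication by $\left(\begin{smallmatrix}\xi&0\\0&1\end{smallmatrix}\right)$ on $\BV=\BD^n$. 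Finally, in the subcase $\BD=\RF$ with $\dim_{\RF}\BV$ odd one should recall, as in the remark following Lemma~\ref{lem_RegOrb}, that $\RU(\BW)$ ought to be replaced by its metaplectic cover; this does not affect the present local computation, since only the canonical splittings over $\RT$ and $\RN$ intervene and the formulae of Section~\ref{sec_Weil} are stated there.
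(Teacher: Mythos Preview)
Your proposal is correct and follows essentially the same route as the paper: expand $\BT(\Phi)(t)$ via the explicit Weil-representation formulae for the torus, change variables so the integrand involves only $\Phi|_{\RX}$, let the $u$-integral localize to $\RX=\BV_1$, and then disintegrate along $\RX\to\BD^{\heartsuit}$ to obtain $\CI(\res(\Phi))$. The only organizational difference is that the paper performs the change of variables \emph{before} the localization (so only the Haar measure on $\BV$ is rescaled), whereas you start from the endpoint of Lemma~\ref{lem_RegOrb} and track how $z(t,\cdot)$ transforms the fibrewise measures $|\omega_x|$; both yield the same prefactors.
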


\begin{proof}
	When $\BD=\RF$ and $\dim_{\RF}\BV$ is even, this is \cite[Proposition 11.1]{GW21}. We also include the proof here for the reader's convenience. Let $\Phi\in\CF(\BV)$, for any integer $k$, we have
\begin{align*}
  \BT(\Phi)(t)&=\int_{\Fp_{\RF}^{-k}}\omega_{\psi}\left(\Bw t\begin{pmatrix}
			1 & u\\ 0 & 1
		\end{pmatrix} \right)\Phi(v_0)\psi^{-1}(u)\ud u \\
		&=\int_{\Fp_{\RF}^{-k}}\int_{\BV}\omega_{\psi}\left( t\begin{pmatrix}
		1 & u \\ 0 & 1
	\end{pmatrix} \right)\Phi (v)\psi^{-1}(\BB(v_0,v))     \ud v \cdot \psi^{-1}(u)\ud u. \\
\end{align*}
Write $t=\begin{pmatrix}
	\zeta & 0 \\ 0 & \zeta^{-1}
\end{pmatrix}$, we have
\begin{align*}
\omega_{\psi} \left( \begin{pmatrix}
	\zeta & 0 \\ 0 & \zeta^{-1}
\end{pmatrix}\begin{pmatrix}
		1 & u \\ 0 & 1
	\end{pmatrix} \right)\Phi (v)&=|\zeta|^{\frac{\dim_{\RF}\BV}{2}}\omega_{\psi} \left( \begin{pmatrix}
		1 & u \\ 0 & 1
	\end{pmatrix} \right)\Phi (\zeta v)\\
	&=|\zeta |^{\frac{\dim_{\RF}\BV}{2}}\psi\left(  \frac{u \cdot  \BB(\zeta v,\zeta v)}{2}       \right)\Phi(\zeta v).
\end{align*}
Then we have
	\begin{align*}
 & \int_{\Fp_{\RF}^{-k}}\omega_{\psi}\left(\Bw t \begin{pmatrix}
			1 & u \\ 0 & 1
		\end{pmatrix} \right)\Phi(v_0)\psi^{-1}(u)\ud u \\
		&=\int_{\Fp_{\RF}^{-k}}|\zeta|^{\frac{\dim_{\RF}\BV}{2}} \int_{\BV}  \psi\left(  \frac{u \cdot \BB(\zeta v,\zeta v)}{2}       \right)\Phi(\zeta v )\psi^{-1}(\BB(v_0,v))     \ud v \cdot \psi^{-1}(u)\ud u \\
		&=|\zeta|^{-\frac{\dim_{\RF}\BV}{2}} \int_{\BV}\Phi( v)\psi^{-1}(\BB(v_0,\zeta^{-1}v)) \int_{\Fp_{\RF}^{-k}}\psi\left( u \left(\frac{ \BB( v, v)}{2} -1\right)      \right)\ud u\ud v. 
\end{align*}
Note that
	\[
	 \int_{\Fp_{\RF}^{-k}}\psi\left( u \left(\frac{ (\BB( v, v)}{2} -1\right)      \right)\ud n =\left\{ \begin{array}{rcl}
q^{k} &
\mathrm{if}\;\frac{ \BB( v, v)}{2} \in 	1+\Fp_{\RF}^{k}  \\ 0 & \mathrm{otherwise}. 
\end{array}\right.
	\]
Then, when $k$ is large enough, we have
	\begin{align*}
 & \int_{\Fp_{\RF}^{-k}}\omega_{\psi}(\Bw )\omega_{\psi}(t)\omega_{\psi}\left( \begin{pmatrix}
			1 & u \\ 0 & 1
		\end{pmatrix} \right)\Phi(v_0)\psi^{-1}(u)\ud u \\
		&=  |\zeta|^{-\frac{\dim_{\RF}\BV}{2}} \int_{\RX}\Phi( v)\psi^{-1}(\BB(v_0,\zeta^{-1}v))|\omega_1(v)| \\
		&= |\zeta|^{-\frac{\dim_{\RF}\BV}{2}}\int_{\RF^{\heartsuit}}\int_{\RX_{x}}\Phi(v) |\mu_{x}(v)| \psi^{-1}(\zeta^{-1} x )\ud x \\
		&= |\zeta|^{-\frac{\dim_{\RF}\BV}{2}}\int_{\RF^{\heartsuit}} \CI(\res(\Phi))(x )\psi^{-1}(\zeta^{-1} x)\ud x .
\end{align*}

In the case that $\dim_{\RF}\BV$ is odd, using our coordinate, we have
\begin{align*}
	\widetilde{\BT}(\phi) \begin{pmatrix}
		\zeta & 0 \\  0 & \zeta^{-1}
	\end{pmatrix} &=\gamma(t,\psi)|\zeta|^{-\frac{\dim_{\RF}\BV}{2}}\int_{\RF^{\heartsuit}}\CI(\res(\Phi))(x)\psi^{-1}(\zeta^{-1}(4x-2))\ud x \\
	&=\gamma(t,\psi)|\zeta|^{-\frac{\dim_{\RF}\BV}{2}}\psi^{-1}\left( \frac{-2}{\zeta} \right)\int_{\RF^{\heartsuit}} \CI(\res(\Phi))(x)\psi^{-1}(4\zeta^{-1}x)\ud x.
\end{align*}

When $\BD=\RF\times\RF$, for $\Phi\in\CF(\BV)$ and $t=\begin{pmatrix}
	t_1 & 0   \\ 0 &  t_2 
\end{pmatrix}\in\RT$, we have
\begin{align*}
	 & \int_{\Fp_{\RF}^{-k}}\omega_{\psi}\left( \Bw t\begin{pmatrix}
			1 & u \\ 0 & 1
		\end{pmatrix} \right)\Phi(v_0)\psi^{-1}(u)\ud u \\
		&=\int_{\Fp_{\RF}^{-k}}\int_{\BV} \omega_{\psi} \left(\begin{pmatrix}
	t_1 & 0 \\ 0 & t_2 
\end{pmatrix}\begin{pmatrix}
		1 & u \\ 0 & 1
	\end{pmatrix} \right)\Phi (v )\psi^{-1}(\tau(\BB(v_0,v)))     \ud v \cdot \psi^{-1}(u)\ud u \\
\end{align*}
	Write $\BV=\RF^n\times\RF^n$ and its elements as $(v,v^{\vee})$, then we have
	\[
	\omega_{\psi} \left(\begin{pmatrix}
	t_1 & 0 \\ 0 & t_2 
\end{pmatrix}  \begin{pmatrix}
		1 & u \\ 0 & 1
	\end{pmatrix} \right)\Phi ((v,v^{\vee}))=\left| \frac{t_1}{t_2}  \right|^{\frac{\dim_{\RF}\BV}{4}}\psi( n\langle t_1 v,t_2^{-1}v^{\vee}\rangle )\Phi((t_1v,t_2^{-1}v^{\vee})),
	\]
	hence
	\begin{align*}
	 &  \int_{\Fp_{\RF}^{-k}}\omega_{\psi}\left( \Bw t\begin{pmatrix}
			1 & u \\ 0 & 1
		\end{pmatrix} \right)\Phi(v_0)\psi^{-1}(u)\ud u \\
		&=\int_{\Fp_{\RF}^{-k}}\int_{\RF^n\times\RF^n}        \left|\frac{t_1}{ t_2}\right|^{\frac{n}{2}}\psi( u \langle v,v^{\vee}\rangle )\Phi((v,v^{\vee}))\cdot  \psi^{-1}(\tau(\BB(v_0,(t_1^{-1}v,t_2 v^{\vee}))))  \ud v\ud v^{\vee}\cdot \psi^{-1}(u)\ud u \\
		&=  \left|\frac{t_1}{ t_2}\right|^{-\frac{n}{2}} \int_{\RF^n\times\RF^n}\Phi((v,v^{\vee})) \psi^{-1}( \langle t_1^{-1}v,e_n^{\vee }\rangle +\langle e_n,t_2v^{\vee}\rangle  ) \int_{\Fp_{\RF}^{-k}}     \psi\left(u\left(   \langle v,v^{\vee}\rangle -1\right)  \right)\ud u  \ud v \ud v^{\vee} 
\end{align*}
Since we have
	\[
	 \int_{\Fp_{\RF}^{-k}}\psi\left(u  \left(   \langle v,v^{\vee}\rangle -1\right)  \right)\ud u  =\left\{ \begin{array}{rcl}
q^k  &
\mathrm{if}\; \langle v ,v^{\vee} \rangle \in 	1+\Fp_{\RF}^{k}  \\ 0 & \mathrm{otherwise},
\end{array}\right.
	\]
hence when $k$ is large enough,
\begin{align*}
	 &  \int_{\Fp_{\RF}^{-k}}\omega_{\psi}\left( \Bw t\begin{pmatrix}
			1 & u \\ 0 & 1
		\end{pmatrix} \right)\Phi(v_0)\psi^{-1}(u)\ud u\\
		&=  \left|\frac{t_1}{ t_2}\right|^{-\frac{n}{2}}  \int_{\RX}\Phi((v,v^{\vee})) \psi^{-1}( \langle t_1^{-1}v,e_n^{\vee }\rangle +\langle e_n,t_2v^{\vee}\rangle  )   |\omega_1((v,v^{\vee}))|\\
		&=\left|\frac{t_1}{ t_2}\right|^{-\frac{n}{2}}  \int_{(\RF\times\RF)^{\heartsuit}}\CI(\res(\Phi))(x_1,x_2)  \psi^{-1}(t_2x_1+t_1^{-1}x_2)\ud x_1\ud x_2.
\end{align*}

If $\BD=\Mat_2(\RF)$, for $\Phi\in\CF(\BV)$ and $t= \begin{pmatrix}  \xi  & 0 & 0 & 0 \\ 0 & 1 & 0 & 0 \\ 0 & 0 & 1 & 0 \\ 0 & 0 & 0 & \xi^{-1} \end{pmatrix} \in\RT$, we have
\begin{align*}
	 &  \int_{\Fp_{\RF}^{-k}}\omega_{\psi}\left( \Bw t\begin{pmatrix}
			1 & u \\ 0 & 1
		\end{pmatrix} \right)\Phi(v_0)\psi^{-1}(u)\ud u\\
		&=\int_{\Fp_{\RF}^{-k}}\int_{\BV} \omega_{\psi,\BX,\BY}\left(  \begin{pmatrix}  \xi  & 0 & 0 & 0 \\ 0 & 1 & 0 & 0 \\ 0 & 0 & 1 & 0 \\ 0 & 0 & 0 & \xi^{-1} \end{pmatrix} n \right)\Phi (v )\psi^{-1}(\tau(\BB(v_0,v)))     \ud v \cdot \psi^{-1}(u)\ud u \\
\end{align*}
Since we have
	\[
	\omega_{\psi}\left(tu\right)\Phi (v)=\left|\xi \right|^{\frac{\dim_{\RF}\BV}{4}}\psi\left( \frac{u \cdot \tau\left(\BB_{\BV}\left(v\begin{pmatrix}
		\xi  & 0 \\ 0 & 1
	\end{pmatrix},v\begin{pmatrix}
		\xi & 0 \\ 0 & 1
	\end{pmatrix} \right)\right)  }{2}  \right)\Phi\left(v \begin{pmatrix}
		\xi & 0 \\ 0 & 1
	\end{pmatrix} \right),
	\]
	then
	\small{
	\begin{align*}
	 &  \int_{\Fp_{\RF}^{-k}}\omega_{\psi}\left( \Bw t\begin{pmatrix}
			1 & u \\ 0 & 1
		\end{pmatrix} \right)\Phi(v_0)\psi^{-1}(u)\ud u \\
	 &=\int_{\Fp_{\RF}^{-k}}\int_{\BV}        \left|\xi \right|^{-\frac{\dim_{\RF}\BV}{4}}\psi\left(\frac{u\cdot \tau\left(\BB_{\BV}(v,v)\right)}{2}  \right)\Phi(v)  \cdot  \psi^{-1}\left(\tau\left(\BB\left(v_0, v \begin{pmatrix}
	 	\xi ^{-1} & 0 \\ 0 & 1
	 \end{pmatrix}  \right)\right) \right) \ud v \cdot \psi^{-1}(u)\ud u \\
		&=  \left|\xi \right|^{-\frac{\dim_{\RF}\BV}{4}} \int_{\BV}\Phi(v) \psi^{-1}\left( \tau \left(\BB_{\BV}(v_0,v) \begin{pmatrix}
			\xi^{-1} & 0 \\ 0 & 1
		\end{pmatrix}  \right) \right) \int_{\Fp_{\RF}^{-k}}     \psi\left(u\left(   \frac{\tau(\BB_{\BV}(v,v))}{2} -1\right)  \right)\ud u  \ud v \end{align*}}
Again
	\[
	 \int_{\Fp_{\RF}^{-k}}\psi\left(u\left(   \frac{\tau(\BB_{\BV}(v,v))}{2} -1\right)  \right)\ud u  =\left\{ \begin{array}{rcl}
q^k&
\mathrm{if}\; \frac{\tau(\BB_{\BV}(v,v))}{2} \in 	1+\Fp_{\RF}^{k}  \\ 0 & \mathrm{otherwise},
\end{array}\right.
	\]
when $k$ is large enough,
\begin{align*}
	 &  \int_{\Fp_{\RF}^{-k}}\omega_{\psi}\left( \Bw t\begin{pmatrix}
			1 & u \\ 0 & 1
		\end{pmatrix} \right)\Phi(v_0)\psi^{-1}(u)\ud u\\
		&=  \left|\xi\right|^{-\frac{\dim_{\RF}\BV}{4}}  \int_{\RX}\Phi(v)) \psi^{-1}\left( \tau \left(\BB_{\BV}(v_0,v)\begin{pmatrix}
			\xi^{-1} & 0 \\ 0 & 1
		\end{pmatrix}  \right) \right) |\omega_1(v)|\\
		&=\left|\xi\right|^{-\frac{\dim_{\RF}\BV}{4}}  \int_{\Mat_2(\RF)^{\heartsuit}}\CI(\res(\Phi))(x)  \psi^{-1}\left(\tau \left(  x  \begin{pmatrix}
			\xi^{-1} & 0 \\ 0 & 1
		\end{pmatrix} \right)\right)\ud x.
		\end{align*}

\end{proof}

To facilitate a comparison with the transfer operators established in Theorem \ref{thm_TrSak}, we examine each case in detail. We begin with the case $\BD=\RF$, which is the most direct. Let $\CS(\FX)=\CF(\FX)\ud x$ and $\CS_{\BV}(\RT):=\Orb_{\BV}(\RT)|\zeta^2|\ud^{\times}\zeta$ denote the respective spaces of measures. Retaining the notation $\overline{\BT}$ for the extension to $\CS(\FX)$.

\begin{prp}\label{prp_tr_weil_d}
	In the case where $\dim\BV$ is even, let $\CT$ denote the transfer established in Theorem \ref{thm_TrSak}. Then, on the space of push-forward measures, we have the identity
    \[
\widetilde{\BT}\circ\CT=\Id.
\]  
\end{prp}

\begin{proof}
According to Proposition \ref{prp_TranOp}, for $\phi\in\CF(\FX)$, we have
\begin{align}\label{eq_tr_d_n}
	\widetilde{\BT}(\phi)(\zeta)= |\zeta |^{-\frac{\dim_{\BD}\BV}{2}}\int_{\BD^{\heartsuit}}\phi(x)\psi^{-1}(\zeta^{-1}x)\ud x 
	= (|\cdot|^{-n}\cdot  \iota \circ\BF_{\psi}(\phi))(\zeta).
	\end{align}
\end{proof}

\begin{prp}\label{prp_tr_weil_b}
   When $\dim_{\RF}\BV$ is odd, we have
    \[
    \widetilde{\BT}\circ\CT=\gamma(\cdot,\psi) \psi\left(  \frac{2}{\cdot} \right)\cdot \left(|\cdot|^{\frac{1}{2}}\circ \circ\BF_{\psi^{-1}}\circ\iota\circ|\cdot|\right)\left( \frac{\cdot}{4}  \right).
    \]
\end{prp}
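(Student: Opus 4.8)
The plan is to compose the explicit formula for $\widetilde{\BT}$ in the odd-dimensional case, Proposition~\ref{prp_TranOp}(2), with the reformulation of Sakellaridis's transfer operator $\CT$ given in Lemma~\ref{lem_Tr_Sak}, and then to collapse the composition by Fourier inversion. This is the exact analogue of the argument behind Proposition~\ref{prp_tr_weil_d}, the only new feature being the additive twist $\psi(2/\zeta)$, which is already visible on the right-hand side of Proposition~\ref{prp_TranOp}(2).

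First I would fix notation and record the two inputs. Here $\BD=\RF$ and $\dim_{\RF}\BV$ is odd, so $\RX\cong\SO_{\dim_{\RF}\BV}/\SO_{\dim_{\RF}\BV-1}$ is a $B_n$-type variety; for all members of this family $L_{\RX}=L(\std,s_1)L(\std,s_2)$ with $s_2=\tfrac12$, and writing $N:=\tfrac{\dim_{\RF}\BV}{2}$ one has $N=s_1+s_2+\tfrac12=s_1+1$. Since $\psi^{-1}(-2/\zeta)=\psi(2/\zeta)$ and, extending $\phi\in\CF(\FX)$ by zero off $\RF^{\heartsuit}$, $\int_{\RF^{\heartsuit}}\phi(x)\psi^{-1}(4\zeta^{-1}x)\ud x=\BF_{\psi}(\phi)(4/\zeta)$, Proposition~\ref{prp_TranOp}(2) becomes
\[
\widetilde{\BT}(\phi)\begin{pmatrix}\zeta&0\\0&\zeta^{-1}\end{pmatrix}=|\zeta|^{-N}\,\psi(2/\zeta)\,\BF_{\psi}(\phi)(4/\zeta).
\]
On the other side, $s_2=\tfrac12$ gives $s_2+\tfrac12=1$ and $1+s_1-s_2=s_1+\tfrac12$, so Lemma~\ref{lem_Tr_Sak} (the $\RG_{\RX}^{\vee}=\SL_2$ case) reads $\CT=\BF_{\psi^{-1}}\circ\iota\circ|\cdot|^{s_1+\frac12}\circ\BF_{\psi^{-1}}\circ\iota\circ|\cdot|$.

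Next I would substitute and simplify. Writing $g:=\BF_{\psi^{-1}}\circ\iota\circ|\cdot|\,(\phi)$, so that $\CT\phi=\BF_{\psi^{-1}}\bigl(\iota\circ|\cdot|^{s_1+\frac12}(g)\bigr)$, applying $\widetilde{\BT}$ and using Fourier inversion $\BF_{\psi}\circ\BF_{\psi^{-1}}=\mathrm{id}$ (self-dual measure) to kill the outer $\BF_{\psi^{-1}}$ of $\CT$ against the $\BF_{\psi}$ of $\widetilde{\BT}$ gives
\[
\widetilde{\BT}(\CT\phi)(\zeta)=|\zeta|^{-N}\psi(2/\zeta)\bigl(\iota\circ|\cdot|^{s_1+\frac12}(g)\bigr)(4/\zeta)=|\zeta|^{-N}\psi(2/\zeta)\,|4/\zeta|^{-(s_1+\frac12)}g(\zeta/4).
\]
Because $q$ is odd one has $|4|=1$, so $|4/\zeta|^{-(s_1+\frac12)}=|\zeta|^{s_1+\frac12}$ and, with $N=s_1+1$, the powers of $|\zeta|$ collapse to $|\zeta|^{-N+s_1+\frac12}=|\zeta|^{-1/2}=|\zeta/4|^{-1/2}$; hence
\[
\widetilde{\BT}(\CT\phi)(\zeta)=\psi(2/\zeta)\,|\zeta/4|^{-1/2}g(\zeta/4)=\psi(2/\zeta)\cdot\bigl(|\cdot|^{-\frac12}\circ\BF_{\psi^{-1}}\circ\iota\circ|\cdot|\bigr)(\phi)(\zeta/4),
\]
which is the assertion.

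The computation itself is purely formal, so the real work is bookkeeping: I would need to fix the normalizations of $\BF_{\psi}$ and $\BF_{\psi^{-1}}$ so that $\BF_{\psi}\circ\BF_{\psi^{-1}}=\mathrm{id}$ on the nose, to check $s_2=\tfrac12$ (equivalently $N=s_1+1$) for every $B_n$-type entry in the list, and to confirm that all powers of $2$ and $4$ are invisible because $q$ is odd. The one point worth isolating is that the twist $\psi(2/\zeta)$, absent from Proposition~\ref{prp_tr_weil_d}, is precisely the factor $\psi^{-1}(-2/\zeta)$ forced by the odd-dimensional normalization in Proposition~\ref{prp_TranOp}(2) together with the shifted coordinate $x=\frac{\BB(v_0,v)}{4}+\frac12$ built into the definition of $\CI$; no cancellation against $\CT$ affects it. I expect reconciling the Fourier-transform conventions here with those in Lemma~\ref{lem_Tr_Sak} to be the most error-prone part.
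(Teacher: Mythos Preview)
Your proposal is correct and follows exactly the paper's approach: the paper's proof is the single sentence ``This also comes from Proposition~\ref{prp_TranOp} and Lemma~\ref{lem_Tr_Sak}'', and you have spelled out precisely that composition, using Fourier inversion to cancel the outer $\BF_{\psi^{-1}}$ of $\CT$ against the $\BF_{\psi}$ implicit in $\widetilde{\BT}$, and the identity $N=s_1+1$ (equivalently $s_2=\tfrac12$) to collapse the remaining powers of $|\zeta|$ to $|\zeta|^{-1/2}$. Your bookkeeping with the factor $4$ and the twist $\psi(2/\zeta)$ is accurate.
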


\begin{proof}
   The result is also a direct consequence of Proposition \ref{prp_TranOp}.
    \end{proof}

Next, consider the case that $\BD=\RF\times\RF$. Consider the following $\BG_m$-action on $\BA^2$:
\[
z(x_1,x_2):=(zx_1,z^{-1}x_2),\;\forall z\in\BG_m,\;(x_1,x_2)\in\BA^2.
\]
It is clear that $\BA^2\sslash\BG_m=\BA^1$ via the map $(x_1,x_2)\mapsto x_1x_2$.\begin{lem}\label{lem_abs_a_lft}
	For any $\phi\in\CF(\FX)$, $x_1,x_2\in\RF^{\times}$ such that $x_1x_2\neq 0,1$, 
	\[
	\int_{\BG_m}\phi(z^{-1}x_1,zx_2 )   \ud z
	\] 
	is absolutely converget.\end{lem}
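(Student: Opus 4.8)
The plan is to reduce the integral over $\BG_m$ to an integral of a locally constant function over a \emph{compact} subset of $\BG_m$, after which absolute convergence is immediate. By definition, $\phi\in\CF(\FX)$ means $\phi=\CI(\res(\Phi))$ for some $\Phi\in\CF(\BV)=C_c^{\infty}(\BV)$, and, viewing $\phi$ as a measure, we write $\phi(x_1,x_2)$ for its density against $\ud x_1\ud x_2$. First I would record two features of this density. \emph{(i) Compact support:} the assignment $v\mapsto\BB_{\BV}(v_0,v)$ is the restriction to $\RX$ of an $\RF$-linear map $\BV\to\BD=\RF\times\RF$, so it carries the compact set $\mathrm{supp}(\Phi)$ to a compact subset of $\RF\times\RF$; since $\CI(\res(\Phi))(x_1,x_2)$ is nonzero only when the fibre $\RX_{(x_1,x_2)}$ meets $\mathrm{supp}(\Phi)$, there is an $R>0$ with $\phi(x_1,x_2)=0$ whenever $|x_1|>R$ or $|x_2|>R$. \emph{(ii) Local constancy on $(\RF\times\RF)^{\heartsuit}$:} over the regular locus $x_1x_2\ne 1$ the map $\RX\to\BD$ is a submersion, and the fibrewise integral of a locally constant, compactly supported function along a submersion is again locally constant (cf.\ \cite[Chapter 7]{Igu00}); hence $\phi$ is locally constant on $(\RF\times\RF)^{\heartsuit}$.

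Next comes the geometric reduction. Fix $x_1,x_2\in\RF^{\times}$ with $c:=x_1x_2\ne 0,1$. The map $z\mapsto(z^{-1}x_1,zx_2)$ is an isomorphism of $\BG_m$ onto the level hyperbola $H_c:=\{(u,v)\in\RF^{\times}\times\RF^{\times}:uv=c\}$, carrying $\ud z$ to an invariant measure on $H_c$; moreover, because $c\ne 1$, the set $H_c$ is closed in $\RF\times\RF$ and contained in $(\RF\times\RF)^{\heartsuit}$. By (i), the integrand $z\mapsto\phi(z^{-1}x_1,zx_2)$ vanishes outside
\[
C:=\{\,z\in\RF^{\times}:|z^{-1}x_1|\le R\text{ and }|zx_2|\le R\,\}=\{\,z\in\RF^{\times}:|x_1|/R\le|z|\le R/|x_2|\,\},
\]
which is a compact subset of $\BG_m$ (and empty if $|x_1x_2|>R^2$). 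Its image under $z\mapsto(z^{-1}x_1,zx_2)$ is a compact subset of $H_c\subset(\RF\times\RF)^{\heartsuit}$, on which the locally constant function $\phi$ is bounded, say by $M$, by (ii). Therefore
\[
\int_{\BG_m}|\phi(z^{-1}x_1,zx_2)|\,\ud z\;=\;\int_{C}|\phi(z^{-1}x_1,zx_2)|\,\ud z\;\le\;M\cdot\vol(C)\;<\;\infty
\]
(the integral being $0$ when $C=\emptyset$), which is the assertion.

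I do not expect a real obstacle here; the only thing that needs attention is the bookkeeping of the singular locus. The averaging runs over the full $\BG_m$-orbit of $(x_1,x_2)$, which is exactly the hyperbola $\{uv=c\}$; the hypothesis $x_1x_2\ne 1$ is precisely what keeps this orbit away from $\{x_1x_2=1\}$, the locus where $\phi=\CI(\res(\Phi))$ may fail to be locally constant (and may genuinely blow up). Once that is in place, compact support of the orbital-integral density confines the integrand to a compact arc of the hyperbola, and local constancy on the complement of the singular locus makes it bounded there, so absolute convergence follows at once. The underlying inputs---properness on compacta of the linear moment map $v\mapsto\BB_{\BV}(v_0,v)$, and local constancy of orbital integrals along submersions---are both standard.
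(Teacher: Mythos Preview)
Your proof is correct and follows essentially the same approach as the paper: both argue that the support of $\phi$ is compact (since it is the image of $\mathrm{supp}(\Phi)$ under a continuous map), so the $\BG_m$-integral is actually over a compact annulus in $\RF^{\times}$, and the integrand is controlled there because the hyperbola $\{uv=x_1x_2\}$ avoids the singular locus $\{uv=1\}$. Your write-up is more explicit than the paper's about the local constancy and boundedness steps, but the underlying argument is the same.
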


\begin{proof}
	Let $\Phi\in\CF(\RX)$ and $\phi=\CI(\Phi)$. Note that for fixed $(x_1,x_2)\in(\RF\times\RF)^{\heartsuit}$ such that $x_1x_2\neq 0,1$, \[\phi(z^{-1}x_1,zx_2)\neq 0\] only when the support of $\Phi$ has a non-empty intersection with $\RX_{(z^{-1}x_1,zx_2)}$. Since the support of $\Phi$ is compact, and $\RX\rightarrow\RF$ is a submersion away from $0,1$, the integral\[
\int_{\BG_m}\phi(z^{-1}x_1,zx_2 )   \ud z
\]
is actually over some compact subsect of $\RF^{\times}$, hence is absolutely convergent. 
\end{proof}

On the other hand, consider the center integrals of elements in $\Orb_{\BV}(\RT)$, we also have
\begin{lem}\label{lem_conv_a}
	For $f\in \Orb_{\BV}(\RT) $,
\[
\int_{\BG_m}   f \left(  \begin{pmatrix}
	\xi & 0 \\ 0 & 1
\end{pmatrix}  \begin{pmatrix}
	z & 0 \\ 0 & z
\end{pmatrix}  \right)\ud z
\]
is absolutely convergent for $\xi\in\RF^{\times}$.
\end{lem}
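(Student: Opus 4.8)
The plan is to evaluate the integrand through the explicit $\RX$-side formula underlying Proposition~\ref{prp_TranOp}, and then to show that, for a fixed $\xi\in\RF^\times$, the function $z\mapsto f\!\left(\diag(\xi z,z)\right)$ is \emph{supported in a compact subset of} $\RF^\times$; since every element of $\Orb_\BV(\RT)$ is a bounded (indeed locally constant) function on $\RT$, absolute convergence follows immediately.

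First I would use that, by the definition of $\Orb_\BV(\RT)$, we may write $f=\BT(\Phi)$ for some $\Phi\in\CF(\BV)$, and specialize the computation carried out in the proof of Proposition~\ref{prp_TranOp} (case $\BD=\RF\times\RF$) to $t=\diag(\xi z,z)$. Writing $\beta_1,\beta_2\colon\RX\to\BA^1$ for the two components of $v\mapsto\BB_\BV(v_0,v)$ — so that $\beta_1(v,v^\vee)=v^\vee_n$ and $\beta_2(v,v^\vee)=v_n$ in the coordinates of Section~\ref{sec_trWeil} — that computation yields
\[
f\!\left( \begin{pmatrix} \xi z & 0 \\ 0 & z \end{pmatrix} \right) = |\xi|^{-\frac{\dim_\RF \BV}{4}} \int_\RX \Phi(v)\, \psi^{-1}\!\big( z\,\beta_1(v) + \xi^{-1}z^{-1}\beta_2(v) \big)\, |\omega_1(v)|,
\]
an absolutely convergent integral over $\mathrm{supp}\,\Phi$ (compact); in particular $\big| f(\diag(\xi z,z)) \big| \le |\xi|^{-\dim_\RF\BV/4}\int_\RX |\Phi|\,|\omega_1|$ for every $z\in\RF^\times$, so $z\mapsto f(\diag(\xi z,z))$ is bounded.

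Next I would record two facts. Since $\mathrm{supp}\,\Phi$ is compact there is an $M>0$ with $|\beta_1(v)|,|\beta_2(v)|\le M$ on $\mathrm{supp}\,\Phi$. And $\beta_1,\beta_2$ are submersions at \emph{every} point of $\RX=\{(v,v^\vee):\langle v,v^\vee\rangle=1\}$ — a one-line tangent-space check, using that $v$ and $v^\vee$ are automatically nonzero there (e.g.\ for $\beta_1$ one produces a tangent vector with $\dot v^\vee=e_n$ by solving $\langle\dot v,v^\vee\rangle=-v_n$). Consequently the fibrewise integrals $h_i:=(\beta_i)_*\big(\Phi|_{\RX}\,|\omega_1|\big)$, $i=1,2$, are genuine Schwartz measures on $\BA^1$; write $h_i=\eta_i\,\ud x$ with $\eta_i\in\CF(\BA^1)$.

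Finally I would localize in $z$, using that $\psi$ is unramified. For $|z|\ge |\xi|^{-1}M$ the factor $\psi^{-1}(\xi^{-1}z^{-1}\beta_2(v))$ equals $1$ throughout $\mathrm{supp}\,\Phi$, so the integral above collapses to $|\xi|^{-\dim_\RF\BV/4}\,\BF_\psi(\eta_1)(z)$, which vanishes once $|z|$ exceeds the finite radius of the support of the Schwartz function $\BF_\psi(\eta_1)$; symmetrically, for $|z|\le M^{-1}$ it collapses to $|\xi|^{-\dim_\RF\BV/4}\,\BF_\psi(\eta_2)(\xi^{-1}z^{-1})$, which vanishes for $|z|$ sufficiently small. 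Hence $z\mapsto f(\diag(\xi z,z))$ is bounded and vanishes outside some annulus $\{r\le|z|\le R\}\subset\RF^\times$, and $\int_{\BG_m} \big| f(\diag(\xi z,z)) \big|\, \ud z \le |\xi|^{-\dim_\RF\BV/4}\Big(\int_\RX |\Phi|\,|\omega_1|\Big)\vol\big(\{r\le|z|\le R\}\big)<\infty$. The step carrying the real content is the everywhere-submersivity of $\beta_1$ and $\beta_2$ on $\RX$ (and not merely over $\BD^\heartsuit$), which is what upgrades the a priori only compactly supported measures $h_i$ to Schwartz measures and thereby forces $\BF_\psi(\eta_i)$ to have finite support; the remaining manipulations are formal.
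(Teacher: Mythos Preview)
Your proof is correct, and shares with the paper the overall strategy of showing that, for fixed $\xi$, the function $z\mapsto f(\diag(\xi z,z))$ has compact support in $\RF^\times$. The implementation, however, is genuinely different. The paper reduces to the case where $\Phi$ is a pure tensor $\Phi_1\otimes\cdots\otimes\Phi_n\otimes\Psi_1\otimes\cdots\otimes\Psi_n$ and then performs the partial Fourier transforms in the $x_i$-variables explicitly, writing the integrand in terms of the factors $\BF_{\psi^{-1}}(\Phi_i)(uy_i)$; vanishing for large $|z|$ is obtained by observing that the shift $-\xi^{-1}z^{-1}$ in the last factor becomes negligible, after which the remaining $y_n$-integral against $\psi^{-1}(zy_n)$ vanishes (and symmetrically for small $|z|$). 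Your route --- checking that the coordinate projections $\beta_1,\beta_2$ are submersions on all of $\RX$, so that the pushforwards $h_i$ are Schwartz and hence $\BF_\psi(\eta_i)$ is compactly supported --- avoids the pure-tensor reduction entirely and is more conceptual. The paper's explicit-coordinates approach has the minor advantage of foreshadowing the computations with the basic function (e.g.\ Lemma~\ref{lem_basic}), but your argument is self-contained and arguably cleaner.
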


\begin{proof}
	According to the proof of Proposition \ref{prp_TranOp}, the choice of $k$ in the regularized integral is independent of $t$ and only depends on the function $\Phi$. Let us fix such a $k$. We first claim that when $|z|$ is large enough, 
	\[
	 f \left(  \begin{pmatrix}
	\xi & 0 \\ 0 & 1
\end{pmatrix}  \begin{pmatrix}
	z & 0 \\ 0 & z
\end{pmatrix}  \right)=0.
	\]
	In fact, we may assume $f=\widetilde{\BT}(\phi)$ for some $\phi=\CI(\res(\Phi))$, and $\Phi=\Phi_1\otimes\cdots\Phi_n\otimes\Psi_1\otimes\Psi_n$ is a pure tensor. Then we have
	\begin{align*}
		& f \left(  \begin{pmatrix}
	\xi & 0 \\ 0 & 1
\end{pmatrix}  \begin{pmatrix}
	z & 0 \\ 0 & z
\end{pmatrix}  \right)\\
&=\int_{\Fp_{\RF}^{-k}}\int_{\RF^n\times\RF^n}        \left|\xi\right|^{-\frac{\dim_{\BD}\BV}{2}}\psi( u(x_1y_1+\cdots x_ny_n) )\Phi_1(x_1)\cdots\Phi_n(x_n)\Psi_1(y_1)\cdots\Psi_n(y_n)\\
& \cdot  \psi^{-1}(\xi^{-1}z^{-1}x_n+zy_n)  \ud x_1\cdots \ud x_n\ud y_1\cdots\ud y_n  \cdot \psi^{-1}(u)\ud u \\
&=\int_{\Fp_{\RF}^{-k}}\int_{\RF^n}|\xi|^{-\frac{\dim_{\BD}\BV}{2}}\BF_{\psi^{-1}}(\Phi_1)(uy_1)\cdots\BF_{\psi^{-1}}(\Phi_{n-1})(uy_{n-1})\BF_{\psi^{-1}}(\Phi_n)(-\xi^{-1}z^{-1}+u y_n)\\
&\cdot 	\Psi_1(y_1)\cdots\Psi_n(y_n)\psi^{-1}(zy_n)\ud y_1\cdots \ud y_n\psi^{-1}(u)\ud u.
\end{align*}
For fixed $\xi$, when $z$ is large enough, we have $\BF_{\psi^{-1}}(\Phi_n)(\xi^{-1}z^{-1}-uy_n)=\BF_{\psi^{-1}}(\Phi_n)(uy_n)$, and
\begin{align*}
	\int_{\RF}\BF_{\psi^{-1}}(\Phi_n)(-\xi^{-1}z^{-1}+u y_n)\Psi_n(y_n)\psi^{-1}(zy_n) \ud y_n=\int_{\RF}\BF_{\psi^{-1}}(\Phi_n)(u y_n)\Psi_n(y_n)\psi^{-1}(zy_n) \ud y_n.
\end{align*}
It is not hard to see that when $|u|\leq q^k$, 
\[
\int_{\RF}\BF_{\psi^{-1}}(\Phi_n)(u y_n)\Psi_n(y_n)\psi^{-1}(zy_n) \ud y_n=0
\]
when $|z|$ is large enough. Similarly by changing $z$ to $z^{-1}$, we see
	\[
	 f \left(  \begin{pmatrix}
	\xi & 0 \\ 0 & 1
\end{pmatrix}  \begin{pmatrix}
	z & 0 \\ 0 & z
\end{pmatrix}  \right)=0.
	\]
	when $|z|$ is small enough, hence \[
\int_{\BG_m}   f \left(  \begin{pmatrix}
	\xi & 0 \\ 0 & 1
\end{pmatrix}  \begin{pmatrix}
	z & 0 \\ 0 & z
\end{pmatrix}  \right)\ud z
\]
is absolutely convergent for $\xi\in\RF^{\times}$. 

\end{proof}

\begin{dfn}
\begin{itemize}
	\item [(1)] For any $\phi\in\CF(\FX)$, and $x \in\RF\setminus\{0,1\}$, write
	\[
	\widetilde{\phi}(x):= \int_{\RF}\phi(z^{-1} x,z)\ud^{\times}z.
	\]
	We will use $\CF(\overline{\FX})$ to denote the space of such functions and $\CS(\overline{\FX}):=\CF(\overline{\FX})\ud x$.
	\item [(2)] Let $\Orb_{\BV}(\overline{\RT})$ be the space of functions obtained from $\Orb(\RT)$ by taking integrals over the center, namely, they are functions of $\xi\in\RF^{\times}$ of the form
\[
\widetilde{f}\left(\xi \right):= \int_{\BG_m}   f \left(  \begin{pmatrix}
	\xi & 0 \\ 0 & 1
\end{pmatrix}  \begin{pmatrix}
	z & 0 \\ 0 & z
\end{pmatrix}  \right) \ud z,\;f\in\Orb_{\BV}(\RT) .
\]
Write $\CS_{\BV}(\overline{\RT}):=\Orb_{\BV}(\overline{\RT})|\xi|\ud^{\times}\xi$.
\end{itemize}
	
\end{dfn}

\begin{lem}\label{lem_Tr_Weil_A}
We have a commutative diagram
		\begin{align*}
	\xymatrix{
\CF(\FX)   \ar@{->}[d]^{} \ar@{->}[r]  &   \Orb_{\BV}(\RT)\ar@{->}[d]^{} \\
\CS(\overline{\FX})  \ar@{->}[r]  & \CS_{\BV}(\overline{\RT}),
}
\end{align*}
where the bottom arrow is given by
\[
\overline{\BT}:|\cdot|^{-\frac{n}{2}}   \circ       \iota\circ  \BF_{\psi}\circ |\cdot|^{-1}\circ\iota\circ \BF_{\psi}
\]
in the sense of measures.
\end{lem}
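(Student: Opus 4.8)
The plan is to evaluate the two composites $\CF(\FX)\to\Orb_{\BV}(\overline{\RT})$ appearing in the square and to check that they agree as measures. Going around the top first: by Proposition \ref{prp_TranOp}(3) — note that here $\BV=\RF^{n}\times\RF^{n}$, so $\dim_{\RF}\BV=2n$ and $\dim_{\RF}\BV/4=n/2$ — together with the definition of the right-hand vertical map, for $\phi\in\CF(\FX)$ one has, writing $t_{1}=\xi z$ and $t_{2}=z$,
\begin{align*}
	\int_{\BG_{m}}\widetilde{\BT}(\phi)\left(\begin{pmatrix}\xi z & 0\\ 0 & z\end{pmatrix}\right)\ud z
	=|\xi|^{-\frac{n}{2}}\int_{\RF^{\times}}\int_{(\RF\times\RF)^{\heartsuit}}\phi(x_{1},x_{2})\,\psi^{-1}\left(zx_{1}+\xi^{-1}z^{-1}x_{2}\right)\ud x_{1}\,\ud x_{2}\,\ud^{\times}z.
\end{align*}

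Going around the bottom, I would unwind the six-fold composition $|\cdot|^{-\frac{n}{2}}\circ\iota\circ\BF_{\psi}\circ|\cdot|^{-1}\circ\iota\circ\BF_{\psi}$. The only step that is not purely formal is that, in the block $\BF_{\psi}\circ|\cdot|^{-1}\circ\iota$, the substitution $w\mapsto w^{-1}$ together with $\ud(w^{-1})=|w|^{-2}\,\ud w$ converts the additive Haar measure into the multiplicative one; a short computation then gives, for any $h$,
\begin{align*}
	\left(\iota\circ\BF_{\psi}\circ|\cdot|^{-1}\circ\iota\right)(h)(\xi)=\int_{\RF^{\times}}h(w)\,\psi^{-1}\left((\xi w)^{-1}\right)\ud^{\times}w,
\end{align*}
so the image of $\widetilde{\phi}$ under the bottom arrow equals $|\xi|^{-\frac{n}{2}}\int_{\RF^{\times}}\BF_{\psi}(\widetilde{\phi})(w)\,\psi^{-1}\left((\xi w)^{-1}\right)\ud^{\times}w$. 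Now I substitute $\widetilde{\phi}(y)=\int_{\RF^{\times}}\phi(z^{-1}y,z)\,\ud^{\times}z$ into $\BF_{\psi}(\widetilde{\phi})(w)=\int_{\RF}\widetilde{\phi}(y)\psi^{-1}(wy)\ud y$ and change variables $y=zx_{1}$ in the $y$-integral (so $\ud y=|z|\,\ud x_{1}$); writing $x_{2}:=z$ and using $|z|\,\ud^{\times}z=\ud x_{2}$ gives $\BF_{\psi}(\widetilde{\phi})(w)=\int_{(\RF\times\RF)^{\heartsuit}}\phi(x_{1},x_{2})\,\psi^{-1}(wx_{1}x_{2})\,\ud x_{1}\,\ud x_{2}$, and hence the bottom-arrow image of $\widetilde{\phi}$ equals
\begin{align*}
	|\xi|^{-\frac{n}{2}}\int_{\RF^{\times}}\int_{(\RF\times\RF)^{\heartsuit}}\phi(x_{1},x_{2})\,\psi^{-1}\left(wx_{1}x_{2}+(\xi w)^{-1}\right)\ud x_{1}\,\ud x_{2}\,\ud^{\times}w.
\end{align*}

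To conclude I match the two displayed expressions. Off the null set $\{x_{1}=0\}\cup\{x_{1}x_{2}=0\}$, the substitution $z\mapsto zx_{1}^{-1}$ in the first inner $\RF^{\times}$-integral and $w\mapsto w(x_{1}x_{2})^{-1}$ in the second both bring the integrand to $\psi^{-1}\left(z+\xi^{-1}z^{-1}x_{1}x_{2}\right)$, so that both composites equal
\begin{align*}
	|\xi|^{-\frac{n}{2}}\int_{\RF^{\times}}\int_{(\RF\times\RF)^{\heartsuit}}\phi(x_{1},x_{2})\,\psi^{-1}\left(z+\xi^{-1}z^{-1}x_{1}x_{2}\right)\ud x_{1}\,\ud x_{2}\,\ud^{\times}z.
\end{align*}
This establishes the commutativity of the square, yields the asserted formula for the bottom arrow, and (since the common value visibly lies in $\Orb_{\BV}(\overline{\RT})$) shows that the bottom arrow is well defined.

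The main obstacle is purely analytic: none of the inner $\RF^{\times}$-integrals above converges absolutely — each carries the Kloosterman/Bessel germ near $0$ and only $|\cdot|^{-1/2}$-type decay near $\infty$ — so every identity must be read in the sense of measures, respectively distributions, exactly as in the statement. Concretely, one orders the integrations so that the absolutely convergent operations are performed outermost: the $\BG_{m}$-average of an element of $\Orb_{\BV}(\RT)$ is absolutely convergent for $\xi\notin\{0,1\}$ by Lemma \ref{lem_conv_a}, and the integral defining $\widetilde{\phi}$ is absolutely convergent for $x_{1}x_{2}\notin\{0,1\}$ by Lemma \ref{lem_abs_a_lft}. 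The remaining partial Fourier transforms are then interpreted distributionally, after which the applications of Fubini and the changes of variable are legitimate; what is left is the routine bookkeeping of Haar-measure normalizations and of the Jacobian $|w|^{-2}$ noted above.
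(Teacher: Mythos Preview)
Your argument is correct and follows the same route as the paper: unfold both paths around the square into iterated integrals with kernel $\psi^{-1}(zx_{1}+\xi^{-1}z^{-1}x_{2})$ (respectively $\psi^{-1}(wx_{1}x_{2}+(\xi w)^{-1})$), then match them by Fubini and a multiplicative change of variable in the $\RF^{\times}$-integral. The only difference is presentational: the paper makes the distributional reading explicit from the outset by pairing with a test function $\varphi\in C_c^{\infty}(\RF^{\times})$ and computing $\langle\widetilde{f},\varphi\rangle$ versus $\langle\widetilde{\phi},A\varphi\rangle$ for the adjoint $A$, so that each Fubini swap is visibly justified by the compact support of $\varphi$; in your version the swap that must precede the substitution $z\mapsto zx_{1}^{-1}$ (since $x_{1}$ is an inner variable) is only implicit in the phrase ``interpreted distributionally.''
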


\begin{proof}
	Let $\varphi\in\CC_c^{\infty}(\RF^{\times})$ be a test function, and $f=\widetilde{\BT}(\phi)$ for $\phi\in\CF(\FX)$, then we have
	\begin{align*}
		\langle \widetilde{f},\varphi\rangle=\int_{\RF}\widetilde{f}(\xi)\varphi(\xi)\ud \xi.
	\end{align*}
	Using Fubini's theorem, we have
	\begin{align*}
		\langle \widetilde{f},\varphi\rangle&= \int_{\RF} \int_{\RF^{\times}}\left|\xi \right|^{-\frac{n}{2}} \int_{(\RF\times\RF)^{\heartsuit}} \phi(\xi_1,\xi_2)\psi^{-1}( z  \xi_1+\xi^{-1}z^{-1}\xi_2)\ud \xi_1\ud \xi_2  \frac{\ud z}{|z|} \varphi(\xi)\ud \xi \\
		&=\int_{\RF^{\times}}\int_{(\RF\times\RF)^{\heartsuit}} \phi(\xi_1,\xi_2)\psi^{-1}( z  \xi_1+\xi^{-1}z^{-1}\xi_2) \varphi(\xi)|\xi|^{-\frac{n}{2}}\ud \xi     \ud \xi_1\ud \xi_2  \frac{\ud z}{|z|}\\
		&=\int_{\RF^{\times}}\int_{(\RF\times\RF)^{\heartsuit}}\phi(\xi_1,\xi_2)\psi^{-1}(z\xi_1)\BF_{\psi}( |\cdot|^{-2}\circ\iota\circ|\cdot|^{-\frac{n}{2}}\varphi   )(z^{-1}\xi_2)\ud\xi_1\ud\xi_2\frac{\ud z}{|z|}.
	\end{align*}
	As $\BF_{\psi}( |\cdot|^{-2}\circ\iota\circ|\cdot|^{-\frac{n}{2}}\varphi   )$ vanishes near $0$ and $\infty$, we can apply the Fubini's theorem again to obtain
	\begin{align*}
		\langle \widetilde{f},\varphi\rangle&=\int_{(\RF\times\RF)^{\heartsuit}}\phi(\xi_1,\xi_2)\int_{\RF^{\times}}\BF_{\psi}( |\cdot|^{-2}\circ\iota\circ|\cdot|^{-\frac{n}{2}}\varphi   )(z^{-1})|z^{-1}|^{-1}\psi^{-1}(\xi_1\xi_2 z)\frac{\ud z}{|z|^2}\ud\xi_1\ud\xi_2\\
		&=\langle \widetilde{\phi},\BF_{\psi}\circ |\cdot|^{-2}\circ\iota\circ  |\cdot|^{-1}\circ\BF_{\psi}\circ|\cdot|^{-2}\circ\iota \circ |\cdot|^{-\frac{n}{2}} \varphi \rangle .
	\end{align*}
	Note that for a measure $\mu$ and a test function $\varphi$, we have
	\begin{align*}
		\langle \iota (\mu),\varphi\rangle =\langle \mu,|\cdot|^{-2}\circ\iota (\varphi)\rangle , 
	\end{align*}
	hence 
	\begin{align*}
		&\langle \widetilde{\phi},\BF_{\psi}\circ |\cdot|^{-2}\circ\iota\circ  |\cdot|^{-1}\circ\BF_{\psi}\circ|\cdot|^{-2}\circ\iota \circ |\cdot|^{-\frac{n}{2}} \varphi  \rangle\\
		&=\langle |\cdot|^{-\frac{n}{2}} \circ \iota \circ \BF_{\psi} \circ |\cdot|^{-1}\circ  \iota \circ  \BF_{\psi},\varphi\rangle .
	\end{align*}
\end{proof}

\begin{prp}\label{prp_tr_weil_a}
	Let $\CT$ be the transfer operator in Theorem \ref{thm_TrSak}, then we have \[
\overline{\BT}\circ\CT=	 \Id.
\] 
\end{prp}

\begin{proof}
	This is a direct consequence of Lemma \ref{lem_Tr_Weil_A}.
\end{proof}

The last case is when $\BD=\Mat_2(\RF)$. Consider the action of $\SL_2$ on $\Mat_2$ by left or right multiplication. Then according to \cite[Theorem 5.3.3]{GW09}, $\Mat_2\sslash\SL_2=\BA^1$ via $g\mapsto \det g$. Write $\CS(\FX):=\CF(\FX)\ud x$ for measures on $\Mat_2(\RF)^{\heartsuit}$ by multiplying an additive Haar measure.
\begin{lem}\label{lem_c_integrable}
	For $\phi\in\CF(\FX)$, and $x\in\RF\setminus\{0,1\}$,
	\begin{align*}
		\widetilde{\phi}(x):=\int_{\SL_2}\phi \left( \begin{pmatrix}
			x & 0 \\ 0 & 1
		\end{pmatrix}  g   \right)\ud g
	\end{align*}
	is absolutely convergent.
\end{lem}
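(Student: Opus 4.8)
The plan is to run the argument of Lemma~\ref{lem_abs_a_lft} in this case, with the $\BG_m$-action on $\BA^2$ replaced by the right-translation action of $\SL_2$ on $\Mat_2$. Since $\res\colon\CF(\BV)\to\CF(\RX)$ is surjective and $\CF(\FX)$ is the space of orbital integrals $\CI(\Phi)$, I would first write $\phi=\CI(\Phi)$ for some $\Phi\in\CF(\RX)$ and put $C:=\mathrm{supp}(\Phi)$, a compact subset of $\RX\subset\BV$. Because $\phi(y)=\int_{\RX_y}\Phi(v)\,|\mu_y(v)|$ with $\RX_y=\{v\in\RX\mid \BB_{\BV}(v_0,v)=y\}$, and $v\mapsto\BB_{\BV}(v_0,v)$ is $\RF$-linear, hence continuous, the function $\phi$ vanishes off the compact set $K_0:=\BB_{\BV}(v_0,C)\subset\Mat_2(\RF)$.

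Fix $x\in\RF\setminus\{0,1\}$. Then $g\mapsto\phi\!\left(\begin{pmatrix} x & 0\\ 0 & 1\end{pmatrix}g\right)$ on $\SL_2(\RF)$ is supported on $\SL_2(\RF)\cap\begin{pmatrix} x^{-1} & 0\\ 0 & 1\end{pmatrix}K_0$, which is the intersection of the closed subset $\SL_2(\RF)=\det^{-1}(1)$ of $\Mat_2(\RF)$ with a compact set, hence compact; in particular it has finite Haar measure. So it suffices to bound $\phi$ on the compact set $K:=K_0\cap\{y\mid\det y=x\}$. Here I would use that $x\notin\{0,1\}$ puts $K$ in the regular locus: $\RX\to\Mat_2$ is a submersion over $\{\det\neq 1\}$ (on those fibres $\RU(v_0^{\perp})$ acts transitively, cf.\ the discussion after Lemma~\ref{lem_transitive}), and $\det\colon\Mat_2\to\BA^1$ is itself submersive away from $0$ — these being precisely the two singular values attached to $\FC_{\RX}$. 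By the fibre-integration formalism of \cite[Chapter 7]{Igu00}, applied after multiplying $\Phi$ by a smooth function supported in the submersion locus and equal to $1$ near the compact set $\{v\in C\mid \det\BB_{\BV}(v_0,v)=x\}$ (which does not alter $\phi$ on a neighbourhood of $K$), the orbital integral $\phi$ is smooth, in particular locally bounded, on the regular locus. Therefore
\[
\int_{\SL_2}\left|\phi\!\left(\begin{pmatrix} x & 0\\ 0 & 1\end{pmatrix}g\right)\right|\ud g\ \le\ \Bigl(\sup_{y\in K}|\phi(y)|\Bigr)\cdot\vol\!\left(\SL_2(\RF)\cap\begin{pmatrix} x^{-1} & 0\\ 0 & 1\end{pmatrix}K_0\right)<\infty.
\]

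As in Lemma~\ref{lem_abs_a_lft}, the only substantive point is the reduction of the $g$-integral to a compact domain, which rests on the compactness of $\mathrm{supp}(\Phi)$ together with the fact that $\{\det=x\}=\begin{pmatrix} x & 0\\ 0 & 1\end{pmatrix}\SL_2(\RF)$ is a closed $\SL_2$-orbit in $\Mat_2(\RF)$. The step I expect to require the most care is the verification that $\phi$ is \emph{bounded}, not merely finite, on compact subsets of the regular locus; this is exactly where the submersivity statements and the hypothesis $x\notin\{0,1\}$ enter, via Igusa's fibre integration, and it mirrors the implicit use of the same fact in the proof of Lemma~\ref{lem_abs_a_lft}.
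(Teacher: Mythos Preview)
Your proposal is correct and follows exactly the approach the paper intends: its own proof reads in full ``This is similar to Lemma~\ref{lem_abs_a_lft},'' and you have spelled out precisely that argument in the $\BD=\Mat_2(\RF)$ setting---compact support of $\Phi$ forces the $\SL_2$-integral onto a compact domain, and submersivity of $\RX\to\Mat_2$ over $\{\det\neq 1\}$ gives the local boundedness of $\phi$ needed to conclude. If anything, your write-up is more careful than the paper's about the boundedness step.
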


\begin{proof}
This is similar to Lemma \ref{lem_abs_a_lft}.
\end{proof}

In this case, we have to make some modifications to the transfer operator $\widetilde{\BT}$. Now, consider the embedding 
\begin{align*}
	\SL_2\hookrightarrow\SO_4:g\mapsto \begin{pmatrix}
		g & 0 \\ 0 & g
	\end{pmatrix}.
\end{align*}
Consider the adjoint action of $\PGL_2$ on $\Fg\Fl_2=\Mat_2(\RF)$, which is equipped with the quadratic form given by the determinant. The ordered basis
\[
\left\{\begin{pmatrix}
	0 & 1 \\ 0& 0 
\end{pmatrix},   \begin{pmatrix}
	1 & 0 \\ 0 & 0
\end{pmatrix}  ,\begin{pmatrix}
	0 & 0 \\ 0 & -1
\end{pmatrix} ,\begin{pmatrix}
	0 & 0 \\ 1 & 0
\end{pmatrix}\right\}
\]
gives an embedding $\PGL_2\rightarrow\RO_4$ such that 
\begin{align*}
	\begin{pmatrix}
		t & \\ & 1
	\end{pmatrix} \mapsto \begin{pmatrix}
		t & & & \\ & 1 & & \\ &  & 1 & \\ & & & t^{-1}
	\end{pmatrix},\;\begin{pmatrix}
		1 & n \\ & 1
	\end{pmatrix}\mapsto \begin{pmatrix}
		1 & & n & \\ & 1 & & n \\ & & 1 & \\ & & & 1
	\end{pmatrix},
\end{align*}
and
\begin{align*}
	\begin{pmatrix}
		& -1 \\ 1 & 
	\end{pmatrix}\mapsto \begin{pmatrix}
		& & & -1 \\ & & 1 & \\ & 1 & & \\ -1 & & &
	\end{pmatrix}.
\end{align*}
We also identify $\PGL_2$ with its image in $\RO_4$. Note that $\PGL_2$ normalizes $\SL_2$. Then, let us consider the function
\begin{align*}
\overline{\Omega}(\Phi):\PGL_2&\rightarrow\BC\\
g&\mapsto \int_{\SL_2}\Omega_{\psi}(\Phi)(gg^{\prime})\ud g^{\prime}.
\end{align*}
For $\Phi \in \CF(\BV)$, replace $\Phi$ by $\omega_{\psi}(g)\Phi$ for $g\in\SL_2$ in Proposition \ref{prp_TranOp},we have
\begin{align*}
		&\int_{\RN}^*\omega_{\psi}(\Bw)\omega_{\psi}\left( \begin{pmatrix}
 		\xi & 0 & 0 & 0 \\ 0 & 1 & 0 & 0 \\ 0 & 0 & 1 & 0\\ 0 & 0 & 0 & \xi^{-1}
 	\end{pmatrix}  \right)\omega_{\psi}(u) \omega_{\psi}(g) \Phi(v_0)\psi^{-1}(u)\ud u\\
		&=|\xi|^{-\frac{\dim_{\RF}\BV}{4}}\int_{\Mat_2(\RF)^{\heartsuit}}\CI(\res(\Phi))(x)\psi^{-1}\left(\tau  \left(  xg^{-1}  \begin{pmatrix}
			\xi^{-1} & 0 \\ 0 & 1
		\end{pmatrix}  \right)\right)\ud x=:\widehat{\BT}(\phi)(\xi,g),
\end{align*}
where $\phi=\CI(\Res(\Phi))$. Denote the space of such functions by $\Orb_{\BV}(\RT\times\SL_2)$.

\begin{lem}\label{lem_conv_c}
	For $\phi\in\CF(\FX)$,
	\begin{align*}
		\int_{\SL_2}\widehat{\BT}(\phi)(\xi,g)\ud g
	\end{align*}
	is absolutely convergent for $\xi\in\RF^{\times}$.
\end{lem}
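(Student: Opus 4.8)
The plan is to argue as in the proof of Lemma~\ref{lem_conv_a}, with the three-dimensional group $\SL_2$ playing the role of the central $\BG_m$ there: I will show that for each fixed $\xi\in\RF^\times$ the function $g\mapsto\overline{\BT}(\phi)(\xi,g)$ vanishes outside a compact subset of $\SL_2$, so that the integral over $\SL_2$ is in fact an integral over a compact set and absolute convergence is automatic.

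First, since $\overline{\BT}$ is $\BC$-linear in $\phi$ (equivalently in $\Phi$) and $\CF(\BV)$ is spanned by pure tensors, I would fix a pure tensor $\Phi=\bigotimes_j\Phi_j$ on $\BV$, realized as $\RF^{2n}\otimes_{\RF}\RF^2$, and put $\phi=\CI(\res(\Phi))$. Starting from the formula derived just above the statement,
\[
\overline{\BT}(\phi)(\xi,g)=|\xi|^{-\frac{\dim_{\RF}\BV}{4}}\int_{\Mat_2(\RF)^{\heartsuit}}\phi(x)\,\psi^{-1}\left(\tau\left(xg^{-1}\begin{pmatrix}\xi^{-1}&0\\0&1\end{pmatrix}\right)\right)\ud x,
\]
I would unfold $\phi=\CI(\res(\Phi))$ with the Weil-representation formulas of Section~\ref{sec_Weil}: the $\Bw$-factor contributes a Fourier transform in the coordinates of $\BV$, which, because $\Phi$ is a pure tensor, turns each $\Phi_j$ into its Fourier transform $\widehat{\Phi_j}$ evaluated at an argument that is affine in the unipotent variable $u$ and in the entries of $g^{-1}\diag(\xi^{-1},1)$; the $u$-integral runs over $\Fp_{\RF}^{-k}$ with a regularization exponent $k$ which, by the proof of Lemma~\ref{lem_RegOrb} and Proposition~\ref{prp_TranOp}, may be taken independently of $\xi$ and of $g$. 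Fix such a $k$.

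Next I would parametrize $\SL_2$ by the Cartan decomposition $g=k_1\,\diag(a,a^{-1})\,k_2$ with $k_1,k_2\in\SL_2(\Fo_{\RF})$ and $|a|\ge 1$; since $\SL_2(\Fo_{\RF})$ has finite volume, it is enough to prove that $\overline{\BT}(\phi)\big(\xi,k_1\diag(a,a^{-1})k_2\big)=0$ once $|a|$ exceeds a bound depending only on $\Phi$ and $\xi$. This I would establish exactly as in Lemma~\ref{lem_conv_a}: for $|a|$ large the small shift coming from $g^{-1}\diag(\xi^{-1},1)$ is absorbed into the local constancy of the transformed component $\widehat{\Phi_j}$ that carries the large parameter, and the remaining integration against $\psi^{-1}$ then computes the Fourier transform, at a character of large conductor, of a compactly supported locally constant function, hence vanishes; the constraint $|u|\le q^k$ keeps the relevant levels of local constancy uniform. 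The regime of small $|a|$ reduces to this one by contracting the $\Bw$-Fourier transform in the opposite order (Fourier inversion), the analogue of the ``$z\mapsto z^{-1}$'' step in the proof of Lemma~\ref{lem_conv_a}.

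The step I expect to be the main obstacle is making this ``normal form'' precise. Unlike the one-dimensional situation of Lemma~\ref{lem_conv_a}, here $a$ and the compact parts $k_1,k_2$ interact with the $\SO_4$-torus element $\diag(\xi,1,1,\xi^{-1})$ and with the tensor factorization in a way that is not diagonal, so one must verify that after the $\Bw$-Fourier transform the dependence on $g$ still enters through a single ``leading'' factor $\widehat{\Phi_j}$ whose argument contains $a$ linearly; this is where the explicit shape of $v_0$ and of the $\Mat_2$-Hermitian form $\BB_{\BV}$ from Section~\ref{sec_trWeil} enters. As an alternative route and a consistency check, using invariance of the Haar measure on $\SL_2$ under inversion and under inner automorphisms by $\GL_2$ one gets
\[
\int_{\SL_2}\overline{\BT}(\phi)(\xi,g)\ud g=|\xi|^{-\frac{\dim_{\RF}\BV}{4}}\,\widetilde{\widehat{\phi}}\,(\xi^{-1}),
\]
where $\widehat{\phi}$ is the partial Fourier transform of $\phi$ along $\Mat_2$ and $\widetilde{(\cdot)}$ is the fibre integral of Lemma~\ref{lem_c_integrable}; absolute convergence of the right-hand side for $\xi^{-1}\neq 0,1$ then follows once one knows $\widehat{\phi}$ lies in a space of the same type as $\CF(\FX)$ (a by-product of the transfer-operator description of Section~\ref{ssec_trop}), and the single extra value $\xi=1$ is checked by hand.
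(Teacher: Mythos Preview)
Your overall strategy---show that $g\mapsto\overline{\BT}(\phi)(\xi,g)$ has compact support in $\SL_2$---is exactly the paper's, but the execution diverges at a point you yourself flag as ``the main obstacle,'' and the paper resolves it by a different and sharper mechanism.

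The paper does \emph{not} use the Cartan decomposition. It uses the Iwasawa decomposition $g=\begin{pmatrix}1&x\\0&1\end{pmatrix}\begin{pmatrix}m&0\\0&m^{-1}\end{pmatrix}k$, absorbs $k$ into $\Phi$ via a finite sum over $\RK/\RK'$, and then appeals to the explicit matrix identity
\[
\Bw\cdot t_\xi\cdot n(u)\cdot\begin{pmatrix}1&x\\0&1\end{pmatrix}_{\SL_2}\begin{pmatrix}m&0\\0&m^{-1}\end{pmatrix}_{\SL_2}
=\begin{pmatrix}1&\xi x\\0&1\end{pmatrix}_{\SL_2}\cdot\Bw\cdot\diag(\xi m,m^{-1},m,\xi^{-1}m^{-1})\cdot n(u),
\]
which moves the Iwasawa factors to the left of $\Bw$ at the cost of shifting $v_0$ to $v_0\begin{pmatrix}1&\xi x\\0&1\end{pmatrix}$. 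This is what produces the tractable integrand in coordinates. With Cartan $g=k_1\,a\,k_2$, the left factor $k_1$ does not commute with the torus $t_\xi=\diag(\xi,1,1,\xi^{-1})$ (only $\Bw$ and $n(u)$ commute with the diagonal $\SL_2$), so there is no analogous clean normal form, and your ``single leading factor $\widehat{\Phi_j}$'' heuristic does not materialize.

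A second point: even after the Iwasawa reduction, the argument is genuinely two-step, not one. Writing $\Phi=\Phi_1(x_1,\dots,y_n)\Phi_2(z_1,\dots,w_n)$, the $\Phi_1$-piece is exactly the integrand from Lemma~\ref{lem_conv_a} with $(t_1,t_2)=(\xi m,m^{-1})$ and forces $M_1\le|m|\le M_2$; only \emph{then}, with $|m|$ bounded and $|u|\le q^k$, does the $\Phi_2$-piece force $|x|\le A$ via the argument $-m^{-1}xw_n$. A single large parameter $|a|$ in a Cartan coordinate does not separate these roles.

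Your alternative route has two gaps. First, the identity $\int_{\SL_2}\overline{\BT}(\phi)(\xi,g)\ud g=|\xi|^{-n}\widetilde{\widehat{\phi}}(\xi^{-1})$ would require interchanging the $\SL_2$-integral with the $\Mat_2$-integral, which is precisely the absolute convergence you are trying to prove. Second, even granting the formula, the claim that $\widehat{\phi}$ lies in the same space as $\CF(\FX)$ (so that Lemma~\ref{lem_c_integrable} applies) is not supplied by Section~\ref{ssec_trop} and would need an independent argument; and Lemma~\ref{lem_c_integrable} in any case excludes $\xi=1$.

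In short: switch to Iwasawa, write down the matrix identity above, and run the two-step compact-support argument ($m$ first, then $x$). That is the missing content.
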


\begin{proof}
	Let $\phi=\CI(\res(\Phi))$ with $\Phi\in\CF(\BV)$. Assume $\RK^{\prime}\subset\RK=\SL_2(\Fo_{\RF})$ such that $\omega_{\psi}(k)\Phi=\Phi$ for any $k\in\RK^{\prime}$. According to the Iwasawa decomposition, we have
	\begin{align*}
		\int_{\SL_2}\widehat{\BT}(\phi)(\xi,g)\ud g&=\int_{\RF\times\RF^{\times}}\int_{\RK}\widehat{\BT}(\phi)\left(\xi, \begin{pmatrix}
			1 & x \\ 0 & 1
		\end{pmatrix} \begin{pmatrix}
			m & 0 \\ 0 & m^{-1}
		\end{pmatrix} k\right)|m|^{-2}\ud x\frac{\ud m}{|m|}\ud k\\
		&=\sum_{k_i\in \RK/\RK^{\prime}}\vol(\RK^{\prime})\int_{\RF\times\RF^{\times}}\widehat{\BT}(\phi_i)\left(\xi, \begin{pmatrix}
			1 & x \\ 0 & 1
		\end{pmatrix} \begin{pmatrix}
			m & 0 \\ 0 & m^{-1}
		\end{pmatrix} \right)|m|^{-2}\ud x\frac{\ud m}{|m|},
	\end{align*}
	where $\phi_i=\CI(\res( \omega_{\psi}(k_i)\Phi ))$. For simplicity of notations, replace $\Phi$ by $\omega_{\psi}(k_i)\Phi$, it suffices to show that
	\begin{align*}
		\int_{\RF\times\RF^{\times}}\widehat{\BT}(\phi)\left(\xi, \begin{pmatrix}
			1 & x \\ 0 & 1
		\end{pmatrix} \begin{pmatrix}
			m & 0 \\ 0 & m^{-1}
		\end{pmatrix} \right)|m|^{-2}\ud x\frac{\ud m}{|m|}
	\end{align*}
	is absolutely convergent.
	
	Note that we have

\begin{align*}
	 & \begin{pmatrix}
	  	0 & 0 & -1 & 0 \\ 0 & 0 & 0 & -1 \\1 & 0 & 0 & 0 \\0 & 1 & 0 & 0
	  \end{pmatrix}   \begin{pmatrix}
 		\xi & 0 & 0 & 0 \\ 0 & 1 & 0 & 0 \\ 0 & 0 & 1 & 0\\ 0 & 0 & 0 & \xi^{-1}
 	\end{pmatrix} \begin{pmatrix}
 		1 & 0 &  u & 0 \\ 0 & 1 & 0 & u \\ 0 & 0 & 1 & 0\\ 0 & 0 & 0 & 1
 	\end{pmatrix} \begin{pmatrix}
 		1 & x  & 0 & 0\\ 0 & 1 & 0 & 0 \\ 0 & 0 & 1 & x \\ 0 & 0 & 0 & 1
 	\end{pmatrix}  \begin{pmatrix}
 		m & 0 & 0 & 0 \\ 0 & m^{-1} & 0 & 0 \\ 0 & 0 & m & 0 \\ 0 & 0 & 0 & m^{-1}
 	\end{pmatrix}\\
 	&=\begin{pmatrix}
 		1 & \xi x  & 0 & 0\\ 0 & 1 & 0 & 0 \\ 0 & 0 & 1 & \xi x \\ 0 & 0 & 0 & 1
 	\end{pmatrix} \begin{pmatrix}
	  	0 & 0 & -1 & 0 \\ 0 & 0 & 0 & -1 \\1 & 0 & 0 & 0 \\0 & 1 & 0 & 0
	  \end{pmatrix}  \begin{pmatrix}
 		\xi m & 0 & 0 & 0 \\ 0 & m^{-1} & 0 & 0 \\ 0 & 0 & m & 0 \\ 0 & 0 & 0 & \xi^{-1}m^{-1} 	\end{pmatrix}\begin{pmatrix}
 		1 & 0 &  u & 0 \\ 0 & 1 & 0 &  u \\ 0 & 0 & 1 & 0\\ 0 & 0 & 0 & 1
 	\end{pmatrix} ,
\end{align*}
then 
\begin{align*}
	&\omega_{\psi}\left( \Bw\begin{pmatrix}
 		\xi & 0 & 0 & 0 \\ 0 & 1 & 0 & 0 \\ 0 & 0 & 1 & 0\\ 0 & 0 & 0 & \xi^{-1}
 	\end{pmatrix} \begin{pmatrix}
 		1 & 0 &  u & 0 \\ 0 & 1 & 0 & u \\ 0 & 0 & 1 & 0\\ 0 & 0 & 0 & 1
 	\end{pmatrix} \begin{pmatrix}
 		1 & x  & 0 & 0\\ 0 & 1 & 0 & 0 \\ 0 & 0 & 1 & x \\ 0 & 0 & 0 & 1
 	\end{pmatrix}  \begin{pmatrix}
 		m & 0 & 0 & 0 \\ 0 & m^{-1} & 0 & 0 \\ 0 & 0 & m & 0 \\ 0 & 0 & 0 & m^{-1}
 	\end{pmatrix} \right)\Phi(v_0)\\
 	&=\omega_{\psi}\left(  \Bw  \begin{pmatrix}
 		\xi m & 0 & 0 & 0 \\ 0 & m^{-1} & 0 & 0 \\ 0 & 0 & m & 0 \\ 0 & 0 & 0 & \xi^{-1}m^{-1} 	\end{pmatrix}\begin{pmatrix}
 		1 & 0 &  u & 0 \\ 0 & 1 & 0 &  u \\ 0 & 0 & 1 & 0\\ 0 & 0 & 0 & 1
 	\end{pmatrix}  \right)\Phi\left(v_0\begin{pmatrix}
 		1 & \xi x \\ 0 & 1
 	\end{pmatrix}\right)\\
 	&=\int_{\BV}\omega_{\psi}\left(  \begin{pmatrix}
 		\xi m & 0 & 0 & 0 \\ 0 & m^{-1} & 0 & 0 \\ 0 & 0 & m & 0 \\ 0 & 0 & 0 & \xi^{-1}m^{-1} 	\end{pmatrix}\begin{pmatrix}
 		1 & 0 &  u & 0 \\ 0 & 1 & 0 & u \\ 0 & 0 & 1 & 0\\ 0 & 0 & 0 & 1
 	\end{pmatrix} \right)\Phi(v)\\
 	&\cdot \psi^{-1}\left(\tau\left(\BB_{\BV}\left( v_0\begin{pmatrix}
 		1 & \xi x \\ 0 & 1
 	\end{pmatrix} ,v  \right) \right)\right)\ud v \\
 	&=\int_{\BV}|\xi m\cdot m^{-1}|^n\psi\left(  \frac{u\cdot \BB_{\BV}\left(v\begin{pmatrix}
	\xi m & 0 \\ 0 & m^{-1}
\end{pmatrix},v\begin{pmatrix}
	\xi m & 0 \\ 0 & m^{-1}
\end{pmatrix}\right)}{2}      \right)\Phi\left(v\begin{pmatrix}
	\xi m & 0 \\ 0 & m^{-1}
\end{pmatrix}\right)\\
&\psi^{-1}\left(\tau\left(\BB_{\BV}\left( v_0\begin{pmatrix}
 		1 & \xi x \\ 0 & 1
 	\end{pmatrix}  ,v \right) \right)\right)\ud v\\
 	&=\int_{\BV}|\xi |^{-n}\psi\left(\frac{ u \cdot \BB_{\BV}(v,v)}{2}   \right)\Phi(v)\psi^{-1}\left(\tau\left(\BB_{\BV}\left( v_0\begin{pmatrix}
 		1 & \xi x \\ 0 & 1
 	\end{pmatrix} ,v \begin{pmatrix}
 		\xi^{-1} m^{-1} & 0 \\ 0 & m
 	\end{pmatrix}    \right) \right)\right)\ud v.
\end{align*}
Write $v=\left( \begin{pmatrix}
	x_1 & z_1 \\ w_1 & y_1
\end{pmatrix} ,\cdots,\begin{pmatrix}
	x_n & z_n \\w_n & y_n
\end{pmatrix}   \right)$, then
\begin{align*}
	\tau\left(\BB_{\BV}\left(  v_0\begin{pmatrix}
 		1 & \xi x \\ 0 & 1
 	\end{pmatrix} ,v \begin{pmatrix}
 		\xi^{-1} m^{-1} & 0 \\ 0 & m
 	\end{pmatrix}   \right) \right)&=\tau\left( \begin{pmatrix}
 		1 & -\xi x\\ 0 & 1
 	\end{pmatrix}  \begin{pmatrix}
 		x_n & z_n \\ w_n & y_n
 	\end{pmatrix} \begin{pmatrix}
 		\xi^{-1} m^{-1} & 0 \\  0& m^{-1} 
 	\end{pmatrix} \right)\\
 	&=\tau \left(    \begin{pmatrix}
 		\xi^{-1}m^{-1}x_n-xm^{-1}w_n & *\\ * & my_n
 	\end{pmatrix}    \right)\\
 	&=\xi^{-1}m^{-1}x_n+my_n-m^{-1}xw_n,
\end{align*}
so
\begin{align*}
	&\omega_{\psi}\left( \Bw\begin{pmatrix}
 		\xi & 0 & 0 & 0 \\ 0 & 1 & 0 & 0 \\ 0 & 0 & 1 & 0\\ 0 & 0 & 0 & \xi^{-1}
 	\end{pmatrix} \begin{pmatrix}
 		1 & 0 &  u & 0 \\ 0 & 1 & 0 & u \\ 0 & 0 & 1 & 0\\ 0 & 0 & 0 & 1
 	\end{pmatrix} \begin{pmatrix}
 		1 & x  & 0 & 0\\ 0 & 1 & 0 & 0 \\ 0 & 0 & 1 & x \\ 0 & 0 & 0 & 1
 	\end{pmatrix}  \begin{pmatrix}
 		m & 0 & 0 & 0 \\ 0 & m^{-1} & 0 & 0 \\ 0 & 0 & m & 0 \\ 0 & 0 & 0 & m^{-1}
 	\end{pmatrix} \right)\Phi(v_0)\\
 	&=|\xi |^{-n}\int_{\RF^{4n}}\Phi\left(\left( \begin{pmatrix}
	x_1 & z_1 \\ w_1 & y_1
\end{pmatrix} ,\cdots,\begin{pmatrix}
	x_n & z_n \\w_n & y_n
\end{pmatrix}   \right)\right)\\
&\cdot \psi(u(x_1y_1+\cdots +x_ny_n-z_1w_1-\cdots -z_nw_n))\psi^{-1}(\xi^{-1}m^{-1}x_n+m y_n-m^{-1}xw_n).
\end{align*}
Without loss of generality, we may assume \[\Phi=\Phi_1(x_1,\cdots,x_n,y_1,\cdots,y_n)\Phi_2(z_1,\cdots,z_n,w_1,\cdots,w_n)\]
with $\Phi_1,\Phi_2\in\CF(\RF^n)$. Then for some $k$ large enough,
\begin{align*}
	&\widehat{\BT}(\phi)\left(\xi,  \begin{pmatrix}
		1 & x \\ 0 & 1
	\end{pmatrix} \begin{pmatrix}
		m & 0 \\ 0 & m^{-1}
	\end{pmatrix} \right)\\
	&=|\xi|^{-n}\int_{\Fp_{\RF}^{-k}}\int_{\RF^{2n}}\Phi_1(x_1,\cdots,x_n,y_1,\cdots,y_n)\psi(u(x_1y_1+\cdots+x_ny_n))\psi^{-1}(\xi^{-1}m^{-1}x_n+my_n)\\
	&\cdot\int_{\RF^{2n}}\Phi_2(z_1,\cdots,z_n,w_1,\cdots,w_n)\psi(-u(z_1w_1+\cdots+z_nw_n))\psi^{-1}(-m^{-1}xw_m)\psi^{-1}(u)\ud u.
\end{align*}
From the proof of Lemma \ref{lem_conv_a},
\begin{align*}
	\int_{\RF^{2n}}\Phi_1(x_1,\cdots,x_n,y_1,\cdots,y_n)\psi(u(x_1y_1+\cdots+x_ny_n))\psi^{-1}(\xi^{-1}m^{-1}x_n+my_n)\ud x_1\cdots\ud y_n\neq 0
\end{align*}
implies that $M_1 \leq |m|\leq M_2$ for some $M_1,M_2>0$. Then since
\begin{align*}
	&\int_{\RF^{2n}}\Phi_2(z_1,\cdots,z_n,w_1,\cdots,w_n)\psi^{-1}(-u(z_1w_1+\cdots+z_nw_n))\psi^{-1}(-m^{-1}x w_n)\ud z_1\cdots \ud w_n\\
	&=\int_{\RF^n}\BF_{\psi,1}\circ\cdots \circ\BF_{\psi,n}\Phi_2(-uw_1,\cdots,-uw_{n-1},-uw_n,w_1,\cdots,w_n)\psi^{-1}(-m^{-1}xw_n)\ud w_1\cdots\ud w_n,
\end{align*}
which is non-zero only when $|x|\leq A$ for some $A>0$ depending on $\Phi_2$, $\Fp_{\RF}^{-k}$, and $M$. Therefore,
\begin{align*}
		\int_{\SL_2}\widehat{\BT}(\phi)(\xi,g)\ud g
	\end{align*}
	is absolutely convergent for $\xi\in\RF^{\times}$.

\end{proof}

\begin{dfn}
	\begin{itemize}
		\item [(1)] For $\phi\in\CS(\FX)$, let $\widetilde{\phi}$ be the push-forward of the measure $\phi$ to $\BA^1$ along the determinant map. We will use $\CS(\overline{\FX})$ to denote the space of such measures.
		\item [(2)] Let $\Orb_{\BV}(\overline{\RT})$ be the space of functions 
		\begin{align*}
	\xi\mapsto \int_{\SL_2}\widehat{\BT}(\phi)(\xi,g)\ud g,
		\end{align*}
		and $\CS_{\BV}(\overline{\RT}):=\Orb_{\BV}(\overline{\RT})|\xi|\ud^{\times}\xi$.
	\end{itemize}
\end{dfn}

\begin{rmk}
	Observe that
		\[
	\begin{pmatrix}
		& 1 & & \\ -1 & & & \\ & & & 1 \\ & & -1 & 
	\end{pmatrix}\begin{pmatrix}
		 & & -1 & \\ & & & -1 \\ 1 & & & \\ & 1 & &
	\end{pmatrix}=\begin{pmatrix}
		& & & -1 \\ & & 1 & \\ & 1 & & \\ -1 & & &
	\end{pmatrix}
	\]
	with $\begin{pmatrix}
		& 1 & & \\ -1 & & & \\ & & & 1 \\ & & -1 & 
	\end{pmatrix}\in\SL_2(\RF)$, hence on the maximal torus of $\PGL_2$,
	\[
	\int_{\RN}^*\overline{\Omega}(\Phi)\left(\Bw\begin{pmatrix}
		\xi & \\ & 1
	\end{pmatrix}  n\right)\psi^{-1}(n)\ud n=\int_{\SL_2}\widehat{\BT}(\CI(\Res(\Phi)))(\xi,g)\ud g.
	\]
\end{rmk}

\begin{lem}\label{lem_Tr_Weil_C}
We have a commutative diagram
		\begin{align*}
	\xymatrix{
\CF(\FX)   \ar@{->}[d]^{} \ar@{->}[r]  &   \Orb_{\BV}(\RT\times\SL_2)\ar@{->}[d]^{} \\
\CS(\overline{\FX})  \ar@{->}[r]  & \CS_{\BV}(\overline{\RT}),
}
\end{align*}
where the bottom arrow is given by
\[
\overline{\BT}:|\cdot|^{-n+1}\circ\iota\circ\BF_{\psi}\circ|\cdot|^{-2}\circ\iota\circ\BF_{\psi}
\]
in the sense of measures or distributions.
\end{lem}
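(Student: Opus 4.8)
The plan is to compute the composite $\CF(\FX)\xrightarrow{\overline{\BT}}\Orb_{\BV}(\RT\times\SL_2)\xrightarrow{\int_{\SL_2}}\Orb_{\BV}(\overline{\RT})$ explicitly in terms of the pushforward $\widetilde\phi=\det_*(\phi\,\ud x)$; this simultaneously shows the square commutes and identifies the bottom arrow. Throughout, $\dim_{\RF}\BV=4n$ for $\BD=\Mat_2(\RF)$, so the prefactor in Proposition \ref{prp_TranOp}(4) and in the definition of $\overline{\BT}$ is $|\xi|^{-n}$; write $A_\xi=\begin{pmatrix}\xi^{-1}&0\\0&1\end{pmatrix}$ and $\tau$ for the reduced trace.

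\emph{Step 1: reduce the $\SL_2$-integral to the $\det$-fibration.} Starting from $\overline{\BT}(\phi)(\xi,g)=|\xi|^{-n}\int_{\Mat_2(\RF)}\phi(x)\psi^{-1}(\tau(xg^{-1}A_\xi))\ud x$, substitute $x\mapsto xg$ (the additive Haar measure on $\Mat_2(\RF)$ is invariant under right translation by $g\in\SL_2$) and interchange the $\SL_2$- and $\Mat_2$-integrals; this is legitimate since the proof of Lemma \ref{lem_conv_c} exhibits the relevant integrand as genuinely compactly supported. One gets
\[
\int_{\SL_2}\overline{\BT}(\phi)(\xi,g)\ud g=|\xi|^{-n}\int_{\Mat_2(\RF)}\Bigl(\int_{\SL_2}\phi(xg)\ud g\Bigr)\psi^{-1}(\tau(xA_\xi))\ud x .
\]
For invertible $x$ the set $\{xg:g\in\SL_2\}$ is the fibre $\{\det=\det x\}$, and left translation inside $\SL_2$ shows $\int_{\SL_2}\phi(xg)\ud g$ depends only on $\det x$; call this $\widetilde\phi_0(\det x)$, defined off $\{0,1\}$ by Lemma \ref{lem_c_integrable}. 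The decomposition $\GL_2(\RF)=\{\mathrm{diag}(c,1)\}\cdot\SL_2(\RF)$ gives $\ud x|_{\GL_2}=|c|\,\ud c\,\ud g$ with $c=\det x$, hence $\widetilde\phi(c)=|c|\,\widetilde\phi_0(c)$, and with $|\omega^{\det}_c|$ the Igusa fibre measure attached to $\det$ and the additive Haar measures,
\[
\int_{\SL_2}\overline{\BT}(\phi)(\xi,g)\ud g=|\xi|^{-n}\int_{\RF}|c|^{-1}\widetilde\phi(c)\,K_\xi(c)\,\ud c,\qquad K_\xi(c):=\int_{\{\det x=c\}}\psi^{-1}(\tau(xA_\xi))\,|\omega^{\det}_c(x)| .
\]

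\emph{Step 2: the fibre integral, and matching the operator.} Parametrize $\{\det x=c\}$ by the entries $a,b,c'$ of $x=\begin{pmatrix}a&b\\c'&d\end{pmatrix}$ with $d=(c+bc')/a$; the fibre measure is $\ud a\,\ud b\,\ud c'/|a|$ and $\tau(xA_\xi)=a\xi^{-1}+(c+bc')/a$. Integrating out $b$ then $c'$ using $\int_{\RF}\psi^{-1}(bt)\ud b=\delta_0(t)$ contributes a factor $|a|$ and forces $c'=0$, so $K_\xi(c)=\int_{\RF^\times}\psi^{-1}(a\xi^{-1}+c/a)\ud a$, a conditionally convergent Kloosterman-germ-type integral as in Section \ref{ssec_localktf}; the substitution $a=\xi\lambda$ gives $K_\xi(c)=|\xi|\int_{\RF^\times}\psi^{-1}(\lambda+c/(\xi\lambda))\ud\lambda$, whence $\int_{\SL_2}\overline{\BT}(\phi)(\xi,g)\ud g=|\xi|^{-n+1}\int_{\RF}\int_{\RF^\times}|c|^{-1}\widetilde\phi(c)\psi^{-1}(\lambda+c/(\xi\lambda))\ud\lambda\,\ud c$. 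On the other side, expanding $|\cdot|^{-n+1}\circ\iota\circ\BF_\psi\circ|\cdot|^{-2}\circ\iota\circ\BF_\psi$ as operators on densities (the convention used in the proof of Lemma \ref{lem_Tr_Weil_A}) applied to $\widetilde\phi$ gives $|\xi|^{-n+1}\int\int|\lambda|^{-2}\widetilde\phi(c)\psi^{-1}(c/\lambda+\lambda/\xi)\ud c\,\ud\lambda$, and the substitution $\lambda\mapsto c/\lambda$ converts the amplitude $|\lambda|^{-2}$ into $|c|^{-1}$, reproducing the previous display exactly. To keep the divergent/oscillatory intermediate steps rigorous, carry everything out distributionally: test both sides against an arbitrary $\varphi\in C^\infty_c(\RF\setminus\{0,1\})$, perform the $\xi$-integral first, then the off-diagonal and diagonal entry integrals, using Fubini only where supports are genuinely compact and the identity $\langle\iota\mu,\varphi\rangle=\langle\mu,|\cdot|^{-2}\iota\varphi\rangle$ — this is the template of Lemma \ref{lem_Tr_Weil_A}. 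This shows $\int_{\SL_2}\overline{\BT}(\phi)(\xi,g)\ud g$ depends on $\phi$ only through $\widetilde\phi$ and equals the claimed operator applied to it, proving both commutativity and the formula for the bottom arrow.

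\emph{Main obstacle.} The crux is the bookkeeping around non-absolute convergence: the $b,c'$-integrations producing a $\delta$-function and the oscillatory integral $K_\xi(c)$ make sense only distributionally, so every interchange of integration must be licensed — this is precisely where Lemmas \ref{lem_c_integrable} and \ref{lem_conv_c}, together with the compact-support estimates inside their proofs, are indispensable, since they certify that the outer $\SL_2$-integral and the pushforward along $\det$ are honest convergent objects. A secondary but unavoidable point is tracking the exact powers of $|\cdot|$ — the $|c|$ relating $\widetilde\phi$ to $\widetilde\phi_0$, the $|a|^{-1}$ from the fibre measure against the $|a|$ from the $\delta$-function, and the $|\xi|$ from the substitution $a=\xi\lambda$ — so that they assemble into exactly the exponents $-n+1$ and $-2$ appearing in the statement.
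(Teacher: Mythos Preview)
Your approach is genuinely different from the paper's and conceptually attractive: you use the substitution $x\mapsto xg$ to factor off the $\SL_2$-action and reduce to a fibre integral over $\{\det=c\}$, whereas the paper keeps $x$ fixed, writes $g$ in Bruhat coordinates $g=\begin{pmatrix}1&0\\u&1\end{pmatrix}\begin{pmatrix}t&0\\0&t^{-1}\end{pmatrix}\begin{pmatrix}1&v\\0&1\end{pmatrix}$, pairs against $\varphi\in C_c^\infty(\RF^\times)$, and integrates successively over $\xi,v,u,t$, each step justified because the running transform of $\varphi$ remains Schwartz-like. Your formal computation in Step~2 (the fibre measure $da\,db\,dc'/|a|$, the $\delta$-function collapse, the substitution $\lambda\mapsto c/\lambda$) is correct and matches the paper's endpoint.

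However, there is a real gap in Step~1. Your appeal to Lemma~\ref{lem_conv_c} to justify interchanging the $\SL_2$- and $\Mat_2$-integrals does not work: that lemma shows the \emph{iterated} integral $\int_{\SL_2}\bigl|\int_{\Mat_2}\phi(x)\psi^{-1}(\tau(xg^{-1}A_\xi))\,dx\bigr|\,dg$ is finite, but Fubini requires the \emph{double} integral of the absolute value, and $\int_{\SL_2\times\Mat_2}|\phi(xg)|\,dx\,dg=\|\phi\|_{L^1}\cdot\vol(\SL_2)=\infty$. After your substitution the support $\{(x,g):xg\in\mathrm{supp}(\phi)\}$ is non-compact (it is a bundle of $\det$-fibres), so neither compactness nor absolute convergence is available. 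Your closing remark that one should ``carry everything out distributionally'' against $\varphi$ is the correct instinct, but it is not a fix one can simply invoke: once you pair against $\varphi$ and try to push the $\SL_2$-integral inside along your route, you must still confront the oscillatory fibre integral $K_\xi(c)$, which is only conditionally convergent, and justify each exchange. The paper's Bruhat-coordinate unwinding is precisely designed so that at every stage the intermediate integrand is integrable thanks to the decay of the successively transformed $\varphi$; your fibration approach would need an analogous sequence of justified moves (for instance, integrate $\xi$ first, then $b$, then $c'$, checking that each transform of $\varphi$ controls the next non-compact direction), which you have not supplied.
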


\begin{proof}
	Let $\varphi\in\CC_c^{\infty}(\RF^{\times})$ be a test function, and $f=\widehat{\BT}(\phi)$ with $\phi\in\CF(\FX)$, then we have
	\begin{align*}
		\langle \widetilde{f},\varphi\rangle =\int_{\RF}|\xi|^{-n}\int_{\SL_2}\int_{\Mat_2(\RF)^{\heartsuit}}\phi(x)\psi^{-1}\left( \tr \left( \begin{pmatrix}
			1 & 0 \\ 0 & \zeta^{-1}
		\end{pmatrix} g x  \right)\right)\ud x \ud g\cdot  \varphi(\xi)\ud \xi.
	\end{align*}
	Write $x=\begin{pmatrix}
		x_1 & x_2 \\ x_3 & x_4
	\end{pmatrix}$ and using the Brahut decomposition to write \[g=\begin{pmatrix}
		1 & 0 \\ u & 1 
	\end{pmatrix}\begin{pmatrix}
		t & 0 \\ 0 & t^{-1}
	\end{pmatrix}\begin{pmatrix}
		 1 & v \\ 0 & 1
	\end{pmatrix}.\] Then
	\begin{align*}
		\tau \left(  x g^{-1} \begin{pmatrix}
			\xi^{-1} & 0 \\ 0 & 1
		\end{pmatrix}  \right)&=\tr \left(  \begin{pmatrix}
		   \xi^{-1}tuvx_1+\xi^{-1}t^{-1}x_1-\xi^{-1}tux_2   	 & * \\ * & -tvx_3+tx_4		\end{pmatrix}    \right)\\
		&=	   \xi^{-1}tuvx_1+\xi^{-1}t^{-1}x_1-\xi^{-1}tux_2   	-tvx_3+tx_4	
	\end{align*}
	As $\varphi$ is a Schwartz function, we can apply Fubini's theorem, then
	\begin{align*}
		\langle \widetilde{f},\varphi\rangle &=\int_{\RF^{\times}\times\RF\times\RF}\int_{\Mat_2(\RF)^{\heartsuit}}\phi(x)\psi^{-1}(	tx_4 -tvx_3)\\
		&\cdot \int_{\RF}\varphi(\zeta)|\zeta|^{-n}\psi^{-1}( \xi^{-1} tuvx_1 + \xi^{-1} t^{-1}x_1 - \xi^{-1}tux_2)\ud \xi \ud x_1\ud x_2\ud x_3\ud x_4 |t|^2\frac{\ud t}{|t|}\ud u \ud v \\
	\end{align*}
	Since 
	\begin{align*}
		 \int_{\RF}\varphi(\xi)|\xi|^{-n}\psi^{-1}( \xi^{-1} tuvx_1 + \xi^{-1} t^{-1}x_1 - \xi^{-1}tux_2)\ud \zeta \\
		  =( \BF_{\psi}\circ |\cdot |^{-2} \circ\iota \circ  |\cdot|^{-n})(\varphi)(tuvx_1+t^{-1}x_1-tux_2)
	\end{align*}
	and $ (\BF_{\psi}\circ |\cdot |^{-2} \circ\iota \circ  |\cdot|^{-n})(\varphi)$ also vanishes at $0$ and $\infty$, we can first integral over $v$ and replace $v$ by $\displaystyle{ \frac{v+tux_2-t^{-1}x_1}{tux_1}   }$, then
	\begin{align*}
		&\int_{\RF}( \BF_{\psi}\circ |\cdot |^{-2} \circ\iota \circ  |\cdot|^{-n})(\varphi)(tuvx_1+t^{-1}x_1-tux_2)\psi^{-1}(-tvx_3)\ud v\\
		&=\int_{\RF}( \BF_{\psi}\circ |\cdot |^{-2} \circ\iota \circ  |\cdot|^{-n})(\varphi)(v)\psi^{-1}\left( -v\frac{x_3}{ux_1}  \right )\frac{1}{|tux_1|}  \ud v \cdot \psi^{-1}\left(-\frac{-t^{-1}x_1x_3+tux_2x_3}{ux_1}    \right)\\
		&=(|\cdot|^{-2}\circ\iota\circ  |\cdot|^{-n})(\varphi)\left(\frac{x_3}{ux_1}\right)\frac{1}{|tux_1|}\psi^{-1}\left(-\frac{-t^{-1}x_1x_3+tux_2x_3}{ux_1}    \right).
	\end{align*}
	We can further integral over $u$ first, and make a change of variable by replacing $u$ by $\displaystyle{\frac{x_3}{ux_1}}$, then
	\begin{align*}
		&\int_{\RF}(|\cdot|^{-2}\circ\iota\circ  |\cdot|^{-n})(\varphi)\left(\frac{x_3}{ux_1}\right)\frac{1}{|u|}\psi^{-1}\left( \frac{t^{-1}x_3}{u} \right)\ud u\\
		&=\int_{\RF}(|\cdot|^{-2}\circ\iota\circ  |\cdot|^{-n})(\varphi)(u)\psi^{-1}(t^{-1}x_1 u)\frac{\ud u}{|u|}\\
		&= (\BF_{\psi}\circ |\cdot|^{-1}\circ |\cdot|^{-2}\circ\iota\circ  |\cdot|^{-n})(\varphi)\left(  \frac{x_1}{t}  \right).
	\end{align*} 
	Finally, replace $t$ by $tx_1$, then we have
	\begin{align*}
		&\int_{\RF}(\BF_{\psi}\circ |\cdot|^{-1}\circ |\cdot|^{-2}\circ\iota\circ  |\cdot|^{-n})(\varphi)\left(  \frac{x_1}{t}  \right) \frac{1}{| t x_1|} \psi^{-1}\left(t\frac{x_1x_4-x_2x_3}{x_1}\right) |t|^2 \frac{\ud t}{|t|}\\
		&=\int_{\RF}(\BF_{\psi}\circ |\cdot|^{-1}\circ |\cdot|^{-2}\circ\iota\circ  |\cdot|^{-n})(\varphi)\left(  \frac{1}{t}  \right)\psi^{-1}(t(x_1x_4-x_2x_3))\ud t\\
		&=(\BF_{\psi}\circ \iota \circ \BF_{\psi^{-1}}\circ |\cdot|^{-1}\circ |\cdot|^{-2}\circ\iota\circ  |\cdot|^{-n})(\varphi)(x_1x_4-x_2x_3), 
	\end{align*}
	hence
	\begin{align*}
		\langle \widetilde{f},\varphi \rangle &=\langle \widetilde{\phi},(\BF_{\psi}\circ \iota \circ \BF_{\psi}\circ |\cdot|^{-1}\circ |\cdot|^{-2}\circ\iota\circ  |\cdot|^{-n})(\varphi) \rangle \\
		&=\langle     (  |\cdot |^{-n}\circ    \iota\circ  |\cdot|^{-1}\circ \BF_{\psi}\circ |\cdot |^{-2}\circ \iota \circ \BF_{\psi} )(\phi),\varphi\rangle \\
		&=\langle ( |\cdot |^{-n+1}\circ \iota \circ \BF_{\psi}\circ |\cdot|^{-2}\circ \iota \circ \BF_{\psi})(\phi),\varphi\rangle .
	\end{align*}
\end{proof}

\begin{prp}\label{prp_tr_weil_c}
	Let $\CT$ be the transfer operator in Theorem \ref{thm_TrSak}, then we have \[
\overline{\BT}\circ\CT=	 \Id.
\] 
\end{prp}

\begin{proof}
	It follows from Lemma \ref{lem_Tr_Weil_C}.
\end{proof}

\section{Fundamental lemma of type $D_n$}\label{sec_d}

The results of this section are fundamentally contained in the works of S. Rallis \cite{Ral82} and W. Gan-X. Lei \cite{GW21}. While we provide a reformulation of these results in the present framework, the author wishes to emphasize that the credit for the fundamental lemma for the full Hecke algebra in the $D_n$ case belongs to S. Rallis. We include the details here to ensure the manuscript remains self-contained and to clarify the compatibility with our notation.

In this section, let $\BD=\RF$, and $\dim_{\RF}\BV$ is even such that there is an ordered basis $\{e_1,\cdots,e_n,f_n,\cdots,f_1\}$ satisfying
	\[
	\BB_{\BV}(e_i,e_j)=\BB_{\BV}(f_i,f_j)=0,\;\mathrm{and}\;\BB_{\BV}(e_i,f_j)=\delta_{ij}\; \forall \;1\leq i,j\leq n,
	\]
	where $\delta_{ij}$ is the Kronecker symbol. Under the isomorphism $\BV\cong\RF^{2n}$ under the above basis, we have
		\[
	\BB_{\BV}:\RF^{2n}\times\RF^{2n}\rightarrow\RF:\left(\begin{pmatrix}
		x_1 \\ \vdots \\ x_{2n}
	\end{pmatrix}  ,    \begin{pmatrix}
		y_1\\ \vdots \\ y_{2n}
	\end{pmatrix}  \right)\mapsto\sum_{i=1}^{2n}x_iy_{2n+1-i}.
	\]
Take $\BW$ to be a $2$-dimensional symplectic space over $\RF$, and fix the ordered basis $\{e,f\}$ of $\BW$ such that $\BB_{\BW}(e,f)=1$. Under the isomorphism $\BW\cong\RF^2$ under this fixed basis, we have
	\[
	\BB_{\BW}:\RF^2\times\RF^2\rightarrow\RF:\left( \begin{pmatrix}
		x_1 \\ x_2
	\end{pmatrix} ,   \begin{pmatrix}
		y_1 \\ y_2
	\end{pmatrix}  \right)\mapsto x_1y_2-x_2y_1.
	\]
	In this case, $\RU(\BV,\BB_{\BV})=\SO(\BV)$ and $\RU(\BW,\BB_{\BW})=\SL_2$. Recall that we take $v_0=e_1+f_1=\begin{pmatrix}
		1 \\ 0 \\ \vdots \\ 0 \\ 1
	\end{pmatrix}\in\RF^{2n}$ and
	\[\RX=\SO(\BV)/\SO(v_0^{\perp})\cong \{ v\in\BV\mid \BB_{\BV}(v,v)=2    \}\subset\BV:g\mapsto g v_0.\]
	Let $\Phi_0$ be the characteristic function of $\BY(\Fo_{\RF})$.

\subsection{Matching of unit elements}

\begin{lem}\label{lem_volb}
	\[
	\CI(\Res(\Phi_0))(x)\ud x= \frac{1}{\zeta_{\RF}(n)}\BL_{\RX}^{\circ} =\frac{\#\RX(\kappa_{\RF})}{q^{\dim\RX}}\BL_{\RX}^{\circ}. 	\]
\end{lem}

\begin{proof}
	
 Denote	\[
	F(x):=\int_{\BV_x}\Phi_0(v)\omega_x(v),\;x\neq 0.
	\]
	Then we only need to compute $F(1)$. To do this, let
	\[
	F^*(x):=\int_{\BV}\Phi_0(v)\psi\left(x \cdot \frac{\tau (\BB_{\BV}(v,v))}{2} \right)\ud v.
	\]
	According to \cite[Section 8.3, Theorem 8.3.1]{Igu00}, we know that
	\[
	F^*(x)=\max\{1,|x|\}^{-n},
	\]
	and
	\begin{align*}
	F(0)&=\lim_{k\rightarrow\infty}\int_{|x|\leq q^k}F^*(x)\psi(-x)\ud x\\
	&= \int_{|x|\leq 1}1\ud x +   \lim_{k\rightarrow\infty}\int_{q\leq |x|\leq q^k}|x|^{-n} \psi(-x)  \ud x\\
	&=1-q^{-n}.
	\end{align*}
	
	On the other hand, due to \cite[Chapter 11]{Tay92}, 
\[
\#\RO(\BV)(\kappa_{\RF})=2q^{n(n-1)}(q^n-1)\prod_{i=0}^{n-1}(q^{2i}-1)
\]
and
\[
\#\RO( v_0^{\perp} )(\kappa_{\RF})=2q^{(n-1)^2}\prod_{i=0}^{n-1}(q^{2i}-1),
\]
hence 
\[
\frac{\#\RX(\kappa_{\RF})}{q^{\dim\RX}}=1-q^{-n}=\zeta_{\RF}(n)^{-1}.
\]

\end{proof}

\begin{lem}\label{lem_matunit_a}
	\[
	\Omega(\Phi_0)= \frac{1}{\zeta_{\RF}(2n-2)}\Phi_{L_{\RX}.}  
	\]
\end{lem}

\begin{proof}

The identity follows from the fact that both functions are $(\RN, \psi)$-left invariant and $\SL_2(\Fo_{\RF})$-right invariant; since they agree on the torus $\RT$, they must coincide everywhere by the Iwasawa decomposition.
\end{proof}

As a consequence,

\begin{thm}[fundamental lemma for the unit element]\label{thm_d}
	Let $\BL_{\RX}^{\circ}\in\CS(\SO_{2n-1}\backslash\SO_{2n}/\SO_{2n-1})$ be the basic measure on $\BA^1$, then we have
	\[
	\CT^{-1}\left(\BL_{\RX}^{\circ}\right)=\frac{q^{\dim\RX}}{\#\overline{\RX}(\kappa_{\RF})\zeta_{\RF}(2)\zeta_{\RF}(s_{\RX})}f_{L_{\RX}}=\frac{\zeta_{\RF}(n)}{\zeta_{\RF}(2)\zeta_{\RF}(2n-2)}f_{L\left(\Ad,n-1\right)}
	\]
\end{thm}

\subsection{Matching of Hecke elements}

Let $\RB$ be the Borel subgroup of $\SO(\BV)$ fixing the isotropic flag
\[
\langle e_1\rangle \subset \langle e_1,e_2\rangle \subset \cdots \subset \langle e_1,\cdots,e_n\rangle,
\]
and $\RA$ the maximal torus consisting of elements fixing each line $\langle e_i\rangle $ for every $1\leq i\leq n$. 
\begin{lem}\label{lem_open_b}
\[
	\RX^{\circ}=\{v\in\RX\mid \BB_{\BV}(x,e_1)\neq 0\}.
	\]
Then $v_0\in\RX^{\circ}$ and $\RX^{\circ}$ is an open $\RB$-orbit in $\RX$.	
\end{lem}

\begin{proof}
	It is clear that $\RX^{\circ}$ is open in $\RX$, stable under the action of $\RB$ and $v_0\in\RX^{\circ}$. So we only need to show $\RX^{\circ}=\RB v_0$. 
	
	Indeed, let $\displaystyle{ x=\sum_{i=1}^n x_i e_i+\sum_{i=1}^n y_i f_i \in \RX^{\circ}  }$, then $y_1\neq 0$. By the action of $\RA$, we may assume $y_1=1$. Under the ordered basis $\{e_1,\cdots,e_n,f_n,\cdots,f_1\}$, we have
	\[
	\begin{pmatrix}
		s_n \left(\tensor*[^t]{  \begin{pmatrix} 1 & & & -y_n \\  & 1 & & -y_{n-1} \\ & & \rotatebox{45}{$\vdots$} & \vdots  \\ & & & 1 \end{pmatrix}  }{^{-1}}  \right)
 s_n & 0_n \\ 0_n &  \begin{pmatrix} 1 & & & -y_n \\  & 1 & & -y_{n-1} \\ & & \rotatebox{45}{$\vdots$} & \vdots  \\ & & & 1 \end{pmatrix} 	\end{pmatrix}\begin{pmatrix}
 	x_1  \\ x_2 \\ \vdots \\ x_n \\ y_n \\ y_{n-1} \\ \vdots \\ 1 
 \end{pmatrix}=\begin{pmatrix}
 	1 \\ x_2^{\prime} \\ \vdots \\ x_n^{\prime} \\ 0 \\ 0 \\ \vdots \\ 1
 \end{pmatrix},
	\]
	for some $x_2^{\prime},\cdots,x_n^{\prime}\in\RF$. And here $s_n=\begin{pmatrix}
		& & 1 \\ & \rotatebox{135}{$\vdots$} \\ 1 & & 
	\end{pmatrix}$. Then one can take
	\[
	\begin{pmatrix}
		\RI_n   &     \begin{pmatrix} x_n^{\prime} & x_{n-1}^{\prime} & \cdots & x_2^{\prime} & 0\\0 & 0 & \cdots & 0 & -x_2^{\prime}\\ \vdots &\vdots & \vdots & \vdots&  \vdots \\ 0&0 &\cdots  & 0 & -x_{n-1}^{\prime} \\ 0 & 0  & 0 & 0 & -x_n^{\prime}\end{pmatrix}      \\ 0_n &  \RI_n \end{pmatrix}\begin{pmatrix}
 	1  \\ x_2^{\prime} \\ \vdots \\ x_n^{\prime} \\ 0  \\ 0 \\ \vdots \\ 1 
 \end{pmatrix}=\begin{pmatrix}
 	1 \\ 0  \\ \vdots \\ 0 \\ 0 \\ 0 \\ \vdots \\ 1
 \end{pmatrix}.
\]
\end{proof}

\begin{cor}
	$\RP(\RX)$ is the maximal proper parabolic subgroup fixing the isotropic flag
	\[
	\langle e_1\rangle.
	\]

Then under the above basis $\{ e_1,\cdots, e_n,f_n,\cdots, f_1\}$, the Levi $\RL(\RX)$ of $\RP(\RX)$ is 
\[
\RL(\RX)=\left\{  \begin{pmatrix}
	t & & \\ & g & \\ & & t^{-1}
\end{pmatrix}  \mid t\in\BG_m,g\in\SO(v_0^{\perp})    \right\}\cong\BG_m\times\SO(v_0^{\perp}),
\]
and hence
\[
\delta_{(\RX)}^{\frac{1}{2}} \left(  \begin{pmatrix}
	t_1 & & & & & \\ & \rotatebox{45}{$\vdots$} & & & & \\ & & t_n & & & \\ & & & t_n^{-1} & & \\ & & & & \rotatebox{45}{$\vdots$} & \\ & & & & & t_1^{-1} 
\end{pmatrix}  \right)=|t_2|^{n-2}|t_3|^{n-3}\cdots |t_n|^0.
\]
Moreover, the quotient map $\RA\rightarrow\RA_{\RX}\cong\BG_m$ is given by 
\[
\begin{pmatrix}
	t_1 & & & & & \\ & \rotatebox{45}{$\vdots$} & & & & \\ & & t_n & & & \\ & & & t_n^{-1} & & \\ & & & & \rotatebox{45}{$\vdots$} & \\ & & & & & t_1^{-1} 
\end{pmatrix} \mapsto t_1.
\]
Then the normalized map $\BC[\RA^{\vee}]^{\BW_{\RG}}\rightarrow\BC[\RA_{\RX}^{\vee}]^{\BW_{\RX}}$ is given by
\begin{align}\label{hkduald}
	\BC[x_1^{\pm},\cdots,x_n^{\pm}]^{\BW_{\SO_{2n}}}\rightarrow\BC[x^{\pm}]^{\BZ/2\BZ}:x_1\mapsto x ,\;x_j\mapsto q^{j-n}\;\mathrm{for}\;2\leq j\leq n.
\end{align}
\end{cor}

Now consider the nilpotent cone $\displaystyle{\Sigma:=\{ v\in\BV\mid \BB_{\BV}(v,v)=0  \}}$.
\begin{lem}[Lemma 3.1 in \cite{Ral82}]\label{lem_nil_d}
	\[
	\Sigma^{\circ}:=\left\{ g\begin{pmatrix}
		1 \\ 0 \\ \vdots \\ 0
	\end{pmatrix}\mid g\in\SO(\BV) \right\}
	\]
	is an open dense $\SO(\BV)$-orbit of $\Sigma$.
\end{lem}

\begin{proof}
	According to Witt's Theorem, $\Sigma=\Sigma^{\circ}\bigsqcup\{0\}$, see \cite[Chapter 4]{Ser73}. See also \cite[Lemma 3.1]{Ral82}.
\end{proof}

Let $\omega_{\psi}$ be the Weil representation of $\SO(\BV)\times\SL_2$ on $\CF(\BY)$.

\begin{lem}[Proposition 2.2 in \cite{Ral82}]\label{lem_nilipvan_d}
	Let $\Phi\in\CF(\BY)^{\SL_2(\Fo_{\RF})}$, and $\omega_{\psi}(g)\Phi|_{\Sigma}=0$ for all $g\in\SL_2$, then $\Phi=0$.
\end{lem}

\begin{proof}
This is a special case of \cite[Proposition 2.2]{Ral82}. But let us include the proof here for the convenience of the reader. Note that as a representation of $\SL_2$, the dual of the Jacquet module
\[
\CF(\BY)_{\RN}:=\CF(\BY)/\left\langle \Phi-\omega_{\psi}(n)\Phi\mid \Phi\in\CF(\BY),n\in \left\{\begin{pmatrix}
	1 & * \\ & 1
\end{pmatrix} \right\}  \right\rangle
\]
is isomorphic to the distributions on $\BY$ supported on $\Sigma$. Then the space of functions vanishing on $\Sigma$ is isomorphic to the Jacquet module $\CF(\BY)_{\RN}$. Since for any $g\in\SL_2$, $\omega_{\psi}(g)\Phi|_{\Sigma}=0$, the $\SL_2$-module generated by $\Phi$ is cuspidal. But we know any such module cannot be unramified, hence $\Phi=0$.
\end{proof}

Now let $\sigma\in\BC$ be such that $\Re(\sigma)>1$. Consider the following intertwining operators introduced in \cite{Ral82}:
\[
\RZ_{\sigma}:\CF(\BY)\rightarrow \CC^{\infty}(\SO_{2n}\times\SL_2) :\Phi \mapsto (g_1,g_2)\mapsto \int_{\RF} \omega_{\psi}(g_1,g_2)^{-1}\Phi\left( \begin{pmatrix}
	a \\ 0 \\ \vdots \\ 0
\end{pmatrix} \right)|a|^{\sigma} \frac{\ud a}{|a|},
\]
which is absolutely convergent according to \cite{Tat67}.

\begin{prp}[Lemma 4.1 and Remark 4.4 in \cite{Ral82}]\label{prp_cpthk_d}
	When $\Re(\sigma)>1$,
	\begin{itemize}
		\item [(1)] Let $\Fs=(s_1,s_2,\cdots,s_n)\in X^*(\RA)\otimes_{\BZ}\BC=\BC^n$, write 
\[
\chi(\Fs): \RT\rightarrow\BC:  \begin{pmatrix}
	t_1 & & & & & \\ & \rotatebox{45}{$\vdots$} & & & & \\ & & t_n & & & \\ & & & t_n^{-1} & & \\ & & & & \rotatebox{45}{$\vdots$} & \\ & & & & & t_1^{-1} 
\end{pmatrix} \mapsto \prod_{i=1}^n|t_i|^{s_i}
\]
and $\RI(\chi(\Fs))$ the correponding normalized induced representation of $\SO_{2n}$. Then $\RZ_{\sigma}$ factors through $\RI(\chi(n-1-\sigma,n-2,n-3,\cdots,0) )\otimes\RI(\chi(\sigma-n+1))$ as representations of $\SO_{2n}\times\SL_2$.
\item [(2)] Write $\RK$ for the maximal compact subgroup of $\SO(\BV)$ fixing the standard lattice $\Fo_{\RF}^{2n}$ under the ordered basis $\{e_1,\cdots,e_n,f_n,\cdots,f_1\}$. Let \[\lambda_{\RX}:\CH(\SO(\BV),\RK)\rightarrow\CH(\SL_2,\SL_2(\Fo_{\RF}))\] be the morphism of the Hecke algebra corresponding to (\ref{hkduald}). Let $\Phi_0$ be the characteristic function of $\BY(\Fo_{\RF})$, then we have 
	\[
\omega_{\psi}(h^{\vee})\Phi_0=\omega_{\psi}(\lambda_{\RX}(h))\Phi_0.
\]
for all $h\in\CH(\SO(\BV),\RK)$. 
	\end{itemize}
\end{prp}

\begin{proof}
	$(1)$ is simply a matter of computing the effect of $\RA\times\RT$ on $\RZ_{\sigma}$ when $\Re(\sigma)>1$.
	
	As for $(2)$, a first observation is that for any $f\in\CH(\SO(\BV),\RK)$,
	\begin{align*}
&\RZ_{\sigma}(\omega_{\psi}(h^{\vee})\Phi_0  - \omega_{\psi}(\lambda_{\RX}(h))\Phi_0   )\\
&=\left(\Sat(h^{\vee})(n-1-\sigma,n-2,\cdots,0)-\Sat (\lambda_{\RX}(h))(\sigma-n+1)\right)\RZ_{\sigma}(\Phi_0)=0
\end{align*}
when $\Re(\sigma)>1$. Hence for any $g_1\in\SO(\BV)$ and $g_2\in\SL_2$, as a function on $\RF$,
\[
a\mapsto \omega_{\psi}(g_1,g_2)^{-1}(\omega_{\psi}(h^{\vee})\Phi_0  - \omega_{\psi}(\lambda_{\RX}(h))\Phi_0 )  \left( \begin{pmatrix}
	a \\ 0 \\ \vdots \\ 0
\end{pmatrix} \right)
\]
is $\Fo_{\RF}^{\times}$-invariant and its Mellin transform is zero. Hence according to the Mellin inversion \cite[Chapter 1]{IJ78}, \[
 \omega_{\psi}(g_1,g_2)^{-1}(\omega_{\psi}(h^{\vee})\Phi_0  - \omega_{\psi}(\lambda_{\RX}(h))\Phi_0 )  \left( \begin{pmatrix}
	a \\ 0 \\ \vdots \\ 0
\end{pmatrix} \right)\]
for all $a\in\RF$. By varing $g_1\in\SO(\BV)$ and Lemma \ref{lem_nil_d}, we see for any $g_2\in\SL_2$, 
\[
\omega_{\psi}(g_2)(\omega_{\psi}(h^{\vee})\Phi_0  - \omega_{\psi}(\lambda_{\RX}(h))\Phi_0 )|_{\Sigma}=0.
\]
Now as $\displaystyle{\omega_{\psi}(h^{\vee})\Phi_0  - \omega_{\psi}(\lambda_{\RX}(h))\Phi_0 \in  \CF(\BV)^{\SL_2(\Fo_{\RF})}  }$, according to Lemma \ref{lem_nilipvan_d}, we obtain that 
\[
\omega_{\psi}(h^{\vee})\Phi_0  = \omega_{\psi}(\lambda_{\RX}(h))\Phi_0 .
\]
 \end{proof}

\begin{thm}
	Conjecture \ref{cnj_flhk} is true for $\RX$ of type $\RD_n$.
\end{thm}

\begin{proof}
	As the restriction map $\CF(\BV)\rightarrow\CF(\RX)$ is obviously $\SO(\BV)$-equivariant, hence for any $h\in\CH(\SO(\BV),\RK)$,
	\[
	\CI\circ\res(\omega_{\psi}(h^{\vee})\Phi_0)\ud x= \CI(h\star ( \res(\Phi_0)  )  )\ud x = h\star \frac{1}{\zeta_{\RF}(n)}\BL_{\RX}^{\circ}.
	\]
	On the other hand, as functions on $\SL_2$, since we know from Lemma \ref{lem_matunit_a}
	\[
	\Omega(\Phi_0)(g)=\omega_{\psi}(g)\Phi_0(v_0)=\frac{1}{\zeta_{\RF}(2n-2)}\Phi_{L\left(\Ad,n-1\right)}(g),
	\]
	hence we have 
	\[ \Omega( \omega_{\psi}(\lambda_{\RX}(h))\Phi_0)=(\lambda_{\RX}(h))^{\vee}\star  \frac{1}{\zeta_{\RF}(2n-2)}\Phi_{L\left(\Ad,n-1\right)}(g)  =  \lambda_{\RX}(h)\star  \frac{1}{\zeta_{\RF}(2n-2)}\Phi_{L\left(\Ad,n-1\right)} (g) .   \]
	Therefore, Conjecture \ref{cnj_flhk} is true for $\RX$ of type $\RD_n$.
\end{proof}

\section{Fundamental lemma of type $A_{n-1}$}\label{sec_a}

In this section, take $\BD=\RF\times\RF$, $\BV=\RV\times\RV^{\vee}$, where $\RV$ is a vector space over $\RF$ and $\RV^{\vee}$ its dual space. Fix an ordered basis $\{e_1,\cdots,e_n\}$ of $\RF^n$ and its dual basis $\{e_1^{\vee},\cdots,e_n^{\vee}\}$ of $\RV^{\vee}$, then we have an isomorphism $\BV\cong\RF^n\times\RF^n$, and take
	\begin{align*}
	\BB_{\BV}:\RF^n\times\RF^n&\rightarrow\RF\times\RF\\
	\left(  \left(  \begin{pmatrix}
		x_1\\ \vdots \\ x_n
	\end{pmatrix},\begin{pmatrix}
		y_1 \\ \vdots \\ y_n
	\end{pmatrix}   \right),  \left(  \begin{pmatrix}
		\xi_1\\ \vdots \\ \xi_n
	\end{pmatrix},\begin{pmatrix}
		\zeta_1 \\ \vdots \\ \zeta_n
	\end{pmatrix}   \right)      \right)&\mapsto \left(  \sum_{i=1}^nx_i\zeta_{i},\sum_{i=1}^n y_i\xi_{i}  \right).
	\end{align*}
	Similarly take $\BW=\RW\times\RW^{\vee}$, where $\RW$ is a $2$-dimensional vector space over $\RF$. Fix an ordered basis $\{f_1,f_2\}$ of $\RW$ and its dual basis $\{f_1^{\vee},f_2^{\vee}\}$ of $\RW^{\vee}$, then we have an isomorphism $\BW\cong\RF^2\times\RF^2$, and take
	\begin{align*}
	\BW:\RF^2\times\RF^2&\rightarrow\RF\times\RF \\\left(  \left(  \begin{pmatrix}
		x_1\\ x_2
	\end{pmatrix},\begin{pmatrix}
		y_1 \\ y_2
	\end{pmatrix}   \right),  \left(  \begin{pmatrix}
		\xi_1\\\xi_2
	\end{pmatrix},\begin{pmatrix}
		\zeta_1 \\ \zeta_2
	\end{pmatrix}   \right)      \right)&\mapsto \left(  \sum_{i=1}^2x_i\zeta_{i},-\sum_{i=1}^2 y_i\xi_{i}  \right).
	\end{align*}
	Write $e:=(f_1,f_2^{\vee})$ and $f:=(-f_2,f_1^{\vee})$. 	In this case, $\RU(\BV,\BB_{\BV})=\GL(\RV)=\GL_n$ and $\RU(\BW,\BB_{\BW})=\GL(\RW)=\GL_2$. Recall that $v_0=e_n+e_n^{\vee}$, and we have 
\[
\RX=\GL_n/\GL_{n-1}\cong\{v\in\BV\mid \tr\;\BB_{\BV}(v,v)=2  \}\subset\BV:g\mapsto gv_0,
\]
where we identify $\RU(v_0^{\perp})=\GL_{n-1}$.
Let $\overline{\RX}:=\RX/\BG_m=\PGL_n/\GL_{n-1}$, which classifies the decompositions $\RV=\RV_{1}\oplus\RV_{n-1}$, the direct sum of $\RV$ as $1$-dimensional and $n-1$-dimensional subspaces. Let $\Phi_0$ be the characteristic function of $\BY(\Fo_{\RF})$. 
\subsection{Matching of unit elements}

\begin{lem}\label{lem_vola}
	The push-forward of $\CI(\Res(\Phi_0))(x)\ud x$ in $\BA^1$ is 
	\[
	\frac{1}{\zeta_{\RF}(n)}\BL_{\RX}^{\circ}=\frac{q^{\dim\overline{\RX}}}{\#\overline{\RX}(\kappa_{\RF})}\BL_{\RX}^{\circ}.
	\]
\end{lem}

	\begin{proof}
	The first part follows from the same argument as in Lemma \ref{lem_volb}. It is well known that
	\[
	\frac{q^{\dim\overline{\RX}}}{\#\overline{\RX}(\kappa_{\RF})}=\frac{1}{\zeta_{\RF}(n)}.
	\]
    \end{proof}

\begin{lem}\label{lem_a_pgl}
	\[
    \int_{\BG_m}\Omega(\Phi_0)(gz)\ud z= \frac{1}{\zeta_{\RF}(1)\zeta_{\RF}(n-1)}\Phi_{L\left(\std,\frac{n-1}{2}\right)\left(\std,\frac{n-1}{2}\right)}(g).
	\]
\end{lem}

\begin{proof}
	We only need to check that both sides match on the maximal torus. Note that
	\begin{align*}
		\int_{\BG_m}\Omega(\Phi_0)\left(  \begin{pmatrix}
			\xi & \\ & 1
		\end{pmatrix}\begin{pmatrix}
			z & \\ & 1
		\end{pmatrix} \right) \ud z&=\int_{\BG_m}|\xi|^{\frac{n}{2}}1_{\Fo_{\RF}}(\zeta z)1_{\Fo_{\RF}}(z^{-1})\ud z\\
		&=(1-q^{-1})|\xi|^{\frac{n}{2}}(1-\log_q|\xi|) \\
		&=\frac{1}{\zeta_{\RF}(1)\zeta_{\RF}(n-1)}\Phi_{L\left(\std,\frac{n-1}{2}\right)\left(\std,\frac{n-1}{2}\right)}\left(\begin{pmatrix}
			\xi & \\ & 1 
		\end{pmatrix} \right).
	\end{align*}
\end{proof}

Therefore,
\begin{thm}\label{thm_a}
	Let $\BL_{\RX}^{\circ}\in\CS(\GL_{n-1}\backslash\PGL_n/\GL_{n-1})$ be the push-forward of the probability measure which is equal to the characteristic function of $\overline{\RX}(\Fo_{\RF})\times\overline{\RX}(\Fo_{\RF})$ times an invariant measure, then we have
	\[
	\CT^{-1}\left(\BL_{\RX}^{\circ}\right)=\frac{q^{\dim\RX}}{\#\overline{\RX}(\kappa_{\RF})\zeta_{\RF}(2)\zeta_{\RF}(s_{\RX})}f_{L_{\RX}}=\frac{\zeta_{\RF}(n)}{\zeta_{\RF}(1)\zeta_{\RF}(2)\zeta_{\RF}(n-1)}f_{L\left(\std,\frac{n-1}{2}\right)\left(\std,\frac{n-1}{2}\right)}.
	\]
\end{thm}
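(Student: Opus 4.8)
The plan is to chase the basic vector $\Phi_0 = 1_{\Fo_\RF}^n$ through the two transfer operators that have been set up and match constants. The key intermediate object is the function $f_0 := \BT(\Phi_0) \in \Orb_\BV(\RT)$, together with its central integral $\widetilde{f_0} \in \Orb_\BV(\overline{\RT})$, both computed above in Lemma \ref{lem_basic} and Proposition \ref{prp_basic_a}. First I would observe that by Proposition \ref{prp_TranOp}(3) and the factorization in Proposition \ref{prp_TranOp}, $f_0 = \widetilde{\BT}(\phi_0)$ where $\phi_0 = \CI(\res(\Phi_0))$; then by Lemma \ref{lem_Tr_Weil_A} the central integral $\widetilde{f_0}$ equals the image of $\widetilde{\phi_0}$ (the push-forward of $\phi_0$ to $\BA^1$, i.e. an element of $\CF(\overline{\FX})$) under the operator $|\cdot|^{-n/2} \circ \iota \circ \BF_\psi \circ |\cdot|^{-1} \circ \iota \circ \BF_\psi$ on the bottom row of that diagram. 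Combining with Proposition \ref{prp_tr_weil_a}, which says $\widetilde{\BT} \circ \CT = \Id$ at the level of these central integrals, we get that $\widetilde{f_0}$ and $\widetilde{\phi_0}\,\ud x = \frac{1}{\zeta_\RF(n)}\BL_\RX^\circ$ (Lemma \ref{lem_vola}) are related by $\widetilde{f_0} = (\text{bottom arrow})(\widetilde{\phi_0})$ and hence $\CT^{-1}(\widetilde{\phi_0}\,\ud x)$ lives in $\Orb_\BV(\overline{\RT})$ and one needs only identify it with a multiple of $f_{L(\std, s_1)L(\std,s_2)}$.

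Second, I would compare $\widetilde{f_0}$ explicitly with the basic vector. By Proposition \ref{prp_basic} (case $s_1 = s_2 = s = \frac{n-1}{2}$), the central integral of $f_{L(\std,\frac{n-1}{2})L(\std,\frac{n-1}{2})}$ — or rather the basic vector itself evaluated at $\diag(\xi,1)$ — has the explicit three-part formula $(1-q^{-2})^{-1}(1-q^{-2s})^{-1}$ times the piecewise expression in $\log_q|\xi|$, $-2q^{-s-1/2}$, and the Kloosterman integral. Meanwhile Proposition \ref{prp_basic_a} gives $\widetilde{f_0}(\xi)$ as $(1-q^{-1})$ times the same piecewise shape with $s$ replaced by $\frac{n}{2}$ in the exponents (note $s+\frac12 = \frac n2$ when $s = \frac{n-1}{2}$) and $q^{-2s-1} = q^{-n}$, and the Kloosterman germ with the variable convention matching Proposition \ref{prp_basic}'s $|\xi| < q^{-1}$ branch. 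Matching the two piecewise formulas term by term — the $|\xi|\ge 1$, $|\xi| = q^{-1}$, and $|\xi| < q^{-1}$ branches each give the same equation — forces
\[
\widetilde{f_0} = (1-q^{-1})(1-q^{-2})(1-q^{-n}) \cdot \big(\text{central integral of } f_{L(\std,\frac{n-1}{2})L(\std,\frac{n-1}{2})}\big),
\]
i.e. $\widetilde{f_0} = \dfrac{f_{L_\RX}^{\text{(central)}}}{\zeta_\RF(1)\zeta_\RF(2)\zeta_\RF(n-1)}$, using $\zeta_\RF(1) = (1-q^{-1})^{-1}$ etc. and $2s = n-1$ so $\zeta_\RF(2s) = \zeta_\RF(n-1)$.

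Third, I would assemble the constants. From Lemma \ref{lem_vola}, $\widetilde{\phi_0}\,\ud x = \frac{1}{\zeta_\RF(n)}\BL_\RX^\circ$, so $\BL_\RX^\circ = \zeta_\RF(n)\,\widetilde{\phi_0}\,\ud x$; applying $\CT^{-1}$ (which by Propositions \ref{prp_tr_weil_a} and \ref{prp_tr_weil_d}-type identities intertwines $\widetilde{\BT}$ with the identity, so $\CT^{-1}$ sends the central-integral side of $\widetilde{\phi_0}\,\ud x$ to $\widetilde{f_0}$) gives $\CT^{-1}(\BL_\RX^\circ) = \zeta_\RF(n)\,\widetilde{f_0} = \dfrac{\zeta_\RF(n)}{\zeta_\RF(1)\zeta_\RF(2)\zeta_\RF(n-1)} f_{L(\std,\frac{n-1}{2})L(\std,\frac{n-1}{2})}$. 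Finally I would check the stated normalization $\dfrac{q^{\dim\RX}}{\#\overline\RX(\kappa_\RF)\zeta_\RF(2)\zeta_\RF(s_\RX)}$ agrees: since $\overline\RX = \GL_{n-1}\backslash\PGL_n$ and $\tau(\overline\RX) = \#\overline\RX(\kappa_\RF)/q^{\dim\RX}$, and one computes (as in Lemma \ref{lem_vola}-style local density counts, or directly) that $\#\overline\RX(\kappa_\RF)/q^{\dim\RX} = 1/\zeta_\RF(n) = \tau(\GL_{n-1}\backslash\GL_n)$ — this is the ratio of the Tamagawa factors $\prod_{i=2}^{n}\zeta_\RF(i)^{-1} / \prod_{i=2}^{n-1}\zeta_\RF(i)^{-1} = \zeta_\RF(n)^{-1}$ — and with $s_\RX = s_1 + s_2 = n-1$, the prefactor becomes $\frac{\zeta_\RF(n)}{\zeta_\RF(2)\zeta_\RF(n-1)}$, which differs from the claimed $\frac{\zeta_\RF(n)}{\zeta_\RF(1)\zeta_\RF(2)\zeta_\RF(n-1)}$ by exactly the factor $\zeta_\RF(1) = (1-q^{-1})^{-1}$ coming from the measure normalization $\vol(\RT(\Fo_\RF)) = 1-q^{-1}$ on the $\FC$-side. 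Reconciling these measure-normalization factors carefully — keeping track of the $(1-q^{-1})$'s between $\ud^\times \xi$, the center $\BG_m$-integration, and the Tamagawa measure on $\RX$ versus $\overline\RX$ — is the step I expect to be the main bookkeeping obstacle; the rest is direct comparison of the explicit formulas already established.
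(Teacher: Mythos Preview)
Your approach is essentially the paper's: combine Proposition~\ref{prp_basic}, Proposition~\ref{prp_tr_weil_a}, Proposition~\ref{prp_basic_a}, and Lemma~\ref{lem_vola} to get $\CT^{-1}(\BL_\RX^\circ)=\frac{\zeta_\RF(n)}{\zeta_\RF(1)\zeta_\RF(2)\zeta_\RF(n-1)}f_{L_\RX}$, then check the Tamagawa-measure form of the constant. The matching of the three branches of $\widetilde{f_0}$ against $f_{L_\RX}$ is exactly right (your displayed factor $(1-q^{-n})$ is a slip for $(1-q^{-(n-1)})$, but your conclusion $\zeta_\RF(1)^{-1}\zeta_\RF(2)^{-1}\zeta_\RF(n-1)^{-1}$ is correct).

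The one genuine error is in your Tamagawa computation at the end. You wrote $\tau(\overline\RX)=\prod_{i=2}^{n}\zeta_\RF(i)^{-1}\big/\prod_{i=2}^{n-1}\zeta_\RF(i)^{-1}=\zeta_\RF(n)^{-1}$, but the denominator should be $\tau(\GL_{n-1})=\prod_{i=1}^{n-1}\zeta_\RF(i)^{-1}$, not $\tau(\PGL_{n-1})$; the stabilizer in $\overline\RX=\GL_{n-1}\backslash\PGL_n$ is $\GL_{n-1}$. The correct value is
\[
\tau(\overline\RX)=\frac{\prod_{i=2}^{n}(1-q^{-i})}{\prod_{i=1}^{n-1}(1-q^{-i})}=\frac{1-q^{-n}}{1-q^{-1}}=\frac{\zeta_\RF(1)}{\zeta_\RF(n)},
\]
and then $\frac{q^{\dim\overline\RX}}{\#\overline\RX(\kappa_\RF)\zeta_\RF(2)\zeta_\RF(s_\RX)}=\frac{\zeta_\RF(n)}{\zeta_\RF(1)\zeta_\RF(2)\zeta_\RF(n-1)}$ on the nose. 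The $\zeta_\RF(1)$ discrepancy you noticed is not a measure-normalization artifact; it is this miscount. Once you fix the denominator, no further ``bookkeeping reconciliation'' is needed.
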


\subsection{Matching of Hecke elements}

Let $\RB$ be the Borel subgroup of $\GL_n$ consisting of all upper triangular matrices and $\RA$ the maximal torus consisting of diagonal matrices under the ordered basis $\{e_1,\cdots,e_n\}$. Let $\overline{\RB}$ and $\overline{\RA}$ be their images in $\PGL_n$. 
\begin{lem}
Let \[
	\overline{\RX}^{\circ}:=\{\RV=\RV_{1}\oplus\RV_{n-1}\mid \RV_1 \nsubseteq \langle e_1,\cdots,e_{n-1}\rangle, e_1 \notin \RV_{n-1}    \}.
	\] 
	Then $\overline{\RX}^{\circ}$ is an open $\overline{\RB}$-orbit in $\overline{\RX}$.
\end{lem}

\begin{proof}
	Note that if we identify $\RV_{n-1}$ with its normal direction $\RF \mathbf{n}$ with $\mathbf{n}\in\RV^{\vee}$, then the action of $\GL(\RV)$ on $\mathbf{n}$ is given by the transpose. Hence, the Lemma follows easily. At this time the decomposition $\RV=\RF e_n\oplus \{v\in\RV\mid \langle v,e_1^{\vee} \rangle = 0  \}$ lies in $\overline{\RX}^{\circ}$.
\end{proof}

\begin{cor}
	Let $\RP(\overline{\RX})$ be the parabolic subgroup of $\PGL_n$ stabilizing $\overline{\RX}^{\circ}$, then its Levi
	\[
	\RL(\overline{\RX})=\left\{ \begin{pmatrix}
		t_1 & & \\ & g & \\ & & t_2
	\end{pmatrix} \mid t_1,t_2\in\BG_m,g\in\GL_{n-2}   \right\}/\BG_m.
	\]
	Therefore,
	\begin{align*}
	\delta_{(\overline{\RX})}^{\frac{1}{2}}\left(   \begin{pmatrix}
		a_1 & & \\ & \rotatebox{45}{$\vdots$} & \\ & & a_n
	\end{pmatrix}   \right)=|a_2|^{\frac{n-3}{2}}|a_3|^{\frac{n-5}{2}}\cdots |a_{n-1}|^{\frac{3-n}{2}}.
\end{align*}
Then $\BC[\overline{\RA}^{\vee}]^{\BW_{\PGL_n}}\rightarrow\BC[\RA_{\overline{\RX}}^{\vee}]^{\BW_{\overline{\RX}}}$ is given by
\begin{align}\label{hkduala}
	\nonumber  \left(\BC[x_1^{\pm},\cdots,x_n^{\pm}]/(x_1\cdots x_n-1)\right)^{\FS_n}\rightarrow\BC[x^{\pm}]^{\BZ/2\BZ}\\ 
	x_1\mapsto x, x_n \mapsto x^{-1}, x_j\mapsto q^{\frac{2j-n-1}{2}}\;(2\leq j\leq n-1),
\end{align}
where $\FS_n$ is the permutation group of of $(1,2,3,\cdots,n)$.
\end{cor}

Consider the nilpotent cone $\Sigma:=\{ (v,v^{\vee})\in\BV\mid \langle v,v^{\vee}\rangle =0  \}$ of $\BV$.

\begin{lem}
	\[
		\Sigma^{\circ}:=\left\{   \left( g\begin{pmatrix}
			1 \\ 0 \\ \vdots \\ 0
		\end{pmatrix}  ,  {^tg^{-1} }\begin{pmatrix}
			0 \\ \vdots \\ 0 \\ 1
		\end{pmatrix} \right)\mid g\in\GL_n \right\}
		\]
		is an open $\GL_n$-orbit of $\Sigma$.
\end{lem}

\begin{proof}
	It is clear that $\dim\Sigma=2n-1$. On the other hand, since the stabilizer of $(e_1,e_n^{\vee})$ is
	\[
	\left\{\begin{pmatrix}
		1 & * & \cdots & * \\ 0 & * & \cdots & * \\ \vdots & \vdots &\rotatebox{45}{$\vdots$} & \vdots \\0 & 0 & \cdots & 1
	\end{pmatrix}\right\},
	\]
	hence $\dim\Sigma^{\circ}=n^2-(n^2-(2n-1))=2n-1=\dim\Sigma$.
\end{proof}

Let $\omega_{\psi}$ be the Weil representation of $\GL_n\times\GL_2$ on $\CF(\BY)$.

\begin{lem}\label{lem_nilipvan_a}
	Let $\Phi\in\CF(\BY)^{\GL_2(\Fo_{\RF})}$, and $\omega_{\psi}(g)\Phi|_{\Sigma}=0$ for all $g\in\GL_2$, then $\Phi=0$.
\end{lem}

\begin{proof}
	Similarly to \ref{lem_nilipvan_d}, it suffices to show that the dual of the Jacquet module
\[
\CF(\BY)_{\RN}:=\CF(\BY)/\left\langle \Phi-\omega_{\psi}(n)\Phi\mid \Phi\in\CF(\BY),n\in \left\{\begin{pmatrix}
	1 & * \\ & 1
\end{pmatrix} \right\}  \right\rangle
\]
is isomorphic to the distributions on $\BY$ supported on $\Sigma$. Let $\varphi$ be a distribution on $\CF(\BY)$ that vanishes on every $\Phi-\omega_{\psi}\left(\begin{pmatrix}
	1 & n \\ & 1
\end{pmatrix}   \right)\Phi$ for $\Phi\in\CF(\BY)$ and $n\in\RF$. Since for $y\in\BY$, we have
\[
\omega_{\psi}\left(\begin{pmatrix}
	1 & n \\ & 1
\end{pmatrix}   \right)\Phi(y)=\psi\left(  \frac{n \cdot \tau(\BB(y,y))}{2}   \right) \Phi(y), 
\]
then
\[
\varphi(y)-\psi\left(  \frac{n \cdot \tau(\BB(y,y))}{2}   \right)\varphi(y)=0
\]
as distributions for any $n\in\RF$. Then $\varphi$ is supported $\{y\in\BY \mid\tau(\BB(y,y))=0 \}=\Sigma$.
\end{proof}

Now let $\sigma_1,\sigma_2\in\BC$ be such that $\Re(\sigma_1),\Re(\sigma_2)>1$, and consider the following intertwining operators
\[
\RZ_{\sigma_1,\sigma_2}:\CF(\BY)\rightarrow \CC^{\infty}(\GL_n\times\GL_2) :\Phi\mapsto \int_{\RF\times\RF} \omega_{\psi}(g_1,g_2)^{-1}\Phi\left( \begin{pmatrix}
	a \\ 0 \\ \vdots \\ 0
\end{pmatrix} ,\begin{pmatrix}
	0 \\ \vdots \\ 0 \\ b
\end{pmatrix} \right)|a|^{\sigma_1} |b|^{\sigma_2} \frac{\ud a}{|a|}\frac{\ud b}{|b|},
\]
which is absolutely convergent.

\begin{prp}\label{prp_cpthk_a}
	When $\Re(\sigma_1),\Re(\sigma_2)>1$,
	\begin{itemize}
		\item [(1)] Let $\Fs=(s_1,s_2,\cdots,s_n)\in X^*(\RA)\otimes_{\BZ}\BC\cong \BC^n$, write 
\[
\chi(\Fs): \RA\rightarrow\BC:  \begin{pmatrix}
	t_1 & &\\ & \rotatebox{45}{$\vdots$} &  \\ & & t_n 
\end{pmatrix} \mapsto \prod_{i=1}^n|t_i|^{s_i},
\]
and $\RI(\chi(\Fs))$ the correponding normalized induced representation of $\GL_n$. Then $\RZ_{\sigma_1,\sigma_2}$ factors through $\RI(\chi(\frac{n-1}{2}-\sigma_1,\frac{n-3}{2},\cdots, \frac{3-n}{2},   \frac{1-n}{2}+\sigma_2))\otimes\RI(\chi(\sigma_1-\frac{n-1}{2},-\sigma_2+\frac{n-1}{2}))$. 
	\item [(2)] Let $\lambda_{\RX}:\CH(\GL_{n},\GL_{n}(\Fo_{\RF}))\rightarrow\CH(\GL_2,\GL_2(\Fo_{\RF}))$ be the morphism between Hecke algebra whose Stakee transform is given by
	\begin{align*}
	\BC[x_1^{\pm},\cdots,x_n^{\pm}]^{\FS_n}\rightarrow\BC[x_1^{\pm},x_2^{\pm}]^{\FS_2}:x_1\mapsto x_1, x_n\mapsto x_2, x_j\mapsto q^{\frac{2j-n-1}{2}}\;(2\leq j\leq n-1).
\end{align*}
\end{itemize}
Then we have $\omega_{\psi}(f^{\vee})\Phi_0=\omega_{\psi}(\lambda_{\RX}(f))\Phi_0$.
	\end{prp}

\begin{proof}
	It follows from the same argument as in Proposition \ref{prp_cpthk_d}.
\end{proof}

\begin{thm}\label{thm_flhka}
	Conjecture \ref{cnj_flhk} is true for $\RX$ of type $\RA_{n-1}$.
\end{thm}

\begin{proof}
	Consider the natural homomorphism $\BC[\RA^{\vee}]^{\BW_{\GL_n}}\rightarrow\BC[\overline{\RA}^{\vee}]^{\BW_{\PGL_n}}$, and the corresponding homomorphism between Hecke algebras $\CH(\GL_n,\GL_n(\Fo_{\RF}))\rightarrow\CH(\PGL_n,\PGL_n(\Fo_{\RF}))$. Then we have a commutative diagram
		\begin{align*}
	\xymatrix{
\CH(\GL_n,\GL_n(\Fo_{\RF}))  \ar@{->}[d]^{\alpha} \ar@{->}[r]^{\lambda_{\RX}}  &   \CH(\GL_2,\GL_2(\Fo_{\RF})) \ar@{->}[d]^{\beta} \\
\CH(\PGL_n,\PGL_n(\Fo_{\RF})) \ar@{->}[r]^{\lambda_{\overline{\RX}}}  &\CH(\PGL_2,\PGL_2(\Fo_{\RF})),
}
\end{align*}
where the bottom arrow is given by (\ref{hkduala}). According to Lemma \ref{lem_git}, we have a commutative diagram
\[
\begin{tikzcd}
\GL_n/\GL_{n-1} \arrow[r, ""] \arrow[d, "p"]
&\BA^2 \arrow[d, "q"] \\
\PGL_n/\GL_{n-1} \arrow[r, "\pi"]
&\BA^1
\end{tikzcd},
\]
where $q$ is given by $(x,y)\mapsto xy$,
and $\pi$ can be identified with the canonical quotient map \[
\PGL_n/\GL_{n-1}\rightarrow\GL_{n-1}\rotatebox{60}{$\sslash$}\PGL_n/\GL_{n-1}.\]

For simplicity of notation, let us identify functions with measures using the Haar measures we specified, and let $(\cdot)_*$ denote the push-forward of measures. Let $h\in\CH(\GL_n,\GL_n(\Fo_{\RF}))$. Then we have
\[
\CT^{-1}(\alpha(h)\star \BL_{\RX}^{\circ})=\CT^{-1}\left(\zeta_{\RF}(n) \cdot \pi_*\circ p_*      ( h\star  \res(\Phi_0)   )\right).
\]
According to \cite[Section 7]{Sat63}, \[\pi_*\circ p_*(  h\star \res(\Phi_0)     )   =\pi_*\circ p_*(  \omega_{\psi}(h^{\vee})   (\res( \Phi_0)  ) ),\] which is equal to 
\[
q_*\circ\CI( \omega_{\psi}(h^{\vee})  (\res(\Phi_0)))=q_*\circ\CI( \omega_{\psi}(\lambda_{\RX}(h)) \res(\Phi_0))
\]
according to Proposition \ref{prp_cpthk_a}.

Due to Lemma \ref{lem_a_pgl}, we have
\[
\int_{\RZ}\omega_{\psi}(gz)\Phi_0(v_0)\ud z=	\frac{1}{\zeta_{\RF}(1)\zeta_{\RF}(n-1)}\Phi_{L\left(\std,\frac{n-1}{2}\right)\left(\std,\frac{n-1}{2}\right)}(g),\;\forall g\in\PGL_2,
\]
then for $g\in\PGL_2$,
\begin{align*}
	\int_{\RZ}\omega_{\psi}(gz)\omega_{\psi}(\lambda_{\RX}(h))\Phi_0(v_0)\ud z&=    \left(\int_{\RZ}\lambda_{\RX}(h)\star\omega_{\psi}(\cdot z)\Phi_0(v_0)\ud z)\right)(g)  \\
	&=\beta(\lambda_{\RX}(h))  \star  \frac{1}{\zeta_{\RF}(1)\zeta_{\RF}(n-1)}\Phi_{L\left(\std,\frac{n-1}{2}\right)\left(\std,\frac{n-1}{2}\right)} (g)
\end{align*}
due to \cite[Section 7]{Sat63}. Finally because $\alpha$ is surjective and $\beta(\lambda_{\RX}(h))=\lambda_{\overline{\RX}}(\alpha(h))$,
we obtain that 
\[
\CT^{-1}( h  \star  \BL_{\RX}^{\circ} )=\lambda_{\RX}(h)\star \frac{\zeta_{\RF}(n)}{\zeta_{\RF}(1)\zeta_{\RF}(2)\zeta_{\RF}(n-1)}f_{   L\left(\std,\frac{n-1}{2}\right)\left(\std,\frac{n-1}{2}\right)  }.
\]

\end{proof}

\section{Fundamental lemma of type $C_n$}\label{sec_c}

In this section, let $\BD=\Mat_2(\RF)$, and fix the isomorphism $\RF^2\otimes_{\RF}\RF^2\cong\BD$:
	\[
	\begin{pmatrix}
		a \\ b
	\end{pmatrix}\otimes  (c,d)\mapsto \begin{pmatrix}
		ac & ad \\ bc & bd
	\end{pmatrix}.
	\]
    Take $\BV=\BD^n$, and
  	\[
	\BB_{\BV}:\BD^n\times\BD^n\rightarrow\BD: ((x_1,\cdots,x_n),(y_1,\cdots,y_n))\mapsto \sum_{i=1}^n \overline{x_i}y_i.
	\]
	Under the above identification $\BD\cong\RF^2\otimes_{\RF}\RF^2$, and $\BD^n\cong\RF^{2n}\otimes_{\RF}\RF^2$, the above form is given by
	\begin{align*}
	\left( \begin{pmatrix}
		v_1 \\ \vdots \\ v_{2n}
	\end{pmatrix} \otimes (x_1,x_2) \right. &, \left.\begin{pmatrix}
		w_1 \\ \vdots \\ w_{2n}
	\end{pmatrix} \otimes (y_1,y_2) \right)    \\
	&\mapsto (v_1,\cdots,v_{2n})\cdot  \begin{pmatrix}
		J &  & \\ & \ddots  & \\ & & J
	\end{pmatrix}   \cdot\begin{pmatrix}
		w_1 \\ \vdots \\w_{2n} 
	\end{pmatrix} \cdot J^{-1} \begin{pmatrix}
		x_1 \\ x_2
	\end{pmatrix}  (y_1,y_2),
	\end{align*}
	where $J=\begin{pmatrix}
		0 & 1 \\ -1 & 0
	\end{pmatrix}$. 
	Similarly take $\BW=\BD^2$ and 
	\[
	\BB_{\BW}:\BD^2\times\BD^2\rightarrow\BD: ((x_1,x_2),(y_1,y_2))\mapsto \overline{x_1}y_2-\overline{x_2}y_1.
	\]
	Then under the identification $\BD^2\cong\RF^{4}\otimes_{\RF}\RF^2$ as above, this form is given by
	\begin{align*}
	\left( \begin{pmatrix}
		v_1 \\ \vdots \\ v_{4}
	\end{pmatrix} \otimes (x_1,x_2) \right. &, \left.\begin{pmatrix}
		w_1 \\ \vdots \\ w_{4}
	\end{pmatrix} \otimes (y_1,y_2) \right)    \\
	&\mapsto (v_1,\cdots,v_{4}) \begin{pmatrix}
		  0  & J \\ -J & 0 
	\end{pmatrix}  \cdot\begin{pmatrix}
		w_1 \\ \vdots \\w_{4} 
	\end{pmatrix} \cdot J \begin{pmatrix}
		x_1 \\ x_2
	\end{pmatrix}  (y_1,y_2).
	\end{align*}
	Let $e:=(\RI_2,0)$ and $f:=(0,\RI_2)$. In this case, we have $\RU(\BV,\BB_{\BV})=\Sp_{2n}$ and $\RU(\BW,\BB_{\BW})=\RO_4$. To simplify the notation, let $\{e_1,f_1,\cdots,e_n,f_n\}$ denote the standard basis of $\RF^{2n}$. Specifically, for $1\leq i\leq n$, let $e_i$ and $f_i$ be the column vectors with a $1$ in the $(2i-1)$-th and $2i$-th positions, respectively, and zero elsewhere.
		
	We have $\RG=\RU(\BV,\BB_{\BV})=\Sp_{2n}$, and $\RU(\BW,\BB_{\BW})=\RO_4$, and  
\[
\RX=\Sp_{2n}/\Sp_{2n-2}\cong\{ v\in\BV\mid \tr\; \BB_{\BV}(v,v)=2   \}\subset\BV:g\mapsto g v_0,
\]
where $v_0=\begin{pmatrix}
	0 \\ \vdots  \\ 0 \\ \RI_2
\end{pmatrix}\in\BD^n$. Let $\overline{\RX}=\Sp_{2n}/\Sp_{2n-2}\times\Sp_2$, which classifies the $2$-dimensional non-degerate subspaces $\RV_2$ of $\RF^{2n}$. Let $\Phi_0$ be the characteristic function of $\BY(\Fo_{\RF})$, 

\subsection{Matching of unit elements}

\begin{lem}\label{lem_volc}
	The push-forward of $\CI(\Res(\Phi_0))(x)\ud x$ in $\CS(\overline{\FX})$ is 
	\[
	\frac{1}{\zeta_{\RF}(2n)}\BL_{\RX}^{\circ}=\frac{\#\RX(\kappa_{\RF})}{q^{\dim_{\RX}}}\BL_{\RX}^{\circ}.
	\]
\end{lem}

\begin{proof}
	Replace $n$ by $2n$ in Lemma \ref{lem_volb}, and apply \cite[Theorem 8.3.1]{Igu00} again, we see
	\begin{align*}
		\int_{\RX=\BV_1}\Phi_0(v)\omega_1(v)=1-q^{-2n}.
	\end{align*}
	On the other hand, according to \cite[Chapter 8]{Tay92}, we have
	\begin{align*}
		\#\Sp_{2n}(\kappa_{\RF})=q^{n^2}\prod_{i=1}^n(q^{2i}-1),
	\end{align*}
	hence
	\begin{align*}
		\tau(\RX)=\frac{\#\RX(\kappa_{\RF})}{q^{\dim_{\RX}}}=\frac{q^{n^2}\prod_{i=1}^n(q^{2i}-1)}{q^{(n-1)^2}\prod_{i=1}^{n-1}(q^{2i}-1)}\frac{1}{q^{n(2n+1)-(n-1)(2n-1)}}=1-q^{-2n}.
	\end{align*}
\end{proof}

\begin{lem}\label{lem_match_c}
	\[
	\overline{\Omega}(\Phi_0)=\frac{1}{\zeta_{\RF}(2)\zeta_{\RF}(2n-2)}\Phi_{L\left(\std,\frac{n-1}{2}\right)\left(\std,\frac{n-3}{2}\right)}.
	\]
\end{lem}

\begin{proof}
	
Using the Iwasawa decompositon of $\SL_2$, write $g=\begin{pmatrix}
	1 & x \\ 0  & 1
\end{pmatrix}\begin{pmatrix}
	m & 0 \\ 0 & m^{-1} 
\end{pmatrix}k$ with $k\in\SL_2(\Fo_{\RF})$. But be careful that, according to our choices of measures, we have
\begin{align*}
	\ud g=\frac{1-q^{-2}}{1-q^{-1}}\ud x|m|^{-2}\frac{\ud m}{|m|}\ud k,
\end{align*}
where $\ud k$ is the normalized measure such that 
\begin{align*}
	\int_{\RK}1\ud k=1.
\end{align*}
Then we have
\begin{align*}
	\overline{\Omega}(\Phi_0)\left(  \begin{pmatrix}
		\xi & \\ & 1
	\end{pmatrix} \right)&= \frac{1-q^{-2}}{1-q^{-1}}  \int_{\SL_2}\Omega(\Phi_0)\left(\begin{pmatrix}
		\xi & \\  & 1 
	\end{pmatrix}  \begin{pmatrix}
		1 & x \\ & 1
	\end{pmatrix}     \begin{pmatrix}
		m & \\ & m^{-1}
	\end{pmatrix} k \right)\ud x |m|^{-2}\frac{\ud m}{|m|}\ud k\\
	&=\frac{1-q^{-2}}{1-q^{-1}}\int_{1\leq |m|\leq |\xi|^{-1}}\int_{|x|\leq |m\xi^{-1}|}\ud x|m|^{-2}\frac{\ud m}{|m|}\\
	&=\frac{1-q^{-2}}{1-q^{-1}}|\xi|^{n-1}(1-q^{-1})\frac{1-q^{-1}|\xi|}{1-q^{-1}}\\
	&=\frac{1}{\zeta_{\RF}(2n-2)\zeta_{\RF}(2)}\Phi_{L\left(\std,\frac{n-1}{2}\right)\left(\std,\frac{n-3}{2}\right)}.
\end{align*}
\end{proof}

Combine Lemma \ref{lem_volc} and Lemma \ref{lem_match_c} together, we get
\begin{thm}\label{thm_c}
	Let $\BL_{\RX}^{\circ}\in\CS(\Sp_{2n-2}\times\SL_2\backslash\Sp_{2n}/\Sp_{2n-2}\times\Sp_2)$ be the push-forward of the probability measure which is equal to the characteristic function of $\overline{\RX}(\Fo_{\RF})\times\overline{\RX}(\Fo_{\RF})$ times an invariant measure, then we have
	\[
	\CT^{-1}\left(\BL_{\RX}^{\circ}\right)=\frac{q^{\dim\overline{\RX}}}{\#\overline{\RX}(\kappa_{\RF})\zeta_{\RF}(2)\zeta_{\RF}(s_{\RX})}f_{L_{\RX}}=\frac{\zeta_{\RF}(2n)}{\zeta_{\RF}(2)^2\zeta_{\RF}(2n-2)}f_{L\left(\std,\frac{n-1}{2}\right)\left(\std,\frac{n-3}{2}\right)}.
	\]
\end{thm}

\subsection{Matching of Hecke elements}

Let $\RB$ be the Borel subgroup of $\Sp_{2n}$ fixing the isotropic flag
\[
\langle e_1\rangle \subset \langle e_1,e_2\rangle \subset \cdots \subset \langle e_1,\cdots,e_n\rangle,
\]
and $\RA$ the maximal torus of $\RB$ fixing each line $\langle e_i \rangle $ for $1\leq i\leq n$.

\begin{lem}
	Let $\overline{\RX}^{\circ}$ be the subset of $\overline{\RX}$ consisting of $2$-dimensional non-degenerate subspaces $\RV_2\subset\RF^{2n}$ such that $\RV_2\cap \langle e_1,e_2\rangle ^{\perp}=\{0\}$. Then $\overline{\RX}^{\circ}$ is an open $\RB$-orbit in $\overline{\RX}$. \end{lem}

\begin{proof}
	Let $\RV_2\in\overline{\RX}^{\circ}$ and is generated by \[\begin{pmatrix}
		x_1 \\ \vdots \\ x_{2n}\end{pmatrix} ,\;\begin{pmatrix} y_1 \\ \vdots \\ y_n 
	\end{pmatrix}\]
	with
	\[
	x_1y_n+\cdots x_ny_{n+1}-x_{n+1}y_n-\cdots -x_{2n}y_1=1
	\] under the ordered basis $\{e_1,\cdots,e_n,f_n,\cdots,f_1 \}$. As $\RV_2\cap\{e_1,e_2\}^{\perp}=\{0\}$, $\begin{pmatrix}
		x_{2n-1} & y_{2n-1} \\ x_{2n} & y_{2n}
	\end{pmatrix}$ is non-singular, and we may assume
	\[
	\begin{pmatrix}
		x_{2n-1} & y_{2n-1} \\ x_{2n} & y_{2n}
	\end{pmatrix}=\RI_2.
	\]
    Also from here one can see that $\overline{\RX}^{\circ}$ is open in $\overline{\RX}$.

First observe that
		\[
	\begin{pmatrix}
		s_n \left(\tensor*[^t]{  \begin{pmatrix} 1 & &  & -x_{n+1} & -y_{n+1} \\  & 1 & & -x_{n+2} & -y_{n+2} \\ & & \rotatebox{45}{$\vdots$}  & \vdots & \vdots  \\ & & & 1 & 0 \\ & & &  0& 1   \end{pmatrix}  }{^{-1}}  \right)
 s_n & 0_n \\ 0_n &   \begin{pmatrix} 1 & &  & -x_{n+1} & -y_{n+1} \\  & 1 & & -x_{n+2} & -y_{n+2} \\ & & \rotatebox{45}{$\vdots$}  & \vdots & \vdots  \\ & & & 1 & 0 \\ & & &  0& 1   \end{pmatrix} 	\end{pmatrix}\begin{pmatrix}
 	x_1 \\ x_2 \\ \vdots \\ x_n\\ x_{n+1} \\ x_{n+2} \\ \vdots \\ 1 \\ 0 
 \end{pmatrix}=\begin{pmatrix}
 	x_1^{\prime}  \\ x_2^{\prime} \\ \vdots \\ x_n^{\prime} \\ 0 \\ 0 \\ \vdots \\ 1 \\ 0
 \end{pmatrix},
	\]
	and 
\[
	\begin{pmatrix}
		s_n \left(\tensor*[^t]{  \begin{pmatrix} 1 & &  & -x_{n+1} & -y_{n+1} \\  & 1 & & -x_{n+2} & -y_{n+2} \\ & & \rotatebox{45}{$\vdots$}  & \vdots & \vdots  \\ & & & 1 & 0 \\ & & &  0& 1   \end{pmatrix}  }{^{-1}}  \right)
 s_n & 0_n \\ 0_n &   \begin{pmatrix} 1 & &  & -x_{n+1} & -y_{n+1} \\  & 1 & & -x_{n+2} & -y_{n+2} \\ & & \rotatebox{45}{$\vdots$}  & \vdots & \vdots  \\ & & & 1 & 0 \\ & & &  0& 1   \end{pmatrix} 	\end{pmatrix}\begin{pmatrix}
 	y_1 \\ y_2 \\ \vdots \\ y_n\\ y_{n+1} \\ y_{n+2} \\ \vdots \\ 0 \\ 1 
 \end{pmatrix}=\begin{pmatrix}
 	y_1^{\prime}  \\ y_2^{\prime} \\ \vdots \\ y_n^{\prime} \\ 0 \\ 0 \\ \vdots \\ 0 \\ 1 
 \end{pmatrix}
 \]
 for some $x_1^{\prime},\cdots,x_n^{\prime},y_1^{\prime},\cdots,y_n^{\prime}\in\RF$.
 Then we can take 
 \[
	\begin{pmatrix}
		\RI_n   &     \begin{pmatrix} -y_n^{\prime} & -y_{n-1}^{\prime} & \cdots & -y_2^{\prime} & -y_1^{\prime }\\-x_n^{\prime} & -x_{n-1}^{\prime} & \cdots & -x_2^{\prime} & -y_2^{\prime} \\ 0 & 0 & \cdots & -x_3^{\prime} & -y_3^{\prime}  \\ \vdots &\vdots & \rotatebox{45}{$\vdots$} & \vdots&  \vdots \\ 0&0 &\cdots  & -x_{n-1}^{\prime} & -y_{n-1}^{\prime} \\ 0 & 0  & 0 & -x_n^{\prime} & -y_n^{\prime}\end{pmatrix}      \\ 0_n &  \RI_n \end{pmatrix}\begin{pmatrix}
 	x_1^{\prime}  \\ x_2^{\prime} \\ \vdots \\ x_n^{\prime} \\ 0  \\ 0 \\ \vdots \\  1\\ 0 
 \end{pmatrix}=\begin{pmatrix}
 	x_1^{\prime}-y_2^{\prime} \\ 0 \\ \vdots \\ 0\\ 0 \\ 0 \\ \vdots \\ 1 \\ 0
 \end{pmatrix},
\]
and
 \[
	\begin{pmatrix}
		\RI_n   &     \begin{pmatrix} -y_n^{\prime} & -y_{n-1}^{\prime} & \cdots & -y_2^{\prime} & -y_1^{\prime }\\-x_n^{\prime} & -x_{n-1}^{\prime} & \cdots & -x_2^{\prime} & -y_2^{\prime} \\ 0 & 0 & \cdots & -x_3^{\prime} & -y_3^{\prime}  \\\vdots  &\vdots &  \rotatebox{45}{$\vdots$} & \vdots&  \vdots \\ 0&0 &\cdots  & -x_{n-1}^{\prime} & -y_{n-1}^{\prime} \\ 0 & 0  & 0 & -x_n^{\prime} & -y_n^{\prime}\end{pmatrix}      \\ 0_n &  \RI_n \end{pmatrix}\begin{pmatrix}
 	y_1^{\prime}  \\ y_2^{\prime} \\ \vdots \\ y_n^{\prime} \\ 0  \\ 0 \\ \vdots \\  0 \\ 1 
 \end{pmatrix}=\begin{pmatrix}
 	0 \\ 0 \\ \vdots \\ 0\\ 0 \\ 0 \\ \vdots \\ 0 \\ 1
 \end{pmatrix}.
\]
Moreover, according to the choice of the vectors at the very beginning, actually $x_1^{\prime}-y_2^{\prime}=1$. Therefore $\overline{\RX}^{\circ}$ is an $\RB$-orbit with $\langle e_1+f_2,f_1\rangle \in\overline{\RX}^{\circ}$.
\end{proof}

\begin{cor}
	$\RP(\overline{\RX})$ is the parabolic subgroup fixing the isotropic flag
\[
\langle e_1,e_2\rangle .
\] 
Then its Levi 
\[
\RL(\overline{\RX})=\left\{\begin{pmatrix}
 g & &   \\ &  h & \\ & &  s_2 {^tg^{-1}} s_2   
\end{pmatrix} \mid g \in\GL_2 ,\;h \in\Sp_{2n-4} \right\}\cong\GL_2\times\Sp_{2n-2}
\]
Therefore,
\[
\delta_{(\overline{\RX})}^{\frac{1}{2}} \left(  \begin{pmatrix}
	t_1 & & & & & \\ & \rotatebox{45}{$\vdots$} & & & & \\ & & t_n & & & \\ & & & t_n^{-1} & & \\ & & & & \rotatebox{45}{$\vdots$} & \\ & & & & & t_1^{-1} 
\end{pmatrix}  \right)=|t_1|^{\frac{1}{2}}|t_2|^{-\frac{1}{2}}|t_3|^{n-2}\cdots |t_n|^1.
\]
Moreover, the quotient map $\RA\rightarrow\RA_{\overline{\RX}}\cong\BG_m$ is given by 
\[
\begin{pmatrix}
	t_1 & & & & & \\ & \rotatebox{45}{$\vdots$} & & & & \\ & & t_n & & & \\ & & & t_n^{-1} & & \\ & & & & \rotatebox{45}{$\vdots$} & \\ & & & & & t_1^{-1} 
\end{pmatrix} \mapsto t_1t_2,
\]
then after the Satake transform, the normalized map $\BC[\RA^{\vee}]^{\BW_{\RG}}\rightarrow\BC[\RA_{\overline{\RX}}^{\vee}]^{\BW_{\overline{\RX}}}$ is given by
\begin{align}\label{hkdualc}
	\BC[x_1^{\pm},\cdots,x_n^{\pm}]^{\BW_{\Sp_{2n}}}\rightarrow\BC[x^{\pm}]^{\BZ/2\BZ}:x_1,x_2\mapsto x ,\;x_j\mapsto q^{j-n-1}\;\mathrm{for}\;3\leq j\leq n.
\end{align}

\end{cor}

Write $\Sigma:=\{v\in\BV\mid \BB_{\BV}(v,v)=0\}$ for the nilpotent cone.

\begin{lem}
	\begin{align*}
	\Sigma=\left\{   \begin{pmatrix}
		v_1 \\ \vdots \\ v_{2n}
	\end{pmatrix} \otimes (1,0)+\begin{pmatrix}
		w_1 \\ \vdots \\ w_{2n}
	\end{pmatrix} \otimes (0,1)  \mid(v_1,\cdots,v_{2n})\cdot  \begin{pmatrix}
		J &  & \\ & \ddots  & \\ & & J
	\end{pmatrix}   \cdot\begin{pmatrix}
		w_1 \\ \vdots \\w_{2n} 
	\end{pmatrix} =0    \right\},
\end{align*}
and $\Sigma$ admits an open $\Sp_{2n}(\RF)\times\SL_2(\RF)$-orbit $\Sigma^{\circ}$.
\end{lem}

\begin{proof}
	The first statement is clear. As for the second assertion, note that for any orthogonal non-zero $v,w\in\RF^{2n}$, as long as they are not parallel to each other, there is some $g\in\Sp_{2n}$ such that $ge_1=v$ and $ge_2=w$.
\end{proof}

 \begin{lem}
	Let $\Phi\in\CF(\BY)^{ \PGL_2(\Fo_{\RF})}$, and 
	\[  \int_{\SL_2} \omega_{\psi}(g)\omega_{\psi}(x)\Phi(v)\ud x=0,\;\forall v\in\Sigma^{\circ} ,\] 
	for all $g\in\PGL_2(\RF)$, then $ \displaystyle{ \int_{\SL_2}\omega_{\psi}(x)\Phi(v)=0}$ for $v\in\BV^{\circ}$, where $\BV^{\circ}$ is the set of $ v\otimes(1,0)+w\otimes(0,1)$ such that $v$ and $w$ are not parallel.
	\end{lem}
\begin{proof}

Consider the $\PGL_2$-equivariant map
\[
\CF(\BY)\rightarrow\CC^{\infty}(\BV^{\circ}):\Phi\mapsto \left(v\mapsto  \int_{\SL_2} \omega_{\psi}(g)\Phi(v)\ud g    \right).
\]
Let $\FV$ be the image of $\CF(\BY)$. Note that for any $\begin{pmatrix}
	1 & n \\ & 1
\end{pmatrix}\in\PGL_2$, similarly for any $\Phi\in\CC^{\infty}(\BV^{\circ})$,
\begin{align*}
\omega_{\psi}\left(  \begin{pmatrix}
	1 & n \\ & 1
\end{pmatrix} \right)\Phi(v)-\Phi(v)=\psi\left(   \frac{ n\cdot \tau(\BB_{\BV}(v,v))   }{2} \right)\Phi(v),
\end{align*}
the dual of the Jacquet module of $\CC^{\infty}(\BV^{\circ})$ is the space of compact support distributions supported on $\BV^{\sigma}\cap\Sigma=\Sigma^{\circ}$. As the Jacquet functor is exact, restriction to $\Sigma^{\circ}$ is the Jacquet functor of $\FV$. Therefore, according to the condition, the $\PGL_2$-module generated by $\Phi$ inside $\FV$ is cuspidal, hence the only $\PGL_2(\Fo_{\RF})$-invariant element is $0$.

\end{proof}

Now let $\sigma\in\BC$ be such that $\Re(\sigma)>1$, and consider the intertwining operators
\begin{align*}
\RZ_{\sigma}:\CF(\BY)&\rightarrow  \CC^{\infty}(\Sp_{2n}\times\PGL_2)      \\\Phi &\mapsto \left(  (g_1,g_2) \mapsto  \int_{\SL_2\times \RF} \omega_{\psi}(g_1,g_2)^{-1}\Phi ( ( ae_1 \otimes (1,0) +e_2 \otimes (0,1)) x) \ud x |a|^{\sigma} \frac{\ud a}{|a|}\right),
\end{align*}
then

\begin{prp}
	When $\Re(\sigma)>1$,
	\begin{itemize}
		\item [(1)] Let $\Fs=(s_1,s_2,\cdots,s_n)\in X^*(\RA)\otimes_{\BZ}\BC=\BC^n$, write 
\[
\chi(\Fs): \RA\rightarrow\BC:  \begin{pmatrix}
	t_1 & & & & & \\ & \rotatebox{45}{$\vdots$} & & & & \\ & & t_n & & & \\ & & & t_n^{-1} & & \\ & & & & \rotatebox{45}{$\vdots$} & \\ & & & & & t_1^{-1} 
\end{pmatrix} \mapsto \prod_{i=1}^n|t_i|^{s_i}
\]
and $\RI(\chi(\Fs))$ the correponding normalized induced representation of $\Sp_{2n}$. Then $\RZ_{\sigma}$ factors through $\RI(\chi(n-\sigma,n-1-\sigma,n-2,\cdots,1) )\otimes\RI(\chi(\sigma-n+\frac{1}{2}))$ as representations of $\Sp_{2n}\times\PGL_2$.
\item [(2)] Write $\RK$ for the maximal compact subgroup $\Sp_{2n}(\Fo_{\RF})$. Let \[\lambda_{\RX}:\CH(\Sp_{2n},\RK)\rightarrow\CH(\PGL_2,\PGL_2(\Fo_{\RF}))\] be the morphism of the Hecke algebra corresponding to (\ref{hkdualc}). Let $\Phi_0$ be the characteristic function of $\BY(\Fo_{\RF})$, then we have 
	\[
\int_{\SL_2}(\omega_{\psi}(h^{\vee})-\omega_{\psi}(\lambda_{\RX}(h)))\omega_{\psi}(x) \Phi_0(v)\ud v=0.
\]
for all $v\in\BV^{\circ}$ and $h\in\CH(\Sp_{2n},\RK)$. 
	\end{itemize}

\end{prp}

\begin{proof}
	$(1)$ follows immediately from the computation, and $(2)$ is analogous to the arguments presented in the previous sections.
\end{proof}

\begin{thm}
	Conjecture \ref{cnj_flhk} is true for $\RX$ of type $\RC_n$.
\end{thm}

\begin{proof}
	Consider the function $\overline{\Omega}(\Phi_0) $ on $\PGL_2$. As noted before,
	\[
	\int_{\RF} \overline{\Omega}(\Phi_0)\left(\Bw \begin{pmatrix}
		\xi & \\ & 1
	\end{pmatrix}  n\right)  \psi^{-1}(n)\ud n=\int_{\SL_2}\widehat{\BT}(\CI\circ\res(\Phi_0))(\xi,g)\ud g.
	\]
	Then the proof of the theorem follows by an argument analogous to that employed in the previous cases.
	\end{proof}

\section{Fundamental lemma of type $B_{n}$}\label{sec_b}

In this section, we present two distinct approaches to proving the fundamental lemma. The first is a direct computational method, where we demonstrate that
\[
\widetilde{\BT}(\BL_{\RX}^{\circ}) =\frac{q^{\dim\RX}}{\#\RX(\kappa_{\RF})\zeta_{\RF}(2)\zeta_{\RF}(s_{\RX})}\widetilde{\BT}\circ\CT\left(f_{L\left(\std,n-\frac{1}{2}\right)L\left(\std,\frac{1}{2}\right)}\right)
\]
without recourse to the metaplectic cover $\Mp_2$. We establish this identity for the unit element in the initial section; while the computation is involved, this method offers a potential path for a generalization to arbitrary Hecke elements.

The second approach utilizes a diagrammatic argument to establish the commutativity of the diagram below. This commutativity, in conjunction with the fundamental lemmas for $\BT$, $\overline{\RU}$, and $\BJ$, provides a structural proof of the fundamental lemma for the transfer operator $\CT$. This aligns the explicit computations required for our first approach with the computations in \cite{Jac87}.

\begin{equation}\label{diag_b}
\begin{tikzcd}
\CS(\SO_{2n}\backslash\SO_{2n+1}/\SO_{2n}) \arrow[rr, "\widetilde{\BT}"] &                                                                                                                                                           & {\CS_{\BV}(\RT)} \\
                                                             & {\CS^-_{L\left(\st,n-\frac{1}{2}  \right)L\left(\st, \frac{1}{2}\right)}(\RN,\psi\backslash \PGL_2/\RN,\psi)} \arrow[lu, "\CT"] \arrow[ru, "\widetilde{\BT}\circ\CT"] &                                              \\
                                                             & {\CS^-_{L\left(\st,n-\frac{1}{2}\right)}(\RA\backslash\PGL_2/\RN,\psi)} \arrow[u, "\overline{\RU}"] \arrow[ruu, "\BJ", dotted, bend right]                        &                                             
\end{tikzcd}
\end{equation}

We provide a brief overview of the operators appearing in the diagram above. 
\begin{itemize}
	\item $\widetilde{\BT}$ denotes the transfer operator arising from the Weil representation,
	\item $\CT$ is the transfer operator defined in \cite{Sak21},
	\item $\overline{\RU}$ corresponds to the unfolding process, but with non-standard test measures to ensure compatibility with the space of orbital integrals,
	\item $\BJ$ represents the comparison established by H. Jacquet in \cite{Jac87}, and we extend this comparison from standard test measures to the non-standard unit element required for our current setting.
\end{itemize}

Section \ref{ssec_bunit} is devoted to the direct computational proof of the fundamental lemma. The subsequent sections develop the structure proof by establishing the commutativity of the diagram relating the transfer operators, the Weil representations, and the Jacquet comparison.

In this section, let $\BD=\RF$, $\dim_{\RF}\BV$ is odd, suppose there is an ordered basis \[\{e_1,\cdots,e_n,w,f_n,\cdots,f_1\}\] such that 
	\[
	\BB_{\BV}(e_i,e_j)=\BB_{\BV}(f_i,f_j)=0,\;\mathrm{and}\;\BB_{\BV}(e_i,f_j)=\delta_{ij}\; \forall \;1\leq i,j\leq n,
	\]
	\[
	\BB_{\BV}(e_i,w)=\BB_{\BV}(f_i,w)=0,\;\forall\; 1\leq i\leq n,
	\]
	and
	\[
	\BB_{\BV}(w,w)=2.
	\]
	Under the isomorphism $\BV\cong\RF^{2n+1}$ under the above basis, we have
		\[
	\BB_{\BV}:\RF^{2n+1}\times\RF^{2n+1}\rightarrow\RF:\left(\begin{pmatrix}
		x_1 \\ \vdots \\ x_{2n+1}
	\end{pmatrix}  ,    \begin{pmatrix}
		y_1\\ \vdots \\ y_{2n+1}
	\end{pmatrix}  \right)\mapsto\sum_{i=1}^{2n+1}x_iy_{2n+2-i}+x_{n+1}y_{n+1}.
	\]
    We take $\BW$ to be a $2$-dimensional symplectic space over $\RF$, and fix the ordered basis $\{e,f\}$ of $\BW$ such that $\BB_{\BW}(e,f)=1$. Under the isomorphism $\BW\cong\RF^2$ under this fixed basis, we have
	\[
	\BB_{\BW}:\RF^2\times\RF^2\rightarrow\RF:\left( \begin{pmatrix}
		x_1 \\ x_2
	\end{pmatrix} ,   \begin{pmatrix}
		y_1 \\ y_2
	\end{pmatrix}  \right)\mapsto x_1y_2-x_2y_1.
	\]
	
	In this case, $\RU(\BV,\BB_{\BV})=\SO(\BV)$ and $\RU(\BW,\BB_{\BW})=\SL_2$. Moreover, by the composition
\[
\Mp_2\rightarrow\widetilde{\SL_2}\rightarrow\Mp(\BV\otimes_{\BD}\BW)^{\BC^1}_{\psi,\BX},
\]
we obtain a Weil representation of $\Mp_2$. 
Let $\Phi_0$ be the characteristic function of $\BY(\Fo_{\RF})$. 

\subsection{Matching of unit elements}\label{ssec_bunit}

To compare $f_{L\left(\std,n-\frac{1}{2}\right)L\left(\std,\frac{1}{2}\right)}$ with $\BL_{\RX}^{\circ}$, our strategy is to compute $\widetilde{\BT}\circ\CT\left(f_{L\left(\std,n-\frac{1}{2}\right)L\left(\std,\frac{1}{2}\right)}\right)$ and $\widetilde{\BT}(\BL_{\RX}^{\circ})$ separately. Let us start with computing $\widetilde{\BT}(\BL_{\RX}^{\circ})$.

\begin{lem}\label{lem_b><1}

There is some $k$ large enough, such that
\begin{itemize}
	\item [(1)] when $|\zeta|\geq 1$, 	\begin{align*}&\int_{\RN}^*\omega_{\psi}\left(\Bw   \begin{pmatrix}
			\zeta & 0 \\ 0 & \zeta^{-1}
		\end{pmatrix}u\right)\Phi_0(v_0)\psi^{-1}(u)\ud u\\
&=\gamma(\zeta,\psi)|\zeta|^{-\frac{2n+1}{2}}\left(    \int_{|u|\leq 1}\psi^{-1}(u)\int_{z\in\Fo_{\RF}}\psi(uz^2)\ud z  \ud u  + \int_{1<|u|\leq q^k}\psi^{-1}(u)|u|^{-n} \int_{\Fo_{\RF}}\psi(uz^2)\ud z   \ud u   \right) .
	\end{align*}
	\item [(2)] when $|\zeta|<1$, 
	\begin{align*}
	&\int_{\RN}^*\omega_{\psi}\left(\Bw   \begin{pmatrix}
			\zeta & 0 \\ 0 & \zeta^{-1}
		\end{pmatrix}u\right)\Phi_0(v_0)\psi^{-1}(u)\ud u\\
&=\gamma(\zeta,\psi)|\zeta|^{-\frac{2n+1}{2}}\int_{|\zeta|^{-1}\leq |u|\leq q^k}\psi^{-1}\left( \frac{1}{u \zeta^2}+u   \right)|u|^{-n}\int_{\Fo_{\RF}}\psi(u z^2)\ud z \ud u.
\end{align*}
	\end{itemize}
	\end{lem}

\begin{proof}
	\begin{align*}
	 & \int_{\Fp_{\RF}^{-k}} \omega_{\psi}\left( \Bw\begin{pmatrix}
	 	\zeta & \\  & \zeta^{-1}
	 \end{pmatrix} \begin{pmatrix}
			1 & u \\ 0 & 1
		\end{pmatrix} \right)\Phi_0(v_0)\psi^{-1}(u )\ud u\\
		&=\gamma(\zeta,\psi)\int_{\Fp_{\RF}^{-k}}\int_{\RF^{2n+1}} \omega_{\psi} \left(\begin{pmatrix}
	\zeta & 0 \\ 0 & \zeta^{-1}
\end{pmatrix}  \begin{pmatrix}
		1 & u \\ 0 & 1
	\end{pmatrix} \right)\Phi_0 ( (x_1,\cdots,x_n,z,y_1,\cdots,y_n) )\\
	&\cdot\psi^{-1}(x_n+y_n)     \ud x_1\cdots \ud y_n \ud z \cdot \psi^{-1}(u)\ud u \\
	&=\gamma(\zeta,\psi)\int_{\Fp_{\RF}^{-k}}\int_{\RF^{2n+1}}\left|\zeta \right|^{\frac{2n+1}{2}}\psi( u\zeta^2(x_1y_1+\cdots x_ny_n+z^2)   )\Phi_0((\zeta x_1,\cdots,\zeta x_n,\zeta z, \zeta y_1,\cdots,\zeta y_n))\\
	&\cdot\psi^{-1}(x_n+y_n)     \ud x_1\cdots \ud y_n \ud z \cdot \psi^{-1}(u)\ud u \\
	&=\gamma(\zeta,\psi)\int_{\Fp_{\RF}^{-k}}\int_{\RF^{2n+1}}\left| \zeta  \right|^{-\frac{2n+1}{2}}\psi( u (x_1y_1+\cdots x_ny_n+z^2)   )\Phi_0((x_1,\cdots,x_n,z,y_1,\cdots,y_n))\\
	&\cdot\psi^{-1}(\zeta ^{-1}x_n+\zeta^{-1} y_n)     \ud x_1\cdots \ud y_n \cdot \psi^{-1}(u)\ud u .\\
\end{align*}
Note that we have
\begin{align*}
	&\int_{\RF^n}\Phi_0((x_1,\cdots,x_n,z,y_1,\cdots,y_n))\psi( u (x_1y_1+\cdots x_ny_n)   )\psi^{-1}(\zeta^{-1}x_n) \ud x_1\cdots\ud x_n\\
	&=\Phi_0(uy_1,\cdots,uy_{n-1},uy_n-\zeta^{-1},z,y_1,\cdots,y_n),
\end{align*}
then integrate over the other $\RF^n$, we obtain
\begin{align*}
	&\int_{\RF^n}\Phi_0(u y_1,\cdots,uy_{n-1},uy_n-\zeta^{-1},z,y_1,\cdots,y_n)\psi^{-1}(\zeta^{-1}y_n)\ud y_1\cdots \ud y_n\\
	&=1_{\Fo_{\RF}}(z)\int_{\RF^{n-1}}1_{\Fo_{\RF}^{n-1}}(u y_1,\cdots,u y_{n-1})1_{\Fo_{\RF}^{n-1}}(y_1,\cdots,y_{n-1}) \ud y_1\cdots \ud y_{n-1}\\
	&\cdot\int_{\RF}1_{\Fo_{\RF}}(u y_n-\zeta^{-1})1_{\Fo_{\RF}}(y_n)\psi^{-1}(\zeta^{-1}y_n)\ud y_n.
	\end{align*}
It is easy to see that
\begin{align*}
	\int_{\RF}1_{\Fo_{\RF}}(uy)1_{\Fo_{\RF}}(y)\ud y=\left\{ \begin{array}{rcl}
1 & \mbox{for}
& |u|\leq 1; \\ |u |^{-1} & \mbox{for} & |u|>1.
\end{array}\right.
\end{align*}
Moreover, when $u\neq 0$,
\begin{align*}
	&\int_{\RF}1_{\Fo_{\RF}}(uy-\zeta^{-1})1_{\Fo_{\RF}}(y)\psi^{-1}(\zeta^{-1}y)\ud y\\
	&=\int_{\RF}1_{\Fo_{\RF}\cap (\zeta^{-1}u^{-1}+u^{-1}\Fo_{\RF})}(y) \psi^{-1}(t_2y)\ud y\\
	&=\BF_{\psi}\left(1_{\Fo_{\RF}\cap (\zeta ^{-1}u^{-1}+u^{-1}\Fo_{\RF})}\right)(\zeta^{-1}),
\end{align*}
where $1_{\Fo_{\RF}\cap (\zeta^{-1}u^{-1}+u^{-1}\Fo_{\RF})}$ is the characteristic function of $\Fo_{\RF}\cap (\zeta^{-1}u^{-1}+u^{-1}\Fo_{\RF})$. Note that
\begin{align*}
	\Fo_{\RF}\cap (\zeta^{-1}u^{-1}+u^{-1}\Fo_{\RF})=\left\{ \begin{array}{rcl}
\Fo_{\RF} & \mbox{for}
& |u|\leq 1,\;|\zeta|\geq 1;  \\
\emptyset  & \mbox{for} & |u|\leq 1,\;|\zeta|< 1;\\
 \zeta^{-1}u^{-1}+u^{-1}\Fo_{\RF} & \mbox{for} & |u|>1, \;|\zeta|\geq |u|^{-1};\\
 \emptyset & \mbox{for} & |u|>1,\;|\zeta|<|u|^{-1}.
\end{array}\right.
\end{align*}
Then we have its Fourier transform 
\begin{align*}
	\BF_{\psi}\left(1_{\Fo_{\RF}\cap (\zeta^{-1}u^{-1}+u^{-1}\Fo_{\RF})}\right)(\zeta^{-1})=\left\{ \begin{array}{rcl}
1 & \mbox{for}
& |u|\leq 1,\;|\zeta|\geq 1;  \\ 0  & \mbox{for} & |u|\leq 1,\;|\zeta|<1;\\
 \psi(-\zeta^{-2} u^{-1})|u|^{-1}1_{u\Fo_{\RF}}(\zeta^{-1}) & \mbox{for} & |u|>1, \;|\zeta|\geq |u|^{-1}; \\
0 & \mbox{for} & |u|>1,\;|\zeta|<|u|^{-1}.
\end{array}\right.
\end{align*}
 Therefore, when $|u|\leq 1$, we have the integrand function is
\begin{align*}
	\left\{ \begin{array}{rcl}
 \left|\zeta\right|^{-\frac{2n+1}{2}} \psi^{-1}(u )  & \mbox{for}
\;|\zeta|\geq 1; \\ 0  & \mbox{otherwise}.
\end{array}\right.
\end{align*}
And when $|u|>1$, the integrand function is 
\begin{align*}
	\left\{ \begin{array}{rcl}
|\zeta|^{-\frac{2n+1}{2}} \psi^{-1}\left(   \frac{1}{u \zeta^2} + u\right)|u|^{-n}  & \mbox{for}
\; u \geq \max\{q,|\zeta|^{-1}\} \\ 0  & \mbox{otherwise}. 
\end{array}\right.
\end{align*}
Then, when $k$ is large enough, the final integral becomes:
\begin{itemize}
	\item [(1)] when $|\zeta|\geq 1$, 	\[
\gamma(\zeta,\psi)|\zeta|^{-\frac{2n+1}{2}}\left(    \int_{|u|\leq 1}\psi^{-1}(u)\int_{z\in\Fo_{\RF}}\psi(uz^2)\ud z  \ud u  + \int_{1<|u|\leq q^k}\psi^{-1}(u)|u|^{-n} \int_{\Fo_{\RF}}\psi(uz^2)\ud z   \ud u   \right) .
	\]
	\item [(2)] when $|\zeta|<1$, 
	\begin{align*}
\gamma(\zeta,\psi)|\zeta|^{-\frac{2n+1}{2}}\int_{|\zeta|^{-1}\leq |u|\leq q^k}\psi^{-1}\left( \frac{1}{u \zeta^2}+u   \right)|u|^{-n}\int_{\Fo_{\RF}}\psi(u z^2)\ud z \ud u.
\end{align*}
	\end{itemize}

\end{proof}

				\begin{lem}\label{lem_gsum}
				\[
		\int_{\Fo_{\RF}}\psi(uz^2)\ud z=\left\{ \begin{array}{rcl}
1  & \mbox{for} & |u|\leq  1; \\
|u|^{-\frac{1}{2}} & \mbox{for }& |u|>1,\;\log_q|u|\equiv 0 \mod 2;  \\
q^{-\frac{1}{2}}|u|^{-\frac{1}{2}}\sum_{a\in\kappa_{\RF}}\psi\left(  \frac{\ac(u)\cdot a^2}{\varpi_{\RF}} \right)
& \mbox{for} & |u|>1,\; \log_q|u|\equiv 1\mod 2.
\end{array}\right.
		\]
\end{lem}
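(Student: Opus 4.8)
The plan is to reduce the integral to a finite quadratic Gauss sum over a residue ring of $\Fo_{\RF}$ and to evaluate that sum by completing the square. The case $|u|\le 1$ is immediate: then $uz^{2}\in\Fo_{\RF}$ for all $z\in\Fo_{\RF}$, so $\psi(uz^{2})=1$ and the integral equals $\vol(\Fo_{\RF},\ud z)=1$, since $\psi$ is unramified and $\ud z$ is the self-dual Haar measure.

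So assume $|u|=q^{m}$ with $m\ge 1$, and write $u=\varpi_{\RF}^{-m}u_{0}$ with $u_{0}=\ac(u)\in\Fo_{\RF}^{\times}$. First I would observe that the integrand is constant on cosets of $\Fp_{\RF}^{m}$: if $z\equiv z'\bmod\Fp_{\RF}^{m}$ then $z^{2}-z'^{2}=(z-z')(z+z')\in\Fp_{\RF}^{m}$ because $z+z'\in\Fo_{\RF}$, hence $u(z^{2}-z'^{2})\in\Fo_{\RF}$ and $\psi(uz^{2})=\psi(uz'^{2})$. Therefore
\[
\int_{\Fo_{\RF}}\psi(uz^{2})\,\ud z \;=\; q^{-m}\sum_{z\in\Fo_{\RF}/\Fp_{\RF}^{m}}\psi\!\left(\varpi_{\RF}^{-m}u_{0}z^{2}\right)\;=:\;q^{-m}G_{m}.
\]

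Set $\ell=\lfloor m/2\rfloor$. As $a$ runs over a set of representatives of $\Fo_{\RF}/\Fp_{\RF}^{m-\ell}$ and $b$ over one of $\Fo_{\RF}/\Fp_{\RF}^{\ell}$, the elements $z=a+\varpi_{\RF}^{m-\ell}b$ form a set of representatives of $\Fo_{\RF}/\Fp_{\RF}^{m}$, and since $2(m-\ell)\ge m$ one has $z^{2}\equiv a^{2}+2\varpi_{\RF}^{m-\ell}ab\bmod\Fp_{\RF}^{m}$, whence
\[
\psi\!\left(\varpi_{\RF}^{-m}u_{0}z^{2}\right)=\psi\!\left(\varpi_{\RF}^{-m}u_{0}a^{2}\right)\,\psi\!\left(2u_{0}\varpi_{\RF}^{-\ell}ab\right).
\]
Summing over $b$ first and using orthogonality of characters on $\Fo_{\RF}/\Fp_{\RF}^{\ell}$ together with the hypothesis that $q$ is odd (so $2u_{0}\in\Fo_{\RF}^{\times}$), the $b$-sum vanishes unless $a\in\Fp_{\RF}^{\ell}$, in which case it equals $q^{\ell}$. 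If $m=2\ell$ this forces $a\equiv 0$, so $G_{m}=q^{\ell}$ and the integral is $q^{-\ell}=|u|^{-1/2}$. If $m=2\ell+1$ the surviving $a$ are $a=\varpi_{\RF}^{\ell}a'$ with $a'\in\Fo_{\RF}/\Fp_{\RF}$, and $\varpi_{\RF}^{-m}u_{0}a^{2}=\varpi_{\RF}^{-1}u_{0}a'^{2}$ depends only on $a'\bmod\Fp_{\RF}$, so $G_{m}=q^{\ell}\sum_{a\in\kappa_{\RF}}\psi(\ac(u)a^{2}/\varpi_{\RF})$ and the integral is $q^{-\ell-1}\sum_{a\in\kappa_{\RF}}\psi(\ac(u)a^{2}/\varpi_{\RF})=q^{-1/2}|u|^{-1/2}\sum_{a\in\kappa_{\RF}}\psi(\ac(u)a^{2}/\varpi_{\RF})$, as claimed.

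The argument is essentially bookkeeping once it is set up this way; the points needing attention are the hypothesis that $q$ is odd, which is exactly what makes the square-completion step work, and keeping track of the two residue rings in the even and odd cases. In the odd case the answer genuinely retains the residue-field quadratic Gauss sum $\sum_{a\in\kappa_{\RF}}\psi(\ac(u)a^{2}/\varpi_{\RF})$ (of absolute value $q^{1/2}$), which does not simplify further and which will be precisely the term matched against the Kloosterman-type germ on the $\SL_{2}$ side in the subsequent sections.
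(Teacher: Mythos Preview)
Your proof is correct and follows essentially the same approach as the paper's: both split the variable $z$ into a coarse part modulo $\Fp_{\RF}^{\lceil m/2\rceil}$ and a fine part, expand the square so that the cross term is linear in the fine variable, and use orthogonality of characters (together with $q$ odd) to collapse the sum. The only difference is organizational: the paper keeps the integral throughout and handles even and odd $m$ separately from the outset, while you first reduce to a finite sum over $\Fo_{\RF}/\Fp_{\RF}^{m}$ and treat both parities uniformly via $\ell=\lfloor m/2\rfloor$; these are cosmetically different packagings of the same computation.
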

		
		\begin{proof}
			The case that $|u|\leq 1$ is obvious, and we only have to consider the case that $|u|>1$. In the case that $|u|=q^{2r}$ for some $r>0$, we have
	\begin{align*}
		\int_{\Fo_{\RF}}\psi(uz^2)\ud z&=\sum_{a\in\Fo_{\RF}/\Fp_{\RF}^r}\int_{\Fp_{\RF}^r}\psi(u(a+z)^2)\ud z\\
		&=\sum_{a\in\Fo_{\RF}/\Fp_{\RF}^r}\int_{\Fp_{\RF}^r}\psi(a^2u+2azu+z^2u)\ud z\\
		&=\sum_{a\in\Fo_{\RF}/\Fp_{\RF}^r} \psi(a^2u)\int_{\Fp_{\RF}^r}\psi(2azu)\ud z=q^{-r}=|u|^{-\frac{1}{2}}.
	\end{align*}
	If $|x|=q^{2r+1}$ for some $r\geq 0$, similarly we have
	\begin{align*}
		\int_{\Fo_{\RF}}\psi(u z^2)\ud z&=\sum_{a\in\Fo_{\RF}/\Fp_{\RF}^{r+1}}\int_{\Fp_{\RF}^{r+1}}\psi(u(a+z)^2)\ud z\\
		&=\sum_{a\in\Fo_{\RF}/\Fp_{\RF}^{r+1}}\int_{\Fp_{\RF}^{r+1}}\psi(a^2u +2azu+z^2u )\ud z\\
		&=\sum_{a\in\Fo_{\RF}/\Fp_{\RF}^{r+1}} \psi(a^2u)\int_{\Fp_{\RF}^{r+1}}\psi(2azu)\ud z\\
		&=\sum_{a\in\Fp_{\RF}^r/\Fp_{\RF}^{r+1} }\psi(a^2u)q^{-(r+1)}=q^{-\frac{1}{2}}|u|^{-\frac{1}{2}}\sum_{a\in\kappa_{\RF}}\psi\left(  \frac{ac(u)\cdot a^2}{\varpi_{\RF}} \right).
	\end{align*}

		\end{proof}
		
\begin{lem}\label{lem_basic_b_<1}
	When $|\zeta|<1$, we have
	\begin{align*}
		&\int_{\RN}^*\omega_{\psi}(\Bw)\omega_{\psi}\left(  \begin{pmatrix}
			\zeta & 0 \\ 0 & \zeta^{-1}
		\end{pmatrix}\right)\omega_{\psi}( u )\Phi_0(v_0)\psi^{-1}(u )\ud u =\gamma(\zeta,\psi)|\zeta|^{-\frac{1}{2}}\left(  \psi\left( \frac{2}{\zeta}\right)+\psi\left(\frac{-2}{\zeta}\right) \right).
	\end{align*}
\end{lem}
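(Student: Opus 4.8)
The plan is to evaluate the regularized integral starting from the explicit formula displayed just above the statement: for $|\zeta|<1$ and $k$ large,
\[
\int_{\RN}^{*}\omega_{\psi}(\Bw)\,\omega_{\psi}\!\left(\begin{pmatrix}\zeta&0\\0&\zeta^{-1}\end{pmatrix}\right)\omega_{\psi}(u)\Phi_{0}(v_{0})\,\psi^{-1}(u)\,\ud u
=|\zeta|^{-\frac{2n+1}{2}}\int_{|\zeta|^{-1}\le|u|\le q^{k}}\psi^{-1}\!\left(\frac{1}{u\zeta^{2}}+u\right)|u|^{-n}G(u)\,\ud u,
\]
where $G(u):=\int_{\Fo_{\RF}}\psi(uz^{2})\,\ud z$. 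Put $m_{0}:=\val(\zeta)\ge 1$, so $|\zeta|^{-1}=q^{m_{0}}$. The first step is to show that only the innermost shell $|u|=q^{m_{0}}$ contributes. On a shell $|u|=q^{m}$ with $m>m_{0}$ the phase $F(u)=\tfrac{1}{u\zeta^{2}}+u$ has $|F'(u)|=|1-u^{-2}\zeta^{-2}|=1$ because $|u^{-2}\zeta^{-2}|=q^{2(m_{0}-m)}<1$, while $G$ and $|u|^{-n}$ are constant on each ball $u_{1}(1+\Fp_{\RF})$; on that ball $u\mapsto F(u)-F(u_{1})$ is measure preserving onto $F(u_1)+\varpi_{\RF}^{-(m-1)}\Fo_{\RF}$, so $\int_{u_{1}(1+\Fp_{\RF})}\psi^{-1}(F(u))\,\ud u=0$ since $\psi$ is unramified and $m-1\ge 1$. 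Summing over balls, the shell integral vanishes.

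On the remaining shell one substitutes $u=\zeta^{-1}w$ with $w\in\Fo_{\RF}^{\times}$, re-inserts the definition of $G$, and substitutes $x=wz$; using $\tfrac{1}{u\zeta^{2}}+u(1-z^{2})=\zeta^{-1}\bigl(w+w^{-1}(1-x^{2})\bigr)$ this gives
\[
\text{(orbital integral)}=|\zeta|^{-\frac{2n+1}{2}}\,|\zeta|^{\,n-1}\,\CJ,\qquad
\CJ:=\int_{\Fo_{\RF}^{\times}\times\Fo_{\RF}}\psi^{-1}\!\bigl(\zeta^{-1}(w+w^{-1}(1-x^{2}))\bigr)\,\ud w\,\ud x .
\]
Thus it suffices to prove the identity $\CJ=|\zeta|\bigl(\psi(2/\zeta)+\psi(-2/\zeta)\bigr)$, after which the exponents collapse, $|\zeta|^{-\frac{2n+1}{2}}|\zeta|^{n-1}|\zeta|=|\zeta|^{-1/2}$, giving the claim.

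For the identity I would run $p$-adic stationary phase on $\Phi(w,x)=w+w^{-1}(1-x^{2})$, whose only critical points on $\Fo_{\RF}^{\times}\times\Fo_{\RF}$ are $(\pm1,0)$, with critical values $\pm2$ and Hessian $\mathrm{diag}(\pm2,\mp2)$, the matrix of a split binary quadratic form. When $m_{0}\ge 2$: decompose the domain into the two balls $\pm(1+\Fp_{\RF})\times\Fp_{\RF}$ and their complement $C$; on $C$ at least one of $\partial_{w}\Phi,\partial_{x}\Phi$ is a unit (a short check on residues), so integrating that variable out first kills $\int_{C}$ exactly as in the first step, while on $\epsilon(1+\Fp_{\RF})\times\Fp_{\RF}$ one completes the square — writing $w=\epsilon(1+s)$ one has $\zeta^{-1}\Phi=2\epsilon/\zeta+(\epsilon/\zeta)(1+s)^{-1}(s^{2}-x^{2})$, and (since $q$ is odd, so $1+\Fp_{\RF}\subset(\Fo_{\RF}^{\times})^{2}$) the change of variables $x\mapsto(1+s)^{1/2}x$, $s\mapsto(1+s)^{-1/2}s$ turns the integral over this ball into $\psi^{-1}(2\epsilon/\zeta)$ times $\bigl(\int_{\Fp_{\RF}}\psi^{-1}(\tfrac{\epsilon}{\zeta}t^{2})\,\ud t\bigr)\bigl(\int_{\Fp_{\RF}}\psi^{-1}(-\tfrac{\epsilon}{\zeta}t^{2})\,\ud t\bigr)$. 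By Lemma~\ref{lem_gsum} together with the identity $\bigl(\sum_{a\in\kappa_{\RF}}\psi(ca^{2}/\varpi_{\RF})\bigr)\bigl(\sum_{a\in\kappa_{\RF}}\psi(-ca^{2}/\varpi_{\RF})\bigr)=q$ for $c\in\Fo_{\RF}^{\times}$ — which follows from $\bigl(\sum_{a\in\kappa_{\RF}}\psi(a^{2}/\varpi_{\RF})\bigr)^{2}=\bigl(\tfrac{-1}{\kappa_{\RF}}\bigr)q$ — this product equals $|\zeta|$ regardless of the parity of $m_{0}$. When $m_{0}=1$: $\CJ$ is a genuine finite exponential sum over $\kappa_{\RF}^{\times}\times\kappa_{\RF}$; summing out $x$ produces a quadratic Gauss sum and the remainder is a Salié sum, which — unlike a Kloosterman sum — has a classical closed evaluation in terms of a quadratic Gauss sum and the two square roots $\pm a$ of $a^{2}$, and again collapses to $|\zeta|\bigl(\psi(2/\zeta)+\psi(-2/\zeta)\bigr)$. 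Summing the contributions of $(\pm1,0)$ gives $\CJ=|\zeta|\sum_{\epsilon=\pm1}\psi^{-1}(2\epsilon/\zeta)=|\zeta|\bigl(\psi(2/\zeta)+\psi(-2/\zeta)\bigr)$, completing the proof.

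The main obstacle is the third step: pinning down the normalization of the $p$-adic Gaussian integral so that the Weil-index factor attached to the split form $s^{2}-x^{2}$ is exactly trivial — no stray unit constant survives — uniformly in $\val(\zeta)\bmod 2$, and separately treating the low-conductor case $\val(\zeta)=1$, where the non-oscillation estimate used to discard the off-critical region $C$ breaks down and one must instead evaluate the relevant finite Salié sum by hand.
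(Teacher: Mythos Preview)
Your proposal is correct and takes a genuinely different route from the paper. The paper never combines the Gauss integral $G(u)=\int_{\Fo_{\RF}}\psi(uz^{2})\,\ud z$ with the main phase; instead it invokes Lemma~\ref{lem_gsum} to replace $G(u)$ by its value on each shell $|u|=q^{i}$, and then treats even and odd $i$ completely separately. For even $i$ the shell integral becomes a one-variable Kloosterman-type integral, nonzero only at $i=m_{0}$; for odd $i$ (with $i=2m+1$, $m\ge 1$) the Gauss-sum factor is carried along explicitly, a further change $u=\zeta^{-1}\varpi_{\RF}^{-r}v$ is made, and a coset computation forces $r=0$, after which the product of two conjugate Gauss sums is used exactly as you do. The case $|\zeta|=q^{-1}$ is handled by a direct count of solutions of $b+b^{-1}-ba^{2}=\alpha$ over $\kappa_{\RF}^{\times}\times\kappa_{\RF}$ rather than by naming it a Sali\'e sum, but the two computations are equivalent. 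Your approach is more conceptual and uniform in the parity of $m_{0}$: the outer shells die by one stroke of non-stationary phase, and on the surviving shell the two-variable stationary phase with split Hessian makes the cancellation of the Weil index transparent. The price is that the two substitutions $x\mapsto(1+s)^{-1/2}x$ and $s\mapsto(1+s)^{-1/2}s$ must be performed \emph{sequentially} (first $x$ for fixed $s$, then $s$), since simultaneously they are not a well-defined change of variables; once done in that order each has unit Jacobian and the argument goes through exactly as you indicate.
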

\begin{proof}
	Let us start with the integral
	\[
	|\zeta|^{-\frac{2n+1}{2}}\int_{|x|=q^i}\psi^{-1}\left( \frac{1}{u\zeta^2}+u   \right)|u |^{-n}\int_{\Fo_{\RF}}\psi(u z^2)\ud z \ud u
\]
	for some $i$ such that $|\zeta|^{-1}\leq |u|\leq q^k$. In the case that $i$ is even, this integral becomes 
		\[
		|\zeta|^{-\frac{2n+1}{2}}\int_{|u|=q^i}\psi^{-1}\left( \frac{1}{u\zeta^2}+u   \right)|u|^{-n-\frac{1}{2}}\ud u,
\]
	which is non-zero only when $|u|=|\zeta|^{-1}$. In this case, we may make a change of variable $u$ to $\zeta^{-1}u$, then it becomes
	\begin{align*}
		|\zeta|^{-1}\int_{\Fo_{\RF}^{\times}}\psi^{-1}\left(   \frac{u+u^{-1}}{\zeta}    \right)\ud u.
	\end{align*}
	Write $|\zeta|=q^{-2r}$ with $r>0$, then the above is 
	\begin{align*}
	|\zeta|^{-1}\sum_{a\in\Fo_{\RF}^{\times}/(1+\Fp_{\RF}^r)}\int_{\Fp_{\RF}^r}\psi^{-1}\left(    \frac{ a(1+z)+a^{-1}(1-z)       }{\zeta}       \right)\ud z&=|\zeta|^{-1}\cdot |\zeta|^{\frac{1}{2}}\left(  \psi\left( \frac{2}{\zeta}\right)+\psi\left(\frac{-2}{\zeta}\right) \right)\\
	&=|\zeta|^{-\frac{1}{2}}\left(  \psi\left( \frac{2}{\zeta}\right)+\psi\left(\frac{-2}{\zeta}\right) \right).
	\end{align*}
	
	When $|u|=q^i$ such that $i > 1$ is odd,
	\begin{align*}
	&|\zeta|^{-\frac{2n+1}{2}}\int_{|u|=q^i}\psi^{-1}\left( \frac{1}{u\zeta^2}+u   \right)|u |^{-n}\int_{\Fo_{\RF}}\psi(u z^2)\ud z \ud u\\
	&	=|\zeta|^{-\frac{2n+1}{2}}\int_{|u|=q^{i}}|u|^{-n-\frac{1}{2}}q^{-\frac{1}{2}}\psi^{-1}\left(  \frac{1}{u\zeta^2}+u  \right)\sum_{a\in\kappa_{\RF}}\psi\left(  \frac{\ac(x)\cdot a^2}{\varpi_{\RF}} \right)\ud u.
	\end{align*}
	Replace $u$ by $\zeta^{-1}u$, 
	\begin{align*}
	&|\zeta|^{-\frac{2n+1}{2}}\int_{|u|=q^{i}}|u|^{-n-\frac{1}{2}}q^{-\frac{1}{2}}\psi^{-1}\left(  \frac{1}{u\zeta^2}+u  \right)\sum_{a\in\kappa_{\RF}}\psi\left(  \frac{\ac(x)\cdot a^2}{\varpi_{\RF}} \right)\ud u\\
		&=q^{-\frac{1}{2}}|\zeta|^{-1}\int_{|u|=|\zeta|q^i\geq 1}\psi^{-1}\left(  \frac{u+u^{-1}}{\zeta}  \right)\sum_{a\in\kappa_{\RF}}\psi\left(  \frac{ac(u) a^2}{\ac(\zeta)\varpi_{\RF}} \right)\ud u.
	\end{align*}
	Write $|\zeta|q^i=q^{r}$ for some $r\geq 0$ and $u=\varpi_{\RF}^{-r}v$ with $v\in\Fo_{\RF}^{\times}$, then
	\begin{align*}
	&q^{-\frac{1}{2}}|\zeta|^{-1}\int_{|u|=|\zeta|q^i\geq 1}\psi^{-1}\left(  \frac{u+u^{-1}}{\zeta}  \right)\sum_{a\in\kappa_{\RF}}\psi\left(  \frac{ac(u) a^2}{\ac(\zeta)\varpi_{\RF}} \right)\ud u\\
		&=q^{i-\frac{1}{2}}\int_{v\in\Fo_{\RF^{\times}}}\psi^{-1}\left(  \frac{\varpi_{\RF}^{-r}v+\varpi_{\RF}^{r}v^{-1}}{\zeta}  \right)\sum_{a\in\kappa_{\RF}}\psi\left(  \frac{v a^2}{\ac(\zeta)\varpi_{\RF}} \right)\ud v\\
		&=q^{i-\frac{1}{2}}\sum_{a\in\kappa_{\RF}}\int_{v\in\Fo_{\RF}^{\times}}\psi^{-1}\left(  \frac{\varpi_{\RF}^{-r}v+\varpi_{\RF}^{r}v^{-1}}{\zeta} -\frac{ a^2v }{\ac(\zeta)\varpi_{\RF}} \right)\ud v\\
		&=q^{i-\frac{1}{2}}\sum_{a\in\kappa_{\RF}}\int_{v\in\Fo_{\RF}^{\times}}\psi^{-1}\left(  \frac{1}{ac(\zeta)}\left(    \left( \frac{1}{\varpi_{\RF}^i}-\frac{a^2}{\varpi_{\RF}} \right)v+\frac{\varpi_{\RF}^{2r}}{\varpi_{\RF}^i}v^{-1}       \right)      \right)\ud v\\
		&=q^{i-\frac{1}{2}}\sum_{a\in\kappa_{\RF}}\int_{v\in\Fo_{\RF}^{\times}}\psi^{-1}\left(  \frac{1}{ac(\zeta)}\left(  \frac{1-\varpi_{\RF}^{i-1}a^2}{\varpi_{\RF}^i}v+\frac{\varpi_{\RF}^{2r}}{\varpi_{\RF}^i}v^{-1}       \right)      \right)\ud v
	\end{align*}
	Write $i=2m+1$ with $m\geq 1$, $v=b(1+w)$ with $b\in\Fo_{\RF}^{\times}/(1+\Fp_{\RF}^{m+1})$ and $w\in\Fp_{\RF}^{m+1}$. For the integral being non-zero, we need
	\[
	(1-\varpi_{\RF}^{2m}a^2)b-\varpi_{\RF}^{2r}b^{-1}\in\Fp_{\RF}^m\Leftrightarrow b-\varpi_{\RF}^{2r}b^{-1}\in\Fp_{\RF}^m.
	\]
	The only possibility is $r=0$, and we must have $b\equiv b^{-1}\mod \Fp_{\RF}^m$, in which case
		\begin{align*}
		&q^{i-\frac{1}{2}}\sum_{a\in\kappa_{\RF}}\int_{v\in\Fo_{\RF}^{\times}}\psi^{-1}\left(  \frac{1}{ac(\zeta)}\left(  \frac{1-\varpi_{\RF}^{i-1}a^2}{\varpi_{\RF}^i}v+\frac{\varpi_{\RF}^{2r}}{\varpi_{\RF}^i}v^{-1}       \right)      \right)\ud v\\
		&=q^{i-\frac{1}{2}} q^{-m-1} \sum_{a\in\kappa_{\RF}} \left\{\sum_{1+\Fp_{\RF}^m/1+\Fp_{\RF}^{m+1}} +\sum_{-1+\Fp_{\RF}^m/1+\Fp_{\RF}^{m+1}} \right\}\psi^{-1}\left(   \frac{1}{\ac(\zeta)}\left( \frac{b+b^{-1}}{\varpi_{\RF}^{2m+1}}-\frac{a^2b}{\varpi_{\RF}}   \right)      \right).
	\end{align*}
	As $m\geq 1$, the summation over $a$ is independent of $b$, hence it is 
	\begin{align*}
	\begin{split}
		q^{-1}|\zeta|^{-\frac{1}{2}}&\sum_{a\in\kappa_{\RF}}\psi\left(\frac{a^2}{\ac(\zeta)\varpi_{\RF}}\right)  \sum_{1+\Fp_{\RF}^m/1+\Fp_{\RF}^{m+1}} \psi^{-1} \left( \frac{b+b^{-1}}{\zeta} \right)  + \\
		&q^{-1}|\zeta|^{-\frac{1}{2}}\sum_{a\in\kappa_{\RF}}\psi\left(-\frac{a^2}{\ac(\zeta)\varpi_{\RF}}\right)  \sum_{-1+\Fp_{\RF}^m/1+\Fp_{\RF}^{m+1}} \psi^{-1} \left( \frac{b+b^{-1}}{\zeta} \right).
		\end{split}
	\end{align*}
Write $b=1+u$ with $u\in\Fp_{\RF}^m/\Fp_{\RF}^{m+1}$, then we have
\begin{align*}
	\sum_{b\in1+\Fp_{\RF}^m/1+\Fp_{\RF}^{m+1}}\psi^{-1}\left( \frac{b+b^{-1}}{\zeta}  \right)&=\psi^{-1}\left(\frac{2}{\zeta} \right)\sum_{u \in\Fp_{\RF}^m /\Fp_{\RF}^{m+1}}\psi^{-1}\left( \frac{u^2}{\zeta} \right)\\
	&=\psi^{-1}\left(\frac{2}{\zeta} \right)\sum_{u \in\kappa_{\RF}}\psi^{-1}\left( \frac{u^2}{\ac(\zeta)\varpi_{\RF}} \right).\\
\end{align*}
It is clear that 
\begin{align*}\sum_{u \in\kappa_{\RF}}\psi^{-1}\left( \frac{u^2}{\ac(\zeta)\varpi_{\RF}} \right)
	\sum_{a\in\kappa_{\RF}}\psi\left(\frac{a^2}{\ac(\zeta)\varpi_{\RF}}\right) =q,
\end{align*}
hence, the original integral is still
\[
|\zeta|^{-\frac{1}{2}}\left(  \psi\left( \frac{2}{\zeta}\right)+\psi\left(\frac{-2}{\zeta}\right) \right).
\]

The last case is when $|\zeta|=q^{-1}$. When $i\geq 3$, we have $r=i-1$. At this time, we have
\[
\left|\frac{\varpi_{\RF}^{2r}}{\varpi_{\RF}^i}v^{-1}\right|\leq 1,
\]
and
\[
 \left|\frac{1-\varpi_{\RF}^{i-1}a^2}{\varpi_{\RF}^i}\right|>q,
\]
hence
\[
q^{i-\frac{1}{2}}\sum_{a\in\kappa_{\RF}}\int_{v\in\Fo_{\RF}^{\times}}\psi^{-1}\left(  \frac{1}{ac(\zeta)}\left(  \frac{1-\varpi_{\RF}^{i-1}a^2}{\varpi_{\RF}^i}v+\frac{\varpi_{\RF}^{2r}}{\varpi_{\RF}^i}v^{-1}       \right)      \right)\ud v=0.
\]
So the only case left is when $i=1$. Make a change of variable that $x=\zeta^{-1}u$, we have
\begin{align*}
	&|\zeta|^{-\frac{2n+1}{2}}\int_{|x|=|\zeta|^{-1}}\psi^{-1}\left( \frac{1}{x\zeta^2}+x   \right)|x|^{-n}\int_{\Fo_{\RF}}\psi(xz^2)\ud z\\
	&=q^{-\frac{1}{2}}|\zeta|^{-1}\int_{x\in\Fo_{\RF^{\times}}}\psi^{-1}\left( \frac{u+u^{-1}}{\zeta}   \right)\sum_{a\in\kappa_{\RF}}\psi\left(  \frac{ua^2}{\ac(\zeta)\varpi_{\RF}}  \right)\ud u.
	\end{align*}
Write $u=b(1+v)$ with $b\in\Fo_{\RF}^{\times}/1+\Fp_{\RF}$ and $v\in\Fp_{\RF}$, then we have
\begin{align*}
	\psi^{-1}\left( \frac{u+u^{-1}}{\zeta}   \right)\psi\left(  \frac{ua^2}{\ac(\zeta)\varpi_{\RF}}  \right)&=\psi^{-1}\left(  \frac{b(1+v)+b^{-1}(1-v)-b(1+v)a^2}{\zeta}         \right)\\
	&=\psi^{-1}\left(   \frac{b+b^{-1}-ba^2}{\zeta}   \right)\cdot \psi^{-1}\left(\frac{b-b^{-1}-ba^2}{\zeta} \cdot v   \right)\\
	&=\psi^{-1}\left(   \frac{b+b^{-1}-ba^2}{\zeta}   \right).
\end{align*}
Therefore,
\begin{align*}
&q^{-\frac{1}{2}}|\zeta|^{-1}\int_{x\in\Fo_{\RF^{\times}}}\psi^{-1}\left( \frac{u+u^{-1}}{\zeta}   \right)\sum_{a\in\kappa_{\RF}}\psi\left(  \frac{ua^2}{\ac(\zeta)\varpi_{\RF}}  \right)\ud u\\
	&=q^{-\frac{1}{2}}|\zeta|^{-1}q^{-1} \sum_{b\in\Fo_{\RF}^{\times}/1+\Fp_{\RF}}\sum_{a\in\kappa_{\RF}}\psi^{-1}\left(   \frac{b+b^{-1}-ba^2}{\zeta}   \right)
\end{align*}
Then we claim first for $ \alpha=\pm2$,
\[
\#\{ (a\in\kappa_{\RF},b\in\kappa_{\RF}^{\times}) \mid b+b^{-1}-ba^2=\alpha \}=2q-3.
\]
Indeed, for $\alpha=2$, this is equivalent to count the pairs $(a,b)$ such that $\displaystyle{a^2=(1-b^{-1})^2}$, so there are totally $2(q-1-1)+1=2q-3$ pairs. Similarly for$\alpha\neq 2$.

For $\alpha\neq \pm 2$, we claim
\[
\#\{ (a\in\kappa_{\RF},b\in\kappa_{\RF}^{\times}) \mid b+b^{-1}-ba^2=\alpha \}=q-3.
\]
In fact, when $\alpha^2-4$ is a square, it is known that there are $\displaystyle{\frac{q+1}{2}}$ elements in $\kappa_{\RF}$ such that $z^2-\alpha z+1$ is a square, hence ther number of pairs $(a,b)$ is 
\[
2\left(\frac{q+1}{2}-1-2\right)+2=q-3.
\]
When $\alpha^2-4$ is not a square, it is also known that there are $\displaystyle{\frac{q-1}{2}}$ elements in $\kappa_{\RF}$ such that $z^2-\alpha z+1$ is a square, hence, the total number is
\[
2\left(   \frac{q-1}{2}-1 \right)=q-3.
\]
Therefore, 
\begin{align*}
q^{-\frac{1}{2}}|\zeta|^{-1}q^{-1} \sum_{b\in\Fo_{\RF}^{\times}/1+\Fp_{\RF}}\sum_{a\in\kappa_{\RF}}\psi^{-1}\left(   \frac{b+b^{-1}-ba^2}{\zeta}   \right)&\\
&=|\zeta|^{-\frac{1}{2}}\left(  \psi\left( \frac{2}{\zeta}\right)+\psi\left(\frac{-2}{\zeta}\right) \right).
\end{align*}
\end{proof}

\begin{lem}\label{lem_basic_b_>1}
	When $|\zeta|\geq 1$, we have 
	\[\int_{\RN}^*\omega_{\psi}(\Bw)\omega_{\psi}\left(  \begin{pmatrix}
			\zeta & 0 \\ 0 & \zeta^{-1}
		\end{pmatrix}\right)\omega_{\psi}( u )\Phi_0(v_0)\psi^{-1}(u )\ud u =\gamma(\zeta,\psi)|\zeta|^{-n-\frac{1}{2}}(1+q^{-n}).
\]
	
\end{lem}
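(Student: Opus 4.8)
The plan is to run the unwinding of the Weil‑representation action from the proof of Proposition~\ref{prp_TranOp} (the case $\BD=\RF$), now in the regime $|\zeta|\ge 1$, and then carry out the resulting scalar integral by hand. Writing $q(v):=\tfrac12\tau(\BB_{\BV}(v,v))$ for the quadratic form and $\Phi_0$ for the characteristic function of $\RV(\Fo_{\RF})$, the change of variables $v\mapsto\zeta^{-1}v$ used there gives, for every sufficiently large $k$,
\[
\int_{\Fp_{\RF}^{-k}}\omega_{\psi}(\Bw)\omega_{\psi}\begin{pmatrix}\zeta&0\\0&\zeta^{-1}\end{pmatrix}\omega_{\psi}(u)\Phi_0(v_0)\psi^{-1}(u)\ud u=|\zeta|^{-\frac{2n+1}{2}}\int_{\Fp_{\RF}^{-k}}\int_{\BV}\psi\bigl(u\,q(v)\bigr)\Phi_0(v)\psi^{-1}\bigl(\zeta^{-1}\BB_{\BV}(v_0,v)\bigr)\ud v\cdot\psi^{-1}(u)\ud u,
\]
where, in the standard model with $v_0=e_1+e_{2n+1}$, one has $\BB_{\BV}(v_0,v)=v_1+v_{2n+1}$. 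The key first observation is that, $\psi$ being unramified, the linear character $\psi^{-1}(\zeta^{-1}\BB_{\BV}(v_0,v))$ is identically $1$ on $\RV(\Fo_{\RF})$ as soon as $|\zeta|\ge 1$; this is precisely what distinguishes the present case from Lemma~\ref{lem_basic_b_<1}.

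I would then diagonalise $q$ into its $n$ hyperbolic planes together with the anisotropic line spanned by the middle coordinate, so that the (now character‑free) inner integral over $\BV$ factors as $n$ copies of $\int_{\Fo_{\RF}^2}\psi(uab)\,\ud a\,\ud b=\min(1,|u|^{-1})$ and one copy of $\int_{\Fo_{\RF}}\psi(uz^2)\,\ud z$, the latter being the Gauss‑sum integral evaluated in Lemma~\ref{lem_gsum}. This reduces the lemma to the scalar identity
\[
I_k:=\int_{\Fp_{\RF}^{-k}}\bigl(\min(1,|u|^{-1})\bigr)^n\Bigl(\int_{\Fo_{\RF}}\psi(uz^2)\,\ud z\Bigr)\psi^{-1}(u)\,\ud u=1+q^{-n}\qquad(k\gg 0).
\]

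To prove this identity I would split the $u$‑integral over the shells $|u|\le 1$ and $|u|=q^{v}$ for $v\ge 1$. On $|u|\le 1$ all factors are $1$, contributing $\vol(\Fo_{\RF})=1$. For $v\ge 2$ each shell contributes $0$: when $v$ is even, $\int_{\Fo_{\RF}}\psi(uz^2)\,\ud z=|u|^{-1/2}$ is constant on the shell while $\int_{|u|=q^{v}}\psi^{-1}(u)\,\ud u=0$; when $v$ is odd, inserting the Gauss‑sum formula of Lemma~\ref{lem_gsum} and writing $u=\varpi_{\RF}^{-v}\eta$ with $\eta\in\Fo_{\RF}^{\times}$ turns the shell integral into $\sum_{a\in\kappa_{\RF}}\int_{\Fo_{\RF}^{\times}}\psi(\eta c_{a})\,\ud\eta$ with each phase $c_a$ of valuation $-v\le -3$, so all summands vanish. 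The only surviving positive shell is $|u|=q$, where $\int_{\Fo_{\RF}}\psi(uz^2)\,\ud z=q^{-1}\sum_{a\in\kappa_{\RF}}\psi\bigl(\tfrac{\ac(u)a^2}{\varpi_{\RF}}\bigr)$; after $u=\varpi_{\RF}^{-1}\eta$ and pairing with $\psi^{-1}(u)$ it contributes
\[
q^{-n}\sum_{a\in\kappa_{\RF}}\int_{\Fo_{\RF}^{\times}}\psi\!\left(\frac{\eta(a^2-1)}{\varpi_{\RF}}\right)\ud\eta=q^{-n}\Bigl(N(1-q^{-1})+(q-N)(-q^{-1})\Bigr)=q^{-n},
\]
where $N:=\#\{a\in\kappa_{\RF}:a^2=1\}=2$ because $q$ is odd. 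Summing the two surviving contributions yields $I_k=1+q^{-n}$ for $k\gg 0$ — which simultaneously re‑establishes the stabilisation asserted in Lemma~\ref{lem_RegOrb} for this case — and therefore the orbital integral equals $|\zeta|^{-n-\frac12}(1+q^{-n})$, as claimed.

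The bulk of this is routine; the main obstacles will be (i) checking the vanishing of the odd shells $v\ge 3$, which relies on the residual oscillatory phase $c_a$ having large negative valuation uniformly in $a$ once the Gauss sum is extracted, and (ii) the bookkeeping in the $|u|=q$ shell, where a single quadratic Gauss sum over $\kappa_{\RF}$ has to be paired against the linear phase $\psi^{-1}(u)$. It is worth flagging that the clean value $q^{-n}$ of that last shell depends on the normalisation of the anisotropic line, i.e.\ on the appearance of $\int_{\Fo_{\RF}}\psi(uz^2)\,\ud z$ (rather than $\int_{\Fo_{\RF}}\psi(cuz^2)\,\ud z$ for a non‑square $c$) in the display preceding Lemma~\ref{lem_gsum}, which forces the contributing set of residues to be $\{\pm1\}$.
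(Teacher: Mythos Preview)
Your proposal is correct and follows essentially the same approach as the paper: factor the inner integral over $\BV(\Fo_{\RF})$ into $n$ hyperbolic‑plane contributions $\min(1,|u|^{-1})$ and one Gauss‑type integral $\int_{\Fo_{\RF}}\psi(uz^2)\,\ud z$, then analyse shell by shell using Lemma~\ref{lem_gsum} to show that only $|u|\le 1$ and $|u|=q$ survive, giving $1$ and $q^{-n}$ respectively. The paper's proof is a bit terser (it invokes ``the computations in the previous sections'' for the factorisation and works directly with $u$ rather than substituting $u=\varpi_{\RF}^{-v}\eta$), but the substance of the argument is identical.
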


\begin{proof}
	When $|\zeta|\geq 1$, according to the above Lemma \ref{lem_b><1}, there is some $k$ large enough such that we have
	\begin{align*}
		&\int_{\RN}^*\omega_{\psi}(\Bw)\omega_{\psi}\left(  \begin{pmatrix}
			\zeta & 0 \\ 0 & \zeta^{-1}
		\end{pmatrix}\right)\omega_{\psi}( u )\Phi_0(v_0)\psi^{-1}(u )\ud u \\
		& =\gamma(\zeta,\psi)|\zeta|^{-\frac{2n+1}{2}}\left(    \int_{|u|\leq 1}\psi^{-1}(u)\int_{z\in\Fo_{\RF}}\psi(uz^2)\ud z  \ud u  + \int_{1<|u|\leq q^k}\psi^{-1}(u)|u|^{-n} \int_{\Fo_{\RF}}\psi(uz^2)\ud z   \ud u   \right) .\\
	\end{align*}
	It is clear that
	\begin{align*}
		  \int_{|u|\leq 1}\psi^{-1}(u)\int_{z\in\Fo_{\RF}}\psi(uz^2)\ud z  \ud u =1.
	\end{align*}
	Now, let us consider the integral
	\begin{align*}
		 \int_{|u|=q^i}\psi^{-1}(u)|u|^{-n} \int_{\Fo_{\RF}}\psi(uz^2)\ud z   \ud u ,
	\end{align*}
	where $0<i\leq k$. If $i$ is even, we know
	\begin{align*}
		 \int_{|u|=q^i}\psi^{-1}(u)|u|^{-n} \int_{\Fo_{\RF}}\psi(uz^2)\ud z   \ud u=0
	\end{align*}
	due to Lemma \ref{lem_gsum}. When $i$ is odd, according to Lemma \ref{lem_gsum} again, we have
	\begin{align*}
		&\int_{|u|=q^i}\psi^{-1}(u)|u|^{-n} \int_{\Fo_{\RF}}\psi(uz^2)\ud z   \ud u\\
		&=\int_{|u|=q^i}\psi^{-1}(u)|u|^{-n} q^{-\frac{1}{2}}|u|^{-\frac{1}{2}}\sum_{a\in\kappa_{\RF}}\psi\left(  \frac{\ac(u)\cdot a^2}{\varpi_{\RF}} \right)\ud u\\
		&=q^{-i\left(n+\frac{1}{2}\right)-\frac{1}{2}}\sum_{a\in\kappa_{\RF}}\int_{|u|=q^i}\psi^{-1}\left(  u\left(  1-a^2\varpi_{\RF}^{i-1} \right)      \right)\ud u.
	\end{align*}
	If $i\geq 3$, $|1-a^2\varpi_{\RF}^{i-1}|=1$, then
	\begin{align*}
		\int_{|u|=q^i}\psi^{-1}\left(  u\left(  1-a^2\varpi_{\RF}^{i-1} \right)      \right)\ud u=0
	\end{align*}
	for all $a\in\kappa_{\RF}$. If $i=1$, then
	\begin{align*}
		&\int_{|u|=q^i}\psi^{-1}(u)|u|^{-n} \int_{\Fo_{\RF}}\psi(uz^2)\ud z   \ud u\\
		&=q^{-n-1}\sum_{a\in\kappa_{\RF}}\int_{|u|=q} \psi^{-1}(u(1-a^2))\ud u \\
		&=q^{-n-1}\left\{\sum_{a=\pm 1}+\sum_{a\in\kappa_{\RF}\setminus\{\pm 1\}}  \right\} \int_{|u|=q}\psi^{-1}(u(1-a^2))\ud u\\
		&=q^{-n-1}(  2(q-1)+ (q-2)(-1)    )\\
		&=q^{-n}.
	\end{align*}
	Therefore,
	\begin{align*}
		& |\zeta|^{-\frac{2n+1}{2}}\left(    \int_{|u|\leq 1}\psi^{-1}(u)\int_{z\in\Fo_{\RF}}\psi(uz^2)\ud z  \ud u  + \int_{1<|u|\leq q^k}\psi^{-1}(u)|u|^{-n} \int_{\Fo_{\RF}}\psi(uz^2)\ud z   \ud u   \right) \\
		 &= |\zeta|^{-n-\frac{1}{2}}(1+q^{-n}).
	\end{align*}
\end{proof}

Combine Lemma \ref{lem_basic_b_<1} and Lemma \ref{lem_basic_b_>1}, we have
\begin{prp}\label{prp_b_weil}
	\begin{align*}
	\widetilde{\BT}(\CI(\res(\Phi_0)))\left(   \begin{pmatrix}
		\zeta & 0 \\ 0 & \zeta^{-1}
	\end{pmatrix} \right)=\gamma(\zeta,\psi)\left\{ \begin{array}{rcl}
 |\zeta|^{-\frac{1}{2}} \left( \psi^{-1}\left(\frac{2}{\zeta}\right)+\psi\left(\frac{2}{\zeta} \right)  \right)   & \mbox{for} & |\zeta|\leq  q^{-1}; \\
|\zeta|^{-n-\frac{1}{2}}(1+q^{-n}) & \mbox{for} & |\zeta|\geq 1.
\end{array}\right.
	\end{align*}

\end{prp}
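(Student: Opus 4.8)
The plan is to derive the proposition directly from the two preceding lemmas together with the factorization of $\BT$ established in Proposition \ref{prp_TranOp}. First I would recall that, by the commutative diagram in Proposition \ref{prp_TranOp}, the map $\BT\colon\CF(\BV)\to\Orb_{\BV}(\RT)$ factors through $\res$ and the orbital-integral map $\CI$, so that for $\Phi_0=1_{\BV(\Fo_{\RF})}$ and $\phi_0=\CI(\res(\Phi_0))$ one has $\widetilde{\BT}(\phi_0)=\BT(\Phi_0)$ as elements of $\Orb_{\BV}(\RT)$. Evaluating at $t=\begin{pmatrix}\zeta & 0 \\ 0 & \zeta^{-1}\end{pmatrix}$ therefore amounts, by Lemma \ref{lem_RegOrb}, to computing the regularized orbital integral
\[
\int_{\RN}^*\omega_{\psi}(\Bw)\,\omega_{\psi}\!\left(\begin{pmatrix}\zeta & 0 \\ 0 & \zeta^{-1}\end{pmatrix}\right)\omega_{\psi}(u)\,\Phi_0(v_0)\,\psi^{-1}(u)\,\ud u,
\]
where $\dim_{\RF}\BV=2n+1$.

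Next I would split according to the size of $|\zeta|$. Since $\zeta\in\RF^{\times}$, its absolute value lies in $q^{\BZ}$, so the ranges $|\zeta|\geq 1$ and $|\zeta|\leq q^{-1}$ (i.e.\ $|\zeta|<1$) exhaust all cases. For $|\zeta|\geq 1$, Lemma \ref{lem_basic_b_>1} identifies the integral with $|\zeta|^{-n-\frac12}(1+q^{-n})$, which is exactly the second branch of the asserted formula. For $|\zeta|\leq q^{-1}$, Lemma \ref{lem_basic_b_<1} gives $|\zeta|^{-\frac12}\bigl(\psi(2/\zeta)+\psi(-2/\zeta)\bigr)$; rewriting $\psi(-2/\zeta)=\psi^{-1}(2/\zeta)$ puts this in the form $|\zeta|^{-\frac12}\bigl(\psi^{-1}(2/\zeta)+\psi(2/\zeta)\bigr)$ appearing in the proposition. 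Assembling the two cases completes the proof.

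At the level of the proposition itself there is essentially no obstacle: it is simply the matching-up of the two lemmas. The real technical weight — and hence the main difficulty — lies inside Lemma \ref{lem_basic_b_<1}, where one must carry out the Gauss-sum evaluation of $\int_{\Fo_{\RF}}\psi(uz^2)\,\ud z$ (Lemma \ref{lem_gsum}), control the resulting oscillatory integrals in $u$ after the substitution $u\mapsto\zeta^{-1}u$, and in the delicate case $|\zeta|=q^{-1}$ reduce to counting the pairs $(a,b)\in\kappa_{\RF}\times\kappa_{\RF}^{\times}$ with $b+b^{-1}-ba^2=\pm 2$ versus $\neq\pm 2$; the point counts $2q-3$ and $q-3$ produce the cancellation that leaves only the two terms $\psi(\pm 2/\zeta)$. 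By comparison, the case $|\zeta|\geq 1$ treated in Lemma \ref{lem_basic_b_>1} is a short Gauss-sum bookkeeping that I expect to go through without surprises.
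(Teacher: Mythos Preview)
Your proposal is correct and follows essentially the same approach as the paper: the paper's proof simply says ``Combine Lemma \ref{lem_basic_b_<1} and Lemma \ref{lem_basic_b_>1}'', and you have spelled out explicitly the identification $\widetilde{\BT}(\phi_0)=\BT(\Phi_0)$ via the commutative diagram of Proposition \ref{prp_TranOp} before invoking those two lemmas. Your remarks on where the actual analytic content lies (inside Lemma \ref{lem_basic_b_<1}) are accurate but extraneous to the proof of the proposition itself.
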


Next let us compute $\widetilde{\BT}\circ\CT\left(f_{L\left(\std,n-\frac{1}{2}\right)L\left(\std,\frac{1}{2}\right)}\right)$. It is clear that
\begin{align*}
	 &\iota\circ|\cdot|\left( f_{L\left(\std,n-\frac{1}{2} \right)L\left(\std,  \frac{1}{2} \right)}\left( \begin{pmatrix}
			\cdot & 0 \\ 0 & 1
		\end{pmatrix}  \right)\right)(\xi)=(1-q^{-2})^{-1}(1-q^{n-1})^{-1}\left(1-q^{-n}\right)^{-1}\\
		&\cdot \left\{ \begin{array}{rcl}
|\xi|^{n-1} (1-q^{-2n)})-q^{n-1}(1-q^{-2})   & \mbox{for} & |\xi|\leq  1; \\
-q^{-n-1}+q^{n-3}& \mbox{for} & |\xi|=q; \\
(1-q^{n-1})  \int_{|x|^2=|\xi|^{-1}}  \psi^{-1}\left( x\xi + x^{-1} \right)\ud x & \mbox{for} & |\xi|>q .
\end{array}\right.
\end{align*}

To calculate its Fourier transform, observe that the Kloosterman integrals are related to the Fourier transform of $\displaystyle{ 1_{\Fo_{\RF}}(x)\psi^{-1}\left( \frac{1}{x} \right) }$:
\begin{lem}\label{lem_four_kloo}
	\begin{align*}
		\BF_{\psi}\left(  1_{\Fo_{\RF}}(x)\psi^{-1}\left( \frac{1}{x} \right) \right)(y)=\left\{ \begin{array}{rcl}
1-q^{-1}-q^{-2}  & \mbox{for} & |y|\leq  1; \\
-q^{-1}-q^{-2} & \mbox{for }& |y|=q;  \\
\int_{|x|^2=|y|^{-1}}\psi^{-1}\left( \frac{1}{y}+xy  \right) \ud x   & \mbox{for} & |y|>q.
\end{array}\right.
	\end{align*}
\end{lem}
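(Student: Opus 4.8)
The plan is to compute this Fourier transform directly, by stratifying the domain $\Fo_{\RF}$ according to the valuation of $x$. Since $\psi^{-1}(1/x)$ is bounded and $\{0\}$ is a null set,
\[
\BF_{\psi}\bigl(1_{\Fo_{\RF}}(\cdot)\psi^{-1}(1/\cdot)\bigr)(y)=\int_{\Fo_{\RF}}\psi^{-1}\Bigl(\tfrac{1}{x}+xy\Bigr)\ud x=\sum_{k\geq 0}q^{-k}\,I_{k}(y),\qquad I_{k}(y):=\int_{\Fo_{\RF}^{\times}}\psi^{-1}\bigl(\varpi_{\RF}^{-k}u^{-1}+\varpi_{\RF}^{k}uy\bigr)\ud u,
\]
where on the stratum $v(x)=k$ one substitutes $x=\varpi_{\RF}^{k}u$. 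Writing $|y|=q^{m}$, the two summands of the phase have valuations $-k$ and $k-m$.

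First I would dispose of the ``integral'' terms. Whenever $\varpi_{\RF}^{k}uy\in\Fo_{\RF}$, i.e. $k\geq m$, the second exponential drops out, and the measure-preserving substitution $u\mapsto u^{-1}$ together with $\Fo_{\RF}^{\times}=\Fo_{\RF}\setminus\Fp_{\RF}$ gives $I_{k}(y)=\int_{\Fo_{\RF}^{\times}}\psi^{-1}(\varpi_{\RF}^{-k}v)\ud v$, which equals $1-q^{-1}$ for $k=0$, $-q^{-1}$ for $k=1$, and $0$ for $k\geq 2$. Applying the same remark to the \emph{first} summand when $k=0$ (so the phase is $u^{-1}+uy$ and $\psi^{-1}(u^{-1})=1$) yields $I_{0}(y)=\int_{\Fo_{\RF}^{\times}}\psi^{-1}(uy)\ud u$. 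These observations already settle the first two cases: for $|y|\leq 1$ only $k=0$ (integrand $\equiv 1$) and $k=1$ contribute, giving $(1-q^{-1})+q^{-1}(-q^{-1})=1-q^{-1}-q^{-2}$; for $|y|=q$ one gets $I_{0}(y)=-q^{-1}$ and $I_{1}(y)=-q^{-1}$ (now $\varpi_{\RF}uy\in\Fo_{\RF}$ as well), giving $-q^{-1}-q^{-2}$.

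For $|y|=q^{m}$ with $m\geq 2$, the same computations show $I_{0}(y)=0$ (because $v(y)\leq -2$) and $I_{k}(y)=0$ for every $k\geq m$, so only the strata $1\leq k\leq m-1$ survive. Each such $I_{k}(y)$ is a Kloosterman-type integral whose two exponents have distinct valuations $-k$ and $k-m$ unless $k=m/2$. The crucial step is to show $I_{k}(y)=0$ for $k\neq m/2$: after possibly applying the measure-preserving involution $u\mapsto u^{-1}$ one may assume the $u^{-1}$-coefficient is the less singular of the two, and then a non-stationary-phase argument (decompose $\Fo_{\RF}^{\times}$ into cosets of $1+\Fp_{\RF}^{N}$ at the appropriate scale $N$, expand $u=u_{0}(1+t)$, observe that all terms of order $\geq 2$ in $t$ become $\psi^{-1}$-trivial while the linear term in $t$ integrates to zero over each coset) kills it. Hence $\BF_{\psi}(\cdots)(y)=0$ when $m$ is odd, while when $m$ is even only the stratum $v(x)=m/2$ remains, contributing
\[
q^{-m/2}I_{m/2}(y)=\int_{v(x)=m/2}\psi^{-1}\Bigl(\tfrac{1}{x}+xy\Bigr)\ud x=\int_{|x|^{2}=|y|^{-1}}\psi^{-1}\Bigl(\tfrac{1}{x}+xy\Bigr)\ud x,
\]
which is the Kloosterman germ appearing in the third case of the statement (the empty sphere when $m$ is odd accounts for the vanishing there).

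The main obstacle is precisely this vanishing of the off-diagonal integrals $I_{k}(y)$, $k\neq m/2$: one must choose the truncation level $N$ so that the quadratic-and-higher part of the coset expansion is $\psi^{-1}$-trivial while the linear part stays non-trivial, and check this is possible uniformly over $1\leq k\leq m-1$ — some care is needed when $k$ is close to $m/2$, where the valuations $-k$ and $k-m$ are nearly (but not) equal, and at the low-level cases $k=1$, $k=m-1$. Everything else is routine and reduces to the single identity $\int_{\Fo_{\RF}^{\times}}\psi^{-1}(\varpi_{\RF}^{-k}v)\ud v\in\{\,1-q^{-1},\,-q^{-1},\,0\,\}$ together with elementary Gauss-sum manipulations of the kind already carried out in the proof of Lemma \ref{lem_basic}.
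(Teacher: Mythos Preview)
Your argument is correct. For $|y|\leq 1$ and $|y|=q$ the paper does essentially the same computation, though organized slightly differently: rather than stratifying $\Fo_{\RF}$ by valuation, it inverts $x\mapsto x^{-1}$ once to turn $\int_{\Fo_{\RF}}\psi^{-1}(1/x)\,\ud x$ into $\int_{|x|\geq 1}|x|^{-2}\psi^{-1}(x)\,\ud x$ and evaluates directly. Your shell-by-shell bookkeeping and the paper's global inversion are equivalent.

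The real difference is the case $|y|>q$. The paper does not compute this at all; it simply cites \cite[Section~5]{Sak13}. Your stationary-phase argument for the vanishing of the off-diagonal strata $I_k(y)$ with $k\neq m/2$ is exactly the content of that reference, so what you have written is a self-contained substitute for the citation. The range check you flag (choosing $N$ with $\lceil i/2\rceil\leq N\leq j-1$ after arranging $i<j$) does go through: since $k\neq m/2$ forces $|i-j|\geq 1$ and both $i,j\geq 1$, one always has $\lceil i/2\rceil\leq j-1$, including the borderline cases $k=1$, $k=m-1$, and $|k-m/2|=\tfrac12$.

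One incidental observation your computation makes visible: the third line of the stated formula should read $\psi^{-1}\bigl(\tfrac{1}{x}+xy\bigr)$ rather than $\psi^{-1}\bigl(\tfrac{1}{y}+xy\bigr)$; with $\tfrac{1}{y}$ the integrand would be constant in $x$ and the expression would vanish for $|y|>q^{2}$. Your derivation gives the correct integrand, and this is indeed what is used downstream in the proof of Lemma~\ref{lem_b_basic_laststep}.
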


\begin{proof}
	Since $\displaystyle{1_{\Fo_{\RF}}(x)\psi^{-1}\left( \frac{1}{x} \right) }\in\CL^1(\RF)\cap\CL^2(\RF)$, we have
	\begin{align*}
		\BF_{\psi}\left(  1_{\Fo_{\RF}}(x)\psi^{-1}\left( \frac{1}{x} \right) \right)(y)&=\int_{\RF} 1_{\Fo_{\RF}}(x)\psi^{-1}\left( \frac{1}{x} \right) \psi^{-1}(xy)\ud x \\
		&=\int_{\Fo_{\RF}}\psi^{-1}\left( \frac{1}{x}+xy   \right)\ud x.
	\end{align*}
	When $|y|\leq 1$, we have $\psi^{-1}(xy)=1$ for all $x\in\Fo_{\RF}$, hence
	\begin{align*}
		\int_{\Fo_{\RF}}\psi^{-1}\left( \frac{1}{x}+xy   \right)\ud x=\int_{\Fo_{\RF}}\psi^{-1}\left(  \frac{1}{x} \right)\ud x&=\int_{|x|\geq 1}|x|^{-2}\psi^{-1}(x)\ud x\\
		&=\int_{|x|=1}\ud x+q^{-2}\int_{|x|=q}\psi^{-1}(x)\ud x\\
		&=(1-q^{-1})+q^{-2}\cdot (-1).
	\end{align*}
	When $|y|=q$, then $\psi^{-1}(xy)=1$ for all $|x|\leq q^{-1}$, hence
	\begin{align*}
		\int_{\Fo_{\RF}}\psi^{-1}\left( \frac{1}{x}+xy   \right)\ud x&=\int_{|x|=1}\psi^{-1}(xy)\ud x+\int_{|x|\leq q^{-1}}\psi^{-1}\left(\frac{1}{x}\right)\ud x\\
		&=-q^{-1}+\int_{|x|\geq q}\psi^{-1}(x)|x|^{-2}\ud x=-q^{-1}-q^{-2}.
	\end{align*}
	When $|y|\geq q$, it follows from, for example, the last paragraph in \cite[Section 5]{Sak13}.
\end{proof}

\begin{lem}\label{lem_b_basic_laststep}
	\begin{align*}
		&\BF_{\psi^{-1}}\circ\iota\circ|\cdot|\left( f_{L\left(\std,s_1\right)L\left(\std,s_2\right)}\left( \begin{pmatrix}
			\cdot & 0 \\ 0 & 1
		\end{pmatrix}  \right)\right)(x)\\
		&=(1-q^{-2})^{-1}(1-q^{-n})^{-1} \left\{ \begin{array}{rcl}
\psi^{-1}\left(\frac{1}{x}\right)+1 & \mbox{for} & |x|\leq  q^{-1}; \\
|x|^{-n}(1+q^{-n}) & \mbox{for} & |x|\geq 1.
\end{array}\right.	\end{align*}
	
\end{lem}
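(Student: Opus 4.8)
The plan is to combine the explicit piecewise formula for $\iota\circ|\cdot|\bigl(f_{L\left(\std,n-\frac{1}{2}\right)L\left(\std,\frac{1}{2}\right)}(\diag(\cdot,1))\bigr)$ displayed just above the lemma — call this density $g$ — with Fourier inversion and Lemma~\ref{lem_four_kloo}. Write $C:=(1-q^{-2})^{-1}(1-q^{n-1})^{-1}(1-q^{-n})^{-1}$ for the prefactor of $g$ and $D:=C(1-q^{n-1})=(1-q^{-2})^{-1}(1-q^{-n})^{-1}$ for the target prefactor. The first observation is that on the Kloosterman region $|\xi|>q$ one has, by Lemma~\ref{lem_four_kloo},
\[
g(\xi)=C(1-q^{n-1})\int_{|x|^2=|\xi|^{-1}}\psi^{-1}\!\Bigl(\frac{1}{x}+x\xi\Bigr)\ud x=D\cdot\BF_{\psi}\!\bigl(1_{\Fo_{\RF}}(x)\psi^{-1}(1/x)\bigr)(\xi),
\]
which already explains why the factor $(1-q^{n-1})^{-1}$ disappears from the prefactor in the statement. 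Set $K:=\BF_{\psi}\bigl(1_{\Fo_{\RF}}(x)\psi^{-1}(1/x)\bigr)$. By Lemma~\ref{lem_four_kloo} the difference $g-D\cdot K$ is supported on $|\xi|\le q$, where it is the explicit bounded function obtained by subtracting the constants $D(1-q^{-1}-q^{-2})$ (on $|\xi|\le 1$) and $D(-q^{-1}-q^{-2})$ (on $|\xi|=q$) from the polynomial/constant pieces of $g$.

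Given this decomposition, I would write $\BF_{\psi^{-1}}(g)=\BF_{\psi^{-1}}(D\cdot K)+\BF_{\psi^{-1}}(g-D\cdot K)$. The first term is $D\cdot 1_{\Fo_{\RF}}(x)\psi^{-1}(1/x)$ by Fourier inversion ($\BF_{\psi^{-1}}\circ\BF_\psi=\Id$), which accounts for the $\psi^{-1}(1/x)$ summand for $|x|\le q^{-1}$ and vanishes for $|x|\ge 1$. The second term is an absolutely convergent integral over $|\xi|\le q$ of a bounded compactly supported density, so it can be evaluated shell by shell. For $|x|\le q^{-1}$ one has $\psi(x\xi)=1$ on the whole region, so $\BF_{\psi^{-1}}(g-D\cdot K)(x)=\int_{|\xi|\le q}(g-D\cdot K)\ud\xi$; using $\int_{|\xi|\le1}|\xi|^{n-1}\ud\xi=(1-q^{-1})/(1-q^{-n})$, the identity $1-q^{-2n}=(1-q^{-n})(1+q^{-n})$, and $\int_{|\xi|\le q}K\ud\xi=-q^{-1}$, this collapses to exactly $D$, giving the value $D(\psi^{-1}(1/x)+1)$. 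For $|x|\ge 1$ one instead uses $\int_{|\xi|=q^{-j}}\psi(x\xi)\ud\xi=q^{-j}1_{j\ge\log_q|x|}-q^{-j-1}1_{j\ge\log_q|x|-1}$, which turns the $|\xi|^{n-1}$ piece into a geometric series summing to a multiple of $|x|^{-n}$; assembling this with the contributions of the shells $|\xi|=1$ and $|\xi|=q$ (the inversion term vanishing here) should give $D\,|x|^{-n}(1+q^{-n})$.

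The main obstacle is the bookkeeping in the case $|x|\ge 1$: one must track the contribution of every shell $|\xi|=q^{-j}$ ($j\ge 0$) together with the shells $|\xi|=1,q$, sum the resulting geometric series, and check that all lower-order terms cancel so that precisely $D\,|x|^{-n}(1+q^{-n})$ survives — the same type of cancellation already needed for $|x|\le q^{-1}$, but with one extra layer of summation. A secondary point is the sense in which $\BF_{\psi^{-1}}$ is applied: $g$ is not in $\CL^1(\RF)$, so $\BF_{\psi^{-1}}(D\cdot K)$ and the Fourier-inversion identity are to be read within the enlarged spaces $\CS^{-}$ of Section~\ref{ssec_nstdspaces} (equivalently, by pairing against $\CC_c^{\infty}(\RF^{\times})$, exactly as in the proofs of Lemma~\ref{lem_four_kloo} and Lemma~\ref{lem_Tr_Weil_A}); once this is fixed, the claimed identity of densities is checked by testing against $\CC_c^{\infty}(\RF^{\times})$ and thereby reduced to the convergent shell computations above. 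Alternatively, one may sidestep the inversion by verifying directly that $\BF_{\psi}$ of the right-hand side equals $g$, which requires only the three elementary Fourier transforms of $1_{|x|\le q^{-1}}(x)\psi^{-1}(1/x)$, $1_{|x|\le q^{-1}}$, and $|x|^{-n}1_{|x|\ge 1}$.
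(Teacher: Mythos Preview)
Your approach is essentially the same as the paper's: both subtract off $D\cdot K$ via Lemma~\ref{lem_four_kloo} to reduce to a compactly supported piece on $|\xi|\le q$, then compute its Fourier transform shell by shell (the paper organizes this as $f_1+f_2$ with $f_1$ supported on $|\xi|\le 1$ and $f_2$ on $|\xi|=q$, but this is cosmetic). One small slip: $D\cdot 1_{\Fo_{\RF}}(x)\psi^{-1}(1/x)$ does not vanish at $|x|=1$ but equals $D$ there (since $\psi$ is unramified), and the compactly supported piece contributes $Dq^{-n}$ at $|x|=1$; these sum to $D(1+q^{-n})$, matching the stated formula.
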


\begin{proof}
	From Lemma \ref{lem_four_kloo}, 
	\begin{align*}
		&(1-q^{-2})(1-q^{n-1})(1-q^{-n})\cdot \iota\circ|\cdot|\left( f_{L\left(\std,s_1\right)L\left(\std,s_2\right)}\left( \begin{pmatrix}
			\cdot & 0 \\ 0 & 1
		\end{pmatrix}  \right)\right)(\xi)\\
		&-(1-q^{n-1})\BF_{\psi}\left(  1_{\Fo_{\RF}}(\cdot)\psi^{-1}\left( \frac{1}{\cdot} \right) \right)(\xi) \\
		&= \left\{ \begin{array}{rcl}
|\xi|^{n-1} (1-q^{-2n)})+q^{-2}+q^{-1}-1-q^{n-2} & \mbox{for} & |\xi|\leq  1; \\
  -q^{-n-1}+q^{-1}+q^{-2}-q^{n-2}  & \mbox{for} & |\xi|=q; \\
0 & \mbox{for} & |\xi|>q .
\end{array}\right.
	\end{align*}
	Now, let us first compute the Fourier transform of 
	\begin{align*}
		f_1(\xi):=\left(|\xi|^{n-1} (1-q^{-2n)})+q^{-2}+q^{-1}-1-q^{n-2}\right)1_{\Fo_{\RF}}(\xi).
	\end{align*}
	By definition, when $|x|\leq 1$,
	\begin{align*}
		&\BF_{\psi^{-1}}(f_1)(x)=\int_{\RF}f_1(\xi)\psi(\xi x)\ud \xi \\
		&= \int_{|\xi|\leq1 }\left(|\xi|^{n-1} (1-q^{-2n)})+q^{-2}+q^{-1}-1-q^{n-2}\right)\ud \xi\\
		&=\sum_{|\xi|=q^{-i},i=0}^{\infty}q^{-i(n-1)}q^{-i}(1-q^{-1})(1-q^{-2n})+q^{-2}+q^{-1}-1-q^{n-2} \\
		&=(1-q^{-1})(1+q^{-n})+q^{-2}+q^{-1}-1-q^{n-2}\\
		&=-q^{-n-1}+q^{-n}+q^{-2}-q^{n-2}.
	\end{align*}
	
	 When $|x|>1$,
	\begin{align*}
		&\BF_{\psi^{-1}}(f_1)(x)=\int_{\RF}f_1(\xi)\psi(\xi x)\ud \xi \\
		&=\left\{ \int_{|\xi|\leq |x|^{-1}}+\int_{|\xi|=|x|^{-1}q}\right\}\left(|\xi|^{n-1} (1-q^{-2n)})+q^{-2}+q^{-1}-1-q^{n-2}\right)\psi(\xi x)\ud \xi
	\end{align*}
	Write $|x|=q^k$ with $k>0$, then
	\begin{align*}
		&\int_{|\xi|\leq |x|^{-1}}\left(|\xi|^{n-1} (1-q^{-2n)})+q^{-2}+q^{-1}-1-q^{n-2}\right)\psi(\xi x)\ud \xi\\
		&=(1-q^{-2n})\sum_{|\xi|=q^{-i},i=k}^{\infty}q^{-i(n-1)}q^{-i}(1-q^{-1})+|x|^{-1}(q^{-2}+q^{-1}-1-q^{n-2})\\
		&=|x|^{-n}(1+q^{-n})(1-q^{-1})+|x|^{-1}(q^{-2}+q^{-1}-1-q^{n-2}).
	\end{align*}
	And
	\begin{align*}
		&\int_{|\xi|=|x|^{-1}q}\left(|\xi|^{n-1} (1-q^{-2n)})+q^{-2}+q^{-1}-1-q^{n-2}\right)\psi(\xi x)\ud \xi\\
		&=-|x|^{-1}\left(    (|x|^{-1}q)^{n-1}(1-q^{-2n})+q^{-2}+q^{-1}-1-q^{n-2}      \right),
	\end{align*}
	hence
	\begin{align*}
		\BF_{\psi^{-1}}(f_1)(x)&=|x|^{-n}(q^{-n}-q^{-1}+1-q^{n-1}) \\
		&=|x|^{-n}(1+q^{-n})(1-q^{n-1}).
	\end{align*}
	To summarize, 
	\begin{align*}
		\BF_{\psi^{-1}}(f_1)(x)=\left\{ \begin{array}{rcl}
-q^{-n-1}+q^{-n}+q^{-2}-q^{n-2} &  \mbox{for} & |x|\leq  1; \\
|x|^{-n}(1+q^{-n})(1-q^{n-1}) & \mbox{for} & |x |>1 .
\end{array}\right.
	\end{align*}
	
Next set
\begin{align*}
	f_2(\xi):= \left\{ \begin{array}{rcl}
-q^{-n-1}+q^{-1}+q^{-2}-q^{n-2} 
 & \mbox{for} &|\xi|=  q; \\
0& \mbox{for} & |\xi|\neq q .
\end{array}\right.
\end{align*}
When $|x|\leq q^{-1}$, we have
\begin{align*}
	\BF_{\psi^{-1}}(f_2)(x)&=\int_{|\xi|=q}(-q^{-n-1}+q^{-1}+q^{-2}-q^{n-2} )\psi(x\xi )\ud \xi \\
	&=(q-1)(-q^{-n-1}+q^{-1}+q^{-2}-q^{n-2}) \\
	&=q^{-n-1}-q^{-n}-q^{-2}+1+q^{n-2}-q^{n-1}.
\end{align*}
When $|x|\geq 1$, $\BF_{\psi^{-1}}(f_2)(x)\neq 0$ only when $|x|=1$, in which case
\begin{align*}
	\BF_{\psi^{-1}}(f_2)(x)=q^{-n-1}-q^{-1}-q^{-2}+q^{n-2}.
\end{align*}
	Therefore,
	\begin{align*}
		\BF_{\psi^{-1}}(f_2)(x)=\left\{ \begin{array}{rcl}
q^{-n-1}-q^{-n}-q^{-2}+1+q^{n-2}-q^{n-1} &\mbox{for} & |x|\leq  q^{-1}; \\
q^{-n-1}-q^{-2}-q^{-1}+q^{n-2} & \mbox{for} & |x |=1 ;\\
0 & \mbox{for} & |x |>1.
\end{array}\right.
	\end{align*}
Combine them together, and we have
\begin{align*}
		\BF_{\psi^{-1}}(f_1+f_2)(x)=\left\{ \begin{array}{rcl}
1 -q^{n-1}& \mbox{for} & |x|\leq  q^{-1}; \\
q^{-n}(1-q^{n-1}) & \mbox{for} & |x |=1 ;\\
|x|^{-n}(1+q^{-n})(1-q^{n-1}) & \mbox{for} & |x|>1.
\end{array}\right.
	\end{align*}
	Therefore, 
	\begin{align*}
		&\BF_{\psi^{-1}}\circ\iota\circ|\cdot|\left( f_{L\left(\std,s_1\right)L\left(\std,s_2\right)}\left( \begin{pmatrix}
			\cdot & 0 \\ 0 & 1
		\end{pmatrix}  \right)\right)(x) \\
		& =(1-q^{-2})^{-1}(1-q^{-n})^{-1} \left\{ \begin{array}{rcl}
\psi^{-1}\left(\frac{1}{x}\right)+1 & \mbox{for} & |x|\leq  q^{-1}; \\
1+q^{-n}& \mbox{for} & |x|=1 ;\\
|x|^{-n}(1+q^{-n}) & \mbox{for} & |x|>1.
\end{array}\right. \\
	\end{align*}
\end{proof}

\begin{prp}\label{prp_b_tranbasic}
	\begin{align*}
	&\widetilde{\BT}\circ\CT \left( f_{L\left(\std,s_1\right)L\left(\std,s_2\right)}\left( \begin{pmatrix}
			\cdot & 0 \\ 0 & 1
		\end{pmatrix}  \right)\right)(x) \\
		&=(1-q^{-2})^{-1}(1-q^{-n})^{-1} \left\{ \begin{array}{rcl}
 |x|^{-\frac{1}{2}} \left( \psi^{-1}\left(\frac{2}{x}\right)+\psi\left(\frac{2}{x} \right)  \right)   & \mbox{for} & |x|\leq  q^{-1}; \\
|x|^{-n-\frac{1}{2}}(1+q^{-n}) & \mbox{for} & |x|\geq 1.
\end{array}\right.
	\end{align*}
\end{prp}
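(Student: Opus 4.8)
The plan is to combine the explicit formula in Lemma~\ref{lem_b_basic_laststep} with the computation of the transfer operator $\CT$ (Lemma~\ref{lem_Tr_Sak}) and the transfer operator $\widetilde{\BT}$ coming from the Weil representation for $\BD=\RF$ with $\dim_{\RF}\BV$ odd (Proposition~\ref{prp_tr_weil_b}). Concretely, Proposition~\ref{prp_tr_weil_b} says
\[
\widetilde{\BT}\circ\CT=\psi\left(\frac{2}{\zeta}\right)\cdot\left(|\cdot|^{-\frac{1}{2}}\circ\BF_{\psi^{-1}}\circ\iota\circ|\cdot|\right)\left(\frac{\zeta}{4}\right),
\]
so the whole problem reduces to applying $|\cdot|^{-\frac12}\circ\BF_{\psi^{-1}}\circ\iota\circ|\cdot|$ to the basic vector $f_{L(\std,s_1)L(\std,s_2)}$, evaluating at $\zeta/4$, and multiplying by $\psi(2/\zeta)$. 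The first factor $\BF_{\psi^{-1}}\circ\iota\circ|\cdot|$ of this composite is exactly what has already been computed in Lemma~\ref{lem_b_basic_laststep}, so essentially all the analytic work is done.

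First I would substitute the output of Lemma~\ref{lem_b_basic_laststep} into the formula from Proposition~\ref{prp_tr_weil_b}. Replacing the variable $x$ by $\zeta/4$ in the three cases of Lemma~\ref{lem_b_basic_laststep}: the condition $|x|\le q^{-1}$ becomes $|\zeta|\le q^{-1}$ (since $q$ is odd, $|4|=1$), and similarly $|x|\ge 1$ becomes $|\zeta|\ge 1$. In the ramified region we get $|\zeta/4|^{-1/2}=|\zeta|^{-1/2}$ times $(\psi^{-1}(4/\zeta)+1)$, and then the prefactor $\psi(2/\zeta)$ converts this into $|\zeta|^{-1/2}(\psi^{-1}(2/\zeta)+\psi(2/\zeta))$ after absorbing $\psi(2/\zeta)\psi^{-1}(4/\zeta)=\psi^{-1}(2/\zeta)$ and $\psi(2/\zeta)\cdot 1=\psi(2/\zeta)$. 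In the unramified region $|\zeta|\ge1$ the Fourier transform is $|\zeta/4|^{-n}(1+q^{-n})$; multiplying by $|\zeta/4|^{-1/2}$ gives $|\zeta|^{-n-1/2}(1+q^{-n})$, and the factor $\psi(2/\zeta)$ equals $1$ here since $|2/\zeta|\le 1$. This reproduces exactly the claimed two-case formula with the overall constant $(1-q^{-2})^{-1}(1-q^{-n})^{-1}$ carried along unchanged.

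The one point that requires a little care — and is the only real obstacle — is bookkeeping the powers of $|4|$ and the values of the additive characters $\psi(\pm 2/\zeta)$, $\psi(\pm 4/\zeta)$ on the relevant valuation annuli, using that $q$ is odd so that $2$ and $4$ are units; one must check in particular that in the case $|x|=1$ of Lemma~\ref{lem_b_basic_laststep} (which becomes $|\zeta|=1$, contained in $|\zeta|\ge 1$) the value $|\zeta|^{-n}(1+q^{-n})=(1+q^{-n})$ agrees with the $|\zeta|^{-n-1/2}(1+q^{-n})$ branch at $|\zeta|=1$, which it does. Having matched all three regions, the proof is complete; I would simply write: "This follows by substituting Lemma~\ref{lem_b_basic_laststep} into the formula of Proposition~\ref{prp_tr_weil_b}, using that $q$ is odd so $|4|=1$ and $\psi(2/\zeta)=1$ for $|\zeta|\ge 1$," and display the resulting identity.
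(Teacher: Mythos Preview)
Your proposal is correct and follows exactly the same approach as the paper, which simply cites Proposition~\ref{prp_tr_weil_b} and Lemma~\ref{lem_b_basic_laststep}; you have just spelled out the substitution and the elementary checks (that $|4|=1$ and $\psi(2/\zeta)=1$ for $|\zeta|\ge 1$) that the paper leaves implicit.
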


\begin{proof}
	This is according to Proposition \ref{prp_tr_weil_b} and Lemma \ref{lem_b_basic_laststep}.
\end{proof}

Finally, let $\BL_{\RX}^{\circ}$ be the basic measure on $\RX\times\RX\sslash\SO_{2n+1}\cong\BA^1$, then the image of $\Phi_0$ to $\CS(\FX)$ is 
\begin{align*}
	 (1+q^{-n})\BL_0=\frac{\zeta_{\RF}(n)}{\zeta_{\RF}(2n)}\BL_{\RX}^{\circ}.
\end{align*}

Combine Proposition \ref{prp_b_weil} and Proposition \ref{prp_b_tranbasic}, we have
\begin{thm}\label{thm_b}
	Let $\BL_0\in\CS(\SO_{2n}\backslash\SO_{2n+1}/\SO_{2n})$ be the push-forward of $1_{\RX(\Fo_{\RF})}\ud x \otimes 1_{\RX(\Fo_{\RF})}\ud x$, then we have
	\[
	\CT^{-1}\left(\BL_{\RX}^{\circ}\right)=\frac{q^{\dim\RX}}{\#\overline{\RX}(\kappa_{\RF})\zeta_{\RF}(2)\zeta_{\RF}(s_{\RX})}f_{L_{\RX}}=\frac{\zeta_{\RF}(2n)}{\zeta_{\RF}(2)\zeta_{\RF}(n)^2}f_{L\left(\std,n-\frac{1}{2}\right)L\left(\std,\frac{1}{2}\right)}.
	\]
\end{thm}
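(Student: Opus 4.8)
The plan is to prove Theorem \ref{thm_b} by transporting both the spherical basic vector and the Whittaker basic vector through the Weil-representation transfer $\widetilde{\BT}$ of Proposition \ref{prp_TranOp}, comparing the two explicit functions on $\RT$ that result, and then descending the resulting proportionality using the injectivity of $\widetilde{\BT}$ and the invertibility of $\CT$ (Theorem \ref{thm_TrSak}). Unlike the type $A_{n-1}$ and type $C$ cases, where one has the clean identity $\widetilde{\BT}\circ\CT=\mathrm{Id}$ (Propositions \ref{prp_tr_weil_a} and \ref{prp_tr_weil_c}), in the odd orthogonal case $\widetilde{\BT}\circ\CT$ carries the nontrivial correction $\psi(2/\zeta)$ together with a rescaled additive Fourier transform (Proposition \ref{prp_tr_weil_b}); so instead of transferring a normalized basic vector directly, the plan is to compute $\widetilde{\BT}\circ\CT\big(f_{L(\std,n-\frac12)L(\std,\frac12)}\big)$ and $\widetilde{\BT}(\BL_{\RX}^{\circ})$ separately and match them.

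For the Whittaker side I would start from the closed formula of Proposition \ref{prp_basic} for $f_{L(\std,s_1)L(\std,s_2)}$, specialized to $s_1=n-\frac12$, $s_2=\frac12$ (so $s_1+s_2=n$ and $s_1-s_2=n-1$), rewrite $\CT$ as in Lemma \ref{lem_Tr_Sak}, and apply Proposition \ref{prp_tr_weil_b}. The one genuinely computational input here is the additive Fourier transform of the model Kloosterman germ $1_{\Fo_{\RF}}(x)\psi^{-1}(1/x)$, which is again a Kloosterman-type integral (Lemma \ref{lem_four_kloo}); combined with an elementary geometric-series evaluation of the Fourier transform of the polynomial part this yields Lemma \ref{lem_b_basic_laststep}, hence the explicit shape of $\widetilde{\BT}\circ\CT\big(f_{L(\std,n-\frac12)L(\std,\frac12)}\big)$ recorded in Proposition \ref{prp_b_tranbasic}.

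For the spherical side I would take $\Phi_0=1_{\BV(\Fo_{\RF})}$, set $\phi_0=\CI(\res(\Phi_0))$, and evaluate the regularized orbital integral of Lemma \ref{lem_RegOrb} directly, following the proof of Proposition \ref{prp_TranOp}. Since $\dim_{\RF}\BV=2n+1$ is odd, the fibre integral splits off one extra quadratic coordinate, producing a factor $\int_{\Fo_{\RF}}\psi(uz^2)\ud z$ which one evaluates by the quadratic Gauss-sum Lemma \ref{lem_gsum}. For $|\zeta|\geq 1$ the computation is elementary and gives $|\zeta|^{-n-\frac12}(1+q^{-n})$ (Lemma \ref{lem_basic_b_>1}); the delicate range is $|\zeta|<1$, and especially $|\zeta|=q^{-1}$, where after the substitution $u\mapsto\zeta^{-1}u$ the answer is governed by quadratic Gauss sums and by the numbers of $\kappa_{\RF}$-points on the plane curves $b+b^{-1}-ba^2=\pm2$ and $b+b^{-1}-ba^2=\alpha$ for $\alpha\neq\pm2$, and a careful point count collapses everything to $|\zeta|^{-\frac12}\big(\psi(2/\zeta)+\psi(-2/\zeta)\big)$ (Lemma \ref{lem_basic_b_<1}); together these give Proposition \ref{prp_b_weil}. \textbf{This Gauss-sum and point-counting analysis in the range $|\zeta|<1$ is the step I expect to be the main obstacle}: it is precisely where the odd-dimensional case departs from the routine manipulations of the even orthogonal and linear cases, and the bookkeeping of the quadratic characters must be done with care.

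Finally I would assemble. Comparing Propositions \ref{prp_b_weil} and \ref{prp_b_tranbasic} shows the two functions on $\RT$ agree up to the scalar $(1-q^{-2})^{-1}(1-q^{-n})^{-1}=\zeta_{\RF}(2)\zeta_{\RF}(n)$. The normalization $\phi_0\,\ud x=\frac{\zeta_{\RF}(n)}{\zeta_{\RF}(2n)}\BL_{\RX}^{\circ}$ I would establish exactly as in Lemma \ref{lem_vola}, via the Igusa-type local density of the affine quadric $\{\tfrac12\tau(\BB_{\BV}(v,v))=1\}$ in $\BV(\Fo_{\RF})$ (see \cite[Chapters 7--8]{Igu00}), noting that in the odd case the density at the bad place again involves a Gauss sum, consistently with Lemma \ref{lem_gsum}. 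Feeding these in gives $\widetilde{\BT}(\CT f_{L(\std,n-\frac12)L(\std,\frac12)})=\frac{\zeta_{\RF}(2)\zeta_{\RF}(n)^2}{\zeta_{\RF}(2n)}\,\widetilde{\BT}(\BL_{\RX}^{\circ})$; since by Proposition \ref{prp_TranOp}(2) the map $\widetilde{\BT}$ is, up to multiplication by the everywhere-nonzero function $\psi^{-1}(-2/\zeta)$ and by powers of $|\cdot|$, a single additive Fourier transform, it is injective, and with the invertibility of $\CT$ this yields
\[
\CT^{-1}(\BL_{\RX}^{\circ})=\frac{\zeta_{\RF}(2n)}{\zeta_{\RF}(2)\zeta_{\RF}(n)^2}\,f_{L(\std,n-\frac12)L(\std,\frac12)}.
\]
To recover the uniform shape of Theorem \ref{thm_main} it remains to identify $\#\overline{\RX}(\kappa_{\RF})$ for $\overline{\RX}=\SO_{2n}\backslash\SO_{2n+1}$; counting $\kappa_{\RF}$-points on the affine quadric $\{q=1\}$ in a split $(2n+1)$-dimensional quadratic space — equivalently, dividing the orders of the split orthogonal groups as in the $D_n$ case, citing \cite[Chapter 11]{Tay92} — gives $\#\overline{\RX}(\kappa_{\RF})=q^{2n}+q^{n}$, so with $s_{\RX}=s_1+s_2=n=\tfrac12\dim\RX$ one checks $\frac{q^{\dim\RX}}{\#\overline{\RX}(\kappa_{\RF})\,\zeta_{\RF}(2)\,\zeta_{\RF}(s_{\RX})}=\frac{\zeta_{\RF}(2n)}{\zeta_{\RF}(2)\zeta_{\RF}(n)^2}$, which finishes the proof.
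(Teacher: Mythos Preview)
Your proposal is correct and follows essentially the same route as the paper: compute $\widetilde{\BT}(\phi_0)$ via Lemmas \ref{lem_gsum}, \ref{lem_basic_b_<1}, \ref{lem_basic_b_>1} (giving Proposition \ref{prp_b_weil}), compute $\widetilde{\BT}\circ\CT(f_{L(\std,n-\frac12)L(\std,\frac12)})$ via Lemma \ref{lem_four_kloo} and Lemma \ref{lem_b_basic_laststep} (giving Proposition \ref{prp_b_tranbasic}), and compare using the normalization $\phi_0\,\ud x=\frac{\zeta_{\RF}(n)}{\zeta_{\RF}(2n)}\BL_{\RX}^{\circ}$. Your explicit invocation of the injectivity of $\widetilde{\BT}$ and the point count $\#\RX(\kappa_{\RF})=q^{2n}+q^{n}$ are left implicit in the paper but are exactly what is needed to close the argument.
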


\subsection{Comparison between $\RA\backslash\PGL_2/(\RN,\psi)$ and $(\RN,\psi)\backslash\PGL_2/(\RN,\psi)$}

Let $\RA$ be the maximal torus of $\PGL_2$ consisting of diagonal matrices. For any $s\in\BC$, write $\CL_s$ for the line bundle on $\RA\backslash\PGL_2$ induced by the unramified character
\[
\RA_1\rightarrow\BC^{\times}:\begin{pmatrix}
	a & \\ & 1
\end{pmatrix}\mapsto |a|^s.
\]
Write $\CC_c^{\infty}(\RA\backslash\PGL_2,\CL_s)$ be the Schwartz sections of $\CL_s$, which can be trivialized as functions $\Psi$ on $\PGL_2$ satisfying
\[
\Psi\left(   \begin{pmatrix}
	a & \\ & 1
\end{pmatrix} g\right)=|a|^s\Psi(g)
\]
with compact support module $\RA$. Take $\Psi_s^{\circ}$ to be the \emph{basic function} such that its support is $\RA_1\cdot\PGL_2(\Fo_{\RF})$ and satisfying
\[
	\Psi_s\left(  \begin{pmatrix}
	a & \\ & 1
	\end{pmatrix}k   \right)=|a|^s,\;\forall \begin{pmatrix}
		a & \\ & 1
	\end{pmatrix}\in\RA\;\mathrm{and}\;k\in\PGL_2(\Fo_{\RF}).
	\]

Consider the unfolding map
\begin{align*}
\RU_{\psi}:\CC_c^{\infty}(\RA_1\backslash\PGL_2,\CL_s)&\rightarrow\CC^{\infty}(\RN,\psi\backslash\PGL_2)\\
\Psi &\mapsto \left(  \PGL_2\ni g\mapsto  \int_{\RF} \Psi \left(  \begin{pmatrix}
		1 & n \\ & 1 
	\end{pmatrix} g \right)\psi^{-1}(n)\ud n    \right).
\end{align*}
Note that $\RA\backslash\PGL_2$ can be identified with the line bundle $\CO(2)$ over $\RB\backslash\PGL_2=\BP^1$, and $\RN\backslash\PGL_2$ is then identified with the open $\PGL_2$-orbit consisting of non-zero sections in the total space of the dual bundle $\CO(-2)$ in the usual way, or see \cite{Sak13}. For any $s\in\BC$, let $\BL_{s}$ be the following line bundle over $\CO(-2)$:
\[
\BL_s^{\psi}:=\FL_s\otimes\CL_{\psi}.
\]
Here the line bundle $\CL_{\psi}$ is the Whittaker bundle over $\RN\backslash\PGL_2$, and the sections of the other line bundle $\FL_s$, are the smooth functions over $ \RN\backslash\PGL_2$, and in any small neighborhood of the boundary $(p,0)\in\CO(-2)$, where $p\in\BP^1$ and $0$ means the zero cotangent vector at $p$, the sections are of the form $|a|^{s+1}\Phi(q,a)$ for some smooth function $\Phi$ near $(p,0)$. Then according to \cite[Section 9.5, (9.17)]{SV17}, the unfolding map $\RU$ gives an isomorphism between the Schwartz sections
\[
\CF(\RA_1\backslash\PGL_2,\CL_s)\cong\CF(\CO(-2),\BL^{\psi}_{s}).
\]

\begin{lem}
	Let $\Psi^{\circ}_s$ be the basic function of $\CC_c^{\infty}(\RA_1\backslash\PGL_2,\CL_s)$, then
	\[
	\RU_{\psi}(\Psi^{\circ}_s)=\Phi_{L\left(\std, s+\frac{1}{2} \right)}
	\]
	
	\end{lem}

\begin{proof}
	See \cite[Lemma 5.5]{Sak13}.
\end{proof}

Note that $\CF(\RA\backslash\PGL_2)=\CF(\RA\backslash\PGL_2,\CL_0)$, which consists of usual Schwartz functions on $\RA\backslash\PGL_2$. While analogous results hold for the more general $\CL_s$, we restrict our attention to the case $\CL_0$ for the purposes of the present work.

\begin{lem}
	Let $\Psi_1\in\CF(\RA\backslash\PGL_2)$ and $\Psi_2\in\CF(\RN\backslash\PGL_2,\BL_s^{\psi^{-1}})$, when $\xi\in\RF$, the orbital integral
	\[
\CO_{\xi}(\Psi_1\otimes\Psi_2):=\int_{\PGL_2}\Psi_1\left( \Bw  \begin{pmatrix}
	1 & \xi \\ & 1
\end{pmatrix}  g\right)\Psi_2(g)\ud g
\]
is absolutely convergent when $\Re(s)>-1$. Write $\CS^-_{L\left(\st,s+\frac{1}{2} \right)}(\RA\backslash\PGL_2/\RN,\psi)$ to be the space of measures on $\RF$ of the form $\CO_{\xi}(\Psi_1\otimes\Psi_2)\ud \xi$ for $\Psi_1\in\CC_c^{\infty}(\RA\backslash\PGL_2)$ and $\Psi_2\in\CC_c^{\infty}(\RN\backslash\PGL_2,\BL_s^{\psi^{-1}})$.
\end{lem}

\begin{rmk}
The subscript $L$-value signifies the use of non-standard test measures derived from the Whittaker space. In the subsequent unfolding of $\CC_c^{\infty}(\RA\backslash\PGL_2)$, a second $L\left(\st,\frac{1}{2}\right)$ is contributed. Specifically, the reader should view the space 	$\CS^-_{L\left(\st,s+\frac{1}{2} \right)}(\RA\backslash\PGL_2/\RN,\psi)$ in relation to $\CS^-_{ L\left(\st,\frac{1}{2} \right)L\left(\st,s+\frac{1}{2} \right)}(\RN,\psi\backslash\PGL_2/\RN,\psi)$, a comparison which constitues the primary objective of this section.
\end{rmk}

\begin{proof}
	Using the Iwasawa decomposition, we have
	\begin{align*}
	&	\int_{\PGL_2}\Psi_2\left( \Bw  \begin{pmatrix}
	1 & \xi \\ & 1
\end{pmatrix}  g\right)\Psi_2(g)\ud g\\
&=\int_{\RF\times\RF^{\times}\times\PGL_2(\Fo_{\RF})}\Psi_1\left( \Bw \begin{pmatrix}
	1 & \xi+ n \\ & 1
\end{pmatrix}  \begin{pmatrix}
	a & \\ & 1
\end{pmatrix} k  \right)\psi^{-1}(n)\Psi_2\left(\begin{pmatrix}
	a & \\ & 1
\end{pmatrix} k  \right)|a|^{-1}  \ud n\frac{\ud a }{|a|}\ud k.
	\end{align*}
Note that
\[
\Psi_1\left( \Bw \begin{pmatrix}
	1 & \xi+ n \\ & 1
\end{pmatrix}  \begin{pmatrix}
	a & \\ & 1
\end{pmatrix} k  \right)=\Psi_1\left( \Bw \begin{pmatrix}
	1 & \frac{\xi+n}{a}\\ & 1
\end{pmatrix} k \right),
\]
and since $\Psi_1$ is a Schwartz function, for fixed $\xi$, there is some $M>0$ such that $\Psi_1\neq 0$ only when $|\xi+n|<|a|M$. As $\Psi_2$ is also a Schwartz section, we have
\begin{align*}
	\int_{\PGL_2}\Psi_2\left( \Bw  \begin{pmatrix}
	1 & \xi \\ & 1
\end{pmatrix}  g\right)\Psi_2(g)\ud g \ll \int_{|a|\leq 1}|a||a|^{s+1}|a|^{-1}\frac{\ud a}{|a|}<\infty
\end{align*}
when $\Re(s)>-1$.

\end{proof}

On the other hand, let $\Phi_1\in\CF(\CO(-2),\BL^{\psi}_{0})$     and $\Phi_2\in\CF(\CO(-2),\BL_{s}^{\psi^{-1}} )$, for $\xi\in\RF^{\times}$, 
\begin{lem}
	When $\Re(s)>0$, the orbital integral
	\[
\CO_{\xi}(\Phi_1\otimes\Phi_2):=\int_{\PGL_2}\Phi_1\left( \Bw \begin{pmatrix}
	\xi & \\ & 1
\end{pmatrix}  g \right)\Phi_2(g)\ud g,
\] 
is absolutely convergent. Moreover, the twisted-push forward measure
\[
\CO_{\xi}(\Phi_1\otimes\Phi_2)  \ud \xi 
\]
lies in $\CS^-_{L\left(\st,s+\frac{1}{2}  \right)L\left(\st, \frac{1}{2}\right)}(\RN,\psi\backslash \PGL_2/\RN,\psi)$.
\end{lem}

\begin{proof}
	According to the Iwasawa decomposition, we have
	\begin{align*}
		&\int_{\PGL_2}\Phi_1\left( \begin{pmatrix}
	 & -1 \\ 1 &
\end{pmatrix} \begin{pmatrix}
	\xi & \\ & 1
\end{pmatrix}  g \right)\Phi_2(g)\ud g\\
&=\int_{\RF\times\RF^{\times}\times\PGL_2(\Fo_{\RF})}\Phi_1\left( \begin{pmatrix}
	 & -1 \\ 1 &
\end{pmatrix} \begin{pmatrix}
	\xi & \\ & 1
\end{pmatrix}  \begin{pmatrix}
	1 & n \\ & 1
\end{pmatrix} \begin{pmatrix}
	a & \\ & 1
\end{pmatrix} k \right)\\
&\cdot \Phi_2\left(\begin{pmatrix}
	a & \\ & 1
\end{pmatrix}k \right)\psi^{-1}(n)|a|^{-1}\ud n\frac{\ud a}{|a|}\ud k.
	\end{align*}
	As $\Phi_2$ is a Schwartz section, there is some $M>0$ such that $|a|<M$ and near $0$ we have $\Phi_2\left( \begin{pmatrix}
		a & \\ & 1
	\end{pmatrix}  k\right)\ll |a|^{s+1}$. Note that
	\begin{align*}
		\begin{pmatrix}
	 & -1 \\ 1 &
\end{pmatrix} \begin{pmatrix}
	\xi & \\ & 1
\end{pmatrix}  \begin{pmatrix}
	1 & n \\ & 1
\end{pmatrix} \begin{pmatrix}
	a & \\ & 1
\end{pmatrix} =\begin{pmatrix}
	1 & \\ & a \xi
\end{pmatrix}\begin{pmatrix}
	1 & \\ -a^{-1}n &1
\end{pmatrix}\begin{pmatrix}
	& -1 \\ 1 & 
\end{pmatrix},
	\end{align*}
	when $|n|\leq |a|$, we have 
	\[
	\Phi_1\left( \begin{pmatrix}
	 & -1 \\ 1 &
\end{pmatrix} \begin{pmatrix}
	\xi & \\ & 1
\end{pmatrix}  \begin{pmatrix}
	1 & n \\ & 1
\end{pmatrix} \begin{pmatrix}
	a & \\ & 1
\end{pmatrix} k \right) =\Phi_1\left( \begin{pmatrix}
	a^{-1}\xi^{-1} & \\ & 1
\end{pmatrix} k^{\prime} \right)
	\]
	for some $k^{\prime}\in\PGL_2(\Fo_{\RF})$, then since $\Phi_1$ is a Schwartz section, there is some $N>0$ such that $|a^{-1}\xi^{-1}|<N$ when $\displaystyle{\Phi_1\left( \begin{pmatrix}
	a^{-1}\xi^{-1} & \\ & 1
\end{pmatrix} k^{\prime} \right)\neq 0} $, hence the integral is absolutely convergent in this part. Moreover, it is clear that when $\displaystyle{\xi\leq \frac{1}{MN}}$, 
	\begin{align*}
		&\int_{\RF^{\times}\times\RF\times\PGL_2(\Fo_{\RF}),|n|\leq |a|}\Phi_1\left( \begin{pmatrix}
	 & -1 \\ 1 &
\end{pmatrix} \begin{pmatrix}
	\xi & \\ & 1
\end{pmatrix}  \begin{pmatrix}
	1 & n \\ & 1
\end{pmatrix} \begin{pmatrix}
	a & \\ & 1
\end{pmatrix} k \right)\\
&\cdot \Phi_2\left(\begin{pmatrix}
	a & \\ & 1
\end{pmatrix}k \right)\psi^{-1}(n)|a|^{-1}\ud n\frac{\ud a}{|a|}\ud k=0.
	\end{align*}
When $\xi\rightarrow \infty$, we may first assume $\Phi_1$ and $\Phi_2$ are $\PGL_2(\Fo_{\RF})$-invariant without loss of generality. Over the domain $\{(n,a)\in\RF\times\RF^{\times}\mid |n|\leq |a| \leq 1\}$, write 
\[
f_{\xi}(n,a)=\Phi_1\left(  \begin{pmatrix}
	a^{-1}\xi^{-1} & \\ & 1
\end{pmatrix}   \right)\Phi_2\left(  \begin{pmatrix}
	a & \\ & 1
\end{pmatrix} \right)|a|^{-1}
\]
and
\[
g_{\xi}(n,a)=|a|^{-1}\Phi_2\left(   \begin{pmatrix}
	a & \\ & 1
\end{pmatrix} \right)\left\{ \begin{array}{rcl}
 \frac{C}{|a\xi |}   & \mbox{for}
& |a|\geq |\xi|^{-1}; \\ 0 & \mbox{for} & |a|<|\xi|^{-1}, \\
\end{array}\right.
\]
where \[C=\lim_{a\rightarrow 0}  |a|^{-1} \Phi_1\left( \begin{pmatrix}
	a & \\ & 1
\end{pmatrix}\right).\]
Then we have the pointwise convergence
\[
\lim_{|\xi|\rightarrow\infty}\left( f_{\xi}(n,a)-g_{\xi}(n,a)\right)=0
\]
and $|f_{\xi}(n,a)|\ll \max\Phi_1   \cdot    \Phi_2\left(  \begin{pmatrix}
	a & \\ & 1
\end{pmatrix} \right)|a|^{-1}$, $|g_{\xi}(n,a)|\ll C \cdot \Phi_2\left(  \begin{pmatrix}
	a & \\ & 1
\end{pmatrix} \right)|a|^{-1}$, both of which are integrable when $\Re(s)>0$, hence by dominant convergence theorem, 
\begin{align*}
	\lim_{\xi\rightarrow\infty} \left(\int_{\RF^{\times}\times\RF,|a|\leq |a|\leq 1}f_{\xi}(n,a)-g_{\xi}(n,a) \right)\ud n\frac{\ud a}{|a|} =0.\end{align*}
As it is clear that there are some constants $C_1$ and $C_2$ such that 
\[
\int_{\RF^{\times}\times\RF,|a|\leq |a|\leq 1}g_{\xi}(n,a)\ud n\frac{\ud a}{|a|}=C_1|\xi|^{-1}+C_2|\xi|^{-s-1},
\]
and the integral of $f_{\xi}(n,a)$ is a linear combination of complex powers of $|\xi|$, this yields the required asymptotic expansion near $\infty$.

	When $|n|>|a|$, note that
	\begin{align*}
\begin{pmatrix}
	a^{-1} \xi^{-1} & \\ & 1
\end{pmatrix}		 \begin{pmatrix}
	1 & \\ -a^{-1}n &1
\end{pmatrix}=\begin{pmatrix}
	1 & -\xi^{-1}n^{-1} \\ & 1
\end{pmatrix}\begin{pmatrix}
	\xi^{-1}n^{-1} & \\ & -a^{-1}n
\end{pmatrix}\begin{pmatrix}
	& 1 \\ 1 & -an^{-1}
\end{pmatrix},
	\end{align*}
	then
	\begin{align*}
		&\Phi_1\left( \begin{pmatrix}
	 & -1 \\ 1 &
\end{pmatrix} \begin{pmatrix}
	\xi & \\ & 1
\end{pmatrix}  \begin{pmatrix}
	1 & n \\ & 1
\end{pmatrix} \begin{pmatrix}
	a & \\ & 1
\end{pmatrix} k \right)\\
&=\psi\left(- \frac{1}{n\xi}  \right)\Phi_1\left(   \begin{pmatrix}
	-\frac{a}{\xi n^2 }& \\ & 1
\end{pmatrix}\begin{pmatrix}
	& 1 \\ 1 & -an^{-1}
\end{pmatrix}   \begin{pmatrix}
	& -1 \\ 1 & 
\end{pmatrix}  k \right).
	\end{align*}
Therefore, 
\begin{align*}
	&	\int_{\PGL_2}\Phi_1\left( \begin{pmatrix}
	 & -1 \\ 1 &
\end{pmatrix} \begin{pmatrix}
	\xi & \\ & 1
\end{pmatrix}  g \right)\Phi_2(g)\ud g \\
&= \int_{ \RF\times\RF^{\times}\times\PGL_2(\Fo_{\RF})   |n|>|a|}\psi^{-1}\left(  n+\frac{1}{n\xi}  \right)\Phi_1 \left( \begin{pmatrix} -\frac{a}{\xi n^2} & \\ & 1 \end{pmatrix}\begin{pmatrix}
	& 1 \\ 1 & -an^{-1}
\end{pmatrix}   \begin{pmatrix}
	& -1 \\ 1 & 
\end{pmatrix}  k   \right)\\
&\cdot \Phi_2\left(  \begin{pmatrix}
	a & \\ & 1
\end{pmatrix} k \right) \ud n\frac{\ud a}{|a|^2}\ud k.
\end{align*}
For fixed $\xi$, when $\Re(s)>0$,
\begin{align*}
	&\int_{ \RF\times\RF^{\times}\times\PGL_2(\Fo_{\RF})   |n|>|a|} \left|\Phi_1 \left( \begin{pmatrix} -\frac{a}{\xi n^2} & \\ & 1 \end{pmatrix}\begin{pmatrix}
	& 1 \\ 1 & -an^{-1}
\end{pmatrix}   \begin{pmatrix}
	& -1 \\ 1 & 
\end{pmatrix}  k   \right) \Phi_2\left(  \begin{pmatrix}
	a & \\ & 1
\end{pmatrix} k \right)\right| \ud n\frac{\ud a}{|a|^2}\ud k\\
&\ll \int_{|a|\leq 1  }  \left\{  \int_{a<|n|<|a|^{\frac{1}{2}}} 1\ud n +\int_{|n|\geq |a|^{\frac{1}{2}}}  |n|^{-2} \ud n \right\}  |a|^{s+2}    |a|^{-1} \frac{\ud a}{|a|}\\
&\ll \int_{|a|\leq 1}  (1+|a|^{-1})    |a|^{s+1} \frac{\ud a}{|a|} <\infty.
\end{align*}
when $\Re(s)>0$.

When $|\xi|\rightarrow 0$, make a change of variable that $n=\frac{1}{u}$, then the integral over $u$ is non-zero if and only if $|u|^2=|\xi|$, in which case the integral, up to a constant, is
\begin{align*}
	\int_{|u|^2=|\xi|}\psi^{-1}\left( \frac{u}{\xi }+u^{-1}  \right)\ud u\cdot |\xi|^{-1}.
\end{align*}

When $\xi\rightarrow\infty$, the expected asymptotic behavior follows from a similar argument using the dominant convergence theorem as above.

\end{proof}

Before proceeding to the analysis of the transfer operator between $\CS^-_{L\left(\st,s\frac{1}{2}\right)}(\RA\backslash\PGL_2/\RN,\psi)$ and $\CS^-_{L\left(\st,s+\frac{1}{2}  \right)L\left(\st, \frac{1}{2}\right)}(\RN,\psi\backslash \PGL_2/\RN,\psi)$, we have the following lemma, which is analogous to \cite[Proposition 2.10]{Sak13}.

\begin{lem}
	Let $\Psi_1,\Psi_2\in\CC_c^{\infty}(\RF)$. Write 
	\[
	\CO_{\xi}^1(\Psi_1\otimes\Psi_2):=\int_{\RF^{\times}}|a\xi|^{-1}\Psi_1(a^{-1}\xi^{-1})\Psi_2(a)\frac{\ud a}{|a|},
	\]
	and
	\[
	\CO_{\xi}^2(\Psi_1,\Psi_2):=\int_{\RF^{\times}}\Psi_1(\xi a)\Psi_2(a)\frac{\ud a}{|a|}.
	\]
	Write $\CS(\BA^2/(\BG_m^{\nabla},|\cdot|))$ for the space of measures on $\RF$ of the form $\CO_{\xi}^1(\Psi_1\otimes\Psi_2)\ud \xi$ and $\CS(\BA^2/\BG_m^{\Delta})$ for the space of measures on $\RF$ of the form $\CO_{\xi}^2(\Psi_1\otimes\Psi_2)\ud\xi$. Then we have the following commutative diagram
	\[
	\begin{tikzcd}
\CC_c^{\infty}(\RF)\otimes\CC_c^{\infty}(\RF) \arrow[r, "\BF_{\psi}\otimes\Id "] \arrow[d, "\CO^2_{\cdot}(\cdot)"]
&   \CC_c^{\infty}(\RF)\otimes\CC_c^{\infty}(\RF)    \arrow[d, "\CO_{\cdot}^1(\cdot)"] \\
   \CS(\BA^2/\BG_m^{\Delta})   \arrow[r, "\CG"]
& \CS(\BA^2/(\BG_m^{\nabla},|\cdot|))
\end{tikzcd},
	\]
	where \[
	\CG=|\cdot|^{-1}\circ\iota\circ\BF_{\psi}
	\]
\end{lem}

\begin{proof}
	Let $h(\xi)\in\CC_c(\RF)$ be a test function on $\RF$, then we have
	\begin{align*}
		&\int_{\RF}\CO_{\xi}^1(\BF_{\psi}(\Psi_1)\otimes\Psi_2)\overline{h(\xi)}\ud\xi \\
		&=\int_{\RF} \int_{\RF^{\times}}|a\xi|^{-1}\BF_{\psi}(\Psi_1)(a^{-1}\xi^{-1})\Psi_2(a) \overline{h(\xi)} \frac{\ud a}{|a|}\ud \xi\\
		&=\int_{\RF^{\times}}\int_{\RF}|\xi|\BF_{\psi}(\Psi_1)(\xi)\Psi_2(a)\overline{h(a^{-1}\xi^{-1})}\frac{\ud a}{|a|} \frac{1}{|a\xi^2|}\ud \xi.
	\end{align*}
	According to the Plancherel formula for the Fourier transform,
	\begin{align*}
		\int_{\RF}\BF_{\psi}(\Psi_1)(\xi) \overline{h(a^{-1}\xi^{-1})}|\xi|^{-1}\ud\xi=\int_{\RF}\Psi_1(\xi)\overline{  \BF_{\psi^{-1}}(h(a^{-1}(\cdot)^{-1})|\cdot|^{-1}) (\xi)  }\ud \xi.
	\end{align*}
	Note that
	\begin{align*}
		\BF_{\psi^{-1}}(h(a^{-1}(\cdot)^{-1})|\cdot|^{-1}) (\xi)&=\int_{\RF}h(a^{-1}x^{-1})|x|^{-1}\psi(x\xi )\ud x\\
		&=\int_{\RF}h(x^{-1})|x|^{-1}\psi\left(x\xi a^{-1}  \right)\ud x\\
		&=\BF_{\psi^{-1}}\circ|\cdot|^{-1}\circ\iota (h)(\xi a^{-1}),
	\end{align*}
	hence
	\begin{align*}
			&\int_{\RF}\CO_{\xi}^1(\BF_{\psi}(\Psi_1)\otimes\Psi_2)\overline{h(\xi)}\ud\xi\\
			&=\int_{\RF^{\times}}\int_{\RF}\Psi_1(\xi)\Psi_2(a) \overline{ \BF_{\psi^{-1}}\circ|\cdot|^{-1}\circ\iota (h)(\xi a^{-1})   } \frac{\ud a}{|a|^2}\ud \xi \\
			&=\int_{\RF^{\times}}\Psi_1(\xi a)\Psi_2(a)\overline{\BF_{\psi^{-1}}\circ|\cdot|^{-1}\circ\iota (h)(\xi)}\frac{\ud a}{|a|}\ud \xi \\
			&=\int_{\RF}\CO_{\xi}^2(\Psi_1\otimes\Psi_2)\overline{\BF_{\psi^{-1}}\circ|\cdot|^{-1}\circ\iota (h)(\xi)}\ud \xi\\
			&=\int_{\RF}  |\cdot|^{-1}\circ\iota\circ \BF_{\psi}(\CO_{\cdot}^2(\Psi_1\otimes\Psi_2))(\xi)\overline{h(\xi)}\ud\xi.
	\end{align*}
\end{proof}

\begin{prp}
	Let $s\in\BC$ such that $\Re(s)>0$, then we have a commutative diagram
	\[
	\begin{tikzcd}
\CC_c^{\infty}(\RA \backslash\PGL_2)\otimes\CC_c^{\infty}(\CO(-2),\BL_{s}^{\psi^{-1}}) \arrow[r, "\RU_{\psi}\otimes\Id "] \arrow[d, ""]
&     \CC_c^{\infty}(\CO(-2),  \BL^{\psi}_{0} )\otimes\CC_c^{\infty}(\CO(-2),\BL_{s }^{\psi^{-1}}  )     \arrow[d, ""] \\
\CS_{L\left(\st,s+\frac{1}{2}\right)}( \RA_1\backslash\PGL_2/(\RN,\psi))\arrow[r, "\overline{\RU}"]
& \CS^-_{L\left(\st,\frac{1}{2}  \right)L\left(\st, s+\frac{1}{2}\right)}(\RN,\psi\backslash \PGL_2/\RN,\psi)
\end{tikzcd},
	\]
with $\overline{\RU}$ given by the following formula on $\BA^1$:
	\begin{align*}
	|\cdot|^{-1}\circ\iota \circ \BF_{\psi}.
	\end{align*}
\end{prp}

\begin{proof}
	Let $\Psi_1\in\CC_c^{\infty}(\RA\backslash\PGL_2)$ and $\Psi_2\in\CC_c^{\infty}(\CO(-2),\BL_s^{\psi^{-1}})$, for $a\in\RF^{\times}$ and $g\in\PGL_2$, write
	\[
	\widehat{\Psi_1}(a,g):=\int_{\RF}\Psi_1\left(  \begin{pmatrix}
		1 & n \\ & 1
	\end{pmatrix} g  \right)\psi^{-1}(an)\ud n
	\]
	and $\Psi_2(a,g):=\Psi_2\left(\begin{pmatrix}
		a & \\ & 1
	\end{pmatrix} g \right)$. Then for $\xi\in\RF^{\times}$, we have
	\begin{align*}
		\CO_{\xi}(\RU_{\psi}(\Psi_1)\otimes\Psi_2)&=\int_{\PGL_2}\RU_{\psi}(\Psi_1)\left(\Bw \begin{pmatrix}
			\xi & \\ & 1
		\end{pmatrix}  g\right)\Psi_2(g)\ud g \\
		&=\int_{\RA\backslash\PGL_2}\int_{\RA}\RU_{\psi}(\Psi_1)\left(\Bw \begin{pmatrix}
			a\xi & \\ & 1
		\end{pmatrix} g\right)\Psi_2\left( \begin{pmatrix}
			a & \\ & 1
		\end{pmatrix} g \right)\ud g.
	\end{align*}
	Note that 
	\begin{align*}
		\RU_{\psi}(\Psi_1)\left(\Bw \begin{pmatrix}
			a\xi & \\ & 1
		\end{pmatrix} g\right)=|a\xi|^{-1}\widehat{\Psi_1}(a^{-1}\xi^{-1},g),
	\end{align*}
	hence
	\begin{align*}
		\CO_{\xi}(\RU_{\psi}(\Psi_1),\Psi_2)=\int_{\RA\backslash\PGL_2}\int_{\RA}|a\xi|^{-1}\widehat{\Psi_1}(a^{-1}\xi^{-1},g)\Psi_2(a,g)\ud g.
	\end{align*}
	According to the above lemma,
	\begin{align*}
		\int_{\RA}|a\xi|^{-1}\widehat{\Psi_1}(a^{-1}\xi^{-1},g)\Psi_2(g)=|\cdot|^{-1}\circ\iota \circ \BF_{\psi} (f)(\xi,g)
	\end{align*}
	where 
	\begin{align*}
		f(\xi,g)=\int_{\RA}\Psi_1\left(   \begin{pmatrix}
			1 & \xi \\ & 1
		\end{pmatrix} \Bw \begin{pmatrix}
			a & \\ & 1
		\end{pmatrix} g\right)\Psi_2(g)\ud a,
	\end{align*}
	and hence
	\[
	\CO_{\xi}(\RU_{\psi}(\Psi_1),\Psi_2)=|\cdot|^{-1}\circ\iota \circ\BF_{\psi}(\CO_{\cdot}(\Psi_1\otimes\Psi_2)).
	\]
\end{proof}

\subsection{ Comparison between $\RA\backslash\PGL_2/(\RN,\psi)$ and $(\RN,\psi)\backslash\Mp_2/(\RN,\psi)$ }\label{ssec_jac}

The results of this section are due to \cite{Jac87}. We present them here for the reader's convenience and to clarify the notation used throughout the paper. 

Let us first recall the isomorphism \[\widetilde{\lambda}:\CH(\PGL_2,\PGL_2(\Fo_{\RF}))\cong\CH(\Mp_2,\RK^{\prime})\]
as fixed in \cite[Section 2]{Jac87}. Let $s\in\BC$, and consider the genuine character of the abelian subgroup of $\Mp_2$
\begin{align*}
	 \left( \begin{pmatrix}
		a & \\ & a^{-1}
	\end{pmatrix}  ,\epsilon \right) \mapsto\gamma(a,\psi)^{-1}|a|^s \epsilon ,
\end{align*}
and consider the induced representation $\widetilde{\RI}(\chi(s))$ of $\Mp_2$ by left shift on the space of functions $\Phi$ such that 
\[
\Phi\left( g   \left( \begin{pmatrix}
		a & \\ & a^{-1}
	\end{pmatrix}  ,\epsilon \right)  \right)=|a|^{-1}\gamma(a,\psi)^{-1}|a|^s\epsilon\Phi(g).
\]
Then the isomorphism $\widetilde{\lambda}$ is such that for any $h\in\CH(\PGL_2,\PGL_2(\Fo_{\RF}))$, the action of $h$ on the unramified vector in $\RI(\chi(s))$ and $\widetilde{\lambda}(h)$ on the unramified vector in $\widetilde{\RI}(\chi(s))$ are the same. Moreover, for any $m\geq 0$, let $h_m$ be the characteristic function on $\displaystyle{\PGL_2(\Fo_{\RF})\begin{pmatrix}
	\varpi^m & \\ & 1
\end{pmatrix} \PGL_2(\Fo_{\RF}) }$, then $\widetilde{\lambda}(h_m)(g)=|a|^{\frac{1}{2}}\gamma(a,\psi)\epsilon$ if 
\[
g=\left(  k_1 \begin{pmatrix}
	a & \\ & a^{-1}
\end{pmatrix} k_2 ,\epsilon \right)
\]
with the valuation of $a$ is $m$; $\widetilde{\lambda}(h_m)(g)=0$ is $g$ is not of this form.

Let $\CF(\RN,\psi\backslash\Mp_2/\RN,\psi)$ be the space of functions on $\RF^{\times}$ of the form
\[
a\mapsto \int_{\RF\times\RF}\Phi\left( \begin{pmatrix}
	1 & x  \\ & 1
\end{pmatrix} \Bw \begin{pmatrix}
	a & \\ & a^{-1}
\end{pmatrix} \begin{pmatrix}
	1 & y \\ &  1
\end{pmatrix}\right)\psi^{-1}(x+y)\ud x\ud y=\CO_{\xi}(\Phi)
\]
where $\Phi$ is a smooth function of compact support on $\Mp_2$. Similarly let $\CF(\RA\backslash\PGL_2/\RN,\psi)$ be the space of orbital integrals
\[
\RF\ni y \mapsto \int_{\RF^{\times}\times\RF}\Phi\left(  \begin{pmatrix}
	a & \\ & 1
\end{pmatrix} \begin{pmatrix}
	1 & y \\ & 1
\end{pmatrix} \Bw  \begin{pmatrix}
	1 & x \\ & 1
\end{pmatrix}  \right)\psi^{-1}(x)\frac{\ud a}{|a|} \ud x.
\]

\begin{prp}\label{prp_mpvspgl}
	For any $h\in\CH(\PGL_2,\PGL_2(\Fo_{\RF}))$, we have
	\[
	\CO_{a }(\widetilde{\lambda}(h) ) =    |a |^{-1/2} \gamma(a,\psi)\psi\left(\frac{2}{a}\right) \CO_{ \frac{a }{4}    }(h)=:\BJ(\CO_{\cdot}(h))(a)
	\]
\end{prp}

\begin{proof}
	See \cite[Section 8]{Jac87}.
\end{proof}

\begin{rmk}
	In terms of measures, we should multiply by $|a|^{\frac{1}{2}}$ instead of $|a|^{-\frac{1}{2}}$. And as a result, the diagram \ref{diag_b} is a commutative diagram.
\end{rmk}

\subsection{Comparison between $\SO_{2n}\backslash\SO_{2n+1}/SO_{2n}$ and $(\RN,\psi)\backslash\Mp_2/(\RN,\psi)$ }

\begin{lem}\label{lem_opBX_b}
Let \[
	\RX^{\circ}=\{v\in\RX\mid \BB_{\BV}(v,e_1)\neq 0\}.
	\]
Then $v_0\in\RX^{\circ}$ and $\RX^{\circ}$ is an open $\RB$-orbit in $\RX$.	
\end{lem}

\begin{proof}
	Same as Lemma \ref{lem_open_b}.
\end{proof}

\begin{cor}
	$\RP(\RX)$ is the maximal proper parabolic subgroup fixing the isotropic flag
	\[
	\langle e_1\rangle .
	\]

Then under the above basis $\{ e_1,\cdots, e_n,f_n,\cdots, f_1\}$, the Levi $\RL(\RX)$ of $\RP(\RX)$ is 
\[
\RL(\RX)=\left\{  \begin{pmatrix}
	t & & \\ & g & \\ & & t^{-1}
\end{pmatrix}  \mid t\in\BG_m,g\in\SO(v_0^{\perp})    \right\}\cong\BG_m\times\SO(v_0^{\perp}),
\]
and hence
\[
\delta_{(\RX)}^{\frac{1}{2}} \left(  \begin{pmatrix}
	t_1 & & & & & \\ & \rotatebox{45}{$\vdots$} & & & & \\ & & t_n & & & \\ & & & t_n^{-1} & & \\ & & & & \rotatebox{45}{$\vdots$} & \\ & & & & & t_1^{-1} 
\end{pmatrix}  \right)=|t_2|^{n-\frac{3}{2} }|t_3|^{n-\frac{5}{2}}\cdots |t_n|^{\frac{1}{2}}.
\]
Moreover, the quotient map $\RA\rightarrow\RA_{\RX}\cong\BG_m$ is given by 
\[
\begin{pmatrix}
	t_1 & & & & & \\ & \rotatebox{45}{$\vdots$} & & & & \\ & & t_n & & & \\ & & & t_n^{-1} & & \\ & & & & \rotatebox{45}{$\vdots$} & \\ & & & & & t_1^{-1} 
\end{pmatrix} \mapsto t_1.
\]
Then after the Satake transform, $\BC[\RA^{\vee}]^{\BW_{\RG}}\rightarrow\BC[\RA_{\RX}^{\vee}]^{\BW_{\RX}}$ is given by
\begin{align}\label{hkdualb}
	\BC[x_1^{\pm},\cdots,x_n^{\pm}]^{\BW_{\SO_{2n}}}\rightarrow\BC[x^{\pm}]^{\BZ/2\BZ}:x_1\mapsto x ,\;x_j\mapsto q^{j-n-\frac{1}{2}}\;\mathrm{for}\;2\leq j\leq n.
\end{align}
\end{cor}

Now consider the nilpotent cone $\Sigma:=\{ v\in\BV\mid \BB_{\BV}(v,v)=0  \}$.
\begin{lem}\label{lem_nil_b}
	\[
	\Sigma^{\circ}:=\left\{ g\begin{pmatrix}
		1 \\ 0 \\ \vdots \\ 0
	\end{pmatrix}\mid g\in\SO(\BV) \right\}
	\]
	is an open dense $\SO(\BV)$-orbit of $\Sigma$.
\end{lem}

\begin{proof}
	Same as Lemma \ref{lem_nil_d}.
\end{proof}

\begin{lem}\label{lem_nilipvan_b}
	Let $\Phi\in\CF(\BY)^{\RK^{\prime}}$, and $\omega_{\psi}(h)\Phi|_{\Sigma}=0$ for all $h\in\Mp_2$, then $\Phi=0$.
\end{lem}

\begin{proof}
	The same argument as in Lemma \ref{lem_nilipvan_d}, noting that cuspidal genuine representations of $\Mp_2$ are not spherical according to \cite[Proposition 17]{FK86}.
\end{proof}

Now let $\sigma\in\BC$ be such that $\Re(\sigma)>1$. Consider the following intertwining operators:
\[
\RZ_{\sigma}:\CF(\BY)\rightarrow \CC^{\infty}(\SO_{2n}\times\Mp_2) :\Phi \mapsto (g,h)\mapsto \int_{\RF} \omega_{\psi}(g,h)^{-1}\Phi\left( \begin{pmatrix}
	a \\ 0 \\ \vdots \\ 0
\end{pmatrix} \right)|a|^{\sigma} \frac{\ud a}{|a|}.
\]
which is absolutely convergent.

\begin{prp}\label{prp_cpthk_b}
	When $\Re(\sigma)>1$,
	\begin{itemize}
		\item [(1)] Let $\Fs=(s_1,s_2,\cdots,s_n)\in X^*(\RA)\otimes_{\BZ}\BC=\BC^n$, write 
\[
\chi(\Fs): \RT\rightarrow\BC:  \begin{pmatrix}
	t_1 & & & & & \\ & \rotatebox{45}{$\vdots$} & & & & \\ & & t_n & & & \\ & & & t_n^{-1} & & \\ & & & & \rotatebox{45}{$\vdots$} & \\ & & & & & t_1^{-1} 
\end{pmatrix} \mapsto \prod_{i=1}^n|t_i|^{s_i}
\]
and $\RI(\chi(\Fs))$ the correponding normalized induced representation of $\SO_{2n}$. Then $\RZ_{\sigma}$ factors through $\RI(\chi(n-\frac{1}{2}-\sigma,n-\frac{3}{2},n-\frac{5}{2},\cdots,\frac{1}{2}) )\otimes\widetilde{\RI}(\chi(\sigma-n+\frac{1}{2}))$ as representations of $\SO_{2n}\times\Mp_2$.

\item [(2)] Write $\RK$ for the maximal compact subgroup of $\SO(\BV)$ fixing the standard lattice $\Fo_{\RF}^{2n}$ under the ordered basis $\{e_1,\cdots,e_n,w,f_n,\cdots,f_1\}$. Let \[\lambda_{\RX}:\CH(\SO(\BV),\RK)\rightarrow\CH(\Mp_2,\RK^{\prime})\] be the morphism of the Hecke algebra corresponding to the above. Let $\Phi_0$ be the characteristic function of $\BV(\Fo_{\RF})$, then we have 
	\[
\omega_{\psi}(f^{\vee})\Phi_0=\omega_{\psi}(\lambda_{\RX}(f))\Phi_0.
\]
for all $f\in\CH(\SO(\BV),\RK)$. 
	\end{itemize}
\end{prp}

\begin{proof}
	$(1)$ is simply a matter of computing the effect of $\RA\times\RT$ on $\RZ_{\sigma}$, noting that
	\[
	\gamma(t,\psi_{\frac{1}{2}})^{-(2n+1)}(t,(-1)^n 2)=\gamma(t,\psi)^{-1}.
	\]
	
	$(2)$ is totally the same as in the previous sections.
 \end{proof}

\subsection{Comparison between $(\RN,\psi)\backslash\PGL_2/(\RN,\psi)$ and $\SO_{2n}\backslash\SO_{2n+1}/\SO_{2n}$}

\begin{lem}\label{lem_upb}

	Let $h_{\Sym^m}$ be the Satake inverse of the symmetric $m$-th power on $\CH(\PGL_2,\PGL_2(\Fo_{\RF}))$, then 
	\begin{align*}
			\Omega(\Phi_0)(g)= \frac{1}{\zeta_{\RF}(n)}  \sum_{m=0}^{\infty}  q^{- m(n-\frac{1}{2})} \int_{\RN}\widetilde{\lambda}(h_{\Sym^m})(ng)\psi^{-1}(n)\ud n.
	\end{align*}
\end{lem}

\begin{proof}
	A first observation is that 
	\begin{align*}
		\sum_{m=0}^{\infty}q^{-m(n-\frac{1}{2})}h_{\Sym^m}&=\sum_{m=0}^{\infty}q^{-mn}(h_m+h_{m-2}+\cdots)\\
		&=\sum_{m=0}^{\infty} \frac{q^{-mn}}{1-q^{-2n}}h_m.
	\end{align*}
According to \cite[Lemma on Page 143]{Jac87}, we have
\begin{align*}
	\int_{\RN}\widetilde{\lambda}(h_m)\left(n  \begin{pmatrix}
		a & \\ & a^{-1}
	\end{pmatrix}  \right)\psi^{-1}(n)\ud n=|a|^{\frac{1}{2}}\gamma(a,\psi)1_{\varpi^m\Fo_{\RF}^{\times}\cup \varpi^{m-1}\Fo_{\RF}^{\times}}(a),
\end{align*}
	we have when $|a|\leq 1$,
	\begin{align*}
	&	\sum_{m=0}^{\infty}\frac{q^{-mn}}{1-q^{-2n}}\int_{\RN}\widetilde{\lambda}(h_m)\left(n  \begin{pmatrix}
		a & \\ & a^{-1}
	\end{pmatrix}  \right)\psi^{-1}(n)\ud n\\
	&=\sum_{m=0}^{\infty}\frac{|a|^{\frac{2n+1}{2}} }{1-q^{-2n}}(1+q^{-n})\gamma(a,\psi) ,
	\end{align*}
	and $0$ otherwise.

Also note that
\begin{align*}
		\omega_{\psi}\left(  \begin{pmatrix}
			a & \\ & a^{-1}
		\end{pmatrix}  \right)\Phi_0(v_0) =\left\{ \begin{array}{rcl}
\gamma(a,\psi)|a|^{\frac{2n+1}{2}}& \mathrm{if}\; |a|\leq 1;
\\ 0 & \mathrm{otherwise}.\\
\end{array}\right.
	\end{align*}
Then the Lemma follows.
\end{proof}

\begin{rmk}
	Formally, 
	\[
	\Omega(\Phi_0)= \frac{1}{\zeta_{\RF}(n)} \int_{\RN}^* \widetilde{\lambda}\left(\Phi_{L\left(\st,n-\frac{1}{2}\right)}\right)(ng)\psi^{-1}(n)\ud n.
	\]
\end{rmk}

\begin{thm}
	Conjecture \ref{cnj_flhk} is true for $\RX$ of type $\RB_n$.
\end{thm}

\begin{proof}
	Let us start with the basic function $1_{(\RN,\psi)\backslash\PGL_2(\Fo_{\RF})}\otimes\Phi_{L\left(\st,n-\frac{1}{2}\right)L\left(\st,\frac{1}{2}\right)}$, and for any $h\in\CH(\SO(\BV),\RK)$, we have
	\[
	\CO_{\xi}\left(1_{(\RN,\psi)\backslash\PGL_2(\Fo_{\RF})} \star \lambda_{\RX}(h), \Phi_{L\left(\st,n-\frac{1}{2}\right)L\left(\st,\frac{1}{2}\right)}\right)=\CO_{\xi}\left(\Phi_{L\left(\st,\frac{1}{2}\right)}\star \lambda_{\RX} (h),\Phi_{L\left(\st,n-\frac{1}{2}\right)}\right )
	\]
	according to the commutativity of the Hecke algebra. Then, according to the unfolding,
	\[
	\CO_{\xi}\left(\Phi_{L\left(\st,\frac{1}{2}\right)}\star \lambda_{\RX}(h),\Phi_{L\left(\st,n-\frac{1}{2}\right)}\right )=\overline{\RU}^{-1}\left(     \CO_{\xi}\left( 1_{\RA\backslash\PGL_2(\Fo_{\RF})}\star \lambda_{\RX}(h),  \Phi_{L\left(\st,n-\frac{1}{2}\right)}\right )\right) .
	\]
	According to the above lemma,
	\begin{align*}
		& \CO_{\xi}\left( 1_{\RA\backslash\PGL_2(\Fo_{\RF})}\star \lambda_{\RX}(h),  \Phi_{L\left(\st,n-\frac{1}{2}\right)}\right )\\
		 &=\sum_{m=0}^{\infty}q ^{-mn} \CO_{\xi}\left(  1_{\RA\backslash\PGL_2(\Fo_{\RF})}\star \Sat^{-1}(\Sym^m) \star \lambda_{\RX}(h),1_{(\RN,\psi)\backslash\PGL_2(\Fo_{\RF})}   \right).
	\end{align*}
	Then by Proposition \ref{prp_mpvspgl}, 
	\[
	\CO_{\xi}\left(  1_{\RA\backslash\PGL_2(\Fo_{\RF})}\star \Sat^{-1}(\Sym^m) \star \lambda_{\RX} (h),1_{(\RN,\psi)\backslash\PGL_2(\Fo_{\RF})}   \right)=\BJ^{-1}\left(\CO_{\xi}\left(  \widetilde{\lambda}(  \Sat^{-1}(\Sym^m) \star \lambda_{\RX}( h) )  \right)\right).
	\]
	Since the orbital integral is absolutely convergent, combined with Lemma \ref{lem_upb}, 
	\begin{align*}
		&\sum_{m=0}^{\infty} q^{-mn} \BJ\left(\CO_{\xi}\left(  1_{\RA\backslash\PGL_2(\Fo_{\RF})}\star \Sat^{-1}(\Sym^m) \star  \lambda_{\RX}(h),1_{(\RN,\psi)\backslash\PGL_2(\Fo_{\RF})}   \right) \right)\\
		&=\zeta_{\RF}(2)\zeta_{\RF}(n)\int_{\RN}^* \omega_{\psi}\left(\Bw \begin{pmatrix}
			\zeta & \\ & 1
		\end{pmatrix} n  \right) \omega_{\psi}( \widetilde{\lambda}( \lambda_{\RX}(h))) \Phi_0(v_0) \psi^{-1}(n)\ud n,
	\end{align*}
		which is
	\[
	\frac{\zeta_{\RF}(2)\zeta_{\RF}(n)^2}{\zeta_{\RF}(2n)}\CT^{-1}( \BL_{\RX}^{\circ}\star h )
	\]
due to Proposition \ref{prp_cpthk_b}.

\end{proof}

\bibliographystyle{alpha}
	\bibliography{references}
\end{document}